\documentclass[12pt, a4paper]{amsart}
\usepackage{auxhook}
\usepackage{docmute}
\usepackage[utf8]{inputenc}
\usepackage[T1]{fontenc} 
\usepackage[colorlinks=true, linkcolor=blue, citecolor=green, urlcolor=red]{hyperref}
\hypersetup{breaklinks=true}
\usepackage{diagbox} 
\usepackage{amsmath,amssymb,amsthm,booktabs,longtable}
\usepackage{multirow}
\usepackage{enumitem}
\usepackage{comment}
\usepackage{graphicx}
\usepackage{accsupp}
\usepackage[all]{xy}
\usepackage{caption}
\usepackage{color}
\usepackage{mathdots}
\usepackage{here}
\usepackage{url}
\usepackage{lscape}
\usepackage{longtable}
\usepackage{array}
\usepackage{tikz}
\usepackage{booktabs}
\usetikzlibrary{patterns,decorations.pathmorphing,decorations.pathreplacing}
\theoremstyle{plain}


\newtheorem{theorem}{Theorem}[section]
\newtheorem{lemma}[theorem]{Lemma}
\newtheorem{corollary}[theorem]{Corollary}
\newtheorem{proposition}[theorem]{Proposition}
\newtheorem{fact}[theorem]{Fact}

\newtheorem*{claim}{Claim}

\theoremstyle{definition}
\newtheorem{setting}[theorem]{Setting}

\newtheorem{definition}[theorem]{Definition}

\newtheorem{notation}[theorem]{Notation}

\newtheorem{problem}[theorem]{Problem}

\newtheorem{remark}[theorem]{Remark}
\newtheorem{question}[theorem]{Question}
\newtheorem{example}[theorem]{Example}
\newtheorem{condition}[theorem]{Condition}
\usepackage{cleveref}
\theoremstyle{plain}
\newtheorem{maintheorem}{Theorem}

\crefname{maintheorem}{Theorem}{Theorem}
\Crefname{maintheorem}{Theorem}{Theorem}

\crefname{main}{theorem}{Theorem}
\Crefname{main}{Theorem}{Theorem}
\newcommand{\R}{\mathbb{R}}
\newcommand{\F}{\mathbb{F}}
\newcommand{\D}{\mathbb{D}}
\newcommand{\Z}{\mathbb{Z}}
\newcommand{\Q}{\mathbb{Q}}
\newcommand{\C}{\mathbb{C}}
\newcommand{\HA}{\mathbb{H}}
\newcommand{\N}{\mathbb{N}}
\newcommand{\Hom}{\mathop{\mathrm{Hom}}\nolimits}
\newcommand{\End}{\mathop{\mathrm{End}}\nolimits}

\newcommand{\Int}{\mathop{\mathrm{Int}}\nolimits}

\newcommand{\irr}{\operatorname{Irr}}
\newcommand{\pairing}[2]{\left\langle #1, #2 \right\rangle}
\newcommand{\ord}{\operatorname{ord}_{2}}
\newcommand{\ind}{\operatorname{index}}
\newcommand{\Hai}{\mathbf{i}}
\newcommand{\Haj}{\mathbf{j}}
\newcommand{\Hak}{\mathbf{k}}

\newcommand{\E}{{}^\exists}
\newcommand{\Ad}{\operatorname{Ad}}
\newcommand{\ad}{\operatorname{ad}}
\newcommand{\id}{\operatorname{id}}
\newcommand{\trans}{{}^t\!}
\newcommand{\im}{\operatorname{Im}}
\numberwithin{equation}{section}

\newcommand{\VE}{\text{Property (VE)}}
\newcommand{\PE}{\text{Property (E)}}
\newcommand{\VET}{\text{Property ($\widetilde{\text{VE}}$)}}
\newcommand{\Rep}{\operatorname{Rep}}
\newcommand{\diag}{\operatorname{diag}}

\makeatletter
    
    \@addtoreset{equation}{section}

\def\Hline{
  \noalign{\ifnum0=`}\fi\hrule \@height 4.\arrayrulewidth \futurelet
   \reserved@a\@xhline} 
  \makeatother
\let\oldtocsection=\tocsection
\let\oldtocsubsection=\tocsubsection
\let\oldtocsubsubsection=\tocsubsubsection
\renewcommand{\tocsection}[2]{\hspace{0em}\oldtocsection{#1}{#2}}
\renewcommand{\tocsubsection}[2]{\hspace{1em}\oldtocsubsection{#1}{#2}}
\renewcommand{\tocsubsubsection}[2]{\hspace{2em}\oldtocsubsubsection{#1}{#2}}
\makeatother
\title[Hurwitz--Radon numbers and proper actions]{Hurwitz--Radon numbers and proper actions of semisimple Lie groups}
\author{Kazuki Kannaka, Koichi Tojo}
\date{}

\subjclass[2020]{
Primary:
57S30. 
Secondary:
15A66, 
22E40, 
53C35, 
53C50, 
57S20. 
}
\keywords{
Hurwitz--Radon number; 
rigidity;
discontinuous group;
semisimple Lie group;
proper action; 
symmetric space;
Clifford--Klein form;
pseudo-Riemannian manifold; 
real representation; 
Clifford algebra.
}

\address[Kazuki KANNAKA]{%
Faculty of Mathematics and Physics, Institute of Science and Engineering, Kanazawa University, Kakumamachi, Kanazawa, Ishikawa, 920-1192, JAPAN;
RIKEN Interdisciplinary Theoretical and Mathematical Sciences (\lowercase{i}THEMS), 
Wako, Saitama 351-0198, Japan.
}
\email{kannaka@se.kanazawa-u.ac.jp}

\address[Koichi TOJO]{%
Department of Mathematical Sciences, Tokai University, 4-1-1 Kitakaname, Hiratsuka-shi, Kanagawa 259-1292, Japan;
RIKEN Center for Advanced Intelligence Project, Nihonbashi 1-chome Mitsui Building, 15th floor, 1-4-1 Nihonbashi, Chuo-ku, Tokyo 103-0027, Japan.}
\email{koichi.tojo@tokai.ac.jp}

\begin{document}
\begin{abstract}

We study proper isometric actions of non-compact semisimple Lie groups on pseudo-Riemannian symmetric spaces. Motivated by Okuda's classification of semisimple symmetric spaces admitting proper $SL(2,\mathbb{R})$-actions [J.\ Differential Geom., 2013], we focus on symmetric spaces lying on the ``boundary'' of the existence of proper $SL(2,\mathbb{R})$-actions. As a rigidity result, we show that any connected non-compact semisimple Lie group acting properly on these symmetric spaces must be globally isomorphic to $Spin(n,1)$ up to compact factors. Moreover, the Hurwitz--Radon number arises as the largest value of $n$ for the existence of $Spin(n,1)$-proper actions. Our symmetric spaces include the pseudo-Riemannian hyperbolic space $\mathbf{H}_{+}^{N,N-1}$ of 
signature $(N,N-1)$.
\end{abstract}
\maketitle

\tableofcontents

\section{Introduction}
\label{section:introduction}
Let $ N $ be a positive integer expressed as $ N = 2^{4a + b}(2c+1) $ with $ a, c \in \mathbb{N} $ and $ 0 \leq b \leq 3 $. The integer
\[
\rho(N) := 8a + 2^b,
\]
called the \textit{Hurwitz–Radon number} of $N$, appears in several areas of mathematics: 
for example, 
composition of quadratic forms (Hurwitz~\cite{Hurwitz22}, Radon~\cite{Radon22}), vector fields on spheres (Adams~\cite{Adams_sphere}), non-singular bilinear maps (Lam~\cite{Lam67}), and the existence problem of compact quotients of tangential homogeneous spaces (Kobayashi–Yoshino~\cite[Thm.~1.1.6]{KobayashiYoshino05}). 

In this paper, we report that the Hurwitz–Radon number governs a rigidity phenomenon in pseudo-Riemannian geometry.

\subsection{Illustrative example of main results}
We begin with an example illustrating our main results 
(Theorems~\ref{theorem:classification_of_symmetric_space_with_very_even_condition} to \ref{thm:proper-hurwitz-radon} in Section~\ref{section:main-result}), 
namely, the symmetric space 
\begin{align*}
\mathbf{H}_{+}^{p,q} &= \{x\in \R^{p+q+1}\mid 
x_{1}^2+\cdots +x_{p}^2 - x_{p+1}^2-\cdots -x_{p+q+1}^2=-1\}\\
&\simeq O(p, q+1) / O(p, q),
\end{align*}
which is a pseudo-Riemannian manifold of signature $(p,q)$ with constant sectional curvature $-1$. 
The Lie group $O(p,q+1)$ coincides with the full isometry group of $\mathbf{H}_{+}^{p,q}$.
The following examples are contained in
$\mathbf{H}_{+}^{p,q}$: the sphere $\mathbf{S}^{q}$ $(p=0)$, 
(the disjoint union of two copies of) the real hyperbolic space $\mathbf{H}^{p}$ $(q=0)$, the de Sitter space $\mathbf{dS}^{q+1}$ $(p=1)$ and the anti-de Sitter space $\mathbf{AdS}^{1+p}$ $(q=1)$.
Note that $\mathbf{H}_{+}^{p,q}$ has signature $(q,p)$ with sectional curvature $+1$,
if we replace the pseudo-Riemannian metric $g$ by $-g$.

We focus on the special case where $p-q=1$ and prove the following:
\begin{theorem}[See Section~\ref{section:intro-classification-proper} for the proof]
\label{theorem:X(p,q)}
  Let $L$ be a connected semisimple Lie group with no compact factors. Then  
  $\mathbf{H}_{+}^{N,N-1}$  admits a proper isometric action of $L$ if and only if $L$ is globally isomorphic to $Spin(n,1)$ with $2\leq n\leq \rho(N)$.  
\end{theorem}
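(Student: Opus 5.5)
Throughout, take $G=O(N,N)$ and $H=O(N,N-1)$, so that $\mathbf{H}_{+}^{N,N-1}=G/H$. By a standard reduction, an isometric action of $L$ is the same as a homomorphism $\varphi\colon L\to G$. We may assume $\varphi$ has discrete kernel; otherwise a noncompact normal factor acts trivially and properness fails. Properness is then checked with the Kobayashi--Benoist criterion: $L$ acts properly on $G/H$ if and only if $\varphi(\mathfrak a_L)\cap W\cdot\mathfrak a_H=\{0\}$, up to the Weyl group $W$, inside a maximal split abelian subspace $\mathfrak a\subset\mathfrak g$. The proof has three parts: the real-rank bound, the restriction to $Spin(n,1)$, and the bound on $n$.

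\textbf{Step 1: real rank.} Here $\mathfrak a\simeq\R^N$ with $W$ acting by signed permutations. Up to $W$, $\mathfrak a_H$ is the coordinate hyperplane $\{x_N=0\}$. Hence properness means that no nonzero element of $\varphi(\mathfrak a_L)$ has a coordinate equal to zero. Since $\varphi(\mathfrak a_L)$ is a linear subspace of $\R^N$, rank $\ge2$ is impossible: in a $2$-dimensional subspace we can solve $x_N=0$ nontrivially. So $L$ has real rank one.

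\textbf{Step 2: only $Spin(n,1)$ survives.} Let $H_0\in\mathfrak a_L$ be the generator. The condition is that every eigenvalue of $\varphi(H_0)$ on $\R^{N,N}=\R^{2N}$ is nonzero. Otherwise a zero eigenvalue, together with the $\pm$ pairing, lets some singular value vanish. So the representation $\R^{2N}$ of $\mathfrak l$ has no zero weight for $\mathfrak a_L$.

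For $\mathfrak{su}(n,1)$, $\mathfrak{sp}(n,1)$ and $\mathfrak f_{4(-20)}$ the restricted roots include $2\alpha$. Every real representation then has a zero weight. Indeed, tensor products and subquotients of the adjoint force even multiples, and more concretely the highest-weight theory shows that all restricted weights lie in $\alpha\Z$ with zero occurring. The root-system fact behind this is that $\rho_L$ is an integral multiple of $\alpha$ when $2\alpha$ is a root. For $\mathfrak{so}(n,1)$, representations without zero $\mathfrak a$-weight are exactly those that are sums of spin-type representations, i.e.\ those with half-integral weights. These do not factor through $SO_0(n,1)$, so the global group must be $Spin(n,1)$. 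For $n=1$ we get $SL(2,\R)$ restricted to odd-dimensional-weight modules. That case is presumably excluded by Okuda's classification, or by a signature count, since $\mathfrak{so}(1,1)$ is not semisimple. This gives $n\ge2$.

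\textbf{Step 3: the bound $n\le\rho(N)$, the main obstacle.} We need a representation of $Spin(n,1)$ on $\R^{2N}$ that preserves a form of signature $(N,N)$, with all weights $\pm\tfrac12\alpha$ or, more generally, nonzero. The singular-value condition is really that $\varphi(H_0)$ has all eigenvalues nonzero with a sign balance. Composing with the form, this reduces to a sum of spin representations whose $H_0$-eigenspaces $V_{\pm}$ have equal dimension $N$, together with an invariant split form.

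From the $\mathfrak{so}(n)$-action on $V_+$, plus the action of $\mathfrak n$, we get $n-1$ anticommuting maps. More precisely, $\R^N$ becomes a module over the Clifford algebra $Cl_{n-1}$, or $Cl_{0,n-1}$ depending on signs, and the compatible invariant form forces a $Cl_{n-1}$-module structure of dimension $N$. By Hurwitz--Radon, such a module exists if and only if $n-1<\rho(N)$, i.e.\ $n\le\rho(N)$. The hardest part is tracking which real or quaternionic type each spin representation has, and which invariant bilinear form (symmetric or skew, with what signature) it carries, across $n \bmod 8$. I would first handle this by Clifford periodicity case by case. Conversely, the spin module built from a $Cl_{n-1}$-module of dimension $N$ gives the embedding $Spin(n,1)\to O(N,N)$ realizing properness. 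Compact factors are allowed in $L$ only if they fit, but the statement excludes them.
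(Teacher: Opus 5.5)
Your Steps~1--2 are an essentially valid alternative to the paper's rigidity argument. You obtain real rank one directly from Kobayashi's criterion, since a $2$-dimensional $\mathfrak{a}_L$ always meets a coordinate hyperplane, and then go through the rank-one list. The paper instead uses the diagonal $SL(2,\R)$ with Clebsch--Gordan, and a reduction to $\mathfrak{sl}(3,\R)$, $\mathfrak{su}(2,1)$, $\mathfrak{so}(3,2)$. Your justification of the $BC_1$ case is not right, however. The fact $\rho_L\in\Z\alpha$ also holds for $\mathfrak{so}(2m+1,1)$, whose spin representations have no zero weight. The correct reason is that the $\mathfrak{sl}_2$-triple of the root $2\alpha$ has $H'\in\mathfrak{a}$ with $\alpha(H')=1$, which forces all restricted weights into $\Z\alpha$. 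Then the $\alpha$-triple, with $\alpha(H)=2$, acts with only even eigenvalues, so $0$ occurs. This is closely related to the paper's non-even $\mathfrak{su}(1,1)\subset\mathfrak{su}(2,1)$.

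The genuine gap is Step~3. For the bound $n\le\rho(N)$, Step~2 only tells you that every irreducible constituent of $\R^{2N}\otimes\C$ lies in $\Lambda_n$, so $\mathfrak{a}$-weights $\pm\tfrac32\alpha,\pm\tfrac52\alpha,\dots$ may occur. Your sentence ``this reduces to a sum of spin representations'' is exactly the hard part of the paper (Sections~\ref{section:half-int-rep-spin-rep-SL(N)}--\ref{section:half-int-rep-spin-rep}). One must replace the representation by copies of $S$ (or $S_1,S_2$) of the same dimension, and prove the new one still preserves a symmetric form of signature $(N,N)$. That argument needs:
\begin{itemize}
\item divisibility of $\dim\pi$ by the (semi)spin dimension, and equality of $\ind^{\dagger}\pi$ with the (semi)spin index for all $\pi\in\Lambda_n$ (Facts~\ref{fact:half-int-rep-dim} and~\ref{fact:index_for_spin1});
\item Proposition~\ref{prop:embedding-criterion};
\item Lemma~\ref{lemma:signature_of_half_integral_rep}, to force $p=q$.
\end{itemize}
Your proposal supplies none of this.

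Even for a genuine spin module, the Clifford structure on $V_+$ does not arise as you describe. The algebra $\mathfrak{k}=\mathfrak{so}(n)$ does not preserve $V_+$; only $\mathfrak{m}\simeq\mathfrak{so}(n-1)$ does. That gives a $C^{+}(n-1)$-module, i.e.\ only $n-2$ anticommuting complex structures. The missing generator and the signs of all squares must come from the invariant form, which you leave open (``$Cl_{n-1}$ or $Cl_{0,n-1}$ depending on signs''). That sign is decisive. With generators squaring to $+1$ and $n=2$, $N=1$, you would use the $1$-dimensional modules of $\R\oplus\R$ and wrongly predict a proper $SL(2,\R)$-action on $\mathbf{H}^{1,0}_+$. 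For the converse, you also never check that your spin module carries an invariant symmetric form of signature $(N,N)$.

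Both directions become short if you work with $\mathfrak{p}\subset\mathfrak{so}(N,N)$ rather than with $V_\pm$. For the bound, you already showed that properness means $d\varphi(X)$ is invertible for $0\neq X\in\mathfrak{a}_L$. Since $\mathfrak{p}_L=\Ad(K_L)\mathfrak{a}_L$ in rank one, $d\varphi$ (made compatible with Cartan involutions) sends the $n$-dimensional space $\mathfrak{p}_L$ into $\mathfrak{p}$ with all nonzero images invertible. Hence $n\le\rho^{(2)}(\mathfrak{so}(N,N),\iota)=\rho(N)$ by Adams' theorem, the second equality in \eqref{eq:classical-HR}.

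For existence, take a Hurwitz--Radon family $A_1,\dots,A_n$ and put $f(v)=\sigma(\sum_i v_iA_i)$, with $\sigma$ as in the proof of Lemma~\ref{lemma:HR-our-interpretation}, so that $f(v)^2=\|v\|^2I_{2N}$. Extend to $C(n)\simeq C^{+}(n,1)$ (Lemma~\ref{lemma:c(n)-c(n,1)+}) and restrict to $Spin(n,1)$. The image lies in $SO(N,N)$ because $\mathfrak{p}_{\mathfrak{spin}(n,1)}$ generates $\mathfrak{spin}(n,1)$. Since $\mathfrak{a}_L$ goes to invertible matrices, the action is proper. The paper instead proves the bound through $\rho^{(1)}$ and the representation-theoretic replacement, which avoids K-theory.
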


Here, when $n\geq 2$, 
$Spin(n,1)$ is the non-trivial double-covering group of the identity component of 
the orthogonal group $SO(n,1)$. 
Although the isometry group $O(N, N)$ of $\mathbf{H}_{+}^{N,N-1}$ contains a vast number of non-compact semisimple Lie subgroups, the isometric actions of such subgroups are never proper unless they are isomorphic to $Spin(n, 1)$.

In this paper, we explain the position of
$\mathbf{H}_{+}^{N,N-1}$ among $\mathbf{H}_{+}^{p,q}$, and clarify conceptually how the Hurwitz–Radon number $\rho(N)$
naturally arises in connection with $\mathbf{H}_{+}^{N,N-1}$ in Theorem~\ref{theorem:X(p,q)}.
We then find an interesting family of homogeneous spaces containing
$\mathbf{H}_{+}^{N,N-1}$ 
and extend Theorem~\ref{theorem:X(p,q)} to this family (Theorem~\ref{thm:classification-L}).

\subsection{Background and motivation}

Let us recall some basic notions.
A discrete subgroup $\Gamma$ of a reductive Lie group $G$ is a \emph{discontinuous group for} $X=G/H$ if the $\Gamma$-action on $X$ is properly discontinuous and free.
Then the quotient space $X_{\Gamma}:=\Gamma\backslash X$ 
is a manifold, and 
the quotient map $X\rightarrow X_{\Gamma}$
is a covering map. 
Thus $X_{\Gamma}$
becomes a $(G,X)$-manifold and is called a \emph{Clifford--Klein 
form} of $X$.
In the definition of a discontinuous group, 
the condition of proper discontinuity is more essential.
Indeed, if the freeness is dropped, 
then $X_{\Gamma}$ is no longer a manifold, 
but it carries a natural structure of an orbifold (or a $V$-manifold).
On the other hand, if the proper discontinuity condition is removed, 
the quotient space $X_{\Gamma}$ may fail to be Hausdorff.

In line with the general theme of how strongly local conditions capture global information of manifolds, such as their fundamental groups, our study is motivated by the following problem on Clifford–Klein forms:
\begin{problem}[\cite{Kobayashi-unlimit}]
\label{problem:discrete-version}
    Let $G$ be a reductive Lie group and 
    $H$ a closed reductive subgroup.
    What types of discrete groups can appear as discontinuous groups for the homogeneous space $X=G/H$?
\end{problem}

We are interested in discontinuous groups for manifolds endowed with
indefinite metrics, such as $\mathbf{H}_{+}^{p,q}$ with $pq\neq 0$. 
In the setting of Problem~\ref{problem:discrete-version}, $X$ admits a $G$-invariant pseudo-Riemannian metric and the situation where the metric is indefinite corresponds to the case where the isotropy subgroup $H$ is non-compact.
In contrast to the case where $H$ is compact, it should be emphasized that
the proper discontinuity of an action on $X$ imposes restrictions
on discrete subgroups of $G$. Such restrictions give rise to interesting phenomena in the study of discontinuous groups in pseudo-Riemannian geometry, including the following Calabi--Markus phenomenon (\cite{CalabiMarkus1962,Kobayashi89}): 
every discontinuous group for $X$ is a finite group if and only if 
the real ranks of $G$ and $H$ are equal. 
 
In this paper, we investigate the following problem as a continuous group analogue of Problem~\ref{problem:discrete-version}. 
\begin{problem}
\label{problem:classify-continuous-analog}
Under the setting of Problem~\ref{problem:discrete-version}, 
classify the isomorphism classes of connected semisimple Lie groups $L$ without compact factors such that $X$ admits a proper action of $L$.
\end{problem}

Here we say that $X=G/H$ admits 
a proper action of a Lie group $L$ if there exists a Lie group homomorphism 
$\varphi\colon L\rightarrow G$ such that 
the $L$-action map 
$L\times X\rightarrow X$
defined by 
$l\cdot gH:=\varphi(l)gH$ 
is proper.
Theorem~\ref{theorem:X(p,q)} provides a complete solution to 
Problem~\ref{problem:classify-continuous-analog} in the case where 
$X=O(N,N)/O(N,N-1)$. 

Studying Problem~\ref{problem:classify-continuous-analog} provides a natural approach to Problem~\ref{problem:discrete-version}. 
In fact, if $X$ admits a proper $L$-action, then every torsion-free discrete subgroup $\Gamma$ of $L$ gives a discontinuous group for $X$. Discontinuous groups obtained in this way are called \textit{standard} (\cite[Def.~1.4]{KasselKobayashi16}). 
Kobayashi~\cite{Kobayashi89} established a fundamental criterion (Fact~\ref{fact:properness_criterion}) to determine whether the action of a reductive Lie subgroup of $G$ on $X$ is proper, which gave rise to the Benoist--Kobayashi criterion 
for the proper discontinuity of discrete group actions (\cite{Benoist96,Kobayashi96}). 
Based on Kobayashi's criterion, standard discontinuous groups have been investigated (e.g., \cite{KobayashiYoshino05, Teduka2008, Oku13, BochenskiOkuda16,Tojo2019Classification, BochenskiTralle24}). 
Our approach to Problem~\ref{problem:classify-continuous-analog} also relies on 
his criterion. 

The role that Problem~\ref{problem:classify-continuous-analog} plays in 
Problem~\ref{problem:discrete-version} is not limited to this.
One can consider small deformations of standard discontinuous groups (e.g., \cite{Goldmannonstandard,Ghys95,Kobayashi98,Kassel12,
KannakaOkudaTojo24}).
Remarkably, under certain conditions, 
\emph{any} small deformation of 
a discontinuous group $\Gamma$ inside $G$ preserves 
the proper discontinuity of the $\Gamma$-action on $X$.
A quantitative study of 
this theorem was initiated by 
Kobayashi~\cite{Kobayashi98}
and has recently been developed by several authors including F.\ Kassel (e.g., \cite{Kassel12,GueritaudGuichardKasselWienhard2017anosov,kannaka24,KasselTholozan}).
By combining their theorems with our main theorems, the recent work \cite{KannakaKobayashi25}  
by the first named author and Kobayashi constructed Zariski-dense discontinuous groups  
of high cohomological dimension. 
In particular, it is shown that a compact seven-dimensional pseudo-Riemannian manifold of signature $(4,3)$  
with negative constant sectional curvature, obtained as a compact quotient of $\mathbf{H}_{+}^{4,3}$,  
admits rich deformations 
(see \cite[Thm.~1.3 and Sect.~5.5]{KannakaKobayashi25} for details).

We briefly mention some results related to 
Problem~\ref{problem:classify-continuous-analog}.  
Teduka~\cite{Teduka2008PJA} classified the irreducible complex semisimple
symmetric spaces admitting a proper $SL(2,\R)$-action, and
Okuda~\cite{Oku13} gave the corresponding classification in the real case.
There has also been progress on the existence and non-existence of proper
$SL(2,\R)$-actions on non-symmetric homogeneous spaces (e.g., \cite{Teduka2008,BochenskiOkuda16}).  
However, to the best of the authors'
knowledge, 
there is no work in the literature that classifies all semisimple Lie groups
acting properly and isometrically on a given pseudo-Riemannian homogeneous space,  
as in Theorem~\ref{theorem:X(p,q)}.  
If one imposes cocompactness of the action, the situation is different
(\cite{Zeghib98,Tojo2019Classification,Bochenski-Tralle-transformation-group24,BochenskiTralle24}).

\subsection{Overview of the main theorems}
\label{section:abstract}
In Section~\ref{section:main-result} we shall state our main theorems (Theorems~\ref{theorem:classification_of_symmetric_space_with_very_even_condition} to \ref{thm:proper-hurwitz-radon}).
In this subsection, we clarify the position of
$\mathbf{H}^{N,N-1}_{+}$ in Theorem~\ref{theorem:X(p,q)} 
among the spaces
$\mathbf{H}^{p,q}_{+}$,
and then give a summary of the results.

Okuda~\cite{Oku13} classified irreducible semisimple symmetric spaces
admitting a proper $SL(2,\R)$-action.
Moreover, combined with the result of Benoist~\cite{Benoist96},
he showed that a semisimple symmetric space admits a proper
$SL(2,\R)$-action if and only if it admits a properly discontinuous action
of a non-virtually abelian  group.
Motivated by this research, we focus on symmetric spaces in the ``boundary case''
where these equivalent conditions hold. 
We expect interesting rigidity phenomena for proper actions of discrete or continuous
groups on such special spaces.
Theorem~\ref{theorem:X(p,q)} provides initial evidence in this direction.

Let us explain what we mean by the ``boundary case.''
For example, 
$\mathbf{H}_{+}^{p,q}$
admits a proper $SL(2,\R)$-action if and only if either
(i) $p-q>1$, or
(ii) $p-q=1$ and $p$ is even.
As illustrated in Figure~\ref{tab:X(p,q)-cm},
$\mathbf{H}_{+}^{N,N-1}$ in
Theorem~\ref{theorem:X(p,q)} lies precisely on the ``boundary case''
for the existence of proper $SL(2,\R)$-actions.
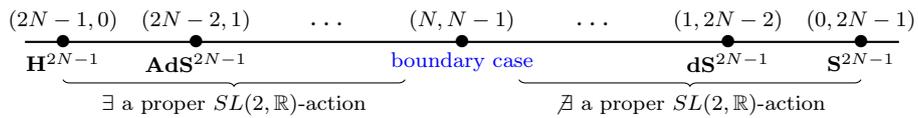
\begin{figure}[b]
\centering 
\BeginAccSupp{ActualText={A schematic diagram of the pseudo-Riemannian hyperbolic spaces Hp,q arranged along a line indexed by (p,q) with p+q=2N-1, ranging from the 2N-1 dimensional hyperbolic space H-plus-p,q at 2N-1,0 to the 2N-1 dimensional sphere at (0,2N-1). The diagram indicates that spaces corresponding to (2N-1,0) through (N+1,N-2) admit proper SL(2,R)-actions, while those corresponding to (N-1,N) through (0,2N-1) do not. The space at (N,N-1) is shown as lying on the boundary between existence and non-existence of proper SL(2,R)-actions.}}
\begin{tikzpicture}
    \path [thick, draw] (-5, 0) -- (6.5, 0);
    \coordinate (A) at (-4.5,0);
    \coordinate (B) at (-2.75,0);
    \coordinate (C) at (-1,0);
    \coordinate (D) at (0.75,0);
    \coordinate (F) at (2.5,0);
    \coordinate (G) at (4.25,0);
    \coordinate (H) at (6,0);
    
    \node at (A) {$\bullet$};
    \node [above] at (A) {\tiny $(2N-1,0)$};
    \node [below] at (A) {\tiny $\mathbf{H}^{2N-1}$};

    \node at (B) {$\bullet$};
    \node [above] at (B) {\tiny $(2N-2,1)$};
    \node [below] at (B) {\tiny $\mathbf{AdS}^{2N-1}$};

    \node [above] at (C) {\small $\cdots$};

    \node at (D) {$\bullet$};
    \node [above] at (D) {\tiny $(N,N-1)$};
    \node [below] at (D) {\tiny \textcolor{blue}{boundary case}};

    \node [above] at (F) {\small $\cdots$};

    \node at (G) {$\bullet$};
    \node [above] at (G) {\tiny $(1,2N-2)$};
    \node [below] at (G) {\tiny $\mathbf{dS}^{2N-1}$};
    
    \node at (H) {$\bullet$};
    \node [above] at (H) {\tiny $(0,2N-1)$};
    \node [below] at (H) {\tiny $\mathbf{S}^{2N-1}$};

    \draw [decorate, decoration={brace,mirror,raise=0.5cm}] (A) -- (0,0) node [pos=0.5,anchor=north,yshift=-0.55cm] {\tiny 
    $\exists$ a proper $SL(2,\mathbb{R})$-action}; 

    \draw [decoration={brace,mirror, raise=0.5cm}, decorate] (1.5,0) -- (H) node [pos=0.5,anchor=north,yshift=-0.55cm] {\tiny $\not\exists$
    a proper $SL(2,\mathbb{R})$-action }; 
\end{tikzpicture}
\EndAccSupp{}
\caption{$\mathbf{H}_{+}^{p,q}$ and proper $SL(2,\R)$-actions.} 
\label{tab:X(p,q)-cm} 
\end{figure}

In Section~\ref{section:intro-def-ve}, as one formulation of the ``boundary case'',
we introduce a condition for homogeneous spaces, called $\VE$.
Furthermore, we give an infinitesimal classification of irreducible semisimple
symmetric spaces satisfying $\VE$ and admitting a proper $SL(2,\R)$-action
(Theorem~\ref{theorem:classification_of_symmetric_space_with_very_even_condition}).
For example, 
$\mathbf{H}_{+}^{p,q}$ has
$\VE$ and admits a proper $SL(2,\R)$-action if and only if the above condition (ii) is satisfied.

In Section~\ref{section:intro-HR}, for the proof of Theorem~\ref{theorem:X(p,q)} and its generalization, 
we reformulate the Hurwitz--Radon number $\rho(N)$.  
We first observe, through the classical results of Hurwitz--Radon--Eckmann--Adams~\cite{Hurwitz22,Radon22,Eckmann43,Adams_sphere} 
(see \eqref{eq:classical-HR}), that the Hurwitz--Radon number $\rho(N)$ can be interpreted in terms of the Lie algebra 
$\mathfrak{so}(N,N)$ of the isometry group of $\mathbf{H}_{+}^{N,N-1}$ (Lemma~\ref{lemma:HR-our-interpretation}).  
Based on this observation, for a pair $(\mathfrak{g},\iota)$ consisting of a reductive Lie algebra $\mathfrak{g}$ and its representation 
$\iota\colon \mathfrak{g}\to \mathfrak{gl}(N,\C)$, we introduce variants of the Hurwitz--Radon number, denoted 
by $\rho^{(i)}(\mathfrak{g},\iota)$ ($i=1,2$).  
Moreover, as preparation for Section~\ref{section:intro-classification-proper},
we show that when $\mathfrak{g}$ is  classical and $\iota$ is its standard representation, 
the numbers $\rho^{(i)}(\mathfrak{g},\iota)$ can be computed in terms of the Hurwitz--Radon number $\rho(N)$ and the $2$-adic valuation $\ord(N)$ 
(Theorem~\ref{theorem:Hurwitz-radon}). 

In Section~\ref{section:intro-classification-proper}, we give a generalization of Theorem~\ref{theorem:X(p,q)} 
for $\mathbf{H}_{+}^{N,N-1}$ to homogeneous spaces with $\VE$.  
Recall that Theorem~\ref{theorem:X(p,q)} consists of two assertions:  
\begin{itemize}
    \item A semisimple Lie group not isomorphic to $Spin(n,1)$ does not act isometrically and properly on $\mathbf{H}_{+}^{N,N-1}$.  
    \item The Hurwitz--Radon number $\rho(N)$ arises as the maximal integer $n$ for which $Spin(n,1)$ can act isometrically and properly on $\mathbf{H}_{+}^{N,N-1}$.  
\end{itemize}
Theorems~\ref{thm:proper-imply-spin} and~\ref{thm:proper-hurwitz-radon} provide generalizations of these two statements.  
In particular, for a homogeneous space $G/H$ with $\VE$, we see that the classification result of Problem~\ref{problem:classify-continuous-analog} 
is naturally connected with the variants $\rho^{(1)}(\mathfrak{g},\iota)$ of the Hurwitz--Radon number introduced in this paper.

In Section~\ref{subsection:rigidity}, we return to Problem~\ref{problem:discrete-version}, which was our original motivation.  
In light of Theorems~\ref{thm:proper-imply-spin} and~\ref{thm:proper-hurwitz-radon}, when $G/H$ has $\VE$, it is natural to ask for strong restrictions on discrete groups that act properly discontinuously on $G/H$.  
From Theorems~\ref{thm:proper-imply-spin} and~\ref{thm:proper-hurwitz-radon}, 
we deduce one consequence for Problem~\ref{problem:discrete-version} (Corollary~\ref{cor:discrete-version}), 
and we also raise a related question (Question~\ref{problem:rigidity}). 

\subsection{Organization of the paper}
In Section~\ref{section:main-result}, we present our main theorems 
(Theorems~\ref{theorem:classification_of_symmetric_space_with_very_even_condition} to \ref{thm:proper-hurwitz-radon}) 
together with the ideas of their proofs.
A summary of the main results was already given in
Section~\ref{section:abstract}.
Theorem~\ref{theorem:classification_of_symmetric_space_with_very_even_condition} 
is proved in Section~\ref{section:classification_VE}, 
Theorem~\ref{theorem:Hurwitz-radon} in Section~\ref{section:proof-hurwitz-radon}, 
Theorem~\ref{thm:proper-imply-spin}
in Section~\ref{section:proper->spin}
and Theorem~\ref{thm:proper-hurwitz-radon} in Section~\ref{section:proof_proper-HR}.
Sections~\ref{section:proper->spin} to \ref{section:classification_VE} 
can be read independently of each other.

Section~\ref{section:proof_proper-HR} relies heavily on the results in 
Appendices~\ref{appendix:embedding} and~\ref{appendix:topocs-clifford}, and 
Section~\ref{section:proper->spin} uses 
Lemma~\ref{lemma:spin-center} in 
Appendix~\ref{section:clifford-spin}. 
In Appendix~\ref{appendix:embedding}, we prove a criterion 
(Proposition~\ref{prop:embedding-criterion}) 
for determining when a representation of a Lie group factors through 
$O(p,q)$, $O^{*}(2N)$, $Sp(N,\mathbb{R})$, or $Sp(p,q)$.  
Appendix~\ref{section:clifford-spin} collects the material on Clifford algebras, 
spin groups, and their basic structure theory associated with indefinite 
quadratic forms that is needed for this paper.  
Appendix~\ref{appendix:spin-representation} summarizes the real, quaternionic, 
orthogonal, and symplectic representation-theoretic properties of $Spin(n,1)$ 
required in the paper, including their counterparts for general signatures $(p,q)$.

\subsection{Notation and conventions} 
When referring to algebras over fields except for Lie algebras, we always assume that they are unital and associative. 
For an algebra $A$, the symbol $A^{\times}$ denotes the group of invertible elements. 

We use the symbols $\N_{+}$, $\N$, $\Z$, $\Q$, $\R$, $\C$, and $\HA = \R + \R\Hai + \R\Haj + \R\Hak$ for the positive integers, 
the non-negative integers,
the integers, and the rational, real, complex, and quaternionic numbers, respectively. 
We denote by $\ord\colon \Q^{\times}\rightarrow \Z$ the $2$-adic valuation.
Namely, if $t=2^a(\text{odd})/(\text{odd})$ $(a\in \Z)$, then $\ord t = a$.

For a set $S$, we write $\id_{S}$ for 
the identity map of $S$. 
We denote by $I_{N}$ the $N\times N$-identity matrix, and use the notation 
\[
I_{p,q}=\begin{pmatrix}
I_{p} & 0 \\
0 & -I_{q}
\end{pmatrix}.
\]

The Lie algebras of Lie groups $G,H,L,\ldots$ are denoted by the corresponding Gothic letters $\mathfrak{g},\mathfrak{h},\mathfrak{l},\ldots$, respectively.
For a Lie group homomorphism $f\colon G\to H$, its differential homomorphism $\mathfrak{g}\to\mathfrak{h}$ is denoted by $df$.
A complex Lie group $G_{\C}$ is 
    a \emph{complexification} of a Lie group $G$ if 
    there exists a Lie group embedding 
    $f\colon G\hookrightarrow G_{\C}$
    such that the $\C$-linear extension of $df\colon \mathfrak{g}\rightarrow \mathfrak{g}_{\C}$ is an isomorphism. 
    
    We refer to $\mathfrak{gl}(N,\mathbb{C})$ and the following subalgebras of  
$\mathfrak{gl}(N,\mathbb{C})$ as \emph{classical Lie algebras}:
\begin{align*}
\mathfrak{gl}(N,\mathbb{R}),\ \mathfrak{gl}(N/2,\mathbb{H}),\ \mathfrak{sl}(N,\mathbb{R}),\ \mathfrak{sl}(N,\mathbb{C}),\ \mathfrak{sl}(N/2,\mathbb{H}),\ \mathfrak{(s)u}(p,q),\\
\mathfrak{so}(p,q),\ \mathfrak{so}(N,\mathbb{C}),\ \mathfrak{so}^{*}(2(N/2)),\ \mathfrak{sp}(N/2,\mathbb{R}),\ \mathfrak{sp}(N/2,\mathbb{C}),\ \mathfrak{sp}(r,s),
\end{align*}
where $p+q = N$ and $r + s = N/2$, and 
we assume that $N/2$ is an integer whenever it appears.
The inclusion $\mathfrak{g} \hookrightarrow \mathfrak{gl}(N,\mathbb{C})$  
for a classical Lie algebra $\mathfrak{g}$ is called the \emph{standard representation}.
To treat $\mathfrak{so}(p,q)$, $\mathfrak{su}(p,q)$, and $\mathfrak{sp}(p,q)$ in a unified manner, 
it is convenient to use the notation
\[
\mathfrak{su}(p,q;\mathbb{D})
:= \{\, X \in \mathfrak{sl}(p+q,\mathbb{D}) \mid X^{*} I_{p,q}+I_{p,q}X=0 \,\},
\quad (\mathbb{D} = \mathbb{R},\, \mathbb{C},\, \mathbb{H}).
\] 

For a representation $\tau$, we denote by $[\tau]$ the equivalence class of $\tau$. 
We denote by $[\tau:\pi]$ the multiplicity
    of an irreducible representation $\pi$
    in a completely reducible representation $\tau$.

\section{Main results and ideas of the proofs}
\label{section:main-result}
In this section, we describe our main results together with outlines of their proofs.
In Section~\ref{subsec:properness_criterion},
we recall Kobayashi's criterion for proper actions of reductive Lie groups
on reductive homogeneous spaces as a preliminary.
In Sections~\ref{section:intro-def-ve} to \ref{section:intro-classification-proper},
we explain our main results
(Theorems~\ref{theorem:classification_of_symmetric_space_with_very_even_condition} to \ref{thm:proper-hurwitz-radon}).
For a summary of the main results, see Section~\ref{section:abstract}.
In Section~\ref{subsection:rigidity},
we present a result and questions concerning rigidity
of discontinuous groups arising from our main results (Corollary~\ref{cor:discrete-version} and Question~\ref{problem:rigidity}).

\subsection{Preliminary: Kobayashi's properness criterion}\label{subsec:properness_criterion}
In this subsection, we recall T.~Kobayashi's criterion 
for the properness of actions of reductive Lie groups on homogeneous spaces 
of reductive type. This criterion is applied in Sections~\ref{section:main-result},~\ref{section:proof_proper-HR} and~\ref{section:classification_VE}. 

Throughout this paper, when we deal with a homogeneous space $G/H$, we mainly assume the following setting:
 \begin{setting}
 \label{setting:semisimple}    
Let $G$ be a linear semisimple Lie group with connected complexification, 
and let $H$ be a closed \emph{reductive} subgroup of $G$. 
Namely, $H$ is stable under some Cartan involution of $G$ 
and has finitely many connected components.    
Then $G/H$ is called a homogeneous space \emph{of reductive type}.
\end{setting}

Under this setting,
we fix a maximal split abelian subalgebra $\mathfrak{a}$ of $\mathfrak{g}$ and write $W(\mathfrak{g},\mathfrak{a})$ for the Weyl group of the restricted root system of $(\mathfrak{g},\mathfrak{a})$ acting on $\mathfrak{a}$. 
We take a Cartan involution $\theta$ on $G$ such that $H$ is $\theta$-stable, and take a maximal abelian subspace $\mathfrak{a}'_{H}$
of $\mathfrak{h}^{-\theta}=\{X\in \mathfrak{h} \mid \theta(X)= -X\}$.
Then there exists $g\in G$
such that $\Ad(g)(\mathfrak{a}'_{H}) \subset \mathfrak{a}$, and put 
$\mathfrak{a}_{H}:=\Ad(g)(\mathfrak{a}'_{H})$.
The union $W(\mathfrak{g},\mathfrak{a})\cdot\mathfrak{a}_{H}:=\bigcup_{w\in W(\mathfrak{g},\mathfrak{a})}w\cdot\mathfrak{a}_{H}$ 
is independent of the choice of $\theta$, $g$ and $\mathfrak{a}'_{H}$.
Similarly, we also define a subspace $\mathfrak{a}_{L}$ of $\mathfrak{a}$ for another reductive subgroup $L$ of $G$.

\begin{fact}[Kobayashi {\cite[Thm.~4.1]{Kobayashi89}}]
\label{fact:properness_criterion}
Let $G,H,L$ be as above.
The action of $L$ on $G/H$ is proper if and only if $\mathfrak{a}_{L}\cap  (W(\mathfrak{g},\mathfrak{a})\cdot\mathfrak{a}_{H}) = \{0\}.$
\end{fact}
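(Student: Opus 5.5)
The plan is to reduce properness to a statement about Cartan projections and then to a statement about cones in $\mathfrak{a}$. Fix a Cartan decomposition $G=KAK$ compatible with $\theta$, with $A=\exp\mathfrak{a}$. Choose a closed Weyl chamber $\mathfrak{a}_{+}\subset\mathfrak{a}$, and let $\mu\colon G\to\mathfrak{a}_{+}$ be the Cartan projection, so that $g\in K\exp(\mu(g))K$. Two standard facts drive the argument.

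\emph{Step 1 (reformulating properness).} For closed subgroups $L,H\subset G$, the $L$-action on $G/H$ is proper if and only if $L\cap SHS$ is relatively compact for every compact $S\subset G$. This follows by unwinding the definition of properness of $L\times G/H\to G/H\times G/H$, using that $G\to G/H$ is open.

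\emph{Step 2 (Cartan projections of $H$ and $L$).} Since $H$ is $\theta$-stable and reductive with finitely many components, it has its own Cartan decomposition $H=K_{H}\exp(\mathfrak{a}'_{H})K_{H}$ with $K_H\subset K$. After conjugating by the element $g$ in the statement, this gives
\[
\mu(H)=(W(\mathfrak{g},\mathfrak{a})\cdot\mathfrak{a}_{H})\cap\mathfrak{a}_{+}
\]
up to a bounded error coming from $g$. The same holds for $L$, with $\mu(L)$ equal to $(W(\mathfrak{g},\mathfrak{a})\cdot\mathfrak{a}_{L})\cap\mathfrak{a}_{+}$ up to bounded error. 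Each of these sets is a finite union of closed polyhedral cones.

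\emph{Step 3 (the key estimate).} For every compact $S\subset G$ there is a constant $C_{S}$ such that $\|\mu(s g s')-\mu(g)\|\le C_{S}$ for all $s,s'\in S$ and $g\in G$. I would prove this via the characterization of $\mu(g)$ through operator norms of $g$ in finitely many representations of $G$, using submultiplicativity of the norm. This is where the linearity assumption in Setting~\ref{setting:semisimple} is used. In addition, $\mu$ is a proper map, since $\mu^{-1}$ of a bounded set is contained in $K\exp(\text{bounded})K$.

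\emph{Step 4 (conclusion).} Combining Steps 1 to 3, the action is proper if and only if, for every $R>0$, the set $\mu(L)\cap\{x:\operatorname{dist}(x,\mu(H))\le R\}$ is bounded.

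For the "if" direction, take an element of $L\cap SHS$. Step 3 places its Cartan projection within $C_S$ of $\mu(H)$, so by the above it lies in a bounded set, and properness of $\mu$ then gives relative compactness.

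For the "only if" direction, a point in $\mu(L)$ within distance $R$ of some $\mu(h)$ yields an element $\ell\in L$ of the form $k\exp(\mu(h)+O(R))k'$, and hence $\ell\in SHS$ for a compact $S$ depending on $R$.

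Finally, for two closed cones $C_{1},C_{2}$, the intersection of $C_{1}$ with the $R$-neighbourhood of $C_{2}$ is bounded for all $R$ if and only if $C_{1}\cap C_{2}=\{0\}$. For the nontrivial direction, if the intersection is unbounded, rescale points to the unit sphere and use compactness to produce a nonzero common vector. Applying this cone fact, with the $W$-invariance allowing us to pass back from $\mathfrak{a}_{+}$ to all of $\mathfrak{a}$, gives exactly the condition $\mathfrak{a}_{L}\cap(W(\mathfrak{g},\mathfrak{a})\cdot\mathfrak{a}_{H})=\{0\}$.

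I expect the main obstacle to be Step 3, the uniform Lipschitz-type control of $\mu$ under left and right multiplication by compact sets. The rest is bookkeeping with cones.
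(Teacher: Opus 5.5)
The paper gives no proof of this Fact; it only cites Kobayashi. Your outline is the standard Cartan-projection argument behind that theorem, and Steps~1, 2, 4 and the cone lemma are fine.

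There is one genuine gap. You never use the hypothesis of Setting~\ref{setting:semisimple} that $G$ has a \emph{connected complexification}, and the Fact is false without it. Linearity of $G$ is not enough.

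The hypothesis is already needed to define $\mu\colon G\to\mathfrak{a}_{+}$ with $\mathfrak{a}_{+}$ a chamber of $W(\mathfrak{g},\mathfrak{a})$. The double cosets $K\exp(X)K$ are parametrized by $\mathfrak{a}$ modulo $N_K(\mathfrak{a})/Z_K(\mathfrak{a})$. For disconnected $G$ (e.g.\ $SO(p,q)$, which falls under the Setting and is used throughout the paper) this group can a priori be strictly larger than $W(\mathfrak{g},\mathfrak{a})$.

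Here is a concrete failure. Let $G=(SL(2,\R)\times SL(2,\R))\rtimes\Z/2\subset GL(4,\R)$, with the factors swapped by $\sigma=\bigl(\begin{smallmatrix}0&I_2\\ I_2&0\end{smallmatrix}\bigr)\in K$. This $G$ is linear and semisimple, but it has no connected complexification.
\begin{itemize}
\item Identify $\mathfrak{a}\simeq\R^{2}$, one coordinate per factor. Then $\exp(x,0)$ and $\exp(0,x)=\sigma\exp(x,0)\sigma^{-1}$ lie in the same $K$-double coset. So $\mu$ is not well defined, and no choice of it satisfies Step~3.
\item Let $H$ and $L$ be the two factors. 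Then $\mathfrak{a}_L\cap W(\mathfrak{g},\mathfrak{a})\cdot\mathfrak{a}_H=\{0\}$. Yet $L=L\cap\sigma H\sigma^{-1}$ fixes the point $\sigma H$, so the action is not proper.
\item Your operator-norm characterization in Step~3 also breaks here. It requires $X\mapsto\log\|\pi(\exp X)\|$ to be linear on $\mathfrak{a}_{+}$. For the defining representation of this $G$ it equals $\max(x,y)$.
\end{itemize}

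The repair is short. Take $\pi_1,\dots,\pi_r$ to be restrictions to $G$ of irreducible holomorphic representations of the connected complexification $G_{\C}$, chosen so that their highest weights restrict to a spanning set of $\mathfrak{a}^{*}$.
\begin{itemize}
\item Since $G_{\C}$ is connected and $\mathfrak{g}\otimes_{\R}\C=\mathfrak{g}_{\C}$, each $\pi_i$ stays irreducible under $\mathfrak{g}$.
\item Use a $K$-invariant inner product making $\pi_i(\mathfrak{p})$ symmetric. Then $\log\|\pi_i(k\exp(X)k')\|=\lambda_i(X)$ for $X\in\mathfrak{a}_{+}$ and $k,k'\in K$, where $\lambda_i$ is the restricted highest weight.
\item This single identity gives uniqueness of $\mu(g)$, because the $\lambda_i$ separate points of $\mathfrak{a}$.
\item It also gives your Lipschitz estimate, by submultiplicativity of the norm.
\end{itemize}

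Alternatively, suppose $\Ad(k)X=Y$ with $k\in K$ and $X,Y\in\mathfrak{a}_{+}$. Then $X$ and $Y$ are conjugate under $\Ad(G_{\C})=\mathrm{Int}(\mathfrak{g}_{\C})$. Hence they are conjugate under the Weyl group of a Cartan subalgebra $\mathfrak{j}_{\C}\supset\mathfrak{a}$. Both lie in its closed chamber $\mathfrak{j}_{+}\supset\mathfrak{a}_{+}$, so $X=Y$. With this step inserted, the rest of your argument goes through.
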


This criterion allows us to reduce
Problem~\ref{problem:classify-continuous-analog}
to a problem at the level of Lie algebras.

As an immediate corollary of Fact~\ref{fact:properness_criterion}, we see that properness is stable under local isomorphisms of homogeneous spaces. That is,
\begin{corollary}
    \label{cor:properness-locally-isom}
    Let $G_{1}/H_{1}$ and $G_{2}/H_{2}$ be homogeneous spaces  
as in Setting~\ref{setting:semisimple},  
and suppose there exists an isomorphism  
$\alpha\colon \mathfrak{g}_{1} \to \mathfrak{g}_{2}$ such that  
$\alpha(\mathfrak{h}_{1}) = \mathfrak{h}_{2}$.  
Let $L$ be a connected semisimple Lie group,  
and $\varphi_{i} \colon L \to G_{i}$ $(i=1,2)$ Lie group homomorphisms  
with finite kernel satisfying  
$\alpha \circ d\varphi_{1} = d\varphi_{2}$.  
Then the action of $L$ on $G_{1}/H_{1}$ via $\varphi_{1}$ is proper  
if and only if the action of $L$ on $G_{2}/H_{2}$ via $\varphi_{2}$ is proper.
\end{corollary}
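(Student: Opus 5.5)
The plan is to apply Kobayashi's criterion (Fact~\ref{fact:properness_criterion}) to both actions. This turns the statement into showing that the relevant set
\[
\mathfrak{a}_{L}\cap \bigl(W(\mathfrak{g},\mathfrak{a})\cdot \mathfrak{a}_{H}\bigr)
\]
is transported by a linear isomorphism between the two sides. To make this meaningful, we first have to make sense of ``$\mathfrak{a}_L$'' for the image of $L$. Since $\varphi_i$ has finite kernel and $L$ is connected semisimple, $\varphi_i(L)$ is a connected semisimple subgroup of $G_i$. A connected semisimple subgroup of a linear semisimple group is reductive in the sense of Setting~\ref{setting:semisimple}: by Mostow/Karpelevich it is stable under some Cartan involution, and being connected it has one component. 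Moreover, the action of $L$ via $\varphi_i$ is proper if and only if the action of $\varphi_i(L)$ on $G_i/H_i$ is proper, because $\ker\varphi_i$ is finite, so $L\to\varphi_i(L)$ is a proper map. Finally, $\varphi_i(L)$ has Lie algebra $d\varphi_i(\mathfrak{l})$, since $\varphi_i(L)$ is the connected analytic subgroup with this Lie algebra.

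Next I would note that every object in the criterion depends only on Lie algebra data. The key point is that $\mathfrak{a}_{H}$, $\mathfrak{a}_{L}$ and $W(\mathfrak{g},\mathfrak{a})\cdot\mathfrak{a}_{H}$ can be defined purely in terms of $(\mathfrak{g},\mathfrak{h})$ and a Cartan involution $\theta$ of $\mathfrak{g}$ preserving $\mathfrak{h}$. Two ingredients make this work.
\begin{enumerate}
\item The union $W\cdot\mathfrak{a}_H$ is independent of all the choices.
\item Conjugation by $\Ad(G)$ can be replaced by $\Int(\mathfrak{g})$, which is the identity component of $\Ad(G)$ when $G$ has connected complexification. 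Conjugacy of maximal split abelian subspaces holds already under $\Int(\mathfrak{g})$, and the Weyl group $W(\mathfrak{g},\mathfrak{a})$ is realized inside $\Int(\mathfrak{g})$.
\end{enumerate}

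I then transport everything along $\alpha$. Fix a Cartan involution $\theta_1$ of $\mathfrak{g}_1$ preserving both $\mathfrak{h}_1$ and $d\varphi_1(\mathfrak{l})$. Then $\theta_2:=\alpha\theta_1\alpha^{-1}$ is a Cartan involution of $\mathfrak{g}_2$, since the Killing form is intrinsic. It preserves $\mathfrak{h}_2=\alpha(\mathfrak{h}_1)$ and $d\varphi_2(\mathfrak{l})=\alpha(d\varphi_1(\mathfrak{l}))$. The map $\alpha$ sends maximal split abelian subspaces to maximal split abelian subspaces, so choose $\mathfrak{a}_2=\alpha(\mathfrak{a}_1)$. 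Further, $\alpha$ conjugates $\Int(\mathfrak{g}_1)$ to $\Int(\mathfrak{g}_2)$, hence carries $W(\mathfrak{g}_1,\mathfrak{a}_1)$ onto $W(\mathfrak{g}_2,\mathfrak{a}_2)$. It also carries the chosen conjugates $\mathfrak{a}_{H_1}$ and $\mathfrak{a}_{\varphi_1(L)}$ to valid choices of $\mathfrak{a}_{H_2}$ and $\mathfrak{a}_{\varphi_2(L)}$. The result is
\[
\alpha\bigl(\mathfrak{a}_{\varphi_1(L)}\cap W_1\cdot\mathfrak{a}_{H_1}\bigr)=\mathfrak{a}_{\varphi_2(L)}\cap W_2\cdot\mathfrak{a}_{H_2}.
\]
One side is $\{0\}$ if and only if the other is, and Fact~\ref{fact:properness_criterion} then gives the equivalence of properness.

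The main obstacle is the bookkeeping in the second step. The group-level conjugation by $g\in G$ in the definition of $\mathfrak{a}_H$ must be replaceable by inner automorphisms of $\mathfrak{g}$, and the resulting set must be independent of the remaining choices. Only then is it legitimate for $\alpha$, which need not come from any group map $G_1\to G_2$, to transport the data. I would handle this by arguing that $\Ad(G)$ and $\Int(\mathfrak{g})$ give the same $W$-orbit of $\mathfrak{a}_H$ inside $\mathfrak{a}$. This rests on $\Ad(G)\subset\Aut(\mathfrak{g})$ normalizing the data, together with the fact that $N_{\Ad(G)}(\mathfrak{a})$ acts on $\mathfrak{a}$ through $W(\mathfrak{g},\mathfrak{a})$, which holds under the connected-complexification hypothesis.
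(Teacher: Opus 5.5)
Your proposal is correct and is the argument the paper intends: the paper gives no separate proof, calling the corollary an immediate consequence of Kobayashi's criterion (Fact~\ref{fact:properness_criterion}), because all its ingredients ($\mathfrak{a}$, $W(\mathfrak{g},\mathfrak{a})$, and $\mathfrak{a}_H$, $\mathfrak{a}_L$ up to $\Int(\mathfrak{g})=\Ad(G_0)$) are Lie-algebraic and are therefore carried over by $\alpha$. One harmless slip: a single Cartan involution of $\mathfrak{g}_1$ preserving both $\mathfrak{h}_1$ and $d\varphi_1(\mathfrak{l})$ need not exist (e.g.\ $\mathfrak{so}(3)$ and $\Ad(a)\mathfrak{so}(3)$ in $\mathfrak{sl}(3,\R)$ with $a=\diag(2,1,1/2)$), but you do not need one, since the criterion defines $\mathfrak{a}_H$ and $\mathfrak{a}_L$ with separate Cartan involutions, and you can transport each of them by $\alpha$ exactly as you do.
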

In Corollary~\ref{cor:properness-locally-isom}, the assumption that each $G_{i}$ admits a connected complexification cannot be dropped.

\subsection{\texorpdfstring{$\VE$}{Property (VE)} for homogeneous spaces}
\label{section:intro-def-ve}

In this subsection, we define a notion called \emph{very even} for 
$\mathfrak{sl}(2,\R)$-homomorphisms into classical Lie algebras 
(Definition~\ref{def:iota-very-even}). 
We then introduce $\VE$ for homogeneous spaces 
$G/H$ (Definition~\ref{def:VE_for_G/H}), 
and give an infinitesimal classification of irreducible symmetric spaces 
with $\VE$ and admitting a proper $SL(2,\R)$-action
(Theorem~\ref{theorem:classification_of_symmetric_space_with_very_even_condition}).

Before introducing the definition of $\VE$, we make some remarks.  
The condition that an $\mathfrak{sl}(2,\R)$-homomorphism to $\mathfrak{g}$ is very even  
is not intrinsic to the Lie algebra $\mathfrak{g}$; rather, it depends on a representation $\iota$ of the Lie algebra $\mathfrak{g}$.  
Our aim is to define the notion of $\VE$ for a homogeneous space $G/H$ in a way that is independent of the choice of the representatives in its local isomorphism class.  
To this end, we introduce the following equivalence relation on pairs $(\mathfrak{g}, \iota)$.  
This equivalence relation will also play a role in our reformulation of the Hurwitz--Radon number in the next subsection.

\begin{definition}
\label{definition:classical-pair}
    Let $(\mathfrak{g}_i, \iota_i)$ $(i=1,2)$ be pairs of real Lie algebras and their representations. 
The pairs $(\mathfrak{g}_i, \iota_i)$ are said to be \emph{equivalent} if there exists a Lie algebra isomorphism $\varphi \colon \mathfrak{g}_1\to \mathfrak{g}_2$ such that 
$\iota_1$ is equivalent to $\iota_2\circ \varphi$ as a representation of $\mathfrak{g}_1$. 
Then we write \((\mathfrak{g}_1, \iota_1) \simeq (\mathfrak{g}_2, \iota_2)\).

A pair of a real Lie algebra $\mathfrak{g}$
and its representation $\iota$
is \emph{classical} if the pair $(\mathfrak{g},\iota)$ is equivalent 
to a pair of a classical real Lie algebra and its standard representation.
\end{definition}
For instance, 
the pair of $\mathfrak{so}(6,2)$ and its semispin representation (see Appendix~\ref{section:clifford-spin} for the definition)
is classical since this pair is equivalent to the pair of 
$\mathfrak{so}^{*}(8)$ and its standard representation via the triality of $\mathfrak{so}(8,\C)$.

We are now ready to introduce the definitions of being very even and of $\VE$. 
\begin{definition}
\label{def:iota-very-even}
Let $(\mathfrak{g},\iota)$ be a classical pair.
We say that a Lie algebra homomorphism $\varphi \colon \mathfrak{sl}(2,\R) \rightarrow \mathfrak{g}$ is \emph{very even for $\iota$}
if the $\mathfrak{sl}(2,\R)$-representation $\iota \circ \varphi$ decomposes as a direct sum of irreducible representations of even dimension.
\end{definition}

We formalize homogeneous spaces lying on the boundary between the existence
and non-existence of proper $SL(2,\R)$-actions
by the following property,
which captures the rarity 
of proper $SL(2,\mathbb{R})$-actions:

\begin{definition}\label{def:VE_for_G/H}
Let $G,H$ be as in Setting~\ref{setting:semisimple}.
We say that $G/H$ has \emph{$\VE$} if 
there exists a Lie algebra representation $\iota\colon \mathfrak{g}\rightarrow \mathfrak{gl}(N,\C)$ such that
\begin{enumerate}[label=$(\roman*)$]
    \item \label{def:VE_for_G/H-item:classical}
    $(\mathfrak{g},\iota)$ is a classical pair;
    \item \label{def:VE_for_G/H-item:VE}
    for any Lie algebra homomorphism $\varphi\colon \mathfrak{sl}(2,\R)\rightarrow \mathfrak{g}$,
    the action of $SL(2,\R)$ on $G/H$ via the lift $SL(2,\R)\rightarrow G$ of $\varphi$
    is proper if and only if $\varphi$ is very even for $\iota$.
\end{enumerate}
When it is necessary to specify the representation $\iota$, we say that $G/H$ has \emph{$\VE$ for $\iota$}. 
Furthermore, we say that $G/H$ has Property~$\widetilde{\text{(VE)}}$
if $G/H$ has $\VE$ for some $\iota$ which lifts to a Lie group representation
of $G$.
\end{definition}

In taking a lift of a Lie algebra homomorphism to a Lie group homomorphism in the above definition, we have used the following easy lemma.
    We will employ it throughout this paper:
    \begin{lemma}   
    \label{lemma:lift}
Let $G$ be a linear Lie group and $L$ a connected Lie group which admits a connected, simply-connected complexification. 
    Then any Lie algebra homomorphism from $\mathfrak{l}$ to $\mathfrak{g}$
    lifts to a Lie group homomorphism from $L$ to $G$.
    \end{lemma}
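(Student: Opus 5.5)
Let $\tilde L$ be the connected, simply-connected complexification of $L$ provided by the hypothesis, with $L\subset \tilde L$ generated by $\exp\mathfrak l$. Let $\psi\colon\mathfrak l\to\mathfrak g$ be the given homomorphism. Since $G$ is linear, I fix a faithful representation $G\hookrightarrow GL(M,\C)$. We may then regard $\mathfrak g\subset\mathfrak{gl}(M,\C)$ and $\psi$ as a real Lie algebra homomorphism $\mathfrak l\to\mathfrak{gl}(M,\C)$.

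The first step is to complexify. Because $\mathfrak{gl}(M,\C)$ is a complex Lie algebra, $\psi$ extends uniquely to a $\C$-linear homomorphism $\psi_\C\colon\mathfrak l_\C\to\mathfrak{gl}(M,\C)$, where $\mathfrak l_\C$ is the Lie algebra of the complexification $\tilde L$. Since $\tilde L$ is connected and simply connected, Lie's theorem integrates $\psi_\C$ to a holomorphic homomorphism $\Psi\colon\tilde L\to GL(M,\C)$. Restricting to $L$ gives a Lie group homomorphism $\Phi:=\Psi|_L\colon L\to GL(M,\C)$ with $d\Phi=\psi$.

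The second step, which is the only point needing care, is to show that $\Phi$ lands in $G$ rather than just in $GL(M,\C)$. Since $L$ is connected, it is generated by $\exp\mathfrak l$. For $X\in\mathfrak l$ we have
\[
\Phi(\exp X)=\exp(\psi(X))\in\exp\mathfrak g\subset G_0 .
\]
Hence $\Phi(L)\subset G_0\subset G$. So $\Phi$ is a Lie group homomorphism $L\to G$ lifting $\psi$. It is continuous, hence smooth, and it is a homomorphism into $G$ because it is one into $GL(M,\C)$.

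I expect no real obstacle here. The simple connectivity of the complexification does the essential work: it replaces simple connectivity of $L$ itself, which fails for $SL(2,\R)$ and $Spin(n,1)$. The linearity of $G$ is what allows us to extend into a complex group in the first place.
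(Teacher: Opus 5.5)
Your argument is correct. The paper calls this lemma easy and omits its proof, and your route (complexify $\psi$ into $\mathfrak{gl}(M,\C)$, integrate on the simply connected complexification, restrict to $L$, and use connectedness of $L$ to land in $G_0$) is exactly the standard argument implicit there.

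One small point to tighten: if ``linear'' only means that $G$ has a faithful representation $\rho$, then $\rho(G)$ may be merely immersed. In that case smoothness of $\rho^{-1}\circ\Phi\colon L\to G$ does not follow from continuity into $GL(M,\C)$. It follows instead from the identity $\rho^{-1}\Phi(\exp X)=\exp_G(\psi(X))$, which holds by injectivity of $\rho$ and shows the map is smooth near the identity.
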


The following lemma ensures that 
$\VE$ is preserved under local isomorphisms of homogeneous spaces. Here, 
a homogeneous space $G_{1}/H_{1}$ is \emph{locally isomorphic} to
$G_{2}/H_{2}$ if 
there exists an isomorphism $\alpha\colon \mathfrak{g}_{1}\rightarrow \mathfrak{g}_{2}$ such that $\alpha(\mathfrak{h}_{1})=\mathfrak{h}_{2}$. 
\begin{lemma}
    \label{lem:VE-locally-isomorphic}
    Assume that two homogeneous spaces $G_{i}/H_{i}$ $(i=1,2)$ of reductive type 
    are locally isomorphic via an isomorphism $\alpha\colon \mathfrak{g}_{1}\rightarrow \mathfrak{g}_{2}$.
    If $G_{1}/H_{1}$ has $\VE$ for $\iota\colon \mathfrak{g}_{1}\rightarrow \mathfrak{gl}(N,\C)$, then $G_{2}/H_{2}$ has $\VE$ for $\iota\circ\alpha^{-1}$.
\end{lemma}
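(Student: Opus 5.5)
We verify the two conditions of Definition~\ref{def:VE_for_G/H} for the pair $(\mathfrak{g}_{2},\iota\circ\alpha^{-1})$ directly from those for $(\mathfrak{g}_{1},\iota)$, transporting everything through $\alpha$.

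Condition~\ref{def:VE_for_G/H-item:classical} is immediate. By assumption $(\mathfrak{g}_{1},\iota)\simeq(\mathfrak{g}_{0},\iota_{0})$ for some classical Lie algebra $\mathfrak{g}_{0}$ with its standard representation $\iota_{0}$, say via an isomorphism $\psi\colon\mathfrak{g}_{1}\to\mathfrak{g}_{0}$ with $\iota\simeq\iota_{0}\circ\psi$. Then $\psi\circ\alpha^{-1}\colon\mathfrak{g}_{2}\to\mathfrak{g}_{0}$ is an isomorphism, and
\[
\iota\circ\alpha^{-1}\simeq\iota_{0}\circ(\psi\circ\alpha^{-1}).
\]
Hence $(\mathfrak{g}_{2},\iota\circ\alpha^{-1})$ is a classical pair.

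For condition~\ref{def:VE_for_G/H-item:VE}, fix a homomorphism $\varphi_{2}\colon\mathfrak{sl}(2,\R)\to\mathfrak{g}_{2}$ and set $\varphi_{1}:=\alpha^{-1}\circ\varphi_{2}$. The two conditions then match up as follows.

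\textbf{Very evenness.} We have $(\iota\circ\alpha^{-1})\circ\varphi_{2}=\iota\circ\varphi_{1}$ as representations of $\mathfrak{sl}(2,\R)$. So $\varphi_{2}$ is very even for $\iota\circ\alpha^{-1}$ if and only if $\varphi_{1}$ is very even for $\iota$.

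\textbf{Properness.} By Lemma~\ref{lemma:lift}, applied with $L=SL(2,\R)$ (whose complexification $SL(2,\C)$ is simply connected) and $G_{i}$ linear, each $\varphi_{i}$ lifts to a Lie group homomorphism $\Phi_{i}\colon SL(2,\R)\to G_{i}$.
\begin{itemize}
\item If $\varphi_{2}$ is nonzero, it is injective because $\mathfrak{sl}(2,\R)$ is simple. Its lift $\Phi_{2}$ then has discrete kernel, which is central in $SL(2,\R)$ and hence contained in $\{\pm I\}$, so it is finite. The same holds for $\Phi_{1}$.
\item Since $d\Phi_{2}=\alpha\circ d\Phi_{1}$ and $\alpha(\mathfrak{h}_{1})=\mathfrak{h}_{2}$, Corollary~\ref{cor:properness-locally-isom} shows that the $SL(2,\R)$-action on $G_{1}/H_{1}$ via $\Phi_{1}$ is proper if and only if the action on $G_{2}/H_{2}$ via $\Phi_{2}$ is proper.
\end{itemize}
Putting these together with condition~\ref{def:VE_for_G/H-item:VE} for $G_{1}/H_{1}$, we get that the action via $\Phi_{2}$ is proper if and only if the action via $\Phi_{1}$ is proper, if and only if $\varphi_{1}$ is very even for $\iota$, if and only if $\varphi_{2}$ is very even for $\iota\circ\alpha^{-1}$.

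The main obstacle is the trivial homomorphism $\varphi_{2}=0$, which has infinite kernel, so Corollary~\ref{cor:properness-locally-isom} does not apply. We treat it directly. The action of $SL(2,\R)$ through the trivial map is never proper, since the stabilizer of every point is all of the non-compact group $SL(2,\R)$. On the other hand, $\iota\circ 0$ is a direct sum of trivial one-dimensional representations, so $0$ is very even only if $N=0$, which does not occur. Thus both sides of the equivalence fail for $\varphi_{2}=0$, and the equivalence holds in that case too.

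A further subtlety is that the lift is only determined by the choice made via Lemma~\ref{lemma:lift}. This causes no problem: Fact~\ref{fact:properness_criterion} shows that properness depends only on the differential, because $\mathfrak{a}_{L}$ is determined by the image Lie algebra.
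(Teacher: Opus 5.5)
Your proof is correct and follows the paper's argument: transport $\varphi$ through $\alpha$, use Corollary~\ref{cor:properness-locally-isom} for the properness equivalence, and note that very evenness is preserved tautologically since $(\iota\circ\alpha^{-1})\circ\varphi_{2}=\iota\circ\varphi_{1}$. You are more careful than the paper in three places: you verify the classical-pair condition, you check the finite-kernel hypothesis of the corollary, and you treat the trivial homomorphism separately. Your worry about non-uniqueness of lifts is unnecessary, because a homomorphism from the connected group $SL(2,\R)$ is determined by its differential.
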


\begin{proof}
Let $\varphi\colon \mathfrak{sl}(2,\mathbb{R}) \rightarrow \mathfrak{g}_{1}$ be a Lie algebra homomorphism,  
and let $\widetilde{\varphi} \colon SL(2,\mathbb{R}) \rightarrow G_{1}$  
and $\widetilde{\alpha \circ \varphi} \colon SL(2,\mathbb{R}) \rightarrow G_{2}$  
be the lifts of $\varphi$ and $\alpha \circ \varphi$, respectively.  
By Corollary~\ref{cor:properness-locally-isom},  
the $SL(2,\mathbb{R})$-action on $G_{1}/H_{1}$ via $\widetilde{\varphi}$ is proper  
if and only if the $SL(2,\mathbb{R})$-action on $G_{2}/H_{2}$ via $\widetilde{\alpha \circ \varphi}$ is proper.

On the other hand, 
it is clear that  
$\varphi$ is very even for $\iota$  
if and only if  
$\alpha \circ \varphi$ is very even for $\iota \circ \alpha^{-1}$.  
Therefore, our assertion holds.
\end{proof}

As we will see in Section~\ref{section:intro-classification-proper}, homogeneous spaces with $\VE$ exhibit interesting phenomena in the classification of proper actions of semisimple Lie groups, involving the Hurwitz--Radon number.  
In what follows, we present several examples of symmetric spaces that have $\VE$.

Let $\sigma$ be an involutive automorphism
of a semisimple Lie group $G$, 
and $H$ an open subgroup of the fixed subgroup
$\{g\in G \mid \sigma(g) = g\}$. 
Then $G/H$ is called a \emph{semisimple symmetric space}.
Berger~\cite{Berger57}
classified irreducible semisimple 
symmetric spaces  
up to local isomorphism, 
and Okuda~\cite{Oku13} classified those admitting a proper $SL(2,\R)$-action. 
In this paper, we classify those with $\VE$ among them.

\begin{maintheorem}
\label{theorem:classification_of_symmetric_space_with_very_even_condition}
    \begin{enumerate}[label=(\roman*)]
        \item 
        Each symmetric space $G/H$ in 
        Table \ref{tab:very-even-symmetric} has Property~$(\widetilde{\text{VE}})$ for the standard representation of the classical Lie algebra $\mathfrak{g}$. 
        \item 
        \label{item:ve+sl(2)}
        Let $G/H$ be a semisimple symmetric space with
$\mathfrak{g}$ simple, and assume 
that $G$ admits a connected 
complexification. If $G/H$ has $\VE$ and 
admits a proper $SL(2,\R)$-action, then $G/H$ is locally isomorphic to one in Table~\ref{tab:very-even-symmetric}.
    \end{enumerate}
    \begin{table}
    \centering
    \begin{tabular}{cc}
        $G$ & $H$ \\
        \noalign{\hrule height 1.2pt}
        $SL(2N,\R)$ & $SO(N+1,N-1)$ \\
        $SL(2N,\C)$ & $SU(N+1,N-1)$ \\
        $SL(2N,\HA)$ & $Sp(N+1,N-1)$ \\
        $SO^{*}(4N)$ & $U(N+1,N-1)$\\
        $SO(2N,\C)$ & $SO(N+1,N-1)$ \\
        $SO(N,N)$ & $SO(p,p+1)\times SO(N-p,N-p-1)$ \\
        $SU(N,N)$ & $S(U(p,p+1)\times U(N-p,N-p-1))$\\
        $Sp(N,N)$ & $Sp(p,p+1)\times Sp(N-p,N-p-1)$ \\
        $SO^{*}(4N)$ & $SO^{*}(4p+2)\times SO^{*}(4N-4p-2)$ \\
        $SO(2N,\C)$ & $SO(2p+1,\C)\times SO(2N-2p-1,\C)$  \\
    \end{tabular}
    \caption{Symmetric spaces with $\VET$, where $0\leq p< N/2$.}
    \label{tab:very-even-symmetric}
\end{table}
\end{maintheorem}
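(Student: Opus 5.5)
Both parts rest on Kobayashi's criterion (Fact~\ref{fact:properness_criterion}) applied to $L=SL(2,\R)$, where $\mathfrak{a}_L$ is the line spanned by the image $\varphi(h)$ of the neutral element $h=\diag(1,-1)$. By Jacobson--Morozov theory, $\varphi$ is determined up to conjugacy by the $\mathfrak{sl}(2,\R)$-representation $\iota\circ\varphi$. Write this representation as $\bigoplus_k m_k V_k$, where $V_k$ is irreducible of dimension $k$; in the $\mathfrak{su}(p,q;\mathbb{D})$ cases we also record the signature data attached to each $V_k$. Then $\varphi(h)$, viewed in $\mathfrak{a}$, has eigenvalues $k-1,k-3,\dots,1-k$, each with multiplicity $m_k$.

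For part (i), we go through Table~\ref{tab:very-even-symmetric} row by row, in each case describing $\mathfrak{a}$, $W(\mathfrak{g},\mathfrak{a})$ and $\mathfrak{a}_H$ explicitly in terms of eigenvalue multisets. Take $SL(2N,\R)/SO(N+1,N-1)$. Up to $W$, the set $\mathfrak{a}_H$ consists of diagonal elements of the form $(t_1,\dots,t_{N-1},-t_1,\dots,-t_{N-1},0,0)$. Hence $\varphi(h)$ is $W$-conjugate into $\mathfrak{a}_H$ exactly when its eigenvalue multiset is symmetric, which is automatic, and additionally contains $0$ with multiplicity at least $2$. The eigenvalue $0$ occurs precisely once for each odd-dimensional summand. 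Since $\dim=2N$ is even, the number of odd-dimensional summands is even, so it is either $0$ or at least $2$. Therefore the action is proper if and only if there is no odd-dimensional summand, i.e.\ if and only if $\varphi$ is very even.

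For $SO(N,N)/SO(p,p+1)\times SO(N-p,N-p-1)$ the analogous condition in $\mathfrak{a}\simeq\R^N$ is the following: modulo signed permutations, $\varphi(h)$ lies in $\mathfrak{a}_H$ iff at least one coordinate vanishes. Indeed, $\mathfrak{a}_H$ has rank $p+(N-p-1)=N-1$ and sits inside the coordinate hyperplanes. A vanishing coordinate corresponds once again to an odd-dimensional summand, because in the $2N$-dimensional representation the zero eigenvalue has even multiplicity $2\cdot\#\{\text{coordinates }0\}$. The remaining rows go the same way after replacing $\R$ by $\C$ or $\HA$ and using the appropriate standard representation. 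For the $\mathfrak{so}^*$ and complex rows we use that the multiplicity of the eigenvalue $0$ in the standard representation is forced to be a multiple of $4$ (respectively $2$).

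Property~$\widetilde{\text{(VE)}}$ then follows because in each case the standard representation lifts to the linear group $G$ as listed. We choose connected representatives so that Corollary~\ref{cor:properness-locally-isom} and Lemma~\ref{lem:VE-locally-isomorphic} let us pass between local forms.

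For part (ii), we start from Okuda's classification of irreducible symmetric spaces admitting a proper $SL(2,\R)$-action, and test each candidate in turn. Suppose $G/H$ has $\VE$ for some classical $\iota$. Then $\mathfrak{g}$ is classical, possibly via an exceptional isomorphism such as triality or a low-rank coincidence; this excludes the exceptional $\mathfrak{g}$ immediately. For classical $\mathfrak{g}$ we enumerate the possible classical $\iota$: the standard representation, its dual, and the finitely many low-rank isomorphisms. For each pair we must show that if $G/H$ is not in the table, then there are two homomorphisms $\varphi$ that give the same properness verdict but disagree on very-evenness. To produce these we use two specific test homomorphisms: the principal-type embedding of $V_2\oplus\cdots$ (very even), and a minimally non-even one such as $V_3\oplus V_1\oplus\cdots$ or $V_2\oplus V_1\oplus V_1$. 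Computing their $\mathfrak{a}$-vectors against $W\cdot\mathfrak{a}_H$ will exhibit the failure. In practice, spaces well inside the proper region (such as $\mathbf{H}^{p,q}_+$ with $p-q>1$) admit a proper non-very-even $\varphi$. Spaces with a different $\mathfrak{a}_H$ fail in one of two ways: either the very even $\varphi$ is not proper, or some odd-dimensional summand avoids $W\cdot\mathfrak{a}_H$.

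The main obstacle is this case-by-case exclusion in (ii). It requires care with the Weyl group action in the $D_N$ types, where sign changes are restricted to even numbers, and with the alternative classical $\iota$'s arising from low-dimensional isomorphisms (e.g.\ $\mathfrak{so}(6,2)\simeq\mathfrak{so}^*(8)$ and $\mathfrak{so}(3,3)\simeq\mathfrak{sl}(4,\R)$), each of which has to be checked separately.
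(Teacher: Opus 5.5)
\textbf{Part (i).} This part is correct, and it argues differently from the paper.

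\begin{itemize}
\item You apply Kobayashi's criterion directly. In every row of Table~\ref{tab:very-even-symmetric}, an element $\varphi(h)\in\mathfrak{a}$ lies in $W(\mathfrak{g},\mathfrak{a})\cdot\mathfrak{a}_H$ exactly when $\iota(\varphi(h))$ has eigenvalue $0$.
\item In the three $SL(2N,\D)$ rows, $W\cdot\mathfrak{a}_H$ consists of sign-symmetric spectra containing $0$ at least twice in $\D$-coordinates. The spectrum of $\varphi(h)$ is automatically sign-symmetric of even size, so $0$ occurs an even number of times. This row is where your ``multiple of $4$'' remark is actually needed (for $\D=\HA$). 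In the $\mathfrak{so}^*(4N)$ rows it is automatic.
\item In the remaining rows, $W\cdot\mathfrak{a}_H$ is the union of the coordinate hyperplanes.
\end{itemize}

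The paper instead goes through the diagrammatic criteria of Teduka and Okuda (Facts~\ref{fact:teduka}, \ref{fact:okuda-cpx/real}, \ref{fact:okuda}). It computes the pairs $(S_{\mathfrak{g}},S_{\mathfrak{g}^c})$ and compares ``non-matching'' with the weight condition of Proposition~\ref{prop:characterization_of_no_odd_dimensional}.

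Your eigenvalue bookkeeping is the technique the paper reserves for its non-symmetric examples (Lemma~\ref{lem:key_for_non-symmetric}). It is more elementary and shows at once why all rows behave identically. The paper's route has the advantage that the same machinery then handles part (ii).

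Your Jacobson--Morozov sentence is false as stated: very even classes in $\mathfrak{so}(2N,\C)$ come in pairs, and real forms split complex classes. It is harmless here, because you only use the spectrum of $\iota(\varphi(h))$, and on the $D_N$ rows the target set is invariant under all sign changes.

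\textbf{Part (ii)} has a genuine gap. It is only a plan, and the concrete recipe does not work. Small test homomorphisms such as $V_3\oplus V_1\oplus\cdots$ or $V_2\oplus V_1\oplus V_1\oplus\cdots$ have $\varphi(h)$ with many vanishing coordinates. They therefore typically lie in $W\cdot\mathfrak{a}_H$ and are \emph{not} proper, so they cannot serve as the proper non-very-even witness.

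Take your own example $\mathbf{H}^{5,2}_{+}=SO(5,3)/SO(5,2)$. Here $W\cdot\mathfrak{a}_H\subset\mathfrak{a}\simeq\R^3$ is the union of the coordinate hyperplanes. The representations $V_3\oplus V_1^{\oplus 5}$ and $V_2^{\oplus 2}\oplus V_1^{\oplus 4}$ give $(2,0,0)$ and $(1,1,0)$, so neither is proper. A proper non-very-even witness needs all coordinates non-zero while still having eigenvalue $0$, necessarily of multiplicity exactly $2$. An example is $[7,1]$, giving $(6,4,2)$.

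Witnesses must be chosen family by family. The paper lists them in Tables~\ref{tab:classification-caseB} and~\ref{longtab:non_very_even_symbol}: $[2N-1,1]$, $[2q+1,1^{p-q-1}]$, $[(2q+1)^2,1^{2p-2q-2}]$, $[2^{2N-2},1^4]$, $[(2N+1)^2]$, $[3^2,1^2]$, and so on. For each it checks that $F_\varphi$ matches $S_{\mathfrak{g}}$, so $\varphi$ is defined over $\mathfrak{g}$, and does not match $S_{\mathfrak{g}^c}$, so $\varphi$ is proper.

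The second missing ingredient is a uniform way to handle the quantifier over $\iota$. Property (VE) asks only for \emph{some} classical $\iota$, so a witness must fail to be very even for \emph{every} $\iota$ with $(\mathfrak{g},\iota)$ classical. This includes:
\begin{itemize}
\item outer twists, such as triality on $\mathfrak{so}(4,4)$ and $\mathfrak{so}(8,\C)$, and $\mathfrak{so}(6,2)\simeq\mathfrak{so}^*(8)$;
\item other classical realizations, as for $A_3=D_3$ and $C_2=B_2$.
\end{itemize}
You only say these ``have to be checked separately.''

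The paper's mechanism is Lemma~\ref{lemma:non-very-even}. Very evenness is the diagram condition $a_N\neq 0$ (types $A_{2N-1}$, $C_N$) or $a_{N-1}\neq a_N$ (type $D_N$). This condition is invariant under Dynkin diagram automorphisms except in type $D_4$, where one picks a witness with $a_1=a_3=a_4$; for example $[7,1]$ has diagram $(2,2,2,2)$.

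You should also note that types $B_l$ ($l\ge 3$) and $A_{2l}$ are excluded at once. They admit no very even $\varphi$ at all, whereas a proper $SL(2,\R)$-action together with (VE) would force one.

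Without the explicit witnesses and this invariance argument, part (ii) is not established.
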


\begin{remark}
Some symmetric spaces in Table~\ref{tab:very-even-symmetric} do not admit proper $SL(2,\R)$-actions; hence Table~\ref{tab:very-even-symmetric} does not provide a complete list of irreducible semisimple symmetric spaces with $\VE$ admitting a proper $SL(2,\mathbb{R})$-action.
However, if $N$ is even in the 5th, 6th, and 10th entries, 
the table provides a complete list. 
This fact follows either from Okuda's classification, or from Theorem~\ref{thm:classification-L} in Section~\ref{section:intro-classification-proper} via the isomorphism $SL(2,\R)\simeq Spin(2,1)$.
\end{remark}

We make a few comments on the list in Theorem~\ref{theorem:classification_of_symmetric_space_with_very_even_condition}.  
First, Table~\ref{tab:very-even-symmetric} includes the following important examples:
\begin{itemize}
    \item[6th] The homogeneous space $SO(N,N)/SO(N,N-1)$, which is isometric to the pseudo-Riemannian space form $\mathbf{H}_{+}^{N,N-1}$.
    
    \item[7th] A pseudo-Riemannian analogue of the complex hyperbolic space, given by $SU(N,N)/U(N,N-1)$. 
    This is a complex manifold of complex dimension $2N-1$, realized as the open subset of the complex projective space $P^{2N-1}\C$ consisting of lines $[v]$ such that $\langle v, v \rangle < 0$, where $\langle \cdot, \cdot \rangle$ denotes the standard hermitian form of signature $(N,N)$ on $\C^{2N}$;

    \item[10th] The complex sphere $SO(2N,\C)/SO(2N-1,\C)$ of complex dimension $2N-1$;
\end{itemize}

The symmetric spaces $SO(3,3)/O(3,\C)$ and $SO(6,2)/U(3,1)$ do not
appear in Table \ref{tab:very-even-symmetric} but have $\VE$. 
Indeed, these two symmetric spaces
are respectively locally isomorphic to $SL(4,\R)/SO(3,1)$ and $SO^*(8)/(SO^*(6)\times SO^*(2))$,  both of which are listed in Table~\ref{tab:very-even-symmetric}.
By Lemma~\ref{lem:VE-locally-isomorphic}, these symmetric spaces also have $\VE$.

The non-symmetric space $SO(8,\C)/Spin(7,\C)$ is locally isomorphic to $SO(8,\C)/SO(7,\C)$ (the fourth entry in Table~\ref{tab:very-even-symmetric}) via the triality of the Lie algebra $\mathfrak{so}(8,\C)$, and thus also has $\VE$.

Examples of homogeneous spaces that have $\VE$ but are not locally isomorphic to any symmetric space are given in 
Section~\ref{section:non-symmetric}.

\vspace{\baselineskip}

Theorem~\ref{theorem:classification_of_symmetric_space_with_very_even_condition} is proved in Section~\ref{section:classification_VE}.  
We briefly explain the idea of the proof.

Let $\varphi\colon \mathfrak{sl}(2,\R)\rightarrow \mathfrak{g}$ be a 
homomorphism, and $\varphi_{\C}\colon \mathfrak{sl}(2,\C)\rightarrow \mathfrak{g}_{\C}$ its complexification. 
Based on Kobayashi's properness criterion (Fact~\ref{fact:properness_criterion}), 
Okuda~\cite{Oku13} gave a criterion for determining whether the action of $SL(2,\R)$ on the symmetric space $G/H$ via the lift of $\varphi$ is proper, in terms of the weighted Dynkin diagram of $\varphi_\C$ and the Satake diagram of another real form $\mathfrak{g}^c$ of $\mathfrak{g}_\C$, which is called the \emph{$c$-dual} (see Fact~\ref{fact:okuda} for details).

On the other hand, we will give a criterion to determine whether $\varphi$ is very even, also in terms of its weighted Dynkin diagram (Proposition~\ref{prop:characterization_of_no_odd_dimensional}).
By comparing these two criteria, we determine whether each irreducible symmetric space has $\VE$.

\subsection{Reformulation of the \texorpdfstring{Hurwitz--Radon}{Hurwitz-Radon} number in terms of Lie algebra representations}
\label{section:intro-HR}
In this subsection, 
we reformulate the Hurwitz--Radon number $\rho(N)$ in terms of 
Lie algebra representations (Lemma~\ref{lemma:HR-our-interpretation}), 
introduce variants of $\rho(N)$ (Definition~\ref{def:HR-ours}), and determine their values in specific cases that will be needed later (Theorem~\ref{theorem:Hurwitz-radon}). In the next subsection, we will use our variants to give an application to Problem~\ref{problem:classify-continuous-analog}
for a homogeneous space $G/H$ with $\VE$ (see Theorems~\ref{thm:proper-imply-spin} and~\ref{thm:proper-hurwitz-radon}). 

Let $(\mathfrak{g},\iota)$ be a pair of a real reductive Lie algebra $\mathfrak{g}$ and a faithful representation 
$\iota\colon \mathfrak{g}\rightarrow \mathfrak{gl}(N,\C)$.
We assume that
the image $\iota(\mathfrak{g})$ is 
self-adjoint, namely, 
$\iota(\mathfrak{g})$ is closed under 
taking adjoint operators 
with respect to some positive definite 
hermitian form $\pairing{\cdot}{\cdot}$ on $\C^{N}$. 
Then there exists an involutive automorphism $\theta$ of $\mathfrak{g}$ such that
\begin{equation*}
\pairing{\iota(X)v}{w} = -\pairing{v}{\iota(\theta(X))w}
\quad (X\in \mathfrak{g},\ v,w \in \mathbb{C}^{N}). 
\end{equation*}
We denote by $\mathfrak{g} = \mathfrak{k} + \mathfrak{p}$ the eigenspace decomposition of $\mathfrak{g}$ with respect to $\theta$,  
namely, $\mathfrak{k}$ (resp.\ $\mathfrak{p}$) is the eigenspace of $\theta$ with eigenvalue $1$ (resp.\ $-1$).
If $\mathfrak{g}$ is semisimple, then any faithful representation $\iota$ of $\mathfrak{g}$ satisfies our assumption,  
and $\theta$ is a Cartan involution of $\mathfrak{g}$.

\begin{definition}
    \label{def:HR-ours}
    In the setting above,  
    we define two integers 
    $\rho^{(i)}(\mathfrak{g},\iota)$ ($i=1,2$)
    as the largest $n\in \N$ for which there exists
    an $\R$-linear map $f\colon \R^{n}\rightarrow \mathfrak{p}$ such that
    \begin{align}
        \rho^{(1)}(\mathfrak{g},\iota)&:\iota(f(v))^2=\|v\|^2 I_N \text{ for any } v\in \R^{n},
        \label{def:HR-ours-1}
        \\
        \rho^{(2)}(\mathfrak{g},\iota)&:\iota(f(v))\text{ is invertible for any non-zero } v\in \R^{n}.
        \label{def:HR-ours-2}
    \end{align}
Here, $\|v\|$ denotes the standard norm of $v\in \R^{n}$. 
\end{definition}

\begin{remark}
    \label{remark-HR-well-defined}
 The number $\rho^{(i)}(\mathfrak{g}, \iota)$ does not depend on the choice of the hermitian form $\pairing{\cdot}{\cdot}$ or the representatives of the equivalence class of the pair $(\mathfrak{g}, \iota)$ in the sense of Definition~\ref{definition:classical-pair}.  
This fact will be proved in Section~\ref{section:HR-well-defined} (see Proposition~\ref{prop:HR-pair-equiv}).   
\end{remark}

We now explain how $\rho^{(i)}(\mathfrak{g},\iota)$ gives an extension of the classical Hurwitz--Radon number $\rho(N)$. Recall the celebrated result by Hurwitz~\cite{Hurwitz22}, Radon~\cite{Radon22}, Eckmann~\cite{Eckmann43}, and Adams~\cite{Adams_sphere}:
\begin{align}
\label{eq:classical-HR}
\rho(N)=
\max\{ n\in \mathbb{N} &\mid
    \exists f\colon \mathbb{R}^{n} \rightarrow M(N,\mathbb{R})\text{ ($\mathbb{R}$-linear) s.t.\ }
    \\
    &\!^{t}f(v)f(v)=\|v\|^2I_{N}
    \text{ for any } v\in \mathbb{R}^n
    \} 
    \nonumber\\
=\max\{ n\in \mathbb{N} &\mid
    \exists f\colon \mathbb{R}^{n} \rightarrow M(N,\mathbb{R})\text{ ($\mathbb{R}$-linear) s.t.\ }
    \nonumber\\
    &f(v)\text{ is invertible for any non-zero } v\in \mathbb{R}^n 
    \}.    \nonumber
\end{align}
The second equality was established as an application of K-theory.

Our observation is that \eqref{eq:classical-HR} can be reformulated in terms of the Lie algebra $\mathfrak{so}(N,N)$ of the isometry group of $\mathbf{H}^{N,N-1}_{+}$ in Theorem~\ref{theorem:X(p,q)}: 
\begin{lemma}
\label{lemma:HR-our-interpretation}
    Let $\iota\colon \mathfrak{so}(N,N)\rightarrow\mathfrak{gl}(2N,\C)$ be the standard representation. Then, 
    we have 
    \begin{equation*}
        \rho(N)=\rho^{(1)}(\mathfrak{so}(N,N),\iota)=\rho^{(2)}(\mathfrak{so}(N,N),\iota).
    \end{equation*}
\end{lemma}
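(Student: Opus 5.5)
We will make the Cartan decomposition of $\mathfrak{so}(N,N)$ explicit and translate both conditions in Definition~\ref{def:HR-ours} into the two characterizations in \eqref{eq:classical-HR}.

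Realize $\mathfrak{so}(N,N)=\{X\in\mathfrak{gl}(2N,\R)\mid {}^tXI_{N,N}+I_{N,N}X=0\}$ with the standard hermitian form on $\C^{2N}$. Then $\theta(X)=-{}^tX$, and
\[
\mathfrak{p}=\left\{\begin{pmatrix}0&A\\ {}^tA&0\end{pmatrix} \;\middle|\; A\in M(N,\R)\right\}.
\]
So an $\R$-linear map $f\colon\R^n\to\mathfrak{p}$ is the same as an $\R$-linear map $g\colon\R^n\to M(N,\R)$, via $f(v)=\begin{pmatrix}0&g(v)\\ {}^tg(v)&0\end{pmatrix}$. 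A direct computation gives
\[
\iota(f(v))^2=\begin{pmatrix}g(v)\,{}^tg(v)&0\\0&{}^tg(v)\,g(v)\end{pmatrix}.
\]
Moreover, $\iota(f(v))$ is invertible if and only if $g(v)$ is invertible.

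For $\rho^{(1)}$, the condition $\iota(f(v))^2=\|v\|^2I_{2N}$ means that ${}^tg(v)g(v)=\|v\|^2I_N$ and $g(v){}^tg(v)=\|v\|^2I_N$. For square matrices the first identity implies the second: for $v\neq0$, $g(v)/\|v\|$ is orthogonal, and both sides vanish at $v=0$. So the condition is exactly the one in the first line of \eqref{eq:classical-HR}, and $\rho^{(1)}(\mathfrak{so}(N,N),\iota)=\rho(N)$.

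For $\rho^{(2)}$, invertibility of $\iota(f(v))$ for all nonzero $v$ is the same as invertibility of $g(v)$ for all nonzero $v$. The second characterization in \eqref{eq:classical-HR} then gives $\rho^{(2)}(\mathfrak{so}(N,N),\iota)=\rho(N)$.

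By Remark~\ref{remark-HR-well-defined}, these values do not depend on the chosen hermitian form, so fixing the standard one is harmless. The only substantive input is the Adams part of \eqref{eq:classical-HR}, which we take as given. Since the rest is bookkeeping, there is no real obstacle.
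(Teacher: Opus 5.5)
Your proposal is correct and follows the paper's own proof: both use the same Cartan decomposition with $\theta(X)=-{}^tX$ and identify $\mathfrak{p}$ with $M(N,\R)$ via $A\mapsto\begin{pmatrix}0&A\\ {}^tA&0\end{pmatrix}$, and the paper then calls the equivalence of the conditions ``immediate.'' Your block-square computation and your remark that ${}^tg(v)g(v)=\|v\|^2I_N$ forces $g(v){}^tg(v)=\|v\|^2I_N$ for square matrices supply exactly the details the paper leaves unchecked.
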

\begin{proof}
We adopt a standard realization of $\mathfrak{g}=\mathfrak{so}(N,N)$ as
\[
\{X\in M(2N,\R)\mid {}^{t}X I_{N,N}+I_{N,N}X=0\},
\]
and consider the Cartan decomposition $\mathfrak{g}=\mathfrak{k}+\mathfrak{p}$
with respect to the Cartan involution of $\mathfrak{g}$ given by $X\mapsto -{}^{t}X$.
Then we have
\[
\mathfrak{p}=\left\{\begin{pmatrix}
0 & A\\
{}^{t}A & 0
\end{pmatrix}\ \middle|\ A\in M(N,\R)\right\},
\]
and we define an $\R$-linear isomorphism
$\sigma\colon M(N,\R)\rightarrow \mathfrak{p}$ by
\[
\sigma(A)=\begin{pmatrix}
0 & A\\
{}^{t}A & 0
\end{pmatrix}.
\]
It is immediate to check that an $\R$-linear map $f\colon \R^{n}\rightarrow \mathfrak{p}$ satisfies the condition \eqref{def:HR-ours-1}
(resp.\ \eqref{def:HR-ours-2}) if and only if $\sigma^{-1}\circ f\colon \R^{n}\rightarrow M(N,\R)$
satisfies the first (resp.\ second) condition in \eqref{eq:classical-HR}.
This proves the assertion.
\end{proof}

In this paper, we determine the values of
\(\rho^{(i)}(\mathfrak{g},\iota)\)
for all classical pairs \((\mathfrak{g},\iota)\) in the sense of
Definition~\ref{definition:classical-pair}.
Moreover, we show that for most classical pairs the equality
\[
\rho^{(1)}(\mathfrak{g},\iota)
=
\rho^{(2)}(\mathfrak{g},\iota)
\]
holds.
We note that the inequality
$\rho^{(1)}(\mathfrak{g},\iota)
\leq
\rho^{(2)}(\mathfrak{g},\iota)$
always holds.

To state our theorem, we extend the Hurwitz--Radon number $\rho(N)$ to the case $N \in \Q^{\times}$. If $\ord (N) = 4a + b$ for some $a \in \Z$ and $b \in \{0,1,2,3\}$, we define
\[
\rho(N) := 8a + 2^b \in \Z.
\]
The following table lists the explicit values of $\rho(N)$.
\begin{table}[htbp]
\begin{tabular}{c!{\vrule width 1.2pt}c|c|c|c|c|c|c|c|c|c|c}
    $N$ & $\ldots$ & $1/16$ & $1/8$ & $1/4$ & $1/2$ & $1$ & $2$ & $4$ & $8$ & $16$ & $\ldots$  \\
    \hline
    $\rho(N)$ & $\ldots$ & $-7$ & $-6$ & $-4$ & $0$ 
    & $1$ & $2$ & $4$ & $8$ & $9$ & $\ldots$
\end{tabular}
\caption{Values of $\rho(N)$.}
\end{table}

\begin{maintheorem}  
\label{theorem:Hurwitz-radon}
    Let $\mathfrak{g}$ be a classical real Lie algebra and 
    $\iota$ its standard representation. 
    If $\mathfrak{g}\neq \mathfrak{sl}(2N+1,\D)$ ($\D=\R,\C,\HA$, $N\geq 1$), then 
    \[
    \rho^{(1)}(\mathfrak{g},\iota) = \rho^{(2)}(\mathfrak{g},\iota),
    \]
    and the common value is summarized in Table~\ref{tab:HR-values}.

    Moreover, in the exceptional case $\mathfrak{g} = \mathfrak{sl}(2N+1,\D)$, 
        we have 
        \[
        \rho^{(1)}(\mathfrak{sl}(2N+1,\D),\iota) = 0,\quad 
    \rho^{(2)}(\mathfrak{sl}(2N+1,\D),\iota) = 1.
    \]

\begin{table}[H]
\renewcommand{\arraystretch}{1.5}
\[
\begin{array}{ccc|ccc}
& \mathfrak{g} & \rho^{(1)}=\rho^{(2)} & &\mathfrak{g} & \rho^{(1)}=\rho^{(2)} \\
\noalign{\hrule height 1.2pt}
(a) &\mathfrak{so}(N,N) & \rho(N)  & 
(b) & \mathfrak{gl}(N,\mathbb{C}) & 2\ord(N)+1 
\\
&\mathfrak{gl}(N,\mathbb{R}) & \rho(N/2)+1  & &
\mathfrak{su}(N,N) & 2\ord(N)+2
\\
\cline{4-6}
&\mathfrak{sp}(N,\mathbb{R})   & \rho(N/2)+2 & (c) & \mathfrak{sl}(2N,\mathbb{R}) & \rho(N)+1\\ 
&\mathfrak{sp}(N,\mathbb{C})   & \rho(N/2)+3 & &
\mathfrak{sl}(2N,\mathbb{C}) & 2\ord(N)+3 \\
&\mathfrak{sp}(N,N)& \rho(N/2)+4 & &
\mathfrak{sl}(2N,\mathbb{H}) & \rho(N/2)+5
\\
\cline{4-6}
&\mathfrak{gl}(N,\mathbb{H}) & \rho(N/4)+5 & (d) & \mathfrak{sl}(1,\mathbb{D}) & 0
\\
& \mathfrak{so}^{*}(2N)& \rho(N/8) + 6 & & \mathfrak{su}(p,q;\mathbb{D})\,(p\neq q) & 0
\\
&\mathfrak{so}(N,\mathbb{C}) & \rho(N/16) + 7 &  & & \\
\end{array}
\]
\renewcommand{\arraystretch}{1}
\caption{Values of $\rho^{(i)}(\mathfrak{g},\iota)$ associated with classical pairs.}
\label{tab:HR-values}
\end{table}
\end{maintheorem}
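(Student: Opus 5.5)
The plan is to reduce every entry of Table~\ref{tab:HR-values} to the classical statement \eqref{eq:classical-HR} and its complex and quaternionic analogues, via Clifford modules. The basic observation is a translation of condition \eqref{def:HR-ours-1}. A linear map $f\colon\R^n\to\mathfrak{p}$ with $\iota(f(v))^2=\|v\|^2I_N$ is the same thing as $n$ pairwise anticommuting elements $e_1,\dots,e_n\in\iota(\mathfrak{p})$ with $e_i^2=I_N$. Elements of $\iota(\mathfrak{p})$ are hermitian, so each $e_i$ is then also unitary. Such a tuple is a module structure on $\C^N$ over the Clifford algebra $Cl_{n,0}$ (positive generators) that is compatible with the extra structure defining $\mathfrak{g}$: a real form, a quaternionic structure, an indefinite form, or the trace condition.

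For each classical $\mathfrak{g}$ I will encode this extra structure by adjoining further generators to the Clifford algebra.
\begin{itemize}
\item For $\mathfrak{so}(N,N)$, take $\mathfrak{p}$ as in the proof of Lemma~\ref{lemma:HR-our-interpretation}. The element $J=I_{N,N}$ anticommutes with $\mathfrak{p}$ and satisfies $J^2=I$. So $\{e_i\}\cup\{J\}$ generates a real $Cl_{n+1,0}$-module (or $Cl_{n,1}$, depending on convention) of dimension $2N$. The condition $e_i\in\mathfrak{p}$ is equivalent to anticommuting with $J$, which recovers $\rho(N)$ through the standard periodic table of real Clifford modules.
\item For $\mathfrak{gl}(N,\R)$, $\mathfrak{p}$ consists of real symmetric matrices. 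This gives a real $Cl_{n,0}$-module of dimension $N$, hence $\rho(N/2)+1$ after matching indices.
\item For $\mathfrak{gl}(N,\C)$, $\mathfrak{p}$ consists of hermitian matrices, giving complex $Cl_n$-modules. Their dimensions are governed by Bott periodicity mod $2$, which produces $2\ord(N)+1$.
\item For $\mathfrak{sp}(N,\R)$, $\mathfrak{sp}(N,\C)$, $\mathfrak{gl}(N,\HA)$, $\mathfrak{so}^*(2N)$, $\mathfrak{so}(N,\C)$, $\mathfrak{sp}(N,N)$ and $\mathfrak{su}(N,N)$, the extra structures are a complex structure $i$, a quaternionic structure $j$, and an involution $J$ commuting or anticommuting with $\mathfrak{p}$. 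Each adjoins one more generator of known sign, shifting the Clifford index. This explains the offsets $+2,\dots,+7$ together with the accompanying divisions $N/2,\dots,N/16$, since each step halves the available real dimension per irreducible module.
\item For the $\mathfrak{sl}$ cases, $\mathfrak{p}$ consists of the traceless elements of the $\mathfrak{gl}$ case. A $Cl_{n,0}$-module with $n\ge 1$ is automatically traceless: $e_1=e_2e_1e_2^{-1}$ if $n\ge2$, and for $n=1$ the trace condition forces the $\pm1$ eigenspaces of $e_1$ to have equal dimension. Hence dimension $2N$ is required, and $e_1$ can then be used as the extra involution. This gives $\rho^{(1)}=\rho^{(1)}(\mathfrak{gl})$ with the index shifted, and $\rho^{(1)}=0$ in odd dimension.
\item For case (d), the rank-one and unequal-signature cases, invertibility of an element of $\mathfrak{p}$ is impossible: an element of $\mathfrak{p}$ for $\mathfrak{su}(p,q;\D)$ has the block form $\begin{pmatrix}0&B\\B^*&0\end{pmatrix}$, which has rank at most $2\min(p,q)<p+q$. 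So $\rho^{(2)}=0$.
\end{itemize}

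This settles $\rho^{(1)}$ everywhere by explicit module counting. For $\rho^{(1)}\le\rho^{(2)}$ the inequality is trivial. For the converse I would deform an invertible family to a Clifford one. Given $f$ with $\iota(f(v))$ invertible for $v\ne0$, each $\iota(f(v))$ is a hermitian invertible matrix. Its polar part $\operatorname{sgn}(\iota f(v))$ lies in the same structured class, since it is a continuous odd function of $\iota f(v)$, and it defines a map $S^{n-1}\to\{$structured hermitian involutions$\}$ that is odd. The structured involutions form a classical symmetric space, for instance the Grassmannian $O(2N)/(O(N)\times O(N))$-type space for $\mathfrak{so}(N,N)$, which is literally $GL(N,\R)$ up to homotopy. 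An odd map from $S^{n-1}$ into it is exactly the datum Adams' theorem, in its general Clifford-module/$K$-theoretic form (Atiyah--Bott--Shapiro), bounds. So the approach is to reduce each case to the classical Adams bound for real, complex or quaternionic vector fields on spheres, after identifying the relevant real vector bundle. For the non-real cases this means invoking the complex and quaternionic analogues of \eqref{eq:classical-HR}, which follow from Adams' real result by restriction of scalars together with the known values for $\C$ and $\HA$. The $\mathfrak{sl}(2N+1,\D)$ exception gives $\rho^{(2)}=1$: an odd-dimensional traceless hermitian matrix can be invertible, which gives $n=1$, but for $n\ge2$ the path $v\mapsto f(v)$ on $S^1$ joins $A$ to $-A$. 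These have signatures $(a,b)$ and $(b,a)$ with $a\ne b$ because the dimension is odd, so some intermediate $f(v)$ is singular.

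The main obstacle is this $\rho^{(2)}\le\rho^{(1)}$ step for the structured cases ($\mathfrak{so}^*$, $\mathfrak{so}(N,\C)$, $\mathfrak{sp}(N,N)$, and so on). It requires identifying each space of structured involutions with an appropriate Clifford-module classifying space and transporting Adams' $K$-theoretic bound correctly. I would first try to realize each pair $(\mathfrak{g},\iota)$, via the well-definedness of Remark~\ref{remark-HR-well-defined} and chains of inclusions such as $\mathfrak{so}(N,\C)\subset\mathfrak{so}(2N,2N)$ and $\mathfrak{sp}(N,N)\subset\mathfrak{so}(4N,4N)$. These inclusions embed $\mathfrak{p}$ compatibly and convert an invertible family in $\mathfrak{g}$ into one in $\mathfrak{so}(M,M)$ that additionally commutes with some fixed Clifford generators. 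Applying the equivariant (Clifford-linear) version of Adams' theorem to these then gives the bound. The bookkeeping of the $2$-adic shifts in Table~\ref{tab:HR-values} is routine once the Clifford index is fixed.
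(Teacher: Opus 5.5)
Your Clifford-module approach to $\rho^{(1)}$ is workable, though only sketched. The rank argument in case (d) is correct. The signature argument giving $\rho^{(2)}(\mathfrak{sl}(2N+1,\D))\le 1$ is also correct, and it is more elementary than the paper's route through $\rho^{(2)}(\mathfrak{gl}(2N+1,\D))=1$.

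The genuine gap is the step you flag yourself: the sharp upper bound $\rho^{(2)}(\mathfrak{g},\iota)\le\rho^{(1)}(\mathfrak{g},\iota)$ for the structured algebras in (a) and (b). Case (c) depends on it too. This is the real content of the theorem beyond the classical case $\mathfrak{so}(N,N)$, and you delegate it to an ``equivariant Adams theorem'' that is never formulated.

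The concrete mechanisms you do name lose information. Consider restriction of scalars, or embedding $\mathfrak{g}$ into some $\mathfrak{so}(M,M)$ and applying Lemma~\ref{lem:inclusion_to_inequality} with the plain Adams bound. For $\mathfrak{su}(N,N)$ this gives only $\rho^{(2)}(\mathfrak{su}(N,N))\le\rho(2N)$. For $N=4$ that bound is $8$, while the true value is $2\ord(4)+2=6$. Likewise, $\mathfrak{so}^{*}(2N)\subset\mathfrak{so}(2N,\C)\subset\mathfrak{so}(2N,2N)$ gives only $\rho(2N)=\rho(N/8)+8$ when $8\mid N$, two more than the correct value.

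Incidentally, the polar part $\operatorname{sgn}(\iota f(v))$ need not stay in $\mathfrak{p}$ for $\mathfrak{sl}$, since $\operatorname{sgn}$ does not preserve trace. This is harmless, because $\rho^{(2)}(\mathfrak{sl})\le\rho^{(2)}(\mathfrak{gl})$ trivially, and the polar step is not needed anyway.

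The missing idea is a move you already use for $\rho^{(1)}$: absorb an anticommuting involution into the family. It works for $\rho^{(2)}$ too. Suppose $\iota(Z)^2=I_N$ and $\iota(Z)$ anticommutes with every $\iota(X)$, $X\in\mathfrak{p}\cap\mathfrak{h}$. Then $\iota(f(v)+tZ)^2=\iota(f(v))^2+t^2I_N$, which is positive definite whenever $\iota(f(v))$ is hermitian and invertible.

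For each adjacent pair $\mathfrak{h}\subset\mathfrak{g}$ in the chains \eqref{sequence:8-period} and \eqref{sequence:2-period} (passing through $\mathfrak{sl}$ where needed), such a $Z$ is a central generator of the parahermitian associated pair $(\mathfrak{g},\mathfrak{h}^{a})$. This gives $\rho^{(2)}(\mathfrak{h})+1\le\rho^{(2)}(\mathfrak{g})$ (Proposition~\ref{prop:ineq-paraherm}). Iterating \emph{upward} once around the $8$-periodic chain gives $\rho^{(2)}(\mathfrak{g}_i(N))+(9-i)\le\rho^{(2)}(\mathfrak{so}(16N/b_i,16N/b_i))=\rho(N/b_i)+8$. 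This is exactly the sharp bound, and it uses only the \emph{non-equivariant} Adams theorem (Lemma~\ref{lem:f(i,N)}).

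For the complex series, the crude bounds $\rho^{(2)}(\mathfrak{gl}(N,\C))\le\rho(N)+1$ and $\rho^{(2)}(\mathfrak{su}(N,N))\le\rho(N)+2$ are sharp only for special $N$, for example $\ord(N)\equiv 1\bmod 4$. Growth by $2$ under $N\mapsto 2N$ then propagates sharpness to all $N$ (Lemma~\ref{lem:f(N)}).

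Without this argument, your proposal establishes the values of $\rho^{(1)}$ and the trivial inequality $\rho^{(1)}\le\rho^{(2)}$, but not the asserted equality $\rho^{(1)}=\rho^{(2)}$. A precisely stated $KR$-theoretic substitute would also close the gap. So would an explicit appeal to Adams--Lax--Phillips and Au-Yeung, using the dictionary of Table~\ref{tab:HR-ours-ALP}.
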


\begin{remark}
    \begin{enumerate}
        \item Table~\ref{tab:HR-values} does not include $\mathfrak{u}(p,q)$ because 
        we have, by definition, 
\[
\rho^{(i)}(\mathfrak{u}(p,q),\iota)
=\rho^{(i)}(\mathfrak{su}(p,q),\iota),
\quad (i=1,2).
\]
        \item 
        For $\mathfrak{g} = \mathfrak{sl}(N, \D)$ and $\mathfrak{gl}(N, \D)$, the corresponding $\rho^{(i)}(\mathfrak{g},\iota)$ can be compared as follows:
\begin{align*}
\rho^{(1)}(\mathfrak{sl}(N,\D),\iota) &= 
\begin{cases}
\rho^{(1)}(\mathfrak{gl}(N,\D),\iota)    & \text{if } N \text{ is even}, \\
0    & \text{if } N \text{ is odd},
\end{cases} \\
\rho^{(2)}(\mathfrak{sl}(N,\D),\iota) &= 
\begin{cases}
\rho^{(2)}(\mathfrak{gl}(N,\D),\iota)    & \text{if } N \geq 2, \\
0    & \text{if } N = 1.
\end{cases}
\end{align*}

\item 
For a certain representation $\tau$ which is not standard, it is still possible
to compute $\rho^{(i)}(\mathfrak{g},\tau)$.
Let $\tau\colon \mathfrak{so}(6,2)\rightarrow \mathfrak{gl}(8,\C)$
be one of the semispin representations of 
$\mathfrak{so}(6,2)$. 
Then the pair $(\mathfrak{so}(6,2),\tau)$
is equivalent to $(\mathfrak{so}^{*}(8),\iota)$ for the standard representation $\iota$ of $\mathfrak{so}^{*}(8)$. Hence, 
by Theorem~\ref{theorem:Hurwitz-radon} with Remark~\ref{remark-HR-well-defined},
\[
\rho^{(i)}(\mathfrak{so}(6,2),\tau)=\rho^{(i)}(\mathfrak{so}^{*}(8),\iota)=6\quad (i=1,2).
\]
\end{enumerate}
\end{remark}

Theorem~\ref{theorem:Hurwitz-radon} will be proved in Section~\ref{section:proof-hurwitz-radon}. Here we outline the idea of the proof
for $(a)$ 
in Table~\ref{tab:HR-values}, 
which 
is 
a crucial case. 

For the eight classical Lie algebras listed in Table~\ref{tab:HR-values}~$(a)$, we make essential use of the following periodic chain of Lie algebras with period~8:
\begin{align*}
    \renewcommand{\arraystretch}{1.5}
    \begin{array}{lllll}
    \cdots &\subset \mathfrak{so}(N,N) &\subset \mathfrak{gl}(2N,\mathbb{R}) &\subset \mathfrak{sp}(2N,\mathbb{R}) &\subset \mathfrak{sp}(2N,\mathbb{C}) \\
    &\subset \mathfrak{sp}(2N,2N) &\subset \mathfrak{gl}(4N,\mathbb{H}) &\subset \mathfrak{so}^{*}(2(8N))
    &\subset \mathfrak{so}(16N,\mathbb{C}) \\
    &\subset \mathfrak{so}(16N,16N) &\subset \cdots. &
    \end{array}
    \renewcommand{\arraystretch}{1}
\end{align*}

For each adjacent pair of classical Lie algebras $(\mathfrak{g}, \mathfrak{h})$ in the above chains, we prove the inequality
\begin{equation}
\label{ineq:key-ineq}
\rho^{(i)}(\mathfrak{h},\iota)+1 \leq \rho^{(i)}(\mathfrak{g},\iota)
\quad (i=1,2)
\end{equation}
(see Corollary~\ref{cor:paraherm}).
By showing, using Lemma~\ref{lemma:HR-our-interpretation}, that this inequality is in fact an equality, a large part of the proof of case~$(a)$ in Table~\ref{tab:HR-values} follows.
The key inequality \eqref{ineq:key-ineq} can be proved by observing the following fact: each adjacent pair $(\mathfrak{g}, \mathfrak{h})$ forms a symmetric pair, and the associated symmetric pair $(\mathfrak{g}, \mathfrak{h}^{a})$ is parahermitian in the sense of Kaneyuki--Kozai~\cite{Kaneyuki_Kozai_1985}.

\vspace{\baselineskip}
Now we explain the relationship between Theorem~\ref{theorem:Hurwitz-radon} and previous works on the Hurwitz--Radon number by Adams--Lax--Phillips~\cite{Adams_Lax_Phillips65} and Au-Yeung~\cite{Yeung_71}.

For $\D=\R, \C, \HA$, we denote by $M_{x}(N, \D)$
the set of $N \times N$ matrices over $\D$ having property $x$, 
where $x$ stands for hermitian (h), skew-hermitian (sk-h),
symmetric (s), or skew-symmetric (sk-s).  
We define an integer $\D_x(N)$ as the largest $n\in \N$ for which there exists an $\R$-linear map $f\colon \R^n \to M_x(N,\D)$ such that  
$f(v)$ is invertible for any non-zero $v \in \R^n$.
Similarly, we denote by $\D(N)$ the quantity defined in the same way as above, replacing $M_x(N, \D)$ with $M(N, \D)$.
The number $\R(N)$ is nothing but the Hurwitz--Radon number $\rho(N)$ by \eqref{eq:classical-HR}.
The values of $\D(N)$ and $\D_{h}(N)$ were determined in \cite{Adams_Lax_Phillips65}, and those of the other $\D_x(N)$ were obtained in \cite{Yeung_71}.

One can verify that, except in the cases where except in the cases where $\D=\HA$ and $x$ is either $\mathrm{s}$ or $\mathrm{sk\text{-}s}$., the quantity $\D_x(N)$ coincides with $\rho^{(2)}(\mathfrak{g}, \iota)$ for some classical pair $(\mathfrak{g},\iota)$, as summarized in Table~\ref{tab:HR-ours-ALP} (note that $\R_{\mathrm{h}} = \R_{\mathrm{s}}$, $\R_{\mathrm{sk\text{-}h}} = \R_{\mathrm{sk\text{-}s}}$, and $\C_{\mathrm{h}} = \C_{\mathrm{sk\text{-}h}}$).  
Accordingly, Theorem~\ref{theorem:Hurwitz-radon} recovers the results of \cite{Adams_Lax_Phillips65} and \cite{Yeung_71}, up to these exceptions. Moreover, as we will see in the next subsection,
these quantities are naturally connected to the study of proper actions.
    \begin{table}[htbp]
    $\renewcommand{\arraystretch}{1.5}
    \begin{array}{cc|cc|cc}
    \rho^{(2)}(\mathfrak{g},\iota) & \mathfrak{g} &  \rho^{(2)}(\mathfrak{g},\iota)& \mathfrak{g} &  \rho^{(2)}(\mathfrak{g},\iota) & \mathfrak{g} \\
    \noalign{\hrule height 1.2pt}
    \R(N) & \mathfrak{so}(N,N) & \C(N) & \mathfrak{su}(N,N) & \HA(N) & \mathfrak{sp}(N,N)\\
     \R_{\text{s}}(N) & \mathfrak{gl}(N,\R) & \C_{\text{h}}(N) &  \mathfrak{gl}(N,\C) & \HA_{\text{h}}(N) & \mathfrak{gl}(N,\HA) \\
     \R_{\text{sk-s}}(N) & \mathfrak{so}(N,\C)  & \C_{\text{sk-s}}(N)& \mathfrak{so}^{*}(2N) &   \HA_{\text{sk-h}}(N)& \mathfrak{sp}(N,\C) \\ 
       &  & \C_{\text{s}}(N) & \mathfrak{sp}(N,\R) & &
\end{array}\renewcommand{\arraystretch}{1}$
\caption{Relation between $\D_x(N)$ and $\rho^{(2)}(\mathfrak{g},\iota)$.}\label{tab:HR-ours-ALP}
    \end{table}

\subsection{\texorpdfstring{Hurwitz--Radon number}{Hurwitz-Radon number} and proper actions}
\label{section:intro-classification-proper}
We now return to the main problem of this paper, Problem~\ref{problem:classify-continuous-analog}. 
We classify 
the isomorphism classes of non-compact semisimple Lie groups that act properly on homogeneous spaces with $\VET$:
\begin{theorem}
\label{thm:classification-L}
Let $L$ be a connected semisimple Lie group
with no compact factors, and 
$G/H$ a homogeneous space with $\VET$ for $\iota$. 
Then $G/H$ admits a proper $L$-action if and only if $L$ is isomorphic to $Spin(n,1)$ with $2\leq n\leq \rho^{(1)}(\mathfrak{g},\iota)$.
\end{theorem}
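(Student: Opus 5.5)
The proof splits into the two assertions announced in the overview: a structural step showing that $L$ must be (globally) $Spin(n,1)$, and a quantitative step identifying the largest admissible $n$ with $\rho^{(1)}(\mathfrak{g},\iota)$. Throughout, fix a proper $L$-action given by $\varphi\colon L\to G$. Since the action is proper and $L$ has no compact factors, $d\varphi$ is injective. Put $\tau:=\iota\circ d\varphi$; by Property~$\widetilde{\text{(VE)}}$ it lifts to a representation of $L$, because $\iota$ lifts to $G$.

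\emph{Step 1 (local structure).} Every $\mathfrak{sl}(2,\R)$ inside $\mathfrak{l}$ gives, via Lemma~\ref{lemma:lift}, a proper $SL(2,\R)$-action, since a closed subgroup of a properly acting group acts properly. By $\VE$, every homomorphism $\psi\colon\mathfrak{sl}(2,\R)\to\mathfrak{l}$ must therefore have $\tau\circ\psi$ very even, i.e.\ with no odd-dimensional constituents. I would exploit this through a principal-type or root $\mathfrak{sl}_2$ argument. If a simple factor of $\mathfrak{l}$ has real rank $\ge2$, or is a non-compact simple algebra other than $\mathfrak{so}(n,1)$, I would exhibit two $\mathfrak{sl}(2,\R)$'s, say $\psi_1$ and $\psi_2=\psi_1+\psi_1'$ with commuting summands, whose weighted Dynkin data force an odd-dimensional constituent in $\tau\circ\psi$ for some $\psi$. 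Concretely: if $\tau\circ\psi_1$ and $\tau\circ\psi_1'$ are both very even, then on a common isotypic piece the tensor product $\text{even}\otimes\text{even}$ contains odd-dimensional summands.

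For $\mathfrak{so}(n,1)$ itself, the $\mathfrak{a}_L$ is one-dimensional, and a single $\mathfrak{sl}_2$ governs properness through Fact~\ref{fact:properness_criterion}. For $\mathfrak{l}$ with two non-compact factors, the diagonal and factor $\mathfrak{sl}_2$'s give the same contradiction. Hence $\mathfrak{l}\simeq\mathfrak{so}(n,1)$ with $n\ge2$.

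\emph{Step 2 (global form).} Very evenness says that $\exp(2\pi\sqrt{-1}\,h/2)$ acts by $-1$ in $\tau$, where $h$ is the semisimple element of an $\mathfrak{sl}_2$. This is the nontrivial central element of $Spin(n,1)$ (Lemma~\ref{lemma:spin-center}). So $\tau$ lifted to $L$ is nontrivial on the lift of that element. Hence $L$ is not $SO_0(n,1)$ and must be the double cover $Spin(n,1)$; for $n=2$ this also excludes the higher covers, since $L$ must map via $\varphi$ into the linear group $G$ faithfully enough.

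\emph{Step 3 (bound $n\le\rho^{(1)}$).} Take a Cartan decomposition of $\mathfrak{so}(n,1)$ compatible with $\theta$, so that $\mathfrak{p}_{\mathfrak{l}}\simeq\R^n$ maps into $\mathfrak{p}$. Every unit $v\in\R^n$ generates an $\mathfrak{sl}_2$ that is very even for $\iota$, and all these are conjugate under $K_L$. From the very-even structure, $\tau(v)$ has eigenvalues $\pm c_k$ with all $c_k\ne0$; in fact $\tau(v)$ is invertible. To reach the squared condition, I would replace $\tau$ by the spectral normalization $f(v)=\operatorname{sgn}$-part, obtained by polar decomposition. The cleaner route is to decompose $\tau|_{\mathfrak{so}(n,1)}$ into irreducibles, all of which are genuine spin-type (odd on the center). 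Then, by the appendix results on embeddings and Clifford algebras (Appendices~\ref{appendix:embedding}, \ref{appendix:topocs-clifford}), replace $\tau$ by a multiple of spin representations with the same $\iota$-type, lying in the same classical group via Proposition~\ref{prop:embedding-criterion}. For spin representations, the image of $\mathfrak{p}_{\mathfrak{l}}$ is given by Clifford multiplication $v\mapsto\frac12 c(v)$, so $f(v):=2\,\tau_{\text{spin}}(v)$ satisfies $f(v)^2=\|v\|^2I_N$. This yields $n\le\rho^{(1)}$.

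\emph{Step 4 (construction).} Conversely, given $f$ attaining $\rho^{(1)}$, the matrices $\iota(f(e_i))$ anticommute and square to $I$, so they generate a Clifford algebra representation. The brackets $\frac14[f(e_i),f(e_j)]$ together with $\frac12 f(e_i)$ span a copy of $\mathfrak{so}(n,1)$ inside $\mathfrak{g}$. I still need to check that these brackets lie in $\mathfrak{k}\subset\mathfrak{g}$, which follows from closure of $\mathfrak{g}$ under brackets. Each $\mathfrak{sl}_2$ in it acts with $h$ having eigenvalues $\pm1$ only on $\C^N$, hence is very even. By $\VE$, every $\mathfrak{sl}_2$ of this subalgebra acts properly. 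Since $\operatorname{rank}_\R\mathfrak{so}(n,1)=1$, Fact~\ref{fact:properness_criterion} then gives properness of the whole group, and the lift to $Spin(n,1)$ exists by Lemma~\ref{lemma:lift}. Restricting to $m\le n$ gives all $2\le m\le\rho^{(1)}$.

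The main obstacle is Step 3, specifically transferring an arbitrary very-even embedding into the Clifford normal form while staying inside $\mathfrak{g}$ rather than merely inside $\mathfrak{gl}(N,\C)$. I would attack this with Proposition~\ref{prop:embedding-criterion}, matching real, quaternionic, orthogonal and symplectic types of the spin constituents.
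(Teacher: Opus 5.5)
Your Step~1 does not establish $\mathfrak{l}\simeq\mathfrak{so}(n,1)$. The only concrete mechanism you give is two commuting very even $\mathfrak{sl}(2,\R)$'s, whose diagonal Clebsch--Gordan then forces to have odd-dimensional constituents. That mechanism needs a subalgebra $\mathfrak{sl}(2,\R)\oplus\mathfrak{sl}(2,\R)\subset\mathfrak{l}$. This is exactly how the paper shows $\mathfrak{l}$ is simple (Lemma~\ref{lem:very-even-odd}), but it says nothing about simple $\mathfrak{l}$ without such a pair. Examples are the real-rank-one algebras $\mathfrak{su}(m,1)$, $\mathfrak{sp}(m,1)$ $(m\ge 2)$ and $\mathfrak{f}_{4(-20)}$, and even $\mathfrak{sl}(3,\R)$, which contains no $\mathfrak{sl}(2,\R)\oplus\mathfrak{sl}(2,\R)$. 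The appeal to ``weighted Dynkin data'' does not fill this gap. The missing idea is that very even implies even (Lemma~\ref{lem:ve->e}). If $\tau\circ\psi$ has only even-dimensional constituents, then $\tau(\psi(A_0))$ has only odd eigenvalues. So $\ad\tau(\psi(A_0))$ has only even eigenvalues on $\mathfrak{gl}(N,\C)$, and hence, $\tau$ being injective, $\ad\psi(A_0)$ has only even eigenvalues on $\mathfrak{l}$. Combine this with Proposition~\ref{prop:classification-not-noneven}: every non-compact simple Lie algebra other than $\mathfrak{so}(n,1)$ contains $\mathfrak{sl}(3,\R)$, $\mathfrak{su}(2,1)$ or $\mathfrak{so}(3,2)$. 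Each of these has an explicit non-even $\mathfrak{sl}(2,\R)$, e.g.\ $\mathfrak{su}(1,1)\subset\mathfrak{su}(2,1)$, where $\ad h$ has eigenvalue $1$. This gives the local statement. The commuting-pair argument is needed only to exclude several non-compact factors.

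Step~2 has a smaller problem. Properness only forces $\ker\varphi$ to be finite, so ``faithfully enough'' is not an argument. For $n\ge 3$ you can conclude because $Spin(n,1)$ is simply connected with centre $\{\pm1\}$. For $n=2$ you must take $\varphi$ injective, as the paper does by assuming $L\subset G$ in Theorem~\ref{thm:proper-imply-spin}; otherwise a nontrivial finite cover of $SL(2,\R)$ acting through the covering map also acts properly.

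In Step~3 your strategy is the paper's $(2)\Rightarrow(3)\Rightarrow(1)$ in Proposition~\ref{prop:key_for_HR}. However, the replacement by (semi)spin representations ``with the same $\iota$-type'' is the whole difficulty, and it needs three inputs you do not supply:
\begin{enumerate}
\item every $\pi$ with $\pi(-1)=-\id$ has dimension divisible by $\dim S$ (resp.\ $\dim S_1$) (Fact~\ref{fact:half-int-rep-dim});
\item such $\pi$ agree with $S$ (resp.\ $S_1,S_2$) on whether $\pi^{\dagger}\simeq\pi$ and on $\ind^{\dagger}\pi$ (Fact~\ref{fact:index_for_spin1}). This is what lets Lemma~\ref{lemma:criterion_for_embedding} transfer, using $S_1\oplus S_2$ when $\pi^{\dagger}\not\simeq\pi$;
\item invariant forms on such representations have balanced signature (Lemma~\ref{lemma:signature_of_half_integral_rep}), which settles the $SO(p,q)$, $SU(p,q)$, $Sp(r,s)$ cases.
\end{enumerate}
There is also a shortcut. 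Your own remark that $\iota(d\varphi(v))$ is invertible for $0\ne v\in\mathfrak{p}_{\mathfrak{l}}$ already gives $n\le\rho^{(2)}(\mathfrak{g},\iota)$. Theorem~\ref{theorem:Hurwitz-radon} gives $\rho^{(2)}=\rho^{(1)}$ except for $\mathfrak{sl}(2N+1,\D)$, where $\rho^{(2)}=1<n$ anyway. This route is valid, but it rests on Adams' theorem through Lemma~\ref{lemma:HR-our-interpretation}, which the paper's proof of Theorem~\ref{thm:proper-hurwitz-radon} avoids.

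Your Step~4 is fine and matches the paper's $(1)\Rightarrow(3)$ combined with Lemma~\ref{lem:2_equiv_n}.
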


For a homogeneous space $G/H$ with $\VET$ for $\iota$, 
note that
$(\mathfrak{g},\iota)$ is a classical pair in the sense of 
Definition~\ref{definition:classical-pair}. 
Hence the value of $\rho^{(1)}(\mathfrak{g},\iota)$ can be computed by 
Theorem~\ref{theorem:Hurwitz-radon} together with Remark~\ref{remark-HR-well-defined}.  
Thus Theorem~\ref{thm:classification-L} provides 
a complete solution to Problem~\ref{problem:classify-continuous-analog} 
for such $G/H$, in terms of the Hurwitz--Radon number $\rho(N)$ and 
the $2$-adic valuation $\ord(N)$.

Let us prove Theorem~\ref{theorem:X(p,q)} as an illustrative example.

\begin{proof}[Proof of Theorem~\ref{theorem:X(p,q)}]
Let $L$ be a connected semisimple Lie group without compact factors.

We note the following natural diffeomorphisms: 
\[
\mathbf{H}^{N,N-1}_{+}\simeq O(N,N)/O(N,N-1)\simeq SO(N,N)/SO(N,N-1). 
\]
Since $O(N,N)$ is the full isometry group of 
$\mathbf{H}^{N,N-1}_{+}$ and $L$ is connected, 
the pseudo-Riemannian manifold 
$\mathbf{H}^{N,N-1}_{+}$ admits a proper isometric 
$L$-action 
if and only if $SO(N,N)/SO(N,N-1)$ admits a proper $L$-action.
By
Theorem~\ref{theorem:classification_of_symmetric_space_with_very_even_condition}
(see the $6$th entry of Table~\ref{tab:very-even-symmetric}), 
this homogeneous space has $\VET$ for the standard representation $\iota$ of $\mathfrak{so}(N,N)$.
Therefore, by Theorem~\ref{thm:classification-L}, the above condition holds if and only if
$L$ is isomorphic to $Spin(n,1)$ and
$2 \le n \le \rho^{(1)}(\mathfrak{so}(N,N),\iota)$.
Since $\rho^{(1)}(\mathfrak{so}(N,N),\iota)=\rho(N)$ by Lemma~\ref{lemma:HR-our-interpretation},
this completes the proof.
\end{proof}

As another example, we have: 
\begin{example}
The pseudo-Riemannian complex hyperbolic space
\[
G/H=SU(N,N)/U(N,N-1)
\]
has $\VET$ by Theorem~\ref{theorem:classification_of_symmetric_space_with_very_even_condition}. Hence 
$Spin(n,1)$ acts properly and isometrically on $G/H$
if and only if $N$ is divisible by $2^{\lceil n/2\rceil -1}$ by Theorems~\ref{theorem:Hurwitz-radon} and~\ref{thm:classification-L}.
Moreover, 
$G/H$ admits no proper isometric actions of 
connected non-compact semisimple Lie groups
other than $Spin(n,1)$, up to compact factors.
\end{example}

Theorem~\ref{thm:classification-L} follows from the following two theorems: 
\begin{maintheorem}
\label{thm:proper-imply-spin}
Let $G/H$ be a homogeneous space  with $\VE$.
Assume that  a connected semisimple Lie subgroup $L$ of $G$
without compact factors acts properly on $G/H$.
Then $L$ must be locally isomorphic to $SO(n,1)$ for some $n\geq 2$.
Further, if $G/H$ has $\VET$, then
$L$ must be globally isomorphic to $Spin(n,1)$.
\end{maintheorem}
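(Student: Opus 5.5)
Throughout, fix $\iota$ as in Definition~\ref{def:VE_for_G/H}. The plan is to turn properness of the $L$-action into a statement about $\mathfrak{sl}(2,\R)$-subalgebras of $\mathfrak{l}$. If $L$ acts properly on $G/H$, then so does every closed reductive subgroup of $L$. In particular, every $SL(2,\R)$ obtained by lifting a homomorphism $\psi\colon\mathfrak{sl}(2,\R)\to\mathfrak{l}$ acts properly; this uses Lemma~\ref{lemma:lift} and the fact that the image of $SL(2,\R)$ in $L$ is closed with finite kernel, so Fact~\ref{fact:properness_criterion} applies. By $\VE$, every such $\psi$ is then very even for $\iota$, i.e. $\iota\circ\psi$ has only even-dimensional irreducible constituents.

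Next I pass to complexifications. Since $\iota|_{\mathfrak{l}}$ is a finite-dimensional representation of $\mathfrak{l}_\C$, the weights of $\iota|_{\mathfrak{l}}$ are well defined. For $\psi$ with neutral element $h\in\mathfrak{a}_L$, the very-even condition says exactly that every weight $\lambda$ of $\iota|_{\mathfrak l}$ satisfies $\lambda(h)\in 2\Z+1$, since all eigenvalues of $h$ are odd. I apply this to two kinds of $\mathfrak{sl}_2$-triples. First, suppose $\mathfrak l$ has two non-compact simple ideals, or a simple ideal of real rank $\ge2$. Then one can choose two $\mathfrak{sl}(2,\R)$-triples with neutral elements $h_1,h_2$ such that $h_1+h_2$ (or a suitable combination such as $h_{\alpha}+h_{\beta}$ for orthogonal or adjacent restricted roots) is again the neutral element of some $\mathfrak{sl}(2,\R)$ in $\mathfrak l$. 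This holds for commuting triples in distinct ideals via the diagonal embedding, and for strongly orthogonal roots in a simple ideal of rank $\ge2$. Then $\lambda(h_1+h_2)=\lambda(h_1)+\lambda(h_2)$ is even, a contradiction (note $\iota|_{\mathfrak l}$ is nonzero since $L$ is non-compact and $\iota$ is faithful on the relevant ideal). Hence $\mathfrak l$ is simple of real rank one: $\mathfrak{so}(n,1)$, $\mathfrak{su}(n,1)$, $\mathfrak{sp}(n,1)$ or $\mathfrak f_{4(-20)}$. For $\mathfrak{su}(n,1)$ ($n\ge2$), $\mathfrak{sp}(n,1)$ and $\mathfrak f_{4(-20)}$, the restricted root system has roots $\alpha$ and $2\alpha$. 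Taking the $\mathfrak{sl}_2$ for the long root $2\alpha$ and the one for $\alpha$ (whose coroot is twice the coroot of $2\alpha$) gives neutral elements $h$ and $2h$ in $\mathfrak a_L$, both realized by $\mathfrak{sl}(2,\R)$-triples. Thus $\lambda(2h)$ is even but must be odd, contradicting the hypothesis. This leaves $\mathfrak l\simeq\mathfrak{so}(n,1)$ with $n\ge2$. The step I expect to be the main obstacle is this restricted-root bookkeeping: verifying that $h_\alpha$ and $h_{2\alpha}$ (and the sums in the higher-rank case) really are neutral elements of genuine real $\mathfrak{sl}(2,\R)$-subalgebras with the stated normalizations, i.e. that the corresponding coroots are honest semisimple elements of real $\mathfrak{sl}_2$-triples.

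For the global statement, assume $\iota$ lifts to a representation $\tilde\iota$ of $G$. Then $\tilde\iota\circ(\text{inclusion})$ is a representation of $L$ whose differential has all weights odd on $h$. Take the $\mathfrak{sl}(2,\R)\subset\mathfrak{so}(2,1)\subset\mathfrak{so}(n,1)$; its $\exp(\pi\sqrt{-1}\,h)$-type element, namely the rotation by $2\pi$ in a compact $\mathfrak{so}(2)\subset\mathfrak{so}(n)$, acts on $\tilde\iota$ by $-1$ because all weights are odd. Hence this element is nontrivial in $L$, so $L$ is not a quotient of $SO_0(n,1)$ and the covering $L\to SO_0(n,1)$ is nontrivial. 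For $n\ge3$, $\pi_1(SO_0(n,1))=\Z/2$, so $L\simeq Spin(n,1)$ (using Lemma~\ref{lemma:spin-center}). For $n=2$ I also need that $L$ is linear, being a subgroup of the linear group $G$, and that $-1$ acts as $-1$, which forces $L\simeq SL(2,\R)=Spin(2,1)$, since the linear connected groups with this Lie algebra are $SL(2,\R)$ and $PSL(2,\R)$.
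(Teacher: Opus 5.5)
Your proof is correct, and for the first assertion it takes a different route from the paper. For non-simple $\mathfrak{l}$ the paper uses Lemma~\ref{lem:very-even-odd}, Clebsch--Gordan for the diagonal $\mathfrak{sl}(2,\R)$; your diagonal-embedding step is exactly this, written in terms of weights. For simple $\mathfrak{l}$, however, the paper first weakens $\VE$ to the intrinsic \PE{} (Lemma~\ref{lem:ve->e}). It then applies Proposition~\ref{prop:classification-not-noneven}, which is proved by classification: every non-compact simple Lie algebra other than $\mathfrak{so}(n,1)$ contains $\mathfrak{sl}(3,\R)$, $\mathfrak{su}(2,1)$ or $\mathfrak{so}(3,2)$ (via Berger's tables and Yokota's model of $\mathfrak{g}_{2(2)}$), and each of these carries a non-even $\mathfrak{sl}(2,\R)$.

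You instead keep the full strength of $\VE$: every $\iota$-weight takes odd values on every neutral element in $\mathfrak{a}_L$. You then use that oddness is destroyed by adding two neutral elements or by doubling one. Restricted root theory disposes of real rank $\geq 2$ and of $\mathfrak{su}(n,1)$, $\mathfrak{sp}(n,1)$, $\mathfrak{f}_{4(-20)}$ uniformly. The only classification input is that the rank-one algebras with reduced restricted root system are the $\mathfrak{so}(n,1)$. This is shorter and more uniform. What the paper's route buys in exchange is Proposition~\ref{prop:proper->Spin}, valid for every $G/H$ with \PE; a parity argument based on oddness cannot give that.

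Your global step is the paper's argument in different dress. The paper lifts $\mathfrak{so}(n,1)\simeq\mathfrak{l}$ to a surjection $Spin(n,1)\to L$ and uses that the center $\{\pm 1\}$ lies in $SL(2,\R)$. You show directly that the $2\pi$-rotation is non-trivial in $L$, then use $\pi_1(SO_0(n,1))=\Z/2$ for $n\geq 3$ and the linearity of $L$ for $n=2$.

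One justification should be repaired. Strongly orthogonal restricted roots need not exist: for a restricted root system of type $A_2$ (e.g.\ $\mathfrak{sl}(3,\R)$, $\mathfrak{sl}(3,\C)$, $\mathfrak{e}_{6(-26)}$) no two roots are orthogonal. Commit instead to the following two cases.
\begin{itemize}
\item If the restricted root system is non-reduced, your relation $h_\alpha=2h_{2\alpha}$ already gives a contradiction.
\item If it is reduced, the coroots $h_\gamma=2H_\gamma/\langle\gamma,\gamma\rangle$ form the dual root system with base $\{h_\alpha\}_{\alpha\ \mathrm{simple}}$. Hence for adjacent simple roots $\alpha,\beta$ one has $h_\alpha+h_\beta=h_\gamma$ for some restricted root $\gamma$.
\end{itemize}
Every $h_\gamma$ is the neutral element of a genuine $\mathfrak{sl}(2,\R)$-triple. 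Indeed, for $0\neq E\in\mathfrak{l}_\gamma$ one has $[E,\theta E]=B(E,\theta E)H_\gamma$ with $B$ the Killing form and $B(E,\theta E)<0$, so suitable multiples of $E$ and $-\theta E$ complete $h_\gamma$ to a triple. This also settles the normalization issue you flagged. A minor point: $\mathfrak{sp}(n,1)$ needs $n\geq 2$ in your rank-one step, since $\mathfrak{sp}(1,1)\simeq\mathfrak{so}(4,1)$.
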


\begin{maintheorem}
\label{thm:proper-hurwitz-radon}
Fix $n\geq 2$ and 
let $G/H$ be a homogeneous space with $\VE$ for $\iota$. Then 
$G/H$ admits a proper $Spin(n,1)$-action if and only if $n\leq \rho^{(1)}(\mathfrak{g},\iota)$.
\end{maintheorem}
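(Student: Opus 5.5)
The plan is to reduce the statement, via Kobayashi's criterion (Fact~\ref{fact:properness_criterion}) and $\VE$, to a statement about the weights of $\iota\circ d\varphi$, and then to translate it into Clifford-module language.

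\emph{Reduction to $\mathfrak{sl}(2,\R)$.} Let $\varphi\colon Spin(n,1)\to G$ be a homomorphism, which we may lift from a Lie algebra map by Lemma~\ref{lemma:lift}. We may assume $d\varphi$ is injective, since otherwise $\varphi$ has non-compact kernel (the group is simple for $n\ge 2$) and the action is not proper. Because $\mathfrak{so}(n,1)$ has real rank one, the space $\mathfrak{a}_L$ is spanned by a single hyperbolic element $Y=d\varphi(A)$. Here $A$ can be chosen as the neutral element of a standard $\mathfrak{sl}(2,\R)\simeq\mathfrak{so}(2,1)\subset\mathfrak{so}(n,1)$. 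By Fact~\ref{fact:properness_criterion}, the $Spin(n,1)$-action is proper if and only if the action of the corresponding $SL(2,\R)$ is proper, since both conditions read $\R Y\cap W\cdot\mathfrak{a}_H=\{0\}$. By $\VE$ this holds if and only if $\iota\circ d\varphi|_{\mathfrak{sl}(2,\R)}$ is very even. That is, $\iota\circ d\varphi(A)$ has only odd integer eigenvalues, with no even weights and in particular no weight $0$. In terms of highest weights of $\mathfrak{so}(n+1,\C)$, this means that every irreducible constituent of $\iota\circ d\varphi$ is a genuine spin representation, i.e.\ its weights lie in $(\tfrac12+\Z)^{\lfloor (n+1)/2\rfloor}$.

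\emph{($\Leftarrow$) Construction from a Hurwitz--Radon map.} Let $n\le\rho^{(1)}(\mathfrak{g},\iota)$ and take $f\colon\R^n\to\mathfrak{p}$ with $\iota(f(v))^2=\|v\|^2I_N$. Polarization gives $\gamma_i:=\iota(f(e_i))$ with $\gamma_i\gamma_j+\gamma_j\gamma_i=2\delta_{ij}$, so $\C^N$ becomes a $Cl(n,0)$-module. Using $\mathfrak{so}(n,1)\simeq\operatorname{span}\{\tfrac12\gamma_i,\ \tfrac14[\gamma_i,\gamma_j]\}$, the linear span of $\tfrac12 f(e_i)$ and $\tfrac14[f(e_i),f(e_j)]$ is a subalgebra of $\mathfrak{g}$ isomorphic to $\mathfrak{so}(n,1)$. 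Its boost $\tfrac12 f(e_1)$ acts by $\tfrac12\gamma_1$, whose eigenvalues are $\pm\tfrac12$; hence the eigenvalues of the neutral element are $\pm1$, which is very even. Lifting by Lemma~\ref{lemma:lift} gives a proper action.

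\emph{($\Rightarrow$) Main obstacle.} Suppose the action is proper, so $\iota\circ d\varphi$ is a sum of genuine spin representations. We need to produce $f\colon\R^n\to\mathfrak{p}$ with $\iota(f(v))^2=\|v\|^2$. The difficulty is that higher genuine constituents, such as spinor $\otimes$ vector, are not Clifford modules, so $d\varphi(\mathfrak{p}_L)$ itself does not work. My first attempt is to replace $\iota\circ d\varphi$ by a multiple of the basic spinor representation $S$ (a $Cl(n,0)$-module restricted to $\mathfrak{so}(n,1)$) of the same dimension $N$ and of the same ``type''. Here type means the invariant real, quaternionic, orthogonal, symplectic or hermitian structure defining $\mathfrak{g}$ inside $\mathfrak{gl}(N,\C)$, which we may assume is a standard classical Lie algebra by Remark~\ref{remark-HR-well-defined}. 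Concretely, each genuine constituent $V_\lambda\simeq S\otimes V'$ (up to virtual corrections) contains $S$-isotypic pieces. I would use the real/quaternionic and orthogonal/symplectic data of $Spin(n,1)$-representations from the appendix on spin representations to check that the form preserved by $\iota\circ d\varphi$ forces $S^{\oplus m}$, with $m\dim S=N$, to carry a form of the same type. Then Proposition~\ref{prop:embedding-criterion} yields a new embedding $\psi\colon\mathfrak{so}(n,1)\to\mathfrak{g}$ with $\iota\circ\psi\simeq S^{\oplus m}$. Conjugating so that $\psi$ is $\theta$-compatible, the Clifford generators $2\psi(p_i)$ lie in $\mathfrak{p}$ and square to the identity, which gives $f$ and hence $n\le\rho^{(1)}(\mathfrak{g},\iota)$. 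Matching the type and multiplicity constraints between the constituents of $\iota\circ d\varphi$ and $S^{\oplus m}$ in each signature case is where I expect the real work to lie.
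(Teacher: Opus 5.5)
Your reduction and your proof of ($\Leftarrow$) are correct. The chain is: properness of the $Spin(n,1)$-action $\Leftrightarrow$ properness of the $SL(2,\R)$-action $\Leftrightarrow$ very evenness $\Leftrightarrow$ all constituents of $\iota\circ d\varphi$ genuine. This is the Lie-algebra form of the paper's Lemma~\ref{lem:2_equiv_n} together with its Clifford argument via $C(n)\simeq C^{+}(n,1)$.

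The direction ($\Rightarrow$), however, is not proved. Its whole content is the following claim: a genuine $\tau=\iota\circ d\varphi$ preserving the structure(s) that cut out $\mathfrak{g}$ can be replaced by a sum of (semi)spin representations of the same dimension preserving structures of the same type. You only announce this. As formulated it is also wrong for odd $n$, because no multiple of a single spinor module works in general:
\begin{itemize}
\item If $S=S_{1}\oplus S_{2}$, then $\dim S$ need not divide $N$. For example, $SL(2,\C)/SU(2)$ with $n=3$ has $N=2$.
\item If $S=S_{1}$, then $S_{1}^{\oplus m}$ may carry no form of the required type. For example, for $SU(2,2)/U(2,1)$ with $n=3$ we have $S_{1}^{*}\simeq S_{2}$, so only $S_{1}\oplus S_{2}$ factors through $SU(2,2)$.
\end{itemize}

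What is missing is the mechanism that makes the replacement preserve the type. It has five parts.
\begin{enumerate}
\item[(i)] For every genuine irreducible $\pi$, $\dim\pi$ is divisible by $\dim S$ ($n$ even), resp.\ by $\dim S_{1}$ ($n$ odd). This holds because the sign-change subgroup of $W$ acts freely on weights in $(\tfrac12+\Z)^{l}$ (Fact~\ref{fact:half-int-rep-dim}).
\item[(ii)] For each $\dagger\in\{-,\vee,*\}$, either all genuine irreducibles are self-$\dagger$ or none are. For $\dagger\in\{-,\vee\}$, every self-$\dagger$ one satisfies $\ind^{\dagger}\pi=\ind^{\dagger}S$, resp.\ $\ind^{\dagger}S_{1}$. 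This follows from multiplicativity of the index under Cartan products, from $\ind^{\dagger}(\wedge^{i}V\otimes_{\R}\C)=1$, and from oddness of the spin coefficient of the highest weight (Fact~\ref{fact:index_for_spin1}).
\item[(iii)] Combine (i) and (ii) with the criterion $(\varepsilon\ind^{\dagger}\pi)^{[\tau:\pi]}=1$ of Lemma~\ref{lemma:criterion_for_embedding}. This gives $(\varepsilon\ind^{\dagger}S)^{N/\dim S}=\prod_{\pi}\bigl((\varepsilon\ind^{\dagger}\pi)^{[\tau:\pi]}\bigr)^{\dim\pi/\dim S}=1$, with $S_{1}$ in place of $S$ for odd $n$. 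Hence the spin multiple embeds.
\item[(iv)] For odd $n$, take $S_{1}^{\oplus m}$ if $S_{1}$ is self-$\dagger$, and $(S_{1}\oplus S_{2})^{\oplus m}$ otherwise. In the second case $\tau$ splits into pairs $\pi\oplus\pi^{\dagger}$, so $2\dim S_{1}\mid N$. For $O(p,q)$, $Sp(m,\R)$, $O^{*}(2m)$ and $Sp(r,s)$, the choice must satisfy two structures at once. When $S_{1}$ is self-$\dagger$ for only one of them, this needs the extra parity check of Lemma~\ref{lem:construction_pi_dagger}~(ii).
\item[(v)] Genuine representations admit only split invariant hermitian forms (Lemma~\ref{lemma:signature_of_half_integral_rep}). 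This is what removes the signature issue for $SO(p,q)$, $SU(p,q)$ and $Sp(p,q)$.
\end{enumerate}

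Point (ii) is the key idea your sketch lacks. Without it, there is no reason a form on $\tau$ should induce a form of the same type on a spin multiple.
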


Let us outline the strategy of the proofs of the two theorems.

Theorem~\ref{thm:proper-imply-spin} will be proved in Section~\ref{section:proper->spin}. 
We first characterize $\mathfrak{so}(n,1)$ among the non-compact simple Lie algebras via $\mathfrak{sl}(2,\R)$-triples (Proposition~\ref{prop:classification-not-noneven}), and use this characterization, together with $\VE$, to handle the case where $L$ is simple.  
In the general case where $L$ is not simple, the proof reduces to showing that $G/H$ does not admit a proper action of $SL(2,\R)\times SL(2,\R)$.  

Theorem~\ref{thm:proper-hurwitz-radon} will be proved in 
Section~\ref{section:proof_proper-HR}. 
For simplicity, we explain the idea of the proof 
in the case where $G$ is a classical Lie group,
$\tilde{\iota}\colon G\rightarrow GL(N,\C)$ is the standard representation,
and $G/H$ has $\VE$ for the differential homomorphism $\iota\colon \mathfrak{g}\rightarrow \mathfrak{gl}(N,\C)$ of $\tilde{\iota}$.
The proof essentially relies on the structure theory of Clifford algebras
and spin groups associated with indefinite 
quadratic forms. Their definitions and
notation are summarized in
Appendix~\ref{section:clifford-spin}.
Here we recall that $Spin(n,1)$ is realized inside the even Clifford algebra
$C^{+}(n,1)$ with unit $1$, and that $-1$ belongs to $Spin(n,1)$.

We first observe that the action of $Spin(n,1)$ on $G/H$
via a Lie group homomorphism
$\varphi\colon Spin(n,1)\rightarrow G$
is proper if and only if
\begin{equation}
\label{eq:intro-phi--1}
\tilde{\iota}\circ\varphi(-1)=-I_{N}.
\end{equation}
This follows from a combination of Kobayashi's properness criterion
(Fact~\ref{fact:properness_criterion})
and Property~(VE).

On the other hand, 
$n\leq \rho^{(1)}(\mathfrak{g},\iota)$
is equivalent to the existence of a Lie group homomorphism
$\varphi\colon Spin(n,1)\rightarrow G\subset GL(N,\C)$
such that
\begin{equation}
\label{eq:intro-phi-extend}
\varphi \text{ extends to a homomorphism from } C^{+}(n,1)\text{ to } M(N,\C).
\end{equation}
This is proved by combining standard arguments on Clifford algebras with the structure theory of real semisimple Lie groups.

The two conditions
\eqref{eq:intro-phi--1} and \eqref{eq:intro-phi-extend}
for a homomorphism $\varphi$ are not equivalent.
However, the existence of $\varphi$
satisfying \eqref{eq:intro-phi--1}
is equivalent to the existence of $\varphi$
satisfying \eqref{eq:intro-phi-extend}.
This equivalence yields Theorem~\ref{thm:proper-hurwitz-radon}.

One of the key points in proving this equivalence is that
irreducible representations of the spin group
that do not factor through the orthogonal group
behave similarly to the (semi)spin representations, with respect to representation-theoretic invariants
such as the dimension and the Cartan index
(see Section~\ref{section:embedding} for the Cartan index). 
These results are summarized, together with proofs,
in Appendix~\ref{appendix:spin-representation}.

\subsection{Questions}
\label{subsection:rigidity}
We return to our original motivation, Problem~\ref{problem:discrete-version}.  

Examining the interaction between actions of continuous groups 
(Problem~\ref{problem:classify-continuous-analog})
and those of discrete groups 
(Problem~\ref{problem:discrete-version})
has led to several significant problems,
as seen in Margulis superrigidity~\cite{Margulis_discrete_subgroup}, in  affine proper actions of non-virtually solvable
groups (e.g., \cite{Margulis-freegroup-anounce,Margulis-freegroup-1984,Danciger-Gueritaud-Kassel-affine-2020}), and in  the existence problem of 
compact Clifford–Klein forms (\cite{Kobayashi1992necessary}, \cite[Conj.~3.3.10]{KobayashiYoshino05}).
From this viewpoint, we formulate some natural questions arising from our theorem.

To begin with, observe that Theorem~\ref{thm:proper-imply-spin}, combined with Margulis superrigidity (see \cite[Thms.~VII.~5.6 and IX.~5.8]{Margulis_discrete_subgroup} and \cite[Thm.~4.1]{Corlette_superrigidity}), yields the following consequence:

\begin{corollary}
    \label{cor:discrete-version}
    Let $G/H$ be a homogeneous space with $\VE$ and $\Gamma$
    an irreducible uniform lattice of a linear semisimple Lie group $L$
    with no compact factors.
    If $L$ is either of real rank at least $2$ or locally isomorphic to $Sp(n,1)$ ($n\geq 2$) or $F_{4(-20)}$, then $G/H$ admits no properly discontinuous $\Gamma$-actions.
\end{corollary}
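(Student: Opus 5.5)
We argue by contradiction. Suppose $\Gamma$ acts properly discontinuously on $G/H$ via a homomorphism $\rho\colon \Gamma\to G$; here we mean an action by left translations through a homomorphism into $G$. The plan is to show that $\rho$ extends, up to finite data, to a continuous homomorphism $L\to G$. That extension would produce a proper action of a semisimple group not locally isomorphic to $SO(n,1)$, which contradicts Theorem~\ref{thm:proper-imply-spin}.

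\emph{Step 1: reduce to Zariski-dense image.} Let $M$ be the identity component of the Zariski closure of $\rho(\Gamma)$ in $G$. Proper discontinuity forces $\ker\rho$ to be finite, because the stabilizer of any point is finite. After passing to a finite-index subgroup, which is again an irreducible uniform lattice in $L$, we may assume $\rho(\Gamma)\subset M$. Since $\rho(\Gamma)$ is infinite, $M$ is nontrivial.

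\emph{Step 2: apply superrigidity.} Margulis superrigidity covers real rank at least $2$. Corlette's archimedean superrigidity together with Gromov--Schoen covers $Sp(n,1)$ and $F_{4(-20)}$. We apply these to $\rho$ composed with a faithful linear representation of $G$. When the image is unbounded, they give the following. Write $M=M_{\mathrm{nc}}\cdot M_{c}\cdot R$ with $M_{\mathrm{nc}}$ the noncompact semisimple part. Then, after passing to a finite-index subgroup, the projection of $\rho$ to $M_{\mathrm{nc}}$ extends to a continuous homomorphism $\tilde\rho\colon L\to M_{\mathrm{nc}}$. This $\tilde\rho$ is surjective onto a product of noncompact simple factors of $L$, up to compact and finite errors. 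The same theorems also make the solvable radical part of the Zariski closure trivial, since $\Gamma$ has property (T) or is a higher-rank lattice. Because $\Gamma$ is uniform, $\rho(\gamma)$ and $\tilde\rho(\gamma)$ then differ by an element of a compact group $C$ commuting with $\tilde\rho(L)$.

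\emph{Step 3: transfer properness to $L$.} Let $L'=\tilde\rho(L)$, a connected semisimple subgroup of $G$ without compact factors. We show that $L'$ acts properly on $G/H$, using Kobayashi's criterion in its Cartan-projection form (Benoist--Kobayashi). Properness of $\Gamma$ means that $\mu(\rho(\gamma))$ eventually leaves every neighborhood of $W\cdot\mathfrak{a}_H$ with respect to $\|\mu(\rho(\gamma))\|$. Since $\rho(\gamma)=\tilde\rho(\gamma)c$ with $c\in C$ compact, $\mu(\rho(\gamma))$ stays within bounded distance of $\mu(\tilde\rho(\gamma))$. Because $\Gamma$ is cocompact in $L$, the set $\tilde\rho(\Gamma)$ is coarsely dense in $L'$. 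So the cone $\mu(\tilde\rho(\Gamma))$ asymptotically fills $\mathfrak{a}_{L'}$ modulo $W$. Hence $\mathfrak{a}_{L'}\cap W\cdot\mathfrak{a}_H=\{0\}$, and Fact~\ref{fact:properness_criterion} shows that $L'$ acts properly.

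\emph{Step 4: contradiction.} By Theorem~\ref{thm:proper-imply-spin}, $L'$ is locally isomorphic to $SO(n,1)$. The group $L'$ is a nontrivial quotient, up to finite data, of a product of simple factors of $L$. Since $\Gamma$ is irreducible and $\tilde\rho$ is nontrivial, the analysis in Step 2 forces $\tilde\rho$ to be locally injective on the relevant factors. Every simple factor of $L$ has real rank $\ge2$, or $L$ is locally $Sp(n,1)$ or $F_{4(-20)}$, and none of these is locally isomorphic to $SO(n,1)$. This is a contradiction.

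The main obstacle is the case where $\rho$ has relatively compact image in $M_{\mathrm{nc}}$, so that superrigidity only produces a homomorphism into a compact group, possibly after Galois-twisting the arithmetic structure. In that case $\rho(\Gamma)$ lies in a compact subgroup of $G$. An infinite subgroup inside a compact group cannot act properly discontinuously on $G/H$, since its closure is compact and contains accumulating elements. So this case is excluded directly.
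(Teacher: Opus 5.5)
Your strategy (superrigidity to extend $\rho$ up to a bounded error, transfer of properness to $L'=\tilde\rho(L)$ via cocompactness, then Theorem~\ref{thm:proper-imply-spin}) is the combination the paper invokes; the paper itself gives only a one-line justification. However, Step~4 has a genuine gap when $L$ is not simple. Your claim that every simple factor of $L$ has real rank $\ge 2$ is false. Real rank $\ge 2$ allows products of rank-one factors, e.g.\ $L=SL(2,\R)\times SL(2,\R)$, and this group has irreducible uniform lattices. Irreducibility plus nontriviality also do not make $\tilde\rho$ locally injective. Nothing in Steps~1--3 excludes $\tilde\rho=\phi\circ\mathrm{pr}_1$, which factors through one factor. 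Then $L'=\phi(SL(2,\R))$ is locally isomorphic to $SO(2,1)$ and may act properly (e.g.\ on $\mathbf{H}^{2,1}_{+}$ by Theorem~\ref{theorem:X(p,q)}), so Theorem~\ref{thm:proper-imply-spin} gives no contradiction. For simple $L$ your argument is fine, since a nontrivial $\tilde\rho$ is then automatically locally injective.

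The missing ingredient is discreteness, which you never use after Step~1. Proper discontinuity means $\{\gamma\mid\rho(\gamma)\in K\}$ is finite for every compact $K\subset G$. Since $\rho(\gamma)\in\tilde\rho(\gamma)C$, the same finiteness holds for $\tilde\rho|_{\Gamma}$, so $\tilde\rho(\Gamma)$ is discrete in $L'$. Suppose $\ker\tilde\rho$ were infinite. Then its identity component would contain a non-compact simple factor $N$ of $L$. Irreducibility of $\Gamma$ would make its image in $L/N$ dense, hence $\tilde\rho(\Gamma)$ dense in the non-discrete group $L'$, a contradiction. So $\ker\tilde\rho$ is finite and $\mathfrak{l}'\simeq\mathfrak{l}$. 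Here $\mathfrak{l}$ is non-simple, or simple of real rank $\ge 2$, or $\mathfrak{sp}(n,1)$ with $n\ge 2$, or $\mathfrak{f}_{4(-20)}$, so it is not isomorphic to any $\mathfrak{so}(n,1)$. Only now does Theorem~\ref{thm:proper-imply-spin} give the contradiction.

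A smaller point in Step~3: properness does \emph{not} mean that $\mu(\rho(\gamma))$ leaves conic neighbourhoods of $W\cdot\mathfrak{a}_H$. That is sharpness, which can fail for proper actions. The Benoist--Kobayashi criterion is additive, and your bounded-distance plus coarse-density argument works with that version. You can also argue directly. Write $L'=\tilde\rho(\Gamma)K_0$ with $K_0$ compact. If $l_k=\tilde\rho(\gamma_k)k_k\to\infty$ in $L'$ with $l_kS\cap S\neq\emptyset$, then $\rho(\gamma_k)(CK_0S)\cap S\neq\emptyset$ for infinitely many distinct $\gamma_k$.
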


In view of Corollary~\ref{cor:discrete-version}, the following natural question arises, to which we currently do not have an answer:
\begin{question}
\label{problem:rigidity}
Let $G/H$ be a symmetric space with $\VE$ for $\iota$ and $\Gamma$
    an irreducible uniform lattice of a semisimple Lie group $L$.
    \begin{enumerate}[label=$(\roman*)$]
        \item 
        \label{problem:su(n,1)}
        Suppose $L=SU(n,1)$ $(n\geq 2)$. Does 
        $G/H$ admit a properly discontinuous $\Gamma$-action?
        \item 
        \label{problem:so(n,1)}
        Suppose $L=Spin(n,1)$. 
        Is it true that
        $G/H$ admits a properly discontinuous $\Gamma$-action  
        if and only if $n \leq \rho^{(1)}(\mathfrak{g}, \iota)$?
    \end{enumerate}
\end{question}

We make several remarks concerning 
Question~\ref{problem:rigidity}~\ref{problem:so(n,1)}.

First, the ``if''-part follows immediately from 
Theorem~\ref{thm:proper-hurwitz-radon}.

Second, when $n=2$, the ``only if''-part also holds.
Indeed, for symmetric spaces $G/H$, Okuda proved in \cite[Thm.~1.3]{Oku13}
that $G/H$ admits a properly discontinuous action of a surface group of any
genus $\ge 2$ if and only if it admits a proper action of
$SL(2,\R)\simeq Spin(2,1)$.
Hence the ``only if''-part follows from
Theorem~\ref{thm:proper-hurwitz-radon}.

Finally, let us mention a special case, namely,  
$n=3$ and $G/H=\mathbf{H}^{N,N-1}_{+}$. 
We note that any cocompact Kleinian group in $PSL(2,\C)$ lifts to a cocompact discrete subgroup of $SL(2,\C)\simeq Spin(3,1)$ (Culler--Shalen~\cite{Culler-Shalen83} and Thurston~\cite{Thurston-3fold}). 
Hence, the question is equivalent to the following:  
is it true that $\mathbf{H}^{N,N-1}_{+}$ admits a properly discontinuous, 
isometric action of a cocompact Kleinian group  
if and only if $N$ is divisible by $4$?  
At present the authors do not know the answer even in this special case.

\section{Semisimple Lie groups that never act properly}
\label{section:proper->spin}
This section is devoted to the proof of Theorem~\ref{thm:proper-imply-spin}. 
Namely, we show that if $G/H$ is a homogeneous space with $\VE$ (resp.\ $\VET$) in the sense of Definition~\ref{def:VE_for_G/H}, then 
it never admits proper actions of non-compact semisimple Lie groups not locally isomorphic (resp.\ globally isomorphic) to $Spin(n,1)$
up to compact factors. 

In Section~\ref{section:characterization-so(n,1)}, we give a characterization
of the non-compact simple Lie algebra $\mathfrak{so}(n,1)$.
Combining $\VE$ with this characterization, we prove Theorem~\ref{thm:proper-imply-spin}
in Section~\ref{section:proof-proper-imply-spin}.

\subsection{Characterization of the  
orthogonal Lie algebra of real rank one}
\label{section:characterization-so(n,1)}
Let $\mathfrak{g}$ be a real semisimple Lie algebra
and $\varphi\colon \mathfrak{sl}(2,\R)\rightarrow \mathfrak{g}$ a Lie algebra homomorphism.
Recall that $\varphi$ is \emph{even} if all the eigenvalues of 
$\mathrm{ad}(\varphi(A_{0}))$
in $\mathfrak{g}$ are even, 
where we have used the symbol
\[A_{0}=\begin{pmatrix}
    1 & 0 \\
    0 & -1
\end{pmatrix}\in \mathfrak{sl}(2,\R).\]

In this subsection, we show the following characterization of 
the Lie algebra $\mathfrak{so}(n,1)$ in terms of even $\mathfrak{sl}(2,\R)$-homomorphisms, which plays an important role in the proof of Theorem~\ref{thm:proper-imply-spin}:
\begin{proposition}
\label{prop:classification-not-noneven}
Let $\mathfrak{l}$ be a non-compact real simple Lie algebra. 
If any Lie algebra homomorphism from $\mathfrak{sl}(2,\R)$ to $\mathfrak{l}$
is even, then $\mathfrak{l}$ is isomorphic to $\mathfrak{so}(n,1)$ for some $n\geq 2$.
\end{proposition}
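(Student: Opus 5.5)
I argue by contraposition: for every non-compact simple $\mathfrak{l}$ not isomorphic to some $\mathfrak{so}(n,1)$, I exhibit a homomorphism $\varphi\colon \mathfrak{sl}(2,\R)\to\mathfrak{l}$ that is not even. The natural candidate is an $\mathfrak{sl}(2,\R)$-triple attached to a long restricted root. Fix a Cartan decomposition $\mathfrak{l}=\mathfrak{k}+\mathfrak{p}$, a maximal abelian $\mathfrak{a}\subset\mathfrak{p}$, and the restricted root system $\Sigma(\mathfrak{l},\mathfrak{a})$, which is non-empty since $\mathfrak{l}$ is non-compact. For $\alpha\in\Sigma$ and a non-zero root vector $E\in\mathfrak{l}_\alpha$, the standard construction $F=-c\,\theta E$, $A=[E,F]\in\R H_\alpha$ gives a triple with $\varphi(A_0)=2H_\alpha/\alpha(H_\alpha)$. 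The eigenvalues of $\ad\varphi(A_0)$ on $\mathfrak{l}$ are then $0$ on $\mathfrak{m}+\mathfrak{a}$ and $2\langle\beta,\alpha\rangle/\langle\alpha,\alpha\rangle$ on $\mathfrak{l}_\beta$. So $\varphi$ fails to be even as soon as some $\beta\in\Sigma$ has odd Cartan integer $\langle\beta,\alpha^\vee\rangle$.

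The argument splits according to the rank of $\Sigma$. If $\operatorname{rank}_\R\mathfrak{l}\ge 2$, then $\Sigma$ is irreducible of rank at least $2$, of type $A_r, B_r, C_r, BC_r, D_r, E_r, F_4$, or $G_2$. In each type I pick $\alpha$ and $\beta$ adjacent in the Dynkin diagram with $\langle\beta,\alpha^\vee\rangle=-1$. For simply-laced types, any adjacent pair works. For $B_r$, $C_r$, $F_4$, $G_2$, I take $\alpha$ to be the long root and $\beta$ the adjacent short root, or two adjacent long roots when available; the pairing of a short root with a long coroot is $-1$. For $BC_r$ (with $r\ge 2$), I take $\alpha=e_1-e_2$ and $\beta=e_2$, which gives $\langle\beta,\alpha^\vee\rangle=-1$. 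In every case an odd eigenvalue occurs, so $\varphi$ is not even.

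If $\operatorname{rank}_\R\mathfrak{l}=1$, then $\Sigma$ is of type $A_1$ (namely $\{\pm\alpha\}$) or $BC_1$ (namely $\{\pm\alpha,\pm2\alpha\}$). In the $A_1$ case, all eigenvalues are $0,\pm2$ for the triple above. Here, however, I need \emph{every} homomorphism to be checked, and I only have to produce one non-even homomorphism when $\mathfrak{l}\not\simeq\mathfrak{so}(n,1)$. By the classification of real rank one simple Lie algebras ($\mathfrak{so}(n,1)$, $\mathfrak{su}(n,1)$, $\mathfrak{sp}(n,1)$, $\mathfrak{f}_{4(-20)}$), reduced $\Sigma$ occurs only for $\mathfrak{so}(n,1)$. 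For the other three, $\Sigma$ is $BC_1$ with $2\alpha\in\Sigma$. Taking the triple attached to the long root $2\alpha$, we get $\varphi(A_0)=H_{2\alpha}$-normalized, and then $\ad\varphi(A_0)$ acts on $\mathfrak{l}_{\pm\alpha}\neq0$ by $\pm\langle\alpha,(2\alpha)^\vee\rangle=\pm1$, which is odd. This settles the claim; $\mathfrak{sl}(2,\R)\simeq\mathfrak{so}(2,1)$ fits with $n=2$.

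The step needing the most care is the rank one case, specifically justifying that reduced rank one systems occur only for $\mathfrak{so}(n,1)$. This is read off from the Satake diagram and restricted root multiplicities: $\mathfrak{su}(n,1)$, $\mathfrak{sp}(n,1)$, $\mathfrak{f}_{4(-20)}$ have $m_{2\alpha}=1,3,7$ respectively. A second point to verify is that in the $BC_r$ and $BC_1$ cases the root vector for $2\alpha$ still yields a genuine $\mathfrak{sl}(2,\R)$-triple; this follows from the same construction via the Killing form, since $2\alpha$ is a restricted root.
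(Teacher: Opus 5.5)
Your proof is correct, but it takes a different route from the paper.

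\textbf{The paper's route.} It argues by containment. Lemma~\ref{lem:even-subalg} shows that evenness passes to subalgebras. Lemma~\ref{lem:sl3-su21-so2,3} shows, case by case from the classification, that every non-compact simple $\mathfrak{l}\not\simeq\mathfrak{so}(n,1)$ contains $\mathfrak{sl}(3,\R)$, $\mathfrak{su}(2,1)$ or $\mathfrak{so}(3,2)$. For types $E$ and $F$ this uses Berger's tables, and for $\mathfrak{g}_{2(2)}$ the split-octonion model. Finally, explicit non-even maps of $\mathfrak{sl}(2,\R)$ into these three algebras are written down.

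\textbf{Your route.} You work intrinsically with the restricted root system $\Sigma$. The root triple of $\alpha\in\Sigma$ has $\varphi(A_0)=2H_\alpha/\langle\alpha,\alpha\rangle$, so it is non-even as soon as some $\beta\in\Sigma$ has $\langle\beta,\alpha^\vee\rangle$ odd. Such a pair exists in every irreducible system of rank $\ge 2$, and in $BC_1$ if you use the root $2\alpha$. It fails only for $A_1$.

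\textbf{Comparison.} Your argument is uniform and shorter. It needs the classification only in real rank one, and there only the restricted root multiplicities. It also explains why $\mathfrak{so}(n,1)$ is singled out: it is exactly the case $\Sigma=A_1$. The paper's argument avoids restricted-root machinery and stays at the level of explicit matrices. Its reduction lemma is also reused later, in Proposition~\ref{prop:proper->Spin}. The two are closer than they look: up to conjugacy, the paper's three test maps are root $\mathfrak{sl}_2$-triples, for a root of $A_2$, a long root of $C_2$, and the root $2\alpha$ of $BC_1$, respectively.

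\textbf{A bookkeeping point.} In your rank-one list, take $n\ge 2$ for $\mathfrak{su}(n,1)$ and $\mathfrak{sp}(n,1)$. What you need is $m_\alpha>0$ together with $m_{2\alpha}>0$. For $n=1$ one has $m_\alpha=0$, and these algebras are $\mathfrak{so}(2,1)$ and $\mathfrak{so}(4,1)$, which are already covered.
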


To prove this proposition, we first give several lemmas.

\begin{lemma}
    \label{lem:even-subalg}
    Let $\psi\colon \mathfrak{l}\rightarrow \mathfrak{g}$ be an 
    injective Lie algebra homomorphism of real semisimple Lie algebras.
    For a Lie algebra homomorphism $\varphi\colon \mathfrak{sl}(2,\R)\rightarrow \mathfrak{l}$, 
    if 
    $\psi \circ \varphi\colon \mathfrak{sl}(2,\R)\rightarrow \mathfrak{g}$
    is even, then so is $\varphi$.
\end{lemma}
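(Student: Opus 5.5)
The plan is to regard $\mathfrak{l}$ as an $\mathfrak{sl}(2,\R)$-module through $\mathrm{ad}\circ\varphi$, and to notice that $\mathfrak{l}$ sits inside $\mathfrak{g}$ as an $\mathrm{ad}(\psi\circ\varphi(A_{0}))$-stable subspace. Everything then reduces to a statement about eigenvalues of the restriction of a linear map to an invariant subspace.

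First, since $\psi$ is an injective Lie algebra homomorphism, it intertwines the adjoint actions:
\[
\psi\bigl(\mathrm{ad}(\varphi(A_{0}))Y\bigr)=\bigl[\psi(\varphi(A_{0})),\psi(Y)\bigr]=\mathrm{ad}\bigl(\psi\circ\varphi(A_{0})\bigr)\psi(Y)\qquad (Y\in\mathfrak{l}).
\]
Hence $\psi(\mathfrak{l})$ is an invariant subspace of $T:=\mathrm{ad}(\psi\circ\varphi(A_{0}))$ acting on $\mathfrak{g}$. Moreover, $\mathrm{ad}(\varphi(A_{0}))$ on $\mathfrak{l}$ is conjugate, via the linear isomorphism $\psi\colon\mathfrak{l}\to\psi(\mathfrak{l})$, to the restriction $T|_{\psi(\mathfrak{l})}$.

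Second, I will use the elementary fact that the eigenvalues (over $\C$) of the restriction of a linear map to an invariant subspace are among the eigenvalues of the whole map. This holds because the characteristic polynomial of the restriction divides that of $T$. By assumption every eigenvalue of $T$ is an even integer, so every eigenvalue of $\mathrm{ad}(\varphi(A_{0}))$ on $\mathfrak{l}$ is even. That is precisely the statement that $\varphi$ is even.

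An alternative route uses representation theory: by complete reducibility, $\mathfrak{g}$ decomposes under $\mathfrak{sl}(2,\R)$ into irreducibles, and evenness means all of them are odd-dimensional. Since $\mathfrak{l}$ is a submodule, its irreducible constituents are also odd-dimensional. The eigenvalue argument, however, is simpler and avoids semisimplicity altogether. I expect no real obstacle; the only point needing care is that "even" refers to the eigenvalues of $\mathrm{ad}(\varphi(A_{0}))$ computed in $\mathfrak{l}$ itself, and the invariant-subspace observation handles this directly.
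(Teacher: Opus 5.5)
Your argument is correct and is essentially the paper's proof. The paper observes that, since $\psi$ is injective, the eigenvalues of $\ad(\varphi(A_{0}))$ on $\mathfrak{l}$ occur among those of $\ad(\psi(\varphi(A_{0})))$ on $\mathfrak{g}$. Your invariant-subspace and characteristic-polynomial argument is exactly this observation, spelled out in more detail.
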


\begin{proof}
Since $\psi\colon \mathfrak{l}\to\mathfrak{g}$ is injective, the eigenvalues of
$\ad(\varphi(A_{0}))$ on $\mathfrak{l}$ also occur as eigenvalues of
$\ad(\psi(\varphi(A_{0})))$ on $\mathfrak{g}$.
Therefore, the assertion follows from the definition of being even.
\end{proof}

\begin{lemma}
\label{lem:sl3-su21-so2,3}
Any non-compact real simple Lie algebra not isomorphic to 
$\mathfrak{so}(n,1)$ for all $n\geq 2$
contains a Lie subalgebra isomorphic to either
$\mathfrak{sl}(3,\R)$, $\mathfrak{su}(2,1)$, or $\mathfrak{so}(3,2)$. 
\end{lemma}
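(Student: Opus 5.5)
We prove Lemma~\ref{lem:sl3-su21-so2,3} by reducing to real rank and then going through the classification. Let $\mathfrak{l}$ be non-compact simple, not isomorphic to any $\mathfrak{so}(n,1)$. We split into two cases: $\operatorname{rank}_{\R}\mathfrak{l}\ge 2$, and $\operatorname{rank}_{\R}\mathfrak{l}=1$.

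\emph{Case of real rank one.} The real rank one simple Lie algebras are $\mathfrak{so}(n,1)$, $\mathfrak{su}(n,1)$, $\mathfrak{sp}(n,1)$ and $\mathfrak{f}_{4(-20)}$. Since $\mathfrak{su}(1,1)\simeq\mathfrak{so}(2,1)$ and $\mathfrak{sp}(1,1)\simeq\mathfrak{so}(4,1)$ are excluded, the remaining ones satisfy:
\begin{itemize}
\item $\mathfrak{su}(n,1)\supset\mathfrak{su}(2,1)$ for $n\ge 2$;
\item $\mathfrak{sp}(n,1)\supset\mathfrak{su}(n,1)\supset\mathfrak{su}(2,1)$ for $n\ge 2$, since $\C^{n+1}\subset\HA^{n+1}$ and the hermitian form restricts;
\item $\mathfrak{f}_{4(-20)}\supset\mathfrak{sp}(2,1)\oplus\mathfrak{sp}(1)$, a standard maximal-rank subalgebra (or $\mathfrak{f}_{4(-20)}\supset\mathfrak{su}(2,1)$ via the Cayley hyperbolic plane containing complex hyperbolic planes).
\end{itemize}

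\emph{Case of real rank at least two.} The restricted root system $\Sigma$ is irreducible of rank $\ge 2$. We pick two simple restricted roots $\alpha,\beta$ adjacent in the Dynkin diagram and consider the subalgebra generated by $\mathfrak{g}_{\pm\alpha},\mathfrak{g}_{\pm\beta}$ and the corresponding $\mathfrak{g}_{\pm(\alpha+\beta)}$ etc. This is a real semisimple subalgebra of real rank two whose restricted root system is of type $A_2$, $B_2=C_2$, $BC_2$ or $G_2$.

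The cleaner route is to take a rank-two subsystem and a split-type subalgebra. If $\Sigma$ contains an $A_2$ subsystem with all multiplicities used minimally, choosing one-dimensional root vectors $X_{\pm\alpha}, X_{\pm\beta}$ suitably generates $\mathfrak{sl}(3,\R)$. A rank-two subsystem of type $B_2$ or $BC_2$ yields $\mathfrak{so}(3,2)\simeq\mathfrak{sp}(2,\R)$. Every irreducible reduced root system of rank $\ge 2$ contains $A_2$ or $B_2$; $G_2$ contains $A_2$ as long roots, and $BC_n$ contains $B_2$.

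The main obstacle is justifying that one can choose root vectors generating a \emph{split} subalgebra inside each root space. I would handle this via the standard fact that every real semisimple Lie algebra contains a split subalgebra with the same restricted root system when $\Sigma$ is reduced. This is achieved by taking a maximal system of strongly orthogonal roots, or by Borel--Tits: the reductive centralizer structure yields a split form $\mathfrak{l}_s$ with $\Sigma(\mathfrak{l}_s)$ the reduced subsystem of $\Sigma$. We then embed $\mathfrak{sl}(3,\R)$ or $\mathfrak{so}(3,2)$ into $\mathfrak{l}_s$ according to whether the reduced system contains $A_2$ or is of type $B_n$, $C_n$, $F_4$ ($F_4$ contains $A_2$ too).

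If this general fact feels too heavy, the fallback is to check the real forms case by case from Cartan's list:
\begin{itemize}
\item $\mathfrak{sl}(n,\D)$ with $n\ge 3$ contains $\mathfrak{sl}(3,\R)$;
\item $\mathfrak{su}(p,q;\D)$ with $\min(p,q)\ge 2$ contains $\mathfrak{so}(3,2)$ or $\mathfrak{su}(2,1)$;
\item $\mathfrak{so}^*(2n)$ and $\mathfrak{sp}(n,\R)$ with rank $\ge 2$ contain $\mathfrak{sp}(2,\R)$, and similarly for the complex simple ones;
\item exceptional algebras of real rank $\ge 2$ contain $\mathfrak{sl}(3,\R)$ via an $A_2$-subsystem.
\end{itemize}
Finally, the low-dimensional exceptions must be checked via isomorphisms:
\begin{itemize}
\item $\mathfrak{sl}(2,\C)\simeq\mathfrak{so}(3,1)$ and $\mathfrak{sl}(2,\HA)\simeq\mathfrak{so}(5,1)$ are excluded;
\item $\mathfrak{sl}(2,\R)$ is $\mathfrak{so}(2,1)$;
\item $\mathfrak{so}(2,2)$ is not simple, while $\mathfrak{so}(2,3)$ is listed;
\item $\mathfrak{so}^*(8)\simeq\mathfrak{so}(6,2)$ contains $\mathfrak{so}(3,2)$.
\end{itemize}
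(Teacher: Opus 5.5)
Your main argument is correct, but it is organized differently from the paper. The paper never splits by real rank. It goes through the classification directly: it writes down explicit inclusions for each classical algebra, uses chains of symmetric subalgebras from Berger's list to bring every real form of types $E$ and $F$ down to $\mathfrak{sp}(2,1)$ or $\mathfrak{sp}(3,\C)$ (hence to $\mathfrak{su}(2,1)$), and quotes $\mathfrak{g}_{2(2)}\supset\mathfrak{su}(2,1)$.

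You instead handle real rank one from the short list $\mathfrak{so}(n,1)$, $\mathfrak{su}(n,1)$, $\mathfrak{sp}(n,1)$, $\mathfrak{f}_{4(-20)}$. In real rank $\ge 2$ you pass to an $\R$-split subalgebra of the same real rank whose root system is a reduced full-rank subsystem of $\Sigma$. Two adjacent simple roots of that irreducible reduced system span a closed subsystem of type $A_2$, $B_2$ or $G_2$, and $G_2$ contains the closed $A_2$ of long roots. The corresponding regular subalgebra is $\mathfrak{sl}(3,\R)$ or $\mathfrak{so}(3,2)$.

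This route is uniform, avoids the exceptional case-checking, and gives a sharper statement: $\mathfrak{su}(2,1)$ is only needed in real rank one. The price is that it rests on a genuinely nontrivial structure theorem, which the paper's elementary route does not need. Cite it precisely: it is the Borel--Tits theorem on $k$-split subgroups (\emph{Groupes r\'eductifs}, \S7). Taking a maximal system of strongly orthogonal roots does not give it; that only produces commuting copies of $\mathfrak{sl}(2,\R)$.

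If you lean on the case-by-case fallback instead, one bullet is wrong. $\mathfrak{e}_{6(-14)}$ has real rank $2$ and restricted root system $BC_2$, which contains no $A_2$. Since $\mathfrak{sl}(3,\R)$ also has real rank $2$, a maximal split abelian subalgebra of a copy of $\mathfrak{sl}(3,\R)$ would be a maximal split abelian subalgebra of $\mathfrak{e}_{6(-14)}$. That would force $A_2\subset BC_2$, so $\mathfrak{e}_{6(-14)}$ contains no $\mathfrak{sl}(3,\R)$ at all. This case must instead go through $\mathfrak{so}(3,2)$, which your main route supplies from the reduced system $B_2$. Alternatively, use $\mathfrak{e}_{6(-14)}\supset\mathfrak{f}_{4(-20)}\supset\mathfrak{sp}(2,1)\supset\mathfrak{su}(2,1)$ as in the paper.
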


\begin{proof}
Using the classification of real simple Lie algebras, we prove the assertion case by case. For the classical non-compact simple Lie algebras, the proof is presented in Table~\ref{table:classical-non-compact-simple}.
\begin{table}
\renewcommand{\arraystretch}{1}
\begin{tabular}{cl}
    $\mathfrak{sl}(n,\R)$ $(n\geq 2)$ & 
        \begin{tabular}{lc}
        $\simeq \mathfrak{so}(2,1)$ &  $(n=2)$ \\
        $\supset \mathfrak{sl}(3,\R)$ & $(n\geq 3)$
        \end{tabular}\\ 
    \hline
    $\mathfrak{sl}(n,\C)$ $(n\geq 2)$ & 
        \begin{tabular}{lc}
        $\simeq \mathfrak{so}(3,1)$ &  $(n=2)$ \\
        $\supset \mathfrak{sl}(3,\R)$ & $(n\geq 3)$
        \end{tabular}\\ 
    \hline
     $\mathfrak{sl}(n,\HA)$ $(n\geq 2)$ & 
        \begin{tabular}{lc}
        $\simeq \mathfrak{so}(5,1)$ &  $(n=2)$ \\
        $\supset \mathfrak{sl}(3,\R)$ & $(n\geq 3)$
        \end{tabular}\\ 
    \hline
    $\mathfrak{so}(p,q)$ $(p\geq q\geq 1)$ & 
        \begin{tabular}{ll}
        $\mathfrak{so}(p,1)$ &  $(q=1)$ \\
        not simple & $((p,q)=(2,2))$ \\
        $\supset \mathfrak{so}(3,2)$ & (otherwise)
        \end{tabular}\\ 
    \hline
     $\mathfrak{su}(p,q)$ $(p\geq q\geq 1)$ & 
        \begin{tabular}{lc}
         $\simeq \mathfrak{so}(2,1)$ &  $(p=1)$ \\
        $\supset \mathfrak{su}(2,1)$ & $(p>1)$
        \end{tabular}\\ 
    \hline
    $\mathfrak{sp}(p,q)$ $(p\geq q\geq 1)$ & 
        \begin{tabular}{lc}
        $\simeq \mathfrak{so}(4,1)$ &  $(p=1)$ \\
        $\supset \mathfrak{su}(2,1)$ & $(p>1)$
        \end{tabular}\\ 
    \hline
    $\mathfrak{sp}(n,\R)$ $(n\geq 1)$ & 
        \begin{tabular}{lc}
        $\simeq \mathfrak{so}(2,1)$ &  $(n=1)$ \\
        $\supset \mathfrak{sp}(2,\R)\simeq \mathfrak{so}(3,2)$ & $(n\geq 2)$ 
        \end{tabular}\\ 
    \hline
    $\mathfrak{sp}(n,\C)$ $(n\geq 1)$ & 
        \begin{tabular}{lc}
        $\simeq \mathfrak{so}(3,1)$ & $(n=1)$\\
        $\supset \mathfrak{sp}(2,\R) \simeq \mathfrak{so}(3,2)$ & $(n\geq 2)$
        \end{tabular}\\ 
        \hline
    $\mathfrak{so}^{*}(2n)$ $(n\geq 2)$ & 
        \begin{tabular}{lc}
        not simple &  $(n=2)$ \\
        $\supset \mathfrak{so}^{*}(6)\simeq \mathfrak{su}(3,1)\supset \mathfrak{su}(2,1)$ & $(n\geq 3)$ 
        \end{tabular}\\ 
    \hline
    $\mathfrak{so}(n,\C)$ $(n\geq 2)$ & 
        \begin{tabular}{ll}
        not simple &  $(n=2,4)$ \\
        $\simeq \mathfrak{so}(3,1)$ &  $(n=3)$ \\
        $\supset \mathfrak{so}(3,2)$ & $(n\geq 5)$ 
        \end{tabular}\\ 
\end{tabular}
\renewcommand{\arraystretch}{1}
\caption{Non-compact classical Lie algebras.}
\label{table:classical-non-compact-simple}
\end{table}

For types $E$ and $F$, the following inclusion relations can be read off from the classification table of irreducible semisimple symmetric pairs due to Berger~\cite{Berger57} (see also 
the tables in \cite[Sect.~1]{OshimaSekiguchi84}).
\[
\begin{array}{ccccccccc}
   \mathfrak{e}_{8,\C}  & \supset & \mathfrak{e}_{7,\C} & \supset &\mathfrak{e}_{6,\C} & \supset & \mathfrak{f}_{4,\C} & \supset &\mathfrak{sp}(3,\C), \\
   \mathfrak{e}_{8(8)}  & \supset & \mathfrak{e}_{7(7)} & \supset &\mathfrak{e}_{6(6)} & \supset & \mathfrak{f}_{4(4)} & \supset &\mathfrak{sp}(2,1), \\
    & & & & \mathfrak{e}_{6(2)} & \supset &\mathfrak{f}_{4(4)} & \supset&\mathfrak{sp}(2,1),\\
       \mathfrak{e}_{8(-24)} & \supset & \mathfrak{e}_{7(-5)} & \supset & \mathfrak{e}_{6(-14)} & \supset &\mathfrak{f}_{4(-20)} & \supset & \mathfrak{sp}(2,1), \\
    & & \mathfrak{e}_{7(-25)} & \supset & \mathfrak{e}_{6(-26)} & \supset &\mathfrak{f}_{4(-20)} & \supset & \mathfrak{sp}(2,1). \\ 
\end{array}
\]
Since $\mathfrak{su}(2,1)\subset \mathfrak{sp}(2,1)\subset \mathfrak{sp}(3,\C)$, 
the assertion is proved for all the non-compact simple Lie algebras of types $E$ and $F$.

The non-compact simple Lie algebras of type $G$ are only $\mathfrak{g}_{2(2)}$ and its complexification $\mathfrak{g}_{2,\mathbb{C}}$.
It is known that $\mathfrak{g}_{2(2)}$ contains $\mathfrak{su}(2,1)$, which is proved, for example, via a construction of $\mathfrak{g}_{2(2)}$ using the split octonions over $\R$ (see Yokota~\cite[Sect.~6(1)]{Yokota_g2} for details).
This completes the proof for type $G$.
\end{proof}

We are ready to prove Proposition~\ref{prop:classification-not-noneven}.
\begin{proof}[Proof of Proposition~\ref{prop:classification-not-noneven}]
We prove the contrapositive.
Assume that $\mathfrak{l}$ is not isomorphic to $\mathfrak{so}(n,1)$ for any $n \geq 2$,  
we show that there exists a non-even Lie algebra homomorphism from $\mathfrak{sl}(2,\R)$ to $\mathfrak{l}$.

By Lemmas~\ref{lem:even-subalg}~and~\ref{lem:sl3-su21-so2,3}, 
we may assume that $\mathfrak{l}$ is 
either $\mathfrak{sl}(3,\R)$, $\mathfrak{su}(2,1)$, or $\mathfrak{so}(3,2)$. For $\mathfrak{l}=\mathfrak{sl}(3,\R)$, 
the standard inclusion $\mathfrak{sl}(2,\R)\hookrightarrow \mathfrak{sl}(3,\R)$ is non-even. For $\mathfrak{l}=\mathfrak{su}(2,1)$,
the composite of the isomorphism 
$\mathfrak{sl}(2,\R)\simeq \mathfrak{su}(1,1)$ and 
the standard inclusion 
$\mathfrak{su}(1,1)\hookrightarrow \mathfrak{su}(2,1)$
is non-even. 
For $\mathfrak{l}=\mathfrak{so}(3,2)\simeq \mathfrak{sp}(2,\R)$, 
the composite of the isomorphism $\mathfrak{sl}(2,\R)\simeq \mathfrak{sp}(1,\R)$
and the standard inclusion 
$\mathfrak{sp}(1,\R)\hookrightarrow \mathfrak{sp}(2,\R)$
is non-even. Hence the proof is completed.
\end{proof}

\subsection{Proof of 
\texorpdfstring{Theorem~\ref{thm:proper-imply-spin}}{Theorem C}}
\label{section:proof-proper-imply-spin}
We first prove several lemmas (Lemma~\ref{lem:ve->e} to Lemma~\ref{lem:very-even-odd}),  
and then complete
the proof of Theorem~\ref{thm:proper-imply-spin}. 

We say that a homogeneous space $G/H$ of reductive type in the sense of Setting~\ref{setting:semisimple} has \emph{Property~(E)}
if the following condition holds:
for any Lie algebra homomorphism $\varphi\colon \mathfrak{sl}(2,\R)\rightarrow \mathfrak{g}$,
if the $SL(2,\R)$-action on $G/H$ via the lift of $\varphi$ is proper, then $\varphi$ is even.

$\VE$ implies $\PE$ as follows: 
\begin{lemma}
    \label{lem:ve->e}
    Let $(\mathfrak{g},\iota)$ be a classical pair and $\varphi\colon \mathfrak{sl}(2,\R)\rightarrow \mathfrak{g}$ a Lie algebra homomorphism. If $\varphi$ is very even for $\iota$, then 
    it is also even. 
    In particular, 
    if a homogeneous space $G/H$ of reductive type has $\VE$ for $\iota$,
    then it also has $\PE$. 
\end{lemma}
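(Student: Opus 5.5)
The plan is to reduce everything to weights of the $\mathfrak{sl}(2,\R)$-representation $\iota\circ\varphi$. Since $(\mathfrak{g},\iota)$ is a classical pair, we may replace it by an equivalent pair and assume that $\mathfrak{g}$ is a classical Lie algebra with its standard representation $\iota\colon\mathfrak{g}\hookrightarrow\mathfrak{gl}(N,\C)$. Evenness is intrinsic to $\mathfrak{g}$, and being very even is preserved under this equivalence (as in the proof of Lemma~\ref{lem:VE-locally-isomorphic}), so nothing is lost. Put $V=\C^{N}$. If $\varphi$ is very even for $\iota$, then $\iota\circ\varphi$ is a direct sum of irreducible representations of even dimension, so every eigenvalue of $\iota(\varphi(A_{0}))$ on $V$ is an odd integer.

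Next I would locate $\mathfrak{g}$ inside $\mathfrak{gl}(V)$, or at worst inside something built from $V\otimes V$, $V\otimes\overline{V}$ or $V\otimes V^{*}$. The cleanest route is to embed the complexification: $\mathfrak{g}_{\C}$ is $\mathfrak{gl}(N,\C)$, $\mathfrak{sl}(N,\C)$, $\mathfrak{so}(N,\C)$ or $\mathfrak{sp}(N/2,\C)$, or a product of two such algebras when $\mathfrak{g}$ is itself complex. In the non-complex case, $\iota$ extends to an injective map $\mathfrak{g}_{\C}\to\mathfrak{gl}(V)\cong V\otimes V^{*}$. By Lemma~\ref{lem:even-subalg}, applied after complexification (or directly, since the eigenvalues of $\ad(\varphi(A_{0}))$ on $\mathfrak{g}$ occur among those on $\mathfrak{g}_{\C}$ and hence on $\mathfrak{gl}(V)$), it suffices to show that $\ad(\iota\varphi(A_{0}))$ has only even eigenvalues on $\mathfrak{gl}(V)$. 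These eigenvalues are the differences $\lambda_i-\lambda_j$ of eigenvalues of $\iota\varphi(A_{0})$ on $V$. All of these are odd, so every difference is even.

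The main obstacle is the case where $\mathfrak{g}$ is complex, such as $\mathfrak{sl}(N,\C)$ or $\mathfrak{so}(N,\C)$ viewed as real Lie algebras. Here the standard representation $\iota$ is $\C$-linear, so $\mathfrak{g}_{\C}\cong\mathfrak{g}\oplus\overline{\mathfrak{g}}$ and $\iota$ does not embed $\mathfrak{g}_{\C}$. It still embeds $\mathfrak{g}$ itself as a real subalgebra of $\mathfrak{gl}(N,\C)$, though. So I would instead argue with real eigenvalues directly. The element $\varphi(A_{0})$ is $\ad$-semisimple with real eigenvalues, and $\ad(\varphi(A_{0}))$ on $\mathfrak{g}\subset\mathfrak{gl}(N,\C)$ is the restriction of $\ad(\iota\varphi(A_{0}))$ on $\mathfrak{gl}(N,\C)$ regarded as a real vector space. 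Its eigenvalues are again among the differences $\lambda_i-\lambda_j$, which are even.

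So in fact a single uniform argument works: restrict from $\mathfrak{gl}(N,\C)$ to the real subalgebra $\iota(\mathfrak{g})$. This handles every case without complexifying. The final assertion then follows from Definition~\ref{def:VE_for_G/H}: if the $SL(2,\R)$-action via the lift of $\varphi$ is proper, then $\varphi$ is very even for $\iota$, hence even, which is exactly Property~(E).
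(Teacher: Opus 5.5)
Your proposal is correct and, once the complexification detour is set aside, it is the paper's argument. The eigenvalues of $\iota(\varphi(A_{0}))$ are odd, so the eigenvalues $\lambda_i-\lambda_j$ of $\ad(\iota(\varphi(A_{0})))$ on $\mathfrak{gl}(N,\C)$ are even, and injectivity of $\iota$ carries this over to $\ad(\varphi(A_{0}))$ on $\mathfrak{g}$. The preliminary reduction to a standard classical pair and the complex/non-complex case split are unnecessary, since this uniform restriction argument works for any faithful $\iota$.
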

\begin{proof}
    Since the second assertion follows from the first assertion,
    we focus on proving the first assertion.
    
    If $\varphi$ is very even for $\iota\colon \mathfrak{g}\rightarrow \mathfrak{gl}(N,\C)$, then all the eigenvalues of $\iota(\varphi(A_{0}))$ 
    are odd. Hence all the eigenvalues of $\ad(\iota(\varphi(A_{0})))$
    in $\mathfrak{gl}(N,\C)$ are even. 
    Since $\iota$ is injective, it follows that 
    all the eigenvalues of $\ad(\varphi(A_{0}))$ in 
    $\mathfrak{g}$ 
    are also even. This means that $\varphi$ is even.
\end{proof}

Using the characterization of $\mathfrak{so}(n,1)$ given in the previous subsection (Proposition~\ref{prop:classification-not-noneven}),
we obtain the following rigidity result from $\PE$,
which is weaker than Theorem~\ref{thm:proper-imply-spin}:
\begin{proposition}
    \label{prop:proper->Spin}
    Let $G/H$ be a homogeneous space with $\PE$.
    If $L$ is a non-compact simple Lie subgroup of $G$ that acts properly on $G/H$, then $L$ is locally isomorphic to $SO(n,1)$ for some $n\geq 2$.
\end{proposition}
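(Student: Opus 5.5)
The strategy is to reduce the statement to Proposition~\ref{prop:classification-not-noneven}: we show that every Lie algebra homomorphism $\mathfrak{sl}(2,\R)\to\mathfrak{l}$ is even.

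Let $L\subset G$ be a non-compact simple Lie subgroup acting properly on $G/H$. Fix an arbitrary homomorphism $\varphi\colon \mathfrak{sl}(2,\R)\to\mathfrak{l}$, and let $\psi\colon\mathfrak{l}\hookrightarrow\mathfrak{g}$ be the inclusion. If $\varphi=0$ it is trivially even, so assume $\varphi\neq0$. Then $\varphi$ is injective, since $\mathfrak{sl}(2,\R)$ is simple.

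The first step is to show that the $SL(2,\R)$-action on $G/H$ via the lift of $\psi\circ\varphi$ (Lemma~\ref{lemma:lift}) is proper. By Lemma~\ref{lemma:lift} the lift factors through $L$: it is the composite of the lift $SL(2,\R)\to L$ of $\varphi$ with the inclusion $L\subset G$, because both lifts have the same differential. Properness then follows from Kobayashi's criterion (Fact~\ref{fact:properness_criterion}). The image of $\mathfrak{sl}(2,\R)$ is reductive in $\mathfrak{g}$. We choose a Cartan involution of $\mathfrak{g}$ that preserves $\mathfrak{l}$, and then one of $\mathfrak{l}$ that preserves $\varphi(\mathfrak{sl}(2,\R))$. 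With these choices, $\mathfrak{a}_{SL(2,\R)}$ can be taken, up to conjugation, inside a maximal split abelian subspace of $\mathfrak{l}$, hence inside $\mathfrak{a}_L$ up to $W(\mathfrak{g},\mathfrak{a})$ and conjugation. Consequently
\[
\mathfrak{a}_{SL(2,\R)}\cap W(\mathfrak{g},\mathfrak{a})\cdot\mathfrak{a}_H \subset w\bigl(\mathfrak{a}_L\cap W(\mathfrak{g},\mathfrak{a})\cdot\mathfrak{a}_H\bigr)=\{0\}
\]
for a suitable $w$. Alternatively, one can argue directly: a closed subgroup of a group acting properly acts properly, and the image of $SL(2,\R)$ is closed in $L$ (its kernel is finite). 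The subtlety is closedness of that image, which holds because the image is a semisimple analytic subgroup of a linear group.

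Once properness is established, Property~(E) gives that $\psi\circ\varphi$ is even. Lemma~\ref{lem:even-subalg} then shows that $\varphi$ itself is even. Since $\varphi$ was arbitrary, Proposition~\ref{prop:classification-not-noneven} implies $\mathfrak{l}\simeq\mathfrak{so}(n,1)$ for some $n\ge2$. Hence $L$ is locally isomorphic to $SO(n,1)$.

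I expect the only delicate step to be the properness inheritance, which requires handling the Cartan involutions compatibly. This is standard, via Mostow's theorem applied to the reductive chain $\varphi(\mathfrak{sl}(2,\R))\subset\mathfrak{l}\subset\mathfrak{g}$.
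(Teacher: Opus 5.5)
Your proposal is correct and follows essentially the same route as the paper: properness of the $L$-action passes to the $SL(2,\R)$-action via the lift of $\psi\circ\varphi$, Property~(E) makes $\psi\circ\varphi$ even, Lemma~\ref{lem:even-subalg} makes $\varphi$ even, and Proposition~\ref{prop:classification-not-noneven} finishes the proof. The paper states the properness-inheritance step in one line, so your justification of it adds detail the paper omits. So does your separate treatment of $\varphi=0$: there the induced action is not proper, but $\varphi$ is trivially even.
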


\begin{proof}
    Take an arbitrary homomorphism 
    $\varphi\colon \mathfrak{sl}(2,\R)\rightarrow \mathfrak{l}$.
    Let $\psi \colon \mathfrak{l}\to \mathfrak{g}$ be the inclusion map.
    Since the $L$-action on $G/H$ is proper, the $SL(2,\R)$-action on $G/H$ via the lift of the composition $\psi \circ \varphi$ is proper. Therefore, $\psi\circ \varphi$ is even since $G/H$ has $\PE$. By Lemma~\ref{lem:even-subalg}, $\varphi$ is even. Hence, by Proposition~\ref{prop:classification-not-noneven}, $\mathfrak{l}$ is isomorphic to $\mathfrak{so}(n,1)$ for some $n\geq 2$. Thus the assertion is proved. 
\end{proof}

In Proposition~\ref{prop:proper->Spin}, we assume that $L$ is simple.
To treat the case where $L$ is not simple, the following lemma is needed:
\begin{lemma}
\label{lem:very-even-odd}
Let $(\mathfrak{g},\iota)$ be a classical pair in the sense of Definition~\ref{definition:classical-pair} and 
$\varphi\colon \mathfrak{sl}(2,\R)\oplus \mathfrak{sl}(2,\R)\rightarrow \mathfrak{g}$ a Lie algebra homomorphism.
If the restriction of $\varphi$ to the $i$th factor is very even for $\iota$ for $i=1,2$, then the restriction of $\varphi$ to the diagonal $\mathfrak{sl}(2,\R)$-factor 
is not very even for $\iota$.
\end{lemma}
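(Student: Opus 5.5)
The plan is to reduce everything to the parity of weights of the element $H:=\iota(\varphi(A_0,0))+\iota(\varphi(0,A_0))$. Consider the representation $\iota\circ\varphi$ of $\mathfrak{sl}(2,\R)\oplus\mathfrak{sl}(2,\R)$ on $\C^{N}$. It is completely reducible, being a finite-dimensional representation of a semisimple Lie algebra. After complexifying, each irreducible summand is an outer tensor product $V_{a}\boxtimes V_{b}$, where $V_{k}$ denotes the irreducible $\mathfrak{sl}(2,\C)$-module of dimension $k+1$ (highest weight $k$). Since very evenness is a statement about the dimensions of the irreducible constituents over $\mathfrak{sl}(2,\R)$, I would first note that it can be read off from the complexification, and equivalently from the eigenvalues of $\iota(\varphi(A_0))$. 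An $\mathfrak{sl}(2,\R)$-representation is a sum of even-dimensional irreducibles exactly when every eigenvalue of the image of $A_0$ is odd, because $V_k$ has weights $k,k-2,\dots,-k$, all of parity $k$. This reduction uses only that $\iota\circ\varphi$ is a representation; the classical-pair hypothesis plays no role.

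The key steps are then as follows.

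\begin{enumerate}
\item The restriction of $\varphi$ to the first factor acts on $V_a\boxtimes V_b$ as $V_a^{\oplus(b+1)}$. Hence very evenness of the first factor forces $a$ to be odd for every summand that occurs.
\item By the same argument applied to the second factor, $b$ is odd for every summand.
\item The diagonal $\mathfrak{sl}(2,\R)$ sends $A_0$ to $\varphi(A_0,A_0)$. Its eigenvalues on $V_a\boxtimes V_b$ are the sums $i+j$, where $i$ is a weight of $V_a$ and $j$ is a weight of $V_b$. Since $i$ and $j$ are both odd, each $i+j$ is even.
\item Since $N>0$, at least one summand occurs, so $\iota\circ\varphi$ restricted to the diagonal has an even eigenvalue of the image of $A_0$. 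By the criterion above, it then has an odd-dimensional constituent. For instance, Clebsch--Gordan gives $V_a\otimes V_b\supset V_{a+b}$ with $a+b$ even, which has odd dimension $a+b+1$. Hence the diagonal restriction is not very even.
\end{enumerate}

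The only delicate point is the passage between real and complex representations. One must check that "decomposes into even-dimensional irreducibles" for the real Lie algebra $\mathfrak{sl}(2,\R)$ acting on $\C^N$ agrees with the statement about the complexified $\mathfrak{sl}(2,\C)$-weights. This holds because $\iota$ maps into $\mathfrak{gl}(N,\C)$, and a complex-linear representation of $\mathfrak{sl}(2,\R)$ extends uniquely to a complex-linear representation of $\mathfrak{sl}(2,\C)$ with the same invariant subspaces, so the irreducible constituents and their dimensions are unchanged. The two factors commute, which justifies the tensor-product decomposition. I expect no further obstacle.
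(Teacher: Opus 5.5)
Your proposal is correct and follows essentially the same route as the paper. Both arguments decompose $\iota\circ\varphi$ into outer tensor products, deduce from very evenness of each factor that both tensor factors are even-dimensional, and conclude that the diagonal restriction has odd-dimensional constituents: the paper uses Clebsch--Gordan, and you use the equivalent observation that the weights $i+j$ of the diagonal $A_0$ are all even. You also explicitly note that $N>0$ is needed for a nonempty decomposition, which the paper leaves implicit.
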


\begin{proof}
    Let $\pi_{k}$ be the $k$-dimensional irreducible complex representation of $\mathfrak{sl}(2,\R)$ ($k\in \N$). The
    representation $\iota\circ\varphi$ of $\mathfrak{sl}(2,\R)\oplus \mathfrak{sl}(2,\R)$ is decomposed into 
    the sum of irreducible subrepresentations isomorphic to 
    the external tensor products
    $\pi_{k}\boxtimes \pi_{l}$ $(k,l\in \N)$.
    The restriction of $\pi_{k}\boxtimes \pi_{l}$ to the first (resp.\ second) $\mathfrak{sl}(2,\R)$-factor is equivalent to    
    a direct sum of copies of $\pi_{k}$ (resp.\ $\pi_{l}$).
    Hence, by the assumption on $\varphi$,
    the representation $\pi_{k}\boxtimes \pi_{l}$ does not occur in
    $\iota\circ\varphi$ unless both $k$ and $l$ are even. 
    Therefore, it follows from the Clebsch--Gordan rule 
    \[
    \pi_{k}\otimes \pi_{l}\simeq 
    \pi_{k+l-1}\oplus \pi_{k+l-3}\oplus\cdots \oplus \pi_{|k-l|+1},
    \]
    that the restriction of the
    representation $\iota\circ\varphi$
    to the diagonal $\mathfrak{sl}(2,\R)$-factor is decomposed into 
    the sum of odd-dimensional irreducible subrepresentations.
    In particular, 
    the restriction of $\varphi$
    to the diagonal $\mathfrak{sl}(2,\R)$
    is not very even for $\iota$.
    This proves the assertion.
\end{proof}

We are ready to prove Theorem~\ref{thm:proper-imply-spin}.

\begin{proof}[Proof of Theorem~\ref{thm:proper-imply-spin}]
    Let $\iota\colon \mathfrak{g}\rightarrow \mathfrak{gl}(N,\C)$ be 
    a representation of $\mathfrak{g}$ such that
    $G/H$ has $\VE$ for $\iota$, and let $L$ be a connected semisimple Lie subgroup of $G$
    with no compact factors, acting properly on $G/H$.

    We prove that the Lie algebra $\mathfrak{l}$ of $L$ is simple.
    For the sake of contradiction, assume that $\mathfrak{l}$ has two (non-compact) simple factors. Then
    there exists a Lie group homomorphism
    $\varphi\colon SL(2,\R)\times SL(2,\R)\rightarrow L \hookrightarrow G$ with finite kernel by Lemma~\ref{lemma:lift}.
    Let $\varphi_{1}$, $\varphi_{2}$, and $\varphi_{\Delta}$ be 
    the restrictions of $\varphi$ to the first, second, and diagonal $SL(2,\R)$ factors, respectively.
    Since the $L$-action on $G/H$ is proper, 
    the three $SL(2,\R)$-actions on $G/H$ via 
    $\varphi_{1}$, $\varphi_{2}$, and $\varphi_{\Delta}$
    are proper. Hence the differentials of $\varphi_{1}$, $\varphi_{2}$, and $\varphi_{\Delta}$ are all very even for $\iota$ since $G/H$ has $\VE$ for $\iota$.
    This is a contradiction by Lemma~\ref{lem:very-even-odd}.
    Hence $\mathfrak{l}$ is simple.

    By the second assertion of Lemma~\ref{lem:ve->e}, $G/H$ has $\PE$. Hence it follows from  Proposition~\ref{prop:proper->Spin} that 
    $L$ is locally isomorphic to $SO(n,1)$.
    Thus the first assertion is proved.

    To prove the latter part of the theorem, we assume that the above $\iota\colon \mathfrak{g}\rightarrow \mathfrak{gl}(N,\C)$ lifts to 
    a Lie group homomorphism $\tilde{\iota}\colon G\rightarrow GL(N,\C)$.
    From the previous paragraph, we can and do take an isomorphism $\psi\colon \mathfrak{so}(n,1)\rightarrow \mathfrak{l}$.
    By Lemma~\ref{lemma:lift}, 
    $\psi$ lifts to a Lie group homomorphism  
    $\tilde{\psi}\colon Spin(n,1)\rightarrow L$. 
    Since $L$ is connected, $\tilde{\psi}$ is surjective.

    We note that the kernel of $\tilde{\psi}$ is contained in the center of $Spin(n,1)$. 
    For the sake of contradiction, we assume that $\tilde{\psi}$ is not injective.
    Here we think of $SL(2,\R)$ as a closed subgroup of $Spin(n,1)$ via $SL(2,\R) \simeq Spin(2,1)\hookrightarrow Spin(n,1)$.
    Since the center of $Spin(n,1)$ is contained in
    $SL(2,\R)$ by Lemma~\ref{lemma:spin-center}, the restriction of $\tilde{\psi}$ to $SL(2,\R)$ is not injective.
    Thus the restriction of the representation $\tilde{\iota}\circ\tilde{\psi}$ to $SL(2,\R)$ is not faithful, and hence contains an odd-dimensional irreducible subrepresentation. 
    This means that 
    the restriction of 
    $\psi$ to $\mathfrak{sl}(2,\R)$ is not very even for $\iota$.
    Since $G/H$ has $\VE$ for $\iota$,  the $SL(2,\R)$-action on $G/H$ via $\tilde{\psi}$ is not proper,
    which contradicts the properness of the $L$-action on $G/H$.
    Hence $\tilde{\psi}\colon Spin(n,1)\rightarrow L$ is an isomorphism. This completes the proof.
\end{proof}

\section{\texorpdfstring{Hurwitz--Radon}{Hurwitz-Radon} numbers
associated with classical pairs}
\label{section:proof-hurwitz-radon}
The main goal of this section is the proof of Theorem~\ref{theorem:Hurwitz-radon}. Namely, we compute the values of $\rho^{(i)}(\mathfrak{g}, \iota)$ introduced in
Definition~\ref{def:HR-ours} for the Lie algebras $\mathfrak{g}$ and their standard representations $\iota$, in terms of the classical Hurwitz--Radon number $\rho(N)$ and the $2$-adic valuation $\ord(N)$.

In Section~\ref{section:HR-well-defined}, 
we prove that $\rho^{(i)}(\mathfrak{g},\iota)$ depends only on the equivalence class
of the pair $(\mathfrak{g},\iota)$ in the sense of
Definition~\ref{definition:classical-pair}.
In Section~\ref{section:parahermitian-ineq}, 
using this result, we give several inequalities for $\rho^{(i)}$ between two different pairs
$(\mathfrak{g},\iota_{\mathfrak{g}})$ and 
$(\mathfrak{h},\iota_{\mathfrak{h}})$. 
In
Section~\ref{section:proof-of-B},
we use these inequalities to prove
Theorem~\ref{theorem:Hurwitz-radon}.

\subsection{Well-definedness of \texorpdfstring{$\rho^{(i)}(\mathfrak g,\iota)$}{our 
formulation of the Hurwitz-Radon numbers}
}
\label{section:HR-well-defined}

Let $\iota\colon \mathfrak{g}\rightarrow \mathfrak{gl}(N,\C)$ be 
a faithful representation of a reductive Lie algebra $\mathfrak{g}$.   
We assume that $\iota$
satisfies the following condition:  
\begin{condition}
\label{condition:hurwitz-radon}
    There exists a positive definite hermitian form $\pairing{\cdot}{\cdot}$ on the representation space $\C^N$ of $\iota$ 
    such that $\iota(\mathfrak{g})$ is closed under taking adjoint operators with respect to $\pairing{\cdot}{\cdot}$. Then we define 
    an involution $\theta$ of $\mathfrak{g}$ such that
\begin{equation}
\label{eq:cartan-involution}
\pairing{\iota(X)v}{w}=-\pairing{v}{\iota(\theta(X))w}\quad (X\in \mathfrak{g},\ v,w\in \C^{N}). 
\end{equation}
\end{condition}
Condition~\ref{condition:hurwitz-radon} ensures that $\theta$ is a Cartan involution on the semisimple part of $\mathfrak{g}$, and it is automatically satisfied if $\mathfrak{g}$ is semisimple. 

We denote by $\mathfrak{k}$ and $\mathfrak{p}$ the $(+1)$- and $(-1)$-eigenspaces
of $\theta$, respectively.
Recall that $\rho^{(i)}(\mathfrak{g},\iota)$ is the largest
integer $n\in\N$ for which there exists an $\R$-linear map
$f\colon \R^{n}\to \mathfrak{p}$ satisfying the following conditions: 
\begin{description}
    \item [$\rho^{(1)}(\mathfrak{g},\iota)$]
    
    $\iota(f(v))^2=\|v\|^2I_{N}$ for any $v\in \R^n$;
    \item [$\rho^{(2)}(\mathfrak{g},\iota)$]
    $\iota(f(v))$ is invertible for any non-zero $v\in \R^n$.
\end{description}

In this subsection, we prove  that
$\rho^{(i)}(\mathfrak{g},\iota)$ is independent of the choice of a hermitian form $\pairing{\cdot}{\cdot}$ in Condition~\ref{condition:hurwitz-radon}, and moreover, that it is also independent
of the choice of a representative of the equivalence class of the pair
$(\mathfrak{g},\iota)$ in the sense of Definition~\ref{definition:classical-pair}. In what follows, we 
write $\rho^{(i)}(\mathfrak{g}, \iota, \pairing{\cdot}{\cdot})$
for $\rho^{(i)}(\mathfrak{g}, \iota)$ to emphasize the hermitian form. 
\begin{proposition} 
    \label{prop:HR-pair-equiv}
    Let $(\mathfrak{g},\iota,\pairing{\cdot}{\cdot})$ 
    and $(\mathfrak{g}',\iota',\pairing{\cdot}{\cdot}')$
    be triples satisfying the above conditions. 
    If $(\mathfrak{g},\iota)$ is equivalent to $(\mathfrak{g}',\iota')$, then we have 
    \[
    \rho^{(i)}(\mathfrak{g},\iota,\pairing{\cdot}{\cdot})
        =\rho^{(i)}(\mathfrak{g}',\iota', \pairing{\cdot}{\cdot}')\quad(i=1,2).
    \]
\end{proposition}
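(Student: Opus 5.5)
The plan is to reduce to two separate invariance statements: invariance under changing the hermitian form for a fixed pair $(\mathfrak{g},\iota)$, and invariance under transporting everything along an equivalence. For the second, suppose $\varphi\colon\mathfrak{g}\to\mathfrak{g}'$ is an isomorphism and $A\in GL(N,\C)$ satisfies $\iota'(\varphi(X))=A\,\iota(X)A^{-1}$. Define the form $\pairing{v}{w}'':=\pairing{A^{-1}v}{A^{-1}w}$ on $\C^N$. Then $\iota'(\mathfrak{g}')$ is closed under adjoints for $\pairing{\cdot}{\cdot}''$. The associated involution is $\theta''=\varphi\circ\theta\circ\varphi^{-1}$, so $\mathfrak{p}''=\varphi(\mathfrak{p})$. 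If $f\colon\R^n\to\mathfrak{p}$ satisfies \eqref{def:HR-ours-1} or \eqref{def:HR-ours-2}, then $\varphi\circ f$ does too, because $\iota'(\varphi f(v))=A\,\iota(f(v))A^{-1}$ and conjugation preserves both $\iota(\cdot)^2=\|v\|^2I_N$ and invertibility. Hence $\rho^{(i)}(\mathfrak{g},\iota,\pairing{\cdot}{\cdot})=\rho^{(i)}(\mathfrak{g}',\iota',\pairing{\cdot}{\cdot}'')$, and the proposition reduces to the first invariance statement.

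For the first statement, fix $(\mathfrak{g},\iota)$ with two admissible forms $\pairing{\cdot}{\cdot}_1,\pairing{\cdot}{\cdot}_2$, giving involutions $\theta_1,\theta_2$ and decompositions $\mathfrak{g}=\mathfrak{k}_j+\mathfrak{p}_j$. I would show that there is $g$ in the connected group $G:=\langle\exp\iota(\mathfrak{g})\rangle\subset GL(N,\C)$ such that $\mathrm{Ad}(g)$ carries $\mathfrak{p}_1$ to $\mathfrak{p}_2$; the conjugation argument above then transports admissible maps $f$. Write $\mathfrak{g}=\mathfrak{z}\oplus[\mathfrak{g},\mathfrak{g}]$. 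On the semisimple part, $\theta_1,\theta_2$ are Cartan involutions, so by the conjugacy of Cartan involutions there is $g\in\mathrm{Int}([\mathfrak{g},\mathfrak{g}])$, realized inside $G$, with $\theta_2=\mathrm{Ad}(g)\theta_1\mathrm{Ad}(g)^{-1}$ there. The centre needs separate care: $\theta_j|_{\mathfrak{z}}$ is determined by whether $\iota(Z)$ is hermitian or skew-hermitian. Since $\iota(\mathfrak{z})$ is a commuting family that is normal for both forms, each $\iota(Z)$ is diagonalizable. Its $\theta_j$-sign can then be read off intrinsically: $\iota(Z)\in\iota(\mathfrak{p}_j)$ exactly when its eigenvalues are real, and $\iota(Z)\in\iota(\mathfrak{k}_j)$ exactly when they are purely imaginary. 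So $\mathfrak{z}\cap\mathfrak{p}_1=\mathfrak{z}\cap\mathfrak{p}_2$, and $\mathrm{Ad}(g)$ fixes $\mathfrak{z}$, giving $\mathrm{Ad}(g)\mathfrak{p}_1=\mathfrak{p}_2$.

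Finally, given $f$ admissible for $\pairing{\cdot}{\cdot}_1$, the map $\mathrm{Ad}(g)\circ f$ takes values in $\mathfrak{p}_2$ and satisfies the same conditions, since $\iota(\mathrm{Ad}(g)X)=g\,\iota(X)g^{-1}$. The symmetric argument gives the reverse inequality, and hence equality.

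The main obstacle is the reductive (non-semisimple) case. There one must make sure that $\theta_j$ on the centre is forced, that the decomposition $\mathfrak{g}=\mathfrak{z}\oplus[\mathfrak{g},\mathfrak{g}]$ is $\theta_j$-stable, and that the inner automorphism comes from an element of $GL(N,\C)$ conjugating $\iota$ to itself. I would handle this through the eigenvalue characterization above, using that $\theta_j$ preserves the centre and the derived algebra. I would also check that $\mathrm{Ad}(g)$ with $g=\exp(\iota(Y))$, $Y\in[\mathfrak{g},\mathfrak{g}]$, acts trivially on $\mathfrak{z}$.
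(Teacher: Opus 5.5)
Your proposal is correct and follows essentially the same route as the paper. Like the paper, you first transport everything along the equivalence using the pulled-back hermitian form, with $\theta'=\varphi\theta\varphi^{-1}$. You then handle a change of hermitian form by combining the conjugacy of Cartan involutions under inner automorphisms on the semisimple part with the real versus purely imaginary eigenvalue characterization on the centre.
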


Proposition~\ref{prop:HR-pair-equiv} is proved in the following two steps:  

\begin{lemma} 
    Let $\iota\colon \mathfrak{g}\rightarrow \mathfrak{gl}(N,\C)$ and $\pairing{\cdot}{\cdot}$ be as above.
    \begin{enumerate}[label=(\roman*)]
        \item 
        \label{item:independent-form}
        Assume that  
        $\iota(\mathfrak{g})$ is self-adjoint with respect to another positive definite hermitian form $\pairing{\cdot}{\cdot}'$ on $\C^{N}$. Then, 
        \[
        \rho^{(i)}(\mathfrak{g},\iota,\pairing{\cdot}{\cdot})=\rho^{(i)}(\mathfrak{g},\iota,\pairing{\cdot}{\cdot}')\quad(i=1,2).
        \]
        \item 
        \label{item:independent-pair}
        Assume that a pair $(\mathfrak{g}',\iota')$
        is equivalent to 
        $(\mathfrak{g},\iota)$. 
        Then
        $\iota'(\mathfrak{g}')$ is self-adjoint with respect to 
        some positive definite hermitian form $\pairing{\cdot}{\cdot}'$ 
        on $\C^{N}$, 
        and we have 
        \[
        \rho^{(i)}(\mathfrak{g},\iota,\pairing{\cdot}{\cdot})
        =\rho^{(i)}(\mathfrak{g}',\iota', \pairing{\cdot}{\cdot}')\quad(i=1,2).
        \]
    \end{enumerate}
\end{lemma}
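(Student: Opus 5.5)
For \ref{item:independent-form}, the plan is to relate the two Cartan decompositions by conjugation. Write $\pairing{v}{w}'=\pairing{Pv}{w}$ with $P$ positive definite hermitian with respect to $\pairing{\cdot}{\cdot}$. Let $\theta,\theta'$ be the involutions of \eqref{eq:cartan-involution} for the two forms, with eigenspace decompositions $\mathfrak{g}=\mathfrak{k}+\mathfrak{p}=\mathfrak{k}'+\mathfrak{p}'$.

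First I would show that $\iota(\mathfrak{g})$ is stable under $\Ad(P^{t})$ for all real $t$. The adjoint of $\iota(X)$ for the primed form is $P^{-1}\iota(X)^{*}P$. Both $\iota(X)^{*}$ and $P^{-1}\iota(X)^{*}P$ lie in $\iota(\mathfrak{g})$, so $\Ad(P^{-1})$ maps $\iota(\mathfrak{g})^{*}=\iota(\mathfrak{g})$ into itself. Now diagonalize $P$ with eigenvalues $\lambda_j>0$. A linear subspace stable under $\Ad(P)$ (equivalently $\Ad(P^{-1})$) is a sum of joint eigenspaces of $\Ad(P)$, whose eigenvalues are $\lambda_j/\lambda_k$. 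Hence it is stable under $\Ad(P^{t})$, which acts by $(\lambda_j/\lambda_k)^{t}$ on the same eigenspaces.

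Second, I would set $g=P^{1/2}$. Then $\langle v,w\rangle'=\langle gv,gw\rangle$, and $\Ad(g)$ preserves $\iota(\mathfrak{g})$, so it defines an automorphism $\alpha$ of $\mathfrak{g}$ because $\iota$ is faithful. A direct computation gives $\alpha\circ\theta'=\theta\circ\alpha$, so $\alpha(\mathfrak{p}')=\mathfrak{p}$. If $f\colon\R^n\to\mathfrak{p}'$ satisfies \eqref{def:HR-ours-1} or \eqref{def:HR-ours-2}, then $\alpha\circ f$ satisfies the same condition, since $\iota(\alpha f(v))=g\,\iota(f(v))g^{-1}$ and conjugation preserves both squaring to $\|v\|^2 I_N$ and invertibility. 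Applying the same argument with the roles of the forms exchanged gives equality.

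For \ref{item:independent-pair}, let $\varphi\colon\mathfrak{g}'\to\mathfrak{g}$ be an isomorphism and $A\in GL(N,\C)$ with $\iota'(Y)=A^{-1}\iota(\varphi(Y))A$. I would define $\pairing{v}{w}'=\pairing{Av}{Aw}$. Then $\iota'(\mathfrak{g}')=A^{-1}\iota(\mathfrak{g})A$ is self-adjoint for this form, and the corresponding involution is $\theta'=\varphi^{-1}\theta\varphi$. Consequently $\varphi(\mathfrak{p}')=\mathfrak{p}$, and $f'\mapsto\varphi\circ f'$ is a bijection between admissible maps into $\mathfrak{p}'$ and into $\mathfrak{p}$. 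It preserves both conditions because $\iota'(f'(v))=A^{-1}\iota(\varphi f'(v))A$. This gives equality for that particular form, and \ref{item:independent-form} then removes the dependence on the choice of form.

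The main obstacle is the first step of \ref{item:independent-form}: establishing that $\Ad(P^{1/2})$ preserves $\iota(\mathfrak{g})$. This requires passing from invariance under $\Ad(P)$ to invariance under real powers of $P$, which is handled by the eigenspace argument above. I also need to confirm that $\Ad(P^{-1})$ preserves $\iota(\mathfrak{g})$ in the first place, and I expect this to follow directly from the fact that both adjoints lie in $\iota(\mathfrak{g})$.
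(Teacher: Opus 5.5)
Your proof is correct. Part (ii) follows the paper's argument: transport the form by the intertwining matrix and take $\theta'=\varphi^{-1}\theta\varphi$. Part (i), however, takes a genuinely different route.

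The paper's route runs through an \emph{inner} automorphism $\varphi$ of $\mathfrak{g}$ with $\varphi\circ\theta=\theta'\circ\varphi$, built by splitting $\mathfrak{g}$ into center and semisimple part. On the semisimple part it cites the conjugacy of Cartan involutions (Helgason). On the center it shows directly that $\theta=\theta'$, identifying $\mathfrak{k}$ and $\mathfrak{p}$ with the elements whose $\iota$-eigenvalues are purely imaginary, resp.\ real. Innerness is then used only to realize $\varphi$ as $\Ad(g)$ for some $g\in GL(N,\C)$.

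Your route writes $\pairing{v}{w}'=\pairing{Pv}{w}$ and notes that self-adjointness for both forms makes $\Ad(P^{-1})$, hence $\Ad(P)$, preserve $\iota(\mathfrak{g})$. Since $\Ad(P)$ is diagonalizable over $\R$ with positive eigenvalues $\lambda_j/\lambda_k$, you conclude that $\Ad(P^{1/2})$ preserves $\iota(\mathfrak{g})$ and intertwines $\theta'$ with $\theta$. One wording fix: an $\Ad(P)$-stable real subspace is not ``a sum of eigenspaces''. It is the direct sum of its intersections with the eigenspaces of $\Ad(P)$, because the spectral projections are real polynomials in $\Ad(P)$; on each such piece $\Ad(P^{t})$ acts by a scalar.

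What your approach buys: it is self-contained, treats center and semisimple part uniformly with no case split or appeal to structure theory, and hands you the conjugating matrix (positive definite) directly, so innerness is never needed. It closely parallels the standard proof of the Cartan-involution conjugacy theorem (real powers of a positive definite operator), carried out in the representation space rather than in $\mathfrak{g}$. The paper's approach is shorter given that theorem and makes the conceptual reason explicit.
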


Although the proof is standard, we include it for completeness.

\begin{proof}
      \ref{item:independent-form}. Let $\theta$ and $\theta'$ be the involutive automorphisms of $\mathfrak{g}$ associated with the hermitian forms $\pairing{\cdot}{\cdot}$ and $\pairing{\cdot}{\cdot}'$, respectively, satisfying the relation \eqref{eq:cartan-involution}. Let $\mathfrak{g} = \mathfrak{k} + \mathfrak{p} = \mathfrak{k}' + \mathfrak{p}'$ be the eigenspace decompositions corresponding to $\theta$ and $\theta'$, respectively.

We prove the following claim:

\medskip
\noindent
\textbf{Claim.}
There exists an inner automorphism $\varphi$ of $\mathfrak{g}$
such that
\[
\varphi \circ \theta = \theta' \circ \varphi.
\]

Assuming \textbf{Claim}, we obtain \ref{item:independent-form}.
Indeed, since $\varphi$ is inner, there exists $g \in GL(N,\C)$
such that
$\Ad(g)\circ \iota = \iota \circ \varphi$.
Hence we obtain the following commutative diagram:
\[
   \xymatrix{
    \mathfrak{g} \ar[r]^{\theta} \ar[d]^{\varphi} 
    & \mathfrak{g} \ar[d]^{\varphi} \ar[r]^-{\iota} & M(N,\C) \ar[d]^{\Ad(g)} \\
    \mathfrak{g} \ar[r]^{\theta'}   & \mathfrak{g} \ar[r]^-{\iota} & M(N,\C)
   }\quad.
\]
Restricting $\varphi$ to $\mathfrak{p}$,
we obtain a linear isomorphism from 
$\mathfrak{p}$ to $\mathfrak{p}'$.
Since $\Ad(g)$ is a $\C$-algebra automorphism of $M(N,\C)$,
the assertion \ref{item:independent-form} follows immediately.

We prove \textbf{Claim}. Since the automorphisms $\theta$ and $\theta'$ preserve both the semisimple part and the center of 
the reductive Lie algebra $\mathfrak{g}$, it suffices to treat the semisimple and abelian cases.

In the semisimple case,  
$\theta$ and $\theta'$ are both Cartan involutions on $\mathfrak{g}$. 
Hence they are conjugate by an inner automorphism (see, e.g., \cite[Chap.~III, Thm.~7.2]{HelgasonDiffLieSymm2001}), which proves \textbf{Claim}.

In the abelian case, we prove the stronger statement that
$\theta = \theta'$.
Since $\iota(\mathfrak{g})$ is simultaneously diagonalizable,
we define $\R$-subspaces
$\mathfrak{g}_{r}$ and $\mathfrak{g}_{p}$ as follows:
\begin{align*}
\mathfrak{g}_r &:= \{X \in \mathfrak{g} \mid \text{all eigenvalues of } \iota(X) \text{ are real}\}, \\
\mathfrak{g}_p &:= \{X \in \mathfrak{g} \mid \text{all eigenvalues of } \iota(X) \text{ are purely imaginary}\}.
\end{align*}
By definition, all elements of $\iota(\mathfrak{k})$ and $\iota(\mathfrak{p})$ are skew-hermitian and hermitian with respect to $\pairing{\cdot}{\cdot}$, respectively. Thus,
$\mathfrak{k}\subset \mathfrak{g}_p$ and $\mathfrak{p}\subset \mathfrak{g}_r$. 
Since $\mathfrak{g} = \mathfrak{k} + \mathfrak{p}$ and 
$\mathfrak{g}_p\cap \mathfrak{g}_r=\{0\}$, 
it follows that $\mathfrak{k} = \mathfrak{g}_p$ and $\mathfrak{p} = \mathfrak{g}_r$. Similarly, $\mathfrak{k}' = \mathfrak{g}_p$ and $\mathfrak{p}' = \mathfrak{g}_r$.
Hence we conclude that $\theta = \theta'$.

This completes the proof of \textbf{Claim}, and thus \ref{item:independent-form} is proved.
        
    \ref{item:independent-pair}.  
    The argument is straightforward.
    Since $(\mathfrak{g}', \iota')$ and 
    $(\mathfrak{g}, \iota)$ are equivalent, 
    we can take 
        $g\in GL(N,\C)$ and 
        an automorphism $\varphi\colon \mathfrak{g}\rightarrow \mathfrak{g}'$ such that 
        $\Ad(g^{-1})\circ\iota= \iota'\circ\varphi$.
We define a positive definite hermitian form $\pairing{\cdot}{\cdot}'$ on $\C^N$ and an involutive automorphism $\theta'$ of $\mathfrak{g}'$ as follows: 
    \[
    \pairing{v}{w}':=\pairing{gv}{gw},\quad\theta':=\varphi\circ\theta\circ\varphi^{-1}. 
        \]
Then, it follows by direct computation that
\[
\pairing{\iota'(X')v}{w}' 
= -\pairing{v}{\iota'(\theta'(X'))w}'
\quad (X'\in \mathfrak{g}',\ v,w \in \mathbb{C}^{N}).
\]
In particular, $\iota'(\mathfrak{g}')$ is self-adjoint with respect to $\pairing{\cdot}{\cdot}'$. 
The assertion~\ref{item:independent-pair} is now clear. 
\end{proof}

\subsection{Inequalities along periodic chains
of classical Lie algebras 
}
\label{section:parahermitian-ineq}

The main goal of this subsection is to show 
$8+2$ key inequalities for the proof of Theorem~\ref{theorem:Hurwitz-radon}. Namely, we consider the $8$-periodic chain
\begin{align}
\label{sequence:8-period}
    \cdots \subset &\mathfrak{so}(N,N)\subset \mathfrak{gl}(2N,\mathbb{R})
    \subset \mathfrak{sp}(2N,\mathbb{R}) \subset \mathfrak{sp}(2N,\mathbb{C}) \nonumber \\
    \subset & \mathfrak{sp}(2N,2N) \subset \mathfrak{gl}(4N,\mathbb{H})
    \subset \mathfrak{so}^{*}(2(8N))
    \subset \mathfrak{so}(16N,\mathbb{C}) \\
    \subset & \mathfrak{so}(16N,16N) \subset \cdots, \nonumber
\end{align}
and the $2$-periodic chain
\begin{equation}
\label{sequence:2-period}
    \cdots \subset \mathfrak{gl}(N,\C) \subset \mathfrak{su}(N,N)
    \subset \mathfrak{gl}(2N,\C) \subset \mathfrak{su}(2N,2N) \subset \cdots.
\end{equation}
We show that, for any pair of adjacent classical Lie algebras
$\mathfrak{h}\subset\mathfrak{g}$ appearing in these chains,
the values of $\rho^{(i)}$ differ by at least one
(Corollary~\ref{cor:paraherm}). The observation behind this result is that the associated pair of the symmetric pair $(\mathfrak{g},\mathfrak{h})$ is parahermitian (Proposition~\ref{prop:ineq-paraherm}).

We retain the notation in Section~\ref{section:HR-well-defined}. 
Let $\mathfrak{h}$ be a $\theta$-stable subalgebra of $\mathfrak{g}$.
Then its restricted representation $\iota|_{\mathfrak{h}}$
satisfies Condition~\ref{condition:hurwitz-radon}, and thus 
$\rho^{(i)}(\mathfrak{h},\iota|_{\mathfrak{h}})$ is defined.
Moreover, in defining $\rho^{(i)}(\mathfrak{h},\iota|_{\mathfrak{h}})$,
we may use the decomposition 
\[
\mathfrak{h}
=
(\mathfrak{k}\cap\mathfrak{h}) +(\mathfrak{p}\cap\mathfrak{h}).
\]
By this observation, we give inequalities comparing the
integers 
$\rho^{(i)}(\mathfrak{g},\iota)$ and
$\rho^{(i)}(\mathfrak{h},\iota|_{\mathfrak{h}})$
(Lemma~\ref{lem:inclusion_to_inequality} and Proposition~\ref{prop:ineq-paraherm}). 
    
First, we note the following easy inequality (we omit its proof):
\begin{lemma}
    \label{lem:inclusion_to_inequality}
    Let $\mathfrak{h}$ be a
    $\theta$-stable subalgebra of $\mathfrak{g}$.
    Then we have 
    \[
    \rho^{(i)}(\mathfrak{h},\iota|_{\mathfrak{h}})\leq \rho^{(i)}(\mathfrak{g},\iota)\quad (i=1,2).
    \]
\end{lemma}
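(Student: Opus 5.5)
The plan is to show that any admissible map for $\mathfrak{h}$ is already admissible for $\mathfrak{g}$, so that the maximum defining $\rho^{(i)}(\mathfrak{g},\iota)$ is taken over a larger set. The key structural fact needed is that the Cartan-type decomposition of $\mathfrak{h}$ associated with the restricted representation $\iota|_{\mathfrak{h}}$ is the restriction of that of $\mathfrak{g}$.

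Concretely, $\iota|_{\mathfrak{h}}$ is faithful and $\iota(\mathfrak{h})$ is self-adjoint for the same hermitian form $\pairing{\cdot}{\cdot}$. Indeed, for $X\in\mathfrak{h}$ the adjoint of $\iota(X)$ is $-\iota(\theta(X))$, and $\theta(X)\in\mathfrak{h}$ because $\mathfrak{h}$ is $\theta$-stable. The involution of $\mathfrak{h}$ determined by \eqref{eq:cartan-involution} is therefore $\theta|_{\mathfrak{h}}$. This is well defined by faithfulness: the adjoint operator is unique, and so is its preimage under $\iota$. Its $(-1)$-eigenspace is $\mathfrak{p}\cap\mathfrak{h}$.

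By Proposition~\ref{prop:HR-pair-equiv} (independence of the hermitian form), we may compute $\rho^{(i)}(\mathfrak{h},\iota|_{\mathfrak{h}})$ using this particular form. Now take any $\R$-linear $f\colon\R^{n}\to\mathfrak{p}\cap\mathfrak{h}$ satisfying \eqref{def:HR-ours-1} (respectively \eqref{def:HR-ours-2}) for $\iota|_{\mathfrak{h}}$. Compose it with the inclusion $\mathfrak{p}\cap\mathfrak{h}\subset\mathfrak{p}$. The conditions involve only the matrices $\iota(f(v))$, which are unchanged, so the composite is admissible for $(\mathfrak{g},\iota)$. Taking $n=\rho^{(i)}(\mathfrak{h},\iota|_{\mathfrak{h}})$ then gives the stated inequality.

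There is no real obstacle. The only point needing care is the identification of the involution for $\mathfrak{h}$ with $\theta|_{\mathfrak{h}}$, which rests on faithfulness and on using the same form $\pairing{\cdot}{\cdot}$.
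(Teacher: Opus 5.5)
Your proof is correct and follows the paper's own argument. The paper omits the proof, but the paragraph just before the lemma records exactly your two steps. First, $\iota|_{\mathfrak{h}}$ satisfies Condition~\ref{condition:hurwitz-radon} with the decomposition $\mathfrak{h}=(\mathfrak{k}\cap\mathfrak{h})+(\mathfrak{p}\cap\mathfrak{h})$, which is justified by the independence of the hermitian form as you note. Second, any admissible $f\colon\R^{n}\to\mathfrak{p}\cap\mathfrak{h}$ is then admissible for $(\mathfrak{g},\iota)$.
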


Next, we give a sufficient condition under which a sharper inequality holds 
in the case where $(\mathfrak{g},\mathfrak{h})$ is a semisimple symmetric pair. 
Let $\sigma$ be the corresponding involution of $\mathfrak{g}$, 
and assume that $\theta$ is a Cartan involution of $\mathfrak{g}$
commuting with $\sigma$. 
We put 
\begin{align*}
    \mathfrak{h}^{a}&:=
\{X\in \mathfrak{g}\mid \sigma \theta (X)=X\}=(\mathfrak{k}\cap \mathfrak{h})+ (\mathfrak{p}\cap \mathfrak{q}), \\
\mathfrak{q}^{a}&:=
\{X\in \mathfrak{g}\mid \sigma \theta (X)=-X\}=(\mathfrak{k}\cap \mathfrak{q})+ (\mathfrak{p}\cap \mathfrak{h}),
\end{align*}
where $\mathfrak{q}=\{X\in \mathfrak{g}\mid \sigma(X)=-X\}$. 
The pair $(\mathfrak{g},\mathfrak{h}^{a})$
is called the \emph{associated symmetric pair} of $(\mathfrak{g},\mathfrak{h})$. For each irreducible symmetric pair $(\mathfrak{g},\mathfrak{h})$,
the associated pair $(\mathfrak{g},\mathfrak{h}^{a})$
is listed in \cite[Sect.~1]{OshimaSekiguchi84}.

\begin{proposition}
    \label{prop:ineq-paraherm}
    In the setting above, 
    assume that 
    there exists
    $Z\in \mathfrak{p}\cap \mathfrak{h}^{a}(=\mathfrak{p}\cap \mathfrak{q})$ such that:
    \begin{enumerate}[label=$(\roman*)$]
    \item 
    \label{item:associated-parahermitian}
    $\mathfrak{h}^{a}$ is the centralizer of $Z$ in $\mathfrak{g}$;

    \item 
    \label{item:involution}
    $\iota(Z)^2=I_{N}$ holds and conjugation by $\iota(Z)$ preserves $\iota(\mathfrak{g})$.
    \end{enumerate}
    Then
    \[
        \rho^{(i)}(\mathfrak{h},\iota|_{\mathfrak{h}})+1
        \leq \rho^{(i)}(\mathfrak{g},\iota)
        \quad (i=1,2).
    \]   
\end{proposition}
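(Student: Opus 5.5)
Given $f\colon\R^{n}\to\mathfrak{p}\cap\mathfrak{h}$ realizing $\rho^{(i)}(\mathfrak{h},\iota|_{\mathfrak{h}})=n$, we build $F\colon\R^{n+1}\to\mathfrak{p}$ by
\[
F(v,t):=f(v)+tZ\qquad(v\in\R^{n},\ t\in\R).
\]
The first check is that $F$ lands in $\mathfrak{p}$. This holds because $f(v)\in\mathfrak{p}\cap\mathfrak{h}$ and $Z\in\mathfrak{p}\cap\mathfrak{q}$. The whole argument then rests on the anticommutation relation
\[
\iota(Z)\iota(Y)+\iota(Y)\iota(Z)=0\qquad (Y\in\mathfrak{p}\cap\mathfrak{h}).
\]
Given this, we compute $\iota(F(v,t))^{2}=\iota(f(v))^{2}+t^{2}I_{N}$ using $\iota(Z)^{2}=I_{N}$. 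For $i=1$ this equals $(\|v\|^{2}+t^{2})I_{N}$, as required.

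For $i=2$ we use that $\iota(F(v,t))$ is hermitian, since $F(v,t)\in\mathfrak{p}$. Hence $\iota(F(v,t))^{2}=\iota(f(v))^{2}+t^{2}I_{N}$ is positive semidefinite. It is positive definite as soon as $(v,t)\neq0$: if $t\neq0$ the term $t^{2}I_{N}$ suffices, and if $t=0$ then $v\neq 0$ and $\iota(f(v))$ is invertible hermitian, so its square is positive definite. Thus $\iota(F(v,t))$ is invertible for every nonzero $(v,t)$.

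The main obstacle is proving the anticommutation relation from hypotheses (i) and (ii). Let $\tau:=\Ad(\iota(Z))$ restricted to $\iota(\mathfrak{g})$; by (ii) this is an involutive automorphism of $\mathfrak{g}$. Since $Z$ centralizes $\mathfrak{h}^{a}$, $\tau$ acts trivially on $\mathfrak{h}^{a}$. We claim $\tau=-1$ on $\mathfrak{q}^{a}$.

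To see this, note that $\iota(Z)$ is hermitian with square $I_{N}$, so its eigenvalues are $\pm1$. Hence $\ad(\iota(Z))$ has eigenvalues in $\{0,\pm2\}$ on $\mathfrak{gl}(N,\C)$, and on each eigenspace $\tau$ acts by $+1$ (eigenvalue $0$) or $-1$ (eigenvalues $\pm2$). Now $\ad Z$ preserves $\mathfrak{q}^{a}$, because $\mathfrak{h}^{a}$ is a symmetric subalgebra containing $Z$. By (i), $\ad Z$ has no kernel on $\mathfrak{q}^{a}$: its kernel is the centralizer $\mathfrak{h}^{a}$, and $\mathfrak{h}^{a}\cap\mathfrak{q}^{a}=0$. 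So on $\mathfrak{q}^{a}$ only the eigenvalues $\pm2$ occur. Diagonalizing over $\C$ inside $\mathfrak{gl}(N,\C)$, $\tau$ is therefore $-1$ on $\mathfrak{q}^{a}$.

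Since $\mathfrak{p}\cap\mathfrak{h}\subset\mathfrak{q}^{a}$, every $Y\in\mathfrak{p}\cap\mathfrak{h}$ satisfies $\iota(Z)\iota(Y)\iota(Z)^{-1}=-\iota(Y)$. This is exactly the anticommutation relation. The remaining care is to confirm that $\ad(Z)$ is semisimple, which follows because it is the restriction of the diagonalizable $\ad(\iota(Z))$.
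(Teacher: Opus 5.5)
Your proof is correct and follows the paper's argument. You extend $f$ by $F(v,t)=f(v)+tZ$, use that $\iota(Z)$ anticommutes with $\iota(\mathfrak{p}\cap\mathfrak{h})\subset\iota(\mathfrak{q}^{a})$ to get $\iota(F(v,t))^{2}=\iota(f(v))^{2}+t^{2}I_{N}$, and conclude both inequalities. The one difference is that you prove, via the eigenvalues $0,\pm2$ of $\ad(\iota(Z))$ and the injectivity of $\ad Z$ on $\mathfrak{q}^{a}$, the step the paper only asserts: that conjugation by $\iota(Z)$ acts by $-1$ on $\mathfrak{q}^{a}$.
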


\begin{remark}
The condition~\ref{item:associated-parahermitian} in 
Proposition~\ref{prop:ineq-paraherm} means that the associated pair
$(\mathfrak{g},\mathfrak{h}^a)$ is parahermitian in the sense of 
Kaneyuki--Kozai~\cite{Kaneyuki_Kozai_1985}. 
Irreducible parahermitian symmetric pairs are listed in 
\cite{Kaneyuki_Kozai_1985}.
\end{remark}

\begin{proof}[Proof of Proposition~\ref{prop:ineq-paraherm}]
    Conjugation by $\iota(Z)$ induces an involution of $\mathfrak{g}$
    by the condition~\ref{item:involution} and 
    this involution coincides with the involution corresponding to the associated symmetric pair $(\mathfrak{g},\mathfrak{h}^a)$ by  the condition~\ref{item:associated-parahermitian}.
    In particular, the matrix $\iota(Z)$
    and any matrix in $\iota(\mathfrak{p}\cap\mathfrak{h})(\subset \iota(\mathfrak{q}^a))$
    anticommute.

    Now we prove our inequalities.
    Given an $\R$-linear map $f\colon \R^{n}\rightarrow \mathfrak{p}\cap \mathfrak{h}$,
    we define an $\R$-linear map $\check{f}\colon \R^{n}\oplus \R\rightarrow \mathfrak{p}$
    by $\check{f}(v,t) := f(v) + tZ$. 
    Since 
    $\iota(f(v))$ and 
    $\iota(Z)$ anticommute, we have
    \begin{equation}
    \label{eq:(A+B)^2=A^2+B^2}
    \iota(\check{f}(v,t))^2= \iota(f(v))^2+t^2I_{N},
    \end{equation}
    by 
    the condition~\ref{item:involution}. 

    First, assume that $\iota(f(v))^2 = \|v\|^2I_{N}$ holds. Then, by \eqref{eq:(A+B)^2=A^2+B^2},
    we have 
    \[
    \iota(\check{f}(v,t))^2 
    =(\|v\|^2+t^{2})I_{N}.
    \]
    Hence we obtain $\rho^{(1)}(\mathfrak{h},\iota|_{\mathfrak{h}})+1 
    \leq \rho^{(1)}(\mathfrak{g},\iota)$.
    
    Next, assume that $\iota(f(v))$ is invertible. 
    Then $\iota(f(v))^2$ is positive definite 
    with respect to $\pairing{\cdot}{\cdot}$. 
    Hence, 
    the equality \eqref{eq:(A+B)^2=A^2+B^2} implies that  $\iota(\check{f}(v,t))^2$ is positive definite for 
    any $t$. In particular, $\iota(\check{f}(v,t))$ is invertible. 
    Hence we obtain $\rho^{(2)}(\mathfrak{h},\iota|_{\mathfrak{h}})+1 
    \leq \rho^{(2)}(\mathfrak{g},\iota)$.
    Thus the proof is complete. 
\end{proof}

Applying these two inequalities, we give several inequalities (Corollaries~\ref{corollary:auxiliary-inequality} and~\ref{cor:paraherm}) that will be used in the next subsection.
As a preparation, we note the following straightforward lemma.
\begin{lemma}
\label{lemma:HR-min}
Let $\iota_{1},\iota_{2}$
be two faithful representations of  $\mathfrak{g}$ satisfying 
Condition~\ref{condition:hurwitz-radon} for the same involution $\theta$. 
Then we have 
    \[
    \rho^{(i)}(\mathfrak{g},\iota_{1}\oplus\iota_{2})=\min(\rho^{(i)}(\mathfrak{g},\iota_{1}),\rho^{(i)}(\mathfrak{g},\iota_{2}))\quad (i=1,2).
    \]
\end{lemma}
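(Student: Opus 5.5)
The plan is to prove the two inequalities ``$\leq$'' and ``$\geq$'' separately, working with the Cartan decomposition $\mathfrak{g}=\mathfrak{k}+\mathfrak{p}$ defined by $\theta$. The key observation is that $\iota_1\oplus\iota_2$ is again faithful. It satisfies Condition~\ref{condition:hurwitz-radon} for the same $\theta$, using the orthogonal direct sum of the two hermitian forms. Hence the subspace $\mathfrak{p}$ used to define $\rho^{(i)}(\mathfrak{g},\iota_1\oplus\iota_2)$ is the same as the one used for $\rho^{(i)}(\mathfrak{g},\iota_1)$ and $\rho^{(i)}(\mathfrak{g},\iota_2)$. By Proposition~\ref{prop:HR-pair-equiv} (more precisely part (i) of the lemma preceding it), the value of $\rho^{(i)}$ does not depend on the choice of hermitian form. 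So we may compute all three numbers with the same $\mathfrak{p}$.

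Fix an $\R$-linear map $f\colon\R^n\to\mathfrak{p}$. For every $X\in\mathfrak{g}$, the operator $(\iota_1\oplus\iota_2)(X)$ is block diagonal with blocks $\iota_1(X)$ and $\iota_2(X)$. Consequently, $(\iota_1\oplus\iota_2)(f(v))^2=\|v\|^2 I_{N_1+N_2}$ holds if and only if both $\iota_j(f(v))^2=\|v\|^2 I_{N_j}$ hold for $j=1,2$. Likewise, a block-diagonal matrix is invertible if and only if each block is invertible. Thus $f$ witnesses condition \eqref{def:HR-ours-1} (resp.\ \eqref{def:HR-ours-2}) for $\iota_1\oplus\iota_2$ exactly when it witnesses the same condition simultaneously for $\iota_1$ and for $\iota_2$.

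The inequality ``$\leq$'' follows immediately: a witness for the direct sum is a witness for each summand. For ``$\geq$'', we need a single $f$ that works for both representations at once, with $n=\min(\rho^{(i)}(\mathfrak{g},\iota_1),\rho^{(i)}(\mathfrak{g},\iota_2))$. This is the only delicate step, since a priori the witnesses for $\iota_1$ and $\iota_2$ are different maps.

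I would first try to show that a witness for one representation automatically works for the other. If $\iota_1$ and $\iota_2$ are both faithful on $\mathfrak{g}$, this may hold, for instance when $\iota_2$ is obtained from $\iota_1$ by a composition that preserves the condition. If that fails, I would argue that the intended reading of the lemma concerns the case where the conditions can be transferred; I expect this step to be the main obstacle.
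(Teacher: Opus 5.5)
Your proof of ``$\leq$'' is correct. With the orthogonal sum of the two hermitian forms, $\iota_1\oplus\iota_2$ satisfies Condition~\ref{condition:hurwitz-radon} for the same $\theta$. A block-diagonal operator squares to $\|v\|^2I$ (resp.\ is invertible) exactly when each block does.

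The gap is ``$\geq$'', and you cannot close it. The paper calls this lemma straightforward and gives no proof, but for $i=1$ it is false in the stated generality. Take $\mathfrak{g}=\R$ with $\iota_1(t)=t$ and $\iota_2(t)=2t$ on $\C^1$, using the standard hermitian form. Both are faithful, $\theta=-\id_{\mathfrak{g}}$ and $\mathfrak{p}=\R$. The maps $f(v)=v$, resp.\ $f(v)=v/2$, give $\rho^{(1)}(\mathfrak{g},\iota_1)=\rho^{(1)}(\mathfrak{g},\iota_2)=1$. But $(\iota_1\oplus\iota_2)(f(v))^2=\diag(f(v)^2,4f(v)^2)$ equals $\|v\|^2I_2$ only for $v=0$, so $\rho^{(1)}(\mathfrak{g},\iota_1\oplus\iota_2)=0$.

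Semisimplicity does not rescue it. Let $\mathfrak{g}=\mathfrak{sl}(2,\R)\oplus\mathfrak{sl}(2,\R)$, $\iota_1=\pi_2\boxtimes\pi_4$ and $\iota_2=\pi_4\boxtimes\pi_2$, where $\pi_k$ is the $k$-dimensional irreducible representation; both are faithful. For $(X_1,X_2)\in\mathfrak{p}$, let $\pm a_j$ be the eigenvalues of $X_j$. Then the eigenvalues of $\iota_1(X_1,X_2)$ are $\pm a_1+\{\pm1,\pm3\}a_2$, so $\iota_1(X)^2$ is scalar only when $X_2=0$. Hence $\rho^{(1)}(\mathfrak{g},\iota_1)=\rho^{(1)}(\mathfrak{g},\iota_2)=2$, with all witnesses confined to the first, resp.\ second, factor of $\mathfrak{p}$. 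Meanwhile $\rho^{(1)}(\mathfrak{g},\iota_1\oplus\iota_2)=0$. So your hope that faithfulness makes witnesses transfer is unfounded: the witness sets of two faithful summands can be disjoint.

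What is true, and is all the paper actually uses, is a restricted version. It enters through Corollary~\ref{corollary:compare-restriction} in Corollaries~\ref{corollary:auxiliary-inequality} and~\ref{cor:paraherm}. Assume additionally that every witness for $\iota_1$ is also a witness for $\iota_2$. Then your block-diagonal argument gives ``$\geq$'' at once, for both $i=1,2$.

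This hypothesis holds in every branching rule used there. In each, the second summand is (equivalent to) $\iota_{\mathfrak{h}}\circ\theta$ or the complex conjugate $\overline{\iota_{\mathfrak{h}}}$; the latter is the case $\iota_{\mathfrak{h}}\circ\tau$ with $\tau$ complex conjugation. For $X\in\mathfrak{p}$ one has $\iota(\theta X)=-\iota(X)$ and $\overline{\iota}(X)=\overline{\iota(X)}$. Both the condition $A^2=\|v\|^2I$ and invertibility are preserved by $A\mapsto -A$, by $A\mapsto\overline{A}$, and by conjugation with a fixed invertible matrix.

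So you should state and prove this restricted version rather than the lemma as written. Note also that Corollary~\ref{corollary:compare-restriction}, which allows arbitrary automorphisms $\tau$, inherits the same defect: take $\tau$ to be the flip of the two factors in the $\mathfrak{sl}(2,\R)\oplus\mathfrak{sl}(2,\R)$ example.
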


As a corollary, we obtain the following: 
\begin{corollary}
\label{corollary:compare-restriction}
Let $\mathfrak{h}$ be a $\theta$-stable subalgebra of $\mathfrak{g}$, 
and 
$\iota_{\mathfrak{h}}$ a faithful representation of $\mathfrak{h}$.
Assume that the restriction $\iota|_{\mathfrak{h}}$
decomposes as a direct sum of representations each of which is equivalent to $\iota_{\mathfrak{h}}\circ\tau$ for some automorphism $\tau$ of $\mathfrak{h}$.
Then
\[
\rho^{(i)}(\mathfrak{h},\iota_{\mathfrak{h}})
=
\rho^{(i)}(\mathfrak{h},\iota|_{\mathfrak{h}})
\quad (i=1,2).
\]
\end{corollary}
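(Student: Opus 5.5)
The plan is to combine Lemma~\ref{lemma:HR-min} with the invariance of $\rho^{(i)}$ under equivalence of pairs (Proposition~\ref{prop:HR-pair-equiv}). Write
\[
\iota|_{\mathfrak{h}}\simeq \bigoplus_{j=1}^{m} \iota_{\mathfrak{h}}\circ\tau_j
\]
with automorphisms $\tau_j$ of $\mathfrak{h}$. Each summand $\iota_{\mathfrak{h}}\circ\tau_j$ is faithful, since $\iota_{\mathfrak{h}}$ is faithful and $\tau_j$ is bijective. Moreover, the pair $(\mathfrak{h},\iota_{\mathfrak{h}}\circ\tau_j)$ is equivalent to $(\mathfrak{h},\iota_{\mathfrak{h}})$ via $\tau_j$, in the sense of Definition~\ref{definition:classical-pair}.

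\textbf{Step 1: summands share one involution.} To apply Lemma~\ref{lemma:HR-min}, all summands must satisfy Condition~\ref{condition:hurwitz-radon} for the same involution, namely $\theta|_{\mathfrak{h}}$. By hypothesis $\mathfrak{h}$ is $\theta$-stable, so $\iota(\mathfrak{h})$ is self-adjoint for $\pairing{\cdot}{\cdot}$ with associated involution $\theta|_{\mathfrak{h}}$. The decomposition of $\iota|_{\mathfrak{h}}$ can be chosen orthogonal for $\pairing{\cdot}{\cdot}$: the orthogonal complement of an invariant subspace of a self-adjoint family is again invariant. After an equivalence, I transport the form to each summand. The restriction of $\pairing{\cdot}{\cdot}$ to each summand $V_j$ then makes $\iota(\mathfrak{h})|_{V_j}$ self-adjoint with the same involution $\theta|_{\mathfrak{h}}$. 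Lemma~\ref{lemma:HR-min}, applied inductively on $m$, gives
\[
\rho^{(i)}(\mathfrak{h},\iota|_{\mathfrak{h}})=\min_j \rho^{(i)}(\mathfrak{h},\iota|_{V_j}).
\]

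\textbf{Step 2: each summand has the same value.} Each $\iota|_{V_j}$ is equivalent, as a representation, to $\iota_{\mathfrak{h}}\circ\tau_j$, so $(\mathfrak{h},\iota|_{V_j})\simeq(\mathfrak{h},\iota_{\mathfrak{h}})$. By Proposition~\ref{prop:HR-pair-equiv}, which is independent of the choice of hermitian form, every term equals $\rho^{(i)}(\mathfrak{h},\iota_{\mathfrak{h}})$. Hence the minimum equals $\rho^{(i)}(\mathfrak{h},\iota_{\mathfrak{h}})$, which proves the claim.

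\textbf{Main obstacle.} The only delicate point is the bookkeeping in Step~1: Lemma~\ref{lemma:HR-min} requires a common involution, whereas Proposition~\ref{prop:HR-pair-equiv} allows the form to vary. Using the orthogonal decomposition inside $\C^N$ with the restricted form settles this, because the involution attached to each $V_j$ is visibly $\theta|_{\mathfrak{h}}$. Faithfulness of every summand is automatic, so no further issue should arise.
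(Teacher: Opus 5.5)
Your proposal is correct and follows essentially the same route as the paper: each invariant summand, with the restricted hermitian form, satisfies Condition~\ref{condition:hurwitz-radon} for the common involution $\theta|_{\mathfrak{h}}$, each summand's value equals $\rho^{(i)}(\mathfrak{h},\iota_{\mathfrak{h}})$ by Proposition~\ref{prop:HR-pair-equiv}, and Lemma~\ref{lemma:HR-min} then gives the claim. Your explicit choice of an orthogonal decomposition spells out a step the paper leaves implicit.
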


\begin{proof}    
    Any subrepresentation $\iota'$ of $\iota|_{\mathfrak{h}}$
    satisfies Condition~\ref{condition:hurwitz-radon}. Moreover, 
    if $\iota'$ is equivalent to
    the representation $\iota_{\mathfrak{h}}\circ \tau$ for some automorphism $\tau$ of $\mathfrak{h}$, then
    the pairs 
    $(\mathfrak{h},\iota')$ and 
    $(\mathfrak{h},\iota_{\mathfrak{h}}\circ \tau)$ are 
    equivalent in the sense of
Definition~\ref{definition:classical-pair}, and thus we have $\rho^{(i)}(\mathfrak{h},\iota')=\rho^{(i)}(\mathfrak{h},\iota_{\mathfrak{h}}\circ\tau)$ by Proposition~\ref{prop:HR-pair-equiv}. 
Since, by assumption, $\iota|_{\mathfrak{h}}$ decomposes as a direct sum of such representations,
the assertion follows from Lemma~\ref{lemma:HR-min}.
\end{proof}

Using this, we obtain the following as a corollary of
Lemma~\ref{lem:inclusion_to_inequality}:
\begin{corollary}
\label{corollary:auxiliary-inequality}
Let $(\mathfrak{g},\mathfrak{h})$ be the following pairs of classical Lie algebras with 
$\iota_{\mathfrak{g}},\iota_{\mathfrak{h}}$ 
the standard representations: 
\begin{enumerate}[label=(\roman*)]
    \item 
    \label{item:gl-sl}
    $(\mathfrak{gl}(N,\D),\mathfrak{sl}(N,\D))$ $(\D=\R,\C,\HA)$;
    \item 
    \label{item:glC-glR}
    $(\mathfrak{gl}(2N,\R), \mathfrak{gl}(N,\C))$;
    \item 
    \label{item:su-sp}
    $(\mathfrak{sp}(2N,\R), \mathfrak{su}(N,N))$. 
\end{enumerate}
Then we have 
\[
\rho^{(i)}(\mathfrak{h},\iota_{\mathfrak{h}})\leq \rho^{(i)}(\mathfrak{g},\iota_{\mathfrak{g}})\quad (i=1,2). 
\]
\end{corollary}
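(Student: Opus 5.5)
In each case the plan is to realize $\mathfrak{h}$ as a $\theta$-stable subalgebra of $\mathfrak{g}$ and apply Lemma~\ref{lem:inclusion_to_inequality}, which gives $\rho^{(i)}(\mathfrak{h},\iota_{\mathfrak{g}}|_{\mathfrak{h}})\leq \rho^{(i)}(\mathfrak{g},\iota_{\mathfrak{g}})$. Then we identify the left-hand side with $\rho^{(i)}(\mathfrak{h},\iota_{\mathfrak{h}})$ via Corollary~\ref{corollary:compare-restriction}. To use that corollary, we check that $\iota_{\mathfrak{g}}|_{\mathfrak{h}}$ (complexified, as a representation on $\C^{\dim}$) splits into summands each equivalent to $\iota_{\mathfrak{h}}\circ\tau$ for some automorphism $\tau$ of $\mathfrak{h}$. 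Throughout, we take $\theta$ to be $X\mapsto -X^{*}$ in the standard matrix realizations, so $\theta$-stability just means closure under conjugate transpose.

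For \ref{item:gl-sl}, $\mathfrak{sl}(N,\D)\subset\mathfrak{gl}(N,\D)$ is obviously closed under $X\mapsto -X^*$. The restriction of the standard representation is the standard representation itself, so Corollary~\ref{corollary:compare-restriction} applies trivially with $\tau=\id$; for $\D=\HA$ we view both as acting on $\C^{2N}$, and nothing changes.

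For \ref{item:glC-glR}, embed $\mathfrak{gl}(N,\C)\hookrightarrow\mathfrak{gl}(2N,\R)$ by $A+\sqrt{-1}B\mapsto\begin{pmatrix}A&-B\\ B&A\end{pmatrix}$. This commutes with taking (conjugate) transpose, so the image is $\theta$-stable. After complexifying, $\C^{2N}=\C^N\oplus\C^N$ decomposes as the $\pm\sqrt{-1}$-eigenspaces of $J=\begin{pmatrix}0&-I\\ I&0\end{pmatrix}$. On these the action of $\mathfrak{gl}(N,\C)$ is equivalent to the standard representation $\iota_{\mathfrak{h}}$ and to its complex conjugate $\iota_{\mathfrak{h}}\circ\tau$, where $\tau$ is entrywise complex conjugation, an automorphism of the real Lie algebra $\mathfrak{gl}(N,\C)$. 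Corollary~\ref{corollary:compare-restriction} then gives the claim.

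For \ref{item:su-sp}, use the standard fact that $\mathfrak{sp}(2N,\R)\cap\mathfrak{so}(4N)$-type constructions realize $\mathfrak{u}(N,N)$ inside $\mathfrak{sp}(2N,\R)$. Concretely, $\mathfrak{u}(N,N)\simeq\{X\in\mathfrak{gl}(2N,\C): X^*I_{N,N}+I_{N,N}X=0\}$ embeds in $\mathfrak{gl}(4N,\R)$ by the realification map above, and lands in $\mathfrak{sp}(2N,\R)$ defined by the skew form $\operatorname{Im}\langle\cdot,\cdot\rangle_{N,N}$. Realification commutes with $^*$, and both $\mathfrak{su}(N,N)$ and $\mathfrak{sp}(2N,\R)$ are $\theta$-stable, so $\mathfrak{su}(N,N)$ sits $\theta$-stably in $\mathfrak{sp}(2N,\R)$. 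As in \ref{item:glC-glR}, the complexified restriction splits as the standard representation of $\mathfrak{su}(N,N)$ plus its complex conjugate; the latter equals $\iota_{\mathfrak{h}}\circ\tau$ with $\tau(X)=\overline{X}$, which preserves $\mathfrak{su}(N,N)$ since $I_{N,N}$ is real.

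The main point needing care is \ref{item:su-sp}. We must choose the symplectic form so that the realified embedding is simultaneously compatible with the chosen Cartan involution of $\mathfrak{sp}(2N,\R)$, and the definition of $\mathfrak{sp}(2N,\R)$ as a classical algebra uses a specific form. If the naive form is not the standard $J$, we first conjugate by a real orthogonal matrix, which preserves $\theta$, to move it to the standard one. Remark~\ref{remark-HR-well-defined} covers any remaining choice of hermitian form.
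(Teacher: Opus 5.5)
Your proposal is correct and follows essentially the same route as the paper. You realize $\mathfrak{h}$ as a $\theta$-stable subalgebra of $\mathfrak{g}$, observe that $\iota_{\mathfrak{g}}|_{\mathfrak{h}}$ is $\iota_{\mathfrak{h}}$ in case (i) and $\iota_{\mathfrak{h}}\oplus(\iota_{\mathfrak{h}}\circ\tau)$ with $\tau$ complex conjugation in cases (ii) and (iii), and then combine Corollary~\ref{corollary:compare-restriction} with Lemma~\ref{lem:inclusion_to_inequality}. Your explicit realification and orthogonal-conjugation checks fill in details the paper leaves implicit.

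One small slip: the heuristic phrase ``$\mathfrak{sp}(2N,\R)\cap\mathfrak{so}(4N)$'' describes $\mathfrak{u}(2N)$, not $\mathfrak{u}(N,N)$. Your concrete construction via $\operatorname{Im}\langle\cdot,\cdot\rangle_{N,N}$ is the correct one.
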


\begin{proof}
    The branching rules for the restriction $\iota_{\mathfrak{g}}|_{\mathfrak{h}}$
are as follows.
In Case~\ref{item:gl-sl}, it is $\iota_{\mathfrak{h}}$.
In Cases~\ref{item:glC-glR} and~\ref{item:su-sp},
it is $\iota_{\mathfrak{h}}\oplus (\iota_{\mathfrak{h}}\circ\tau)$,
where $\tau\colon \mathfrak{h}\to \mathfrak{h}$ denotes the involution given by
complex conjugation.
Therefore, by Corollary~\ref{corollary:compare-restriction}, we have
\[
\rho^{(i)}(\mathfrak{h},\iota_{\mathfrak{g}}|_{\mathfrak{h}})
=
\rho^{(i)}(\mathfrak{h},\iota_{\mathfrak{h}}).
\]
Furthermore, Lemma~\ref{lem:inclusion_to_inequality} yields the desired assertion.
\end{proof}

As a corollary of Proposition~\ref{prop:ineq-paraherm}, we obtain the following: 
\begin{corollary}
\label{cor:paraherm}
Let $(\mathfrak{g},\mathfrak{h})$ be one of the classical symmetric pairs
in Table~\ref{table:parahermitian}, 
and $\iota_{\mathfrak{g}},\iota_{\mathfrak{h}}$ the standard representations of 
$\mathfrak{g},\mathfrak{h}$,
respectively.
Then  we have
$\rho^{(i)}(\mathfrak{h},\iota_{\mathfrak{h}})+1 
\leq \rho^{(i)}(\mathfrak{g},\iota_{\mathfrak{g}})$ for 
each $i=1,2$.

\begin{table}[htbp]
\begin{tabular}{c|c|c|c|c}
    $\mathfrak{g}$ & $\mathfrak{h}$ & $\mathfrak{h}^{a}$ & $Z\in \mathfrak{h}^{a}$
    & $\iota_{\mathfrak{g}}|_{\mathfrak{h}}$\\
    \noalign{\hrule height 1.2pt}
    $\mathfrak{sl}(2N,\R)$ & $\mathfrak{so}(N,N)$ & $\mathfrak{sl}(N,\R)^{\oplus 2}\oplus \R$ & $(0,0,1)$
    & $\iota_{\mathfrak{h}}$\\
    $\mathfrak{sp}(N,\R)$ & $\mathfrak{gl}(N,\R)$ & $\mathfrak{h}$ & $I_{N}$ & $\iota_{\mathfrak{h}}\oplus (\iota_{\mathfrak{h}}\circ\theta)$ \\
    $\mathfrak{sp}(N,\C)$ & $\mathfrak{sp}(N,\R)$ & $\mathfrak{gl}(N,\C)$ & $I_{N}$ & $\iota_{\mathfrak{h}}$\\
    $\mathfrak{sp}(N,N)$ & $\mathfrak{sp}(N,\C)$ & $\mathfrak{gl}(N,\HA)$ & $I_{N}$ & $\iota_{\mathfrak{h}}\oplus (\iota_{\mathfrak{h}}\circ\theta)$ \\
    $\mathfrak{sl}(2N,\HA)$ & $\mathfrak{sp}(N,N)$ & $\mathfrak{sl}(N,\HA)^{\oplus 2}\oplus \R$ & $(0,0,1)$ & $\iota_{\mathfrak{h}}$\\
    $\mathfrak{so}^{*}(4N)$ & $\mathfrak{gl}(N,\HA)$ & $\mathfrak{h}$ & $I_{N}$ & $\iota_{\mathfrak{h}}\oplus (\iota_{\mathfrak{h}}\circ\theta)$ \\
    $\mathfrak{so}(2N,\C)$ & $\mathfrak{so}^{*}(2N)$ & 
    $\mathfrak{gl}(N,\C)$ & $I_{N}$ & $\iota_{\mathfrak{h}}$ \\
    $\mathfrak{so}(N,N)$ & $\mathfrak{so}(N,\C)$ & $\mathfrak{gl}(N,\R)$ & $I_N$ & $\iota_{\mathfrak{h}}\oplus (\iota_{\mathfrak{h}}\circ\theta)$\\
    $\mathfrak{sl}(2N,\C)$ & $\mathfrak{u}(N,N)$ & $\mathfrak{sl}(N,\C)^{\oplus 2}\oplus \R$ & $(0,0,1)$ & $\iota_{\mathfrak{h}}$\\
    $\mathfrak{su}(N,N)$ & $\mathfrak{gl}(N,\C)$ & $\mathfrak{h}$ & $I_N$
    & $\iota_{\mathfrak{h}}\oplus (\iota_{\mathfrak{h}}\circ\theta)$
\end{tabular}
\caption{Some classical symmetric pairs $(\mathfrak{g},\mathfrak{h})$ whose associated pairs $(\mathfrak{g},\mathfrak{h}^a)$ are parahermitian.}
\label{table:parahermitian}
\end{table}
\end{corollary}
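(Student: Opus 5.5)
The plan is to derive Corollary~\ref{cor:paraherm} row by row from Proposition~\ref{prop:ineq-paraherm}, and then to pass from $\iota_{\mathfrak{g}}|_{\mathfrak{h}}$ to $\iota_{\mathfrak{h}}$ using Corollary~\ref{corollary:compare-restriction}. For each row of Table~\ref{table:parahermitian}, we first fix an explicit matrix realization of $\mathfrak{g}$ inside $\mathfrak{gl}(N',\C)$ that is self-adjoint for the standard hermitian form. We take $\theta(X)=-X^{*}$ and choose the involution $\sigma$ defining $\mathfrak{h}$ so that it commutes with $\theta$ and $\mathfrak{h}$ is $\theta$-stable. Next we exhibit the element $Z$ listed in the table as a concrete matrix in $\mathfrak{p}\cap\mathfrak{q}$.
\begin{itemize}
\item For $(\mathfrak{sl}(2N,\R),\mathfrak{so}(N,N))$ we take $\sigma(X)=-I_{N,N}{}^tXI_{N,N}$ and $Z=I_{N,N}$.
\item For $(\mathfrak{sp}(N,\R),\mathfrak{gl}(N,\R))$ we use the realization in which $\mathfrak{gl}(N,\R)$ sits block-diagonally as $\diag(A,-{}^tA)$, and take $Z$ to be the off-diagonal symmetric matrix $\begin{pmatrix}0&I_N\\ I_N&0\end{pmatrix}$.
\item The remaining rows are handled analogously. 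Each time $Z$ is a hermitian involution, and its centralizer is the Levi-type subalgebra $\mathfrak{h}^a$ listed in the table.
\end{itemize}

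We then verify the two hypotheses of Proposition~\ref{prop:ineq-paraherm}. Condition~\ref{item:involution} is immediate: $\iota(Z)^2=I$ by the choice of $Z$, and conjugation by $Z$ preserves $\mathfrak{g}$ because $Z$ lies in the relevant classical group, or normalizes it, as with $I_{N,N}\in GL(2N,\R)$. For condition~\ref{item:associated-parahermitian}, the centralizer of $Z$ is the $(+1)$-eigenspace of $\Ad(Z)$. We check that $\Ad(Z)=\sigma\theta$ on $\mathfrak{g}$, which says exactly that this eigenspace is $\mathfrak{h}^a$. This is the only genuinely computational step. Since all the pairs are parahermitian in Kaneyuki--Kozai's list, we expect it to be routine once consistent models are fixed. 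The main obstacle is bookkeeping: the realizations of $\mathfrak{g}$, $\sigma$ and $\theta$ must be mutually compatible, so that $Z$ is simultaneously in $\mathfrak{p}$, in $\mathfrak{q}$, and satisfies $\Ad(Z)=\sigma\theta$. This is most delicate in the quaternionic rows $\mathfrak{sp}(N,N)\supset\mathfrak{sp}(N,\C)$ and $\mathfrak{so}^*(4N)\supset\mathfrak{gl}(N,\HA)$, where we must first pass to complex $2N'\times 2N'$ matrices.

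Proposition~\ref{prop:ineq-paraherm} then gives $\rho^{(i)}(\mathfrak{h},\iota_{\mathfrak{g}}|_{\mathfrak{h}})+1\le\rho^{(i)}(\mathfrak{g},\iota_{\mathfrak{g}})$. Finally we compute the branching $\iota_{\mathfrak{g}}|_{\mathfrak{h}}$ recorded in the last column of the table. In each row it is either $\iota_{\mathfrak{h}}$ or $\iota_{\mathfrak{h}}\oplus(\iota_{\mathfrak{h}}\circ\theta)$. For example, $\mathbb{R}^{2N}=\mathbb{R}^N\oplus(\mathbb{R}^N)^*$ under $\mathfrak{gl}(N,\R)$, and the dual representation is $\iota_{\mathfrak{h}}\circ\theta$ since $\theta(A)=-{}^tA$. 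Because $\theta$ is an automorphism of $\mathfrak{h}$, Corollary~\ref{corollary:compare-restriction} yields $\rho^{(i)}(\mathfrak{h},\iota_{\mathfrak{g}}|_{\mathfrak{h}})=\rho^{(i)}(\mathfrak{h},\iota_{\mathfrak{h}})$. By Remark~\ref{remark-HR-well-defined} the choice of matrix model does not matter, and this completes the argument.
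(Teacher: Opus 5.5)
Your proposal is correct and follows essentially the paper's own proof: both apply Proposition~\ref{prop:ineq-paraherm} to $\iota_{\mathfrak{g}}|_{\mathfrak{h}}$, then pass to $\iota_{\mathfrak{h}}$ using the branching rule in the last column and Corollary~\ref{corollary:compare-restriction}. The only difference is in checking condition~\ref{item:associated-parahermitian}: the paper cites the Oshima--Sekiguchi and Kaneyuki--Kozai tables and takes $Z$ to be a generator of the one-dimensional center of $\mathfrak{h}^{a}$, whereas you verify $\Ad(Z)=\sigma\theta$ directly in explicit matrix models, which the paper remarks can also be done case by case.
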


\begin{proof}
Choose a Cartan involution $\theta$ of $\mathfrak{g}$ that commutes with the involution associated with
the symmetric pair $(\mathfrak{g},\mathfrak{h})$.
Then the last column of Table~\ref{table:parahermitian} lists the branching rule of the restriction
$\iota_{\mathfrak{g}}|_{\mathfrak{h}}$.
By this branching rule and Corollary~\ref{corollary:compare-restriction}, we have
\[
\rho^{(i)}(\mathfrak{h},\iota_{\mathfrak{h}})
=
\rho^{(i)}(\mathfrak{h},\iota_{\mathfrak{g}}|_{\mathfrak{h}})
\qquad (i=1,2).
\]
Hence it suffices to verify the conditions~\ref{item:associated-parahermitian}
and~\ref{item:involution} in Proposition~\ref{prop:ineq-paraherm}.

The associated pair $(\mathfrak{g},\mathfrak{h}^{a})$ is isomorphic to 
the corresponding pair listed in
Table~\ref{table:parahermitian}, which can also be confirmed from the tables in
\cite[Sect.~1]{OshimaSekiguchi84}.
The center of $\mathfrak{h}^{a}$ is one-dimensional, and its centralizer in $\mathfrak{g}$
coincides with $\mathfrak{h}^{a}$.
Although this fact can be checked directly in each case, it also follows from the fact that
$(\mathfrak{g},\mathfrak{h}^{a})$ in
Table~\ref{table:parahermitian} appears in the classification table of irreducible parahermitian symmetric pairs
due to Kaneyuki--Kozai (see \cite[p.~97]{Kaneyuki_Kozai_1985}).

Let $Z$ be a generator of the center of $\mathfrak{h}^{a}$ as given in
Table~\ref{table:parahermitian}.
Then the condition~\ref{item:associated-parahermitian} follows from the fact in the previous paragraph.
The condition~\ref{item:involution} can also be verified case by case.
For instance, in the first entry of Table~\ref{table:parahermitian},
we have $\iota_{\mathfrak{g}}(Z)=I_{N,N}$, from which
the conditions~\ref{item:associated-parahermitian} and~\ref{item:involution}
follow immediately.
The remaining cases are treated in the same way.

Therefore, Proposition~\ref{prop:ineq-paraherm} implies the assertion.
\end{proof}

\begin{remark}
    \label{remark:HR-equality}
    In the next subsection, we will show that the inequalities in
Corollary~\ref{cor:paraherm}
are in fact equalities.
\end{remark}

\subsection{Proof of 
\texorpdfstring{Theorem~\ref{theorem:Hurwitz-radon}}{Theorem B}
}
\label{section:proof-of-B}
Throughout this subsection, when we consider the integer
$\rho^{(i)}(\mathfrak{g},\iota)$, we always assume that $\iota$ is the standard
representation of the classical Lie algebra $\mathfrak{g}$, and we therefore
omit $\iota$ and simply write $\rho^{(i)}(\mathfrak{g})$.

We prove Theorem~\ref{theorem:Hurwitz-radon} by dividing the classical 
Lie algebras into the following five cases:
\begin{enumerate}[label=$( \alph* )$]
    \item \label{item:real8}
    $\mathfrak{so}(N,N)$, $\mathfrak{gl}(N,\mathbb{R})$, 
    $\mathfrak{sp}(N,\mathbb{R})$, $\mathfrak{sp}(N,\mathbb{C})$, \\
    $\mathfrak{sp}(N,N)$, $\mathfrak{gl}(N,\mathbb{H})$, 
    $\mathfrak{so}^{*}(2N)$, $\mathfrak{so}(N,\mathbb{C})$.
    \item \label{item:complex2}
    $\mathfrak{su}(N,N)$, $\mathfrak{gl}(N,\mathbb{C})$.
    \item \label{item:sl(n,F)}
    $\mathfrak{sl}(2N,\mathbb{D})$ \quad $(\mathbb{D}=\mathbb{R},\mathbb{C},\mathbb{H},\, N\ge 1)$.
    \item[$(c')$] 
    \label{item:sl(odd)}
    $\mathfrak{sl}(2N+1,\mathbb{D})$ \quad $(\mathbb{D}=\mathbb{R},\mathbb{C},\mathbb{H},\, N\ge 1)$.
    \item \label{item:sl(1,D)}
    $\mathfrak{sl}(1,\mathbb{D})$ \ $(\mathbb{D}=\mathbb{R},\mathbb{C},\mathbb{H})$, \quad 
    $\mathfrak{su}(p,q;\D)$ \ $(p\neq q)$.
\end{enumerate}

The cases \ref{item:real8}, \ref{item:complex2}, \ref{item:sl(n,F)}, and 
\ref{item:sl(1,D)} correspond to the entries in Table~\ref{tab:HR-values} of 
Theorem~\ref{theorem:Hurwitz-radon}, while  
\hyperref[item:sl(odd)]{$(c')$} corresponds to the exceptional cases 
in Theorem~\ref{theorem:Hurwitz-radon} where 
$\rho^{(1)}\neq \rho^{(2)}$.

We first consider Case~\ref{item:sl(1,D)}.
For $\mathfrak{g}=\mathfrak{sl}(1,\mathbb{D})$, 
its $\mathfrak{p}$-part is zero, and for $\mathfrak{g}=\mathfrak{su}(p,q;\D)$ with $p\neq q$, 
its $\mathfrak{p}$-part
has no invertible matrices in $M(p+q,\D)$. 
Hence we have 
\[
\rho^{(1)}(\mathfrak{g})=\rho^{(2)}(\mathfrak{g})=0
\]
for the classical Lie algebras $\mathfrak{g}$ in Case \ref{item:sl(1,D)}.

In what follows, we consider Cases
\ref{item:real8}, \ref{item:complex2}, \ref{item:sl(n,F)}, and \hyperref[item:sl(odd)]{$(c')$}
in this order.
We first record a simple lemma that will be used repeatedly:
\begin{lemma}
\label{lemma:gl>0}
    We have $\rho^{(i)}(\mathfrak{gl}(N,\D))\geq 1$
    for $\D=\R,\C,\HA$ and $i=1,2$. 
\end{lemma}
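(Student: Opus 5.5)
Since $\rho^{(1)}\leq\rho^{(2)}$ always holds, it suffices to show $\rho^{(1)}(\mathfrak{gl}(N,\D))\geq 1$. Unwinding Definition~\ref{def:HR-ours}, this means producing one element $X\in\mathfrak{p}$ with $\iota(X)^2=I_N$. The map $f\colon\R\to\mathfrak{p}$, $f(t)=tX$, then satisfies $\iota(f(t))^2=t^2I_N$, which is exactly condition \eqref{def:HR-ours-1} for $n=1$.

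For the choice of $X$, I will use the standard realization of $\mathfrak{gl}(N,\D)$ inside $\mathfrak{gl}(N,\C)$, $\mathfrak{gl}(2N,\C)$, or, for $\D=\HA$, $\mathfrak{gl}(2N,\C)$ via $\HA\simeq\C^2$. For the positive definite hermitian form I take the standard one, for which $\iota(\mathfrak{g})$ is closed under conjugate transpose. The associated involution is then $\theta(X)=-X^{*}$, so $\mathfrak{p}$ consists of the hermitian (respectively real symmetric, quaternionic hermitian) matrices. The identity matrix $I_N$ is hermitian over $\D$, so it lies in $\mathfrak{p}$, and its image under the standard representation is the identity, which squares to the identity. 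So $X=I_N$ works.

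By Remark~\ref{remark-HR-well-defined} (Proposition~\ref{prop:HR-pair-equiv}), the value does not depend on the choice of hermitian form, so this particular choice is legitimate. The only point needing care is that $I_N$ is central rather than semisimple. Condition~\ref{condition:hurwitz-radon} still defines $\theta$ on the center, and the argument in the proof of Proposition~\ref{prop:HR-pair-equiv} shows that $\mathfrak{p}$ on the center consists exactly of the elements with real eigenvalues. Hence $I_N\in\mathfrak{p}$ for any admissible form. No real obstacle is expected.
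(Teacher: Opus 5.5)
Your proof is correct and is essentially the paper's argument: the paper also takes $\mathfrak{p}$ to be the hermitian matrices over $\D$ and uses $f(t)=tI_{N}$, which satisfies both defining conditions simultaneously. Your additional remarks (reducing to $\rho^{(1)}$ via $\rho^{(1)}\leq\rho^{(2)}$, and checking that $I_{N}\in\mathfrak{p}$ for every admissible hermitian form) are valid. They are not needed beyond what the well-definedness result, Proposition~\ref{prop:HR-pair-equiv}, already guarantees.
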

\begin{proof}
    Let $\mathfrak{g}=\mathfrak{gl}(N,\D)$. In defining
$\rho^{(i)}(\mathfrak{g})$, we may and do assume that $\mathfrak{p}$ consists of all $N\times N$ hermitian matrices over $\D$.
If we define $f\colon \R\to\mathfrak{p}$ by $f(t)=tI_{N}$,
it is immediate that $f$ satisfies the conditions in the definition of
$\rho^{(i)}(\mathfrak{g})$.
Therefore, $\rho^{(i)}(\mathfrak{g})\geq 1$ for each $i=1,2$.
\end{proof}

\vspace{\baselineskip}

\noindent \textbf{Proof of Case~\ref{item:real8}.}
In the proof of Case~\ref{item:real8}, we adopt the following strategy.
Using the inequalities for the $8$-periodic chain \eqref{sequence:8-period}
established in the previous subsection (namely, the first eight inequalities in
Corollary~\ref{cor:paraherm}),
we reduce the argument to 
Lemma~\ref{lemma:HR-our-interpretation}, namely,
our interpretation of the classical result of
Hurwitz--Radon--Eckmann--Adams: 
\begin{equation*}
        \rho(N)=\rho^{(1)}(\mathfrak{so}(N,N),\iota)=\rho^{(2)}(\mathfrak{so}(N,N),\iota).
    \end{equation*}

Consider the following sequence $\{a_i\}_{i=1}^{\infty}$ of period $8$:
\[
\begin{array}{c|ccccccccc}
i & 1 & 2 & 3 & 4 & 5 & 6 & 7 & 8 & \cdots \\
\hline
a_{i} & 2 & 1 & 1 & 1 & 2 & 2 & 2 & 1 & \cdots
\end{array}
\]
We also consider the sequence $\{b_i\}_{i=1}^{\infty}$ defined by the recurrence relation $b_{1}=1$ and $b_{i+1}=a_{i}b_{i}$:
\[
\begin{array}{c|cccccccccc}
i & 1 & 2 & 3 & 4 & 5 & 6 & 7 & 8 & 9 & \cdots \\
\hline
b_{i} & 1 & 2 & 2 & 2 & 2 & 4 & 8 & 16 & 16 & \cdots
\end{array}
\]
Note that the sequence $\{b_i\}_{i=1}^{\infty}$ gives the coefficients of $N$ appearing 
in the $8$-periodic chain \eqref{sequence:8-period} of classical Lie algebras. 

We use the following lemma to give a uniform proof of Case~\ref{item:real8}: 
\begin{lemma}
\label{lem:f(i,N)}
Suppose that a function $f\colon \N_{+}\times \N_{+}\rightarrow \N$
satisfies the following four conditions:
For any $i,N\in \N_{+}$, 
\begin{enumerate}[label=$(\roman*)$]
    \item\label{item:lem:f(i,N)-ineq-i} $f(i,N)+1\leq f(i+1,a_{i}N)$; 
        \item\label{item:lem:f(i,N)-period8} $f(i+8,N)=f(i,N)$;
    \item\label{item:lem:f(i,N)-ineq-2-6} $f(2,N)\geq 1$ and  $f(6,N)\geq 1$;
    \item\label{item:lem:f(i,N)-i=1} $f(1,N)=\rho(N)$.
\end{enumerate}    
Then we have
\begin{equation}
\label{lem:f(i,N)-identity}
f(i,N)=\rho(N/b_{i})+i-1
\quad (i,N\in \N_{+}).
\end{equation}  
\end{lemma}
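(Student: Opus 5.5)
We prove \eqref{lem:f(i,N)-identity} by establishing the two inequalities $\leq$ and $\geq$ separately. Condition~\ref{item:lem:f(i,N)-period8} lets us assume $1\le i\le 8$ throughout. We use two facts. From the tables, $b_{9}=16$ and $b_i\mid 16$ for $1\le i\le 8$, and $\prod_{j=i}^{k-1}a_j=b_k/b_i$. Directly from the definition of $\rho$ on $\Q^{\times}$, $\rho(16t)=\rho(t)+8$.

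\emph{Upper bound.} Iterate condition~\ref{item:lem:f(i,N)-ineq-i} from index $i$ up to index $9$. This gives
\[
f(i,N)+(9-i)\le f\bigl(9,(b_9/b_i)N\bigr)=f(1,16N/b_i)=\rho(16N/b_i)=\rho(N/b_i)+8,
\]
using \ref{item:lem:f(i,N)-period8}, \ref{item:lem:f(i,N)-i=1}, and the fact that $16N/b_i\in\N_{+}$. Hence $f(i,N)\le \rho(N/b_i)+i-1$. This needs no divisibility hypothesis on $N$.

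\emph{Lower bound when $b_i\mid N$.} Iterate \ref{item:lem:f(i,N)-ineq-i} from index $1$ to index $i$ starting at $M=N/b_i$. This gives
\[
f(i,N)\ge f(1,N/b_i)+i-1=\rho(N/b_i)+i-1.
\]

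\emph{Lower bound when $b_i\nmid N$.} This is the main obstacle, and it is where condition~\ref{item:lem:f(i,N)-ineq-2-6} and $f\ge 0$ enter. We argue by induction on $i\in\{1,\dots,8\}$, checking the finitely many residues $\ord(N)<\ord(b_i)$ against the table of $\rho$.
\begin{itemize}
\item For $2\le i\le 5$ we have $b_i=2$. If $N$ is odd, the target is $\rho(N/2)+i-1=i-1$. Condition~\ref{item:lem:f(i,N)-ineq-2-6} gives $f(2,N)\ge1$, and since $a_2=a_3=a_4=1$, chaining yields $f(i,N)\ge f(2,N)+i-2\ge i-1$.
\item For $i=6$ we have $b_6=4$. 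If $\ord N=0$, the target is $\rho(1/4)+5=1$, which is exactly $f(6,N)\ge1$ from~\ref{item:lem:f(i,N)-ineq-2-6}. If $\ord N=1$, the target is $5$. Here $f(6,N)\ge f(5,N/2)+1\ge 4+1$ by the case $i=5$ with $N/2$ odd.
\item For $i=7,8$ we reduce via $f(i,N)\ge f(i-1,N/a_{i-1})+1$ whenever $a_{i-1}\mid N$, and we use the previously treated cases.
\item In the remaining cases the target is $\le 0$, for instance $\rho(1/8)+6=0$. These follow from $f\ge0$, since $f$ takes values in $\N$.
\end{itemize}
The only real work is this bookkeeping: confirming, for each $i$ and each $\ord N<\ord b_i$, that the descent through~\ref{item:lem:f(i,N)-ineq-i} lands either on a base case $f(2,\cdot)\ge1$ or $f(6,\cdot)\ge1$, or on a target that is nonpositive. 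Combining the upper and lower bounds gives \eqref{lem:f(i,N)-identity}.
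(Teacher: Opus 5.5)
Your proposal is correct and is essentially the paper's argument. The upper bound comes from chaining (i) up to index $9$ and then using (ii), (iv) and $\rho(16t)=\rho(t)+8$; the paper phrases this through its Claim (ii) together with the divisible case. The lower bound comes from chaining up from index $1$ when $b_i\mid N$, and otherwise from descending to the base cases $f(2,\cdot)\ge 1$, $f(6,\cdot)\ge 1$, $f\ge 0$; the paper reduces in one step to odd $N$ rather than inducting on $i$, and the one point you leave implicit is that the right-hand side is also $8$-periodic in $i$ (because $b_{i+8}=16b_i$), which is what justifies restricting to $1\le i\le 8$.
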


We collect the following claim in advance, as it will be used repeatedly in the proof of the lemma:
\begin{claim}
In the setting of Lemma~\ref{lem:f(i,N)},
we put 
\[
b_{i,j} :=b_{i}b_{j}^{-1}=\prod_{k=j}^{i-1}a_{k} 
\quad (1\leq j\leq i). 
\]
\begin{enumerate}[label=$(\roman*)$]
    \item \label{item:claim-1}  
    For each $j\in \N_{+}$, 
    the function on $\{j,j+1,\ldots\}\times \N_{+}$
    defined by $(i,N)\mapsto f(i, b_{i,j}N)-i+1$ 
    is monotone increasing with respect to $i$. 
    \item \label{item:claim-2} $f(i,N) \leq f(i,16N)-8$ for any $i,N\in \N_{+}$.
\end{enumerate}
\end{claim}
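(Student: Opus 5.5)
Both parts follow directly from conditions \ref{item:lem:f(i,N)-ineq-i} and \ref{item:lem:f(i,N)-period8} of Lemma~\ref{lem:f(i,N)}, together with the multiplicative structure of the sequences $a_i,b_i$. Part \ref{item:claim-1} is a one-step induction, and part \ref{item:claim-2} comes from iterating \ref{item:claim-1} over one full period.

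For \ref{item:claim-1}, fix $j$ and $N$, and put $g(i):=f(i,b_{i,j}N)-i+1$ for $i\ge j$. It suffices to show $g(i)\le g(i+1)$ for every $i\ge j$. From the definition $b_{i,j}=\prod_{k=j}^{i-1}a_k$ we get $b_{i+1,j}=a_i b_{i,j}$, with $b_{j,j}=1$ as the empty product. Applying condition \ref{item:lem:f(i,N)-ineq-i} of Lemma~\ref{lem:f(i,N)} with $N$ replaced by the positive integer $b_{i,j}N$ gives
\[
f(i,b_{i,j}N)+1\le f(i+1,a_i b_{i,j}N)=f(i+1,b_{i+1,j}N).
\]
Subtracting $i$ from both sides yields exactly $g(i)\le g(i+1)$.

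For \ref{item:claim-2}, apply \ref{item:claim-1} with $j=i$ and compare the values at $i$ and $i+8$. This gives
\[
f(i,N)-i+1\le f(i+8,b_{i+8,i}N)-i-7.
\]
Now $b_{i+8,i}=\prod_{k=i}^{i+7}a_k$ is a product over one full period of the $8$-periodic sequence $\{a_k\}$. Its value is therefore independent of $i$ and equals $2\cdot1\cdot1\cdot1\cdot2\cdot2\cdot2\cdot1=16$. By the periodicity condition \ref{item:lem:f(i,N)-period8}, $f(i+8,16N)=f(i,16N)$. Rearranging gives $f(i,N)\le f(i,16N)-8$.

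There is no real obstacle in this argument. The only points needing care are the index bookkeeping (the shift $+1$ at each step of \ref{item:claim-1}) and the observation that a product over any $8$ consecutive $a_k$ equals $b_9/b_1=16$.
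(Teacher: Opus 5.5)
Your proof is correct and follows the paper's argument: part (i) is one application of condition (i) of Lemma~\ref{lem:f(i,N)} with $N$ replaced by $b_{i,j}N$, and part (ii) iterates this over one period of $\{a_k\}$ (product $16$) and then uses $8$-periodicity. The only difference is cosmetic. You step forward from $i$ to $i+8$ and use $f(i+8,16N)=f(i,16N)$, whereas the paper steps from $i-8$ to $i$ and uses $f(i-8,N)=f(i,N)$, which tacitly extends $f$ to non-positive first arguments when $i\le 8$; your direction avoids that.
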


\begin{proof}[Proof of Claim]
\ref{item:claim-1}. 
By the condition \ref{item:lem:f(i,N)-ineq-i} in Lemma~\ref{lem:f(i,N)}, 
we have
\[
f(i+1,b_{i+1,j}N)-i=f(i+1,a_{i}b_{i,j}N)-i
\geq f(i,b_{i,j}N)-i+1,
\]
which shows \ref{item:claim-1}.

\ref{item:claim-2}. 
By the conditions \ref{item:lem:f(i,N)-ineq-i} and \ref{item:lem:f(i,N)-period8} in Lemma~\ref{lem:f(i,N)}, we have 
\[
f(i,16N)=f(i,\bigl(\prod_{k=1}^{8}a_{k}\bigr) N)\geq f(i-8,N)+8=f(i,N)+8,
\]
which shows \ref{item:claim-2}. 
\end{proof}

We now prove Lemma~\ref{lem:f(i,N)}.
Our strategy is to first show the assertion in the case where $N$ is sufficiently divisible by $2$,
and then reduce the general case to this one.
In the following proof, we will use without further mention the equality
\[
\rho(16t)=\rho(t)+8\quad (t\in \mathbb{Q}^\times),
\]
which follows immediately from the definition of $\rho(t)$.
\begin{proof}[Proof of Lemma~\ref{lem:f(i,N)}]
Note that both sides of \eqref{lem:f(i,N)-identity} are $8$-periodic in $i$.
Therefore, throughout the proof we may and do assume that
\[
1\le i\le 8.
\]

Let us prove \eqref{lem:f(i,N)-identity} in the case where $N$ is divisible by $b_i$.
Equivalently, we show that
\begin{equation}
\label{eq:f(i,b_i N)}
f(i,b_i M)=\rho(M)+i-1 \quad (M\in\N_{+}).
\end{equation}

Putting $g(i,M):=f(i,b_iM)-i+1$, it suffices to show that $g(i,M)=\rho(M)$.
We prove this by showing that $g(k,M)$ is monotone increasing with respect to $k$,
and that $g(1,M)=g(9,M)=\rho(M)$.

The former follows by applying \textbf{Claim}~\ref{item:claim-1} in the case $j=1$.
For the latter, the equality $g(1,M)=\rho(M)$ follows from the condition~\ref{item:lem:f(i,N)-i=1}.
Moreover, by the  conditions~\ref{item:lem:f(i,N)-i=1} and~\ref{item:lem:f(i,N)-period8}, we have
\[
g(9,M)
= f(9,b_{9}M)-8
= f(1,16M)-8
= \rho(16M)-8
= \rho(M),
\]
which proves the latter assertion. Therefore, \eqref{eq:f(i,b_i N)} follows.

In what follows, we consider the remaining case where $N\notin b_i\N$.
In this case, we prove \eqref{lem:f(i,N)-identity} by showing both inequalities.

We show $(\text{LHS})\leq (\text{RHS})$ in \eqref{lem:f(i,N)-identity} for $N\notin b_i\N$. Since $1\leq i\leq 8$,
we have $16N\in b_i\N$. Combining \textbf{Claim}~\ref{item:claim-2} with \eqref{eq:f(i,b_i N)}, we obtain
\[
f(i,N) \le f(i,16N)-8
= \rho(16N/b_i)+i-9
= \rho(N/b_i)+i-1.
\]
This proves $(\text{LHS})\leq (\text{RHS})$. 

To prove $(\text{LHS})\ge (\text{RHS})$ in
\eqref{lem:f(i,N)-identity},
we first treat the case where $N$ is odd, and then reduce the general case
$N\notin b_i\N$ to the odd case.

When $N$ is odd, the values of $(\text{RHS})$ for $i=1,\ldots,8$
are as follows:
\[
\begin{array}{c|c|c|c|c|c|c|c|c}
    i & 1 & 2 & 3 & 4 & 5 & 6 & 7 & 8 \\
    \hline
    (\text{RHS}) & 1 & 1 & 2 & 3 & 4 & 1 & 0 & 0
\end{array}
\]
The inequality $(\text{LHS})\geq (\text{RHS})$ follows from
the condition~\ref{item:lem:f(i,N)-i=1} for $i=1$,
from the condition~\ref{item:lem:f(i,N)-ineq-2-6} for $i=2,6$,
and is trivial for $i=7,8$.
For $i=3,4,5$, we have $b_{i,2}=1$. 
Hence, applying \textbf{Claim}~\ref{item:claim-1} to the case $j=2$, we have
\[
f(i,N)-i+1\geq f(2,N)-1 \geq 0.
\]
Therefore, we have $(\text{LHS})=f(i,N)\ge i-1=(\text{RHS})$ for $i=3,4,5$.

We show $(\text{LHS})\geq (\text{RHS})$
for a general $N\notin b_i\N$. 
Let $j$ be the largest integer $\leq i$ such that $N$ is divisible by 
$b_{i,j}$. Then $M:=N/b_{i,j}$ is odd.
By \textbf{Claim}~\ref{item:claim-1}, we have
\[
f(i,N)-i+1\geq f(j,M)-j+1\geq \rho(M/b_j)=\rho(N/b_i).
\]
This proves $(\text{LHS})\geq (\text{RHS})$.
Thus the proof is complete.
\end{proof}

Let us prove Theorem~\ref{theorem:Hurwitz-radon} in Case~\ref{item:real8}. 
We define a classical Lie algebra $\mathfrak{g}_{i}(N)$ $(i,N\in \N_{+})$ as follows: 
\begin{table}[htbp]
\[
    \renewcommand{\arraystretch}{1.5}
    \begin{array}{c!{\vrule width 1.2pt}c|c|c|c}
    i\bmod 8 & 1 & 2 & 3 & 4 \\
    \mathfrak{g}_{i}(N) & \mathfrak{so}(N,N) & \mathfrak{gl}(N,\R) & \mathfrak{sp}(N,\R) & \mathfrak{sp}(N,\C) \\
    \hline
        i\bmod 8 & 5 & 6 & 7 & 8 \\
    \mathfrak{g}_{i}(N) & \mathfrak{sp}(N,N) & \mathfrak{gl}(N,\HA) & \mathfrak{so}^{*}(2N) & \mathfrak{so}(N,\C)
\end{array}
    \renewcommand{\arraystretch}{1}
\]
    \caption{Definition of $\mathfrak{g}_{i}(N)$.}
    \end{table}
    
In Case~\ref{item:real8}, Theorem~\ref{theorem:Hurwitz-radon}
is nothing but the conclusion obtained by applying
Lemma~\ref{lem:f(i,N)} to the following two functions:
\[
(i,N) \mapsto \rho^{(1)}(\mathfrak{g}_{i}(N)) \text{ and } \rho^{(2)}(\mathfrak{g}_{i}(N)).
\]
Hence, it suffices to verify the conditions~\ref{item:lem:f(i,N)-ineq-i}--\ref{item:lem:f(i,N)-i=1} in Lemma~\ref{lem:f(i,N)}. 

The condition~\ref{item:lem:f(i,N)-ineq-i} follows from  Corollary~\ref{cor:paraherm} (see the first eight entries in Table~\ref{table:parahermitian})
together with Corollary~\ref{corollary:auxiliary-inequality}~\ref{item:gl-sl}. 
The condition~\ref{item:lem:f(i,N)-period8} is obvious by definition.
The condition~\ref{item:lem:f(i,N)-ineq-2-6} is verified by Lemma~\ref{lemma:gl>0}. 
The condition~\ref{item:lem:f(i,N)-i=1} is nothing but Lemma~\ref{lemma:HR-our-interpretation}, namely,
our interpretation of the classical result of Hurwitz, Radon, Eckmann, and Adams:  
\[
\rho^{(1)}(\mathfrak{so}(N,N))=\rho^{(2)}(\mathfrak{so}(N,N))=\rho(N).
\]
Thus the proof of Case~\ref{item:real8} is complete by Lemma~\ref{lem:f(i,N)}.

\vspace{\baselineskip}

\noindent \textbf{Proof of Case~\ref{item:complex2}.}
The proof of Case~\ref{item:complex2} relies on the result of  Case~\ref{item:real8}.
In this case, we utilize the chain \eqref{sequence:2-period} of period $2$.

The following lemma was inspired by the argument starting in the latter half of page 321 in \cite{Adams_Lax_Phillips65}:
\begin{lemma}\label{lem:f(N)}
Let $k\in \N$. 
Suppose that a function $f\colon \N_+\to \N$ satisfies the following three conditions: 
for any $N\in \N_+$,
\begin{enumerate}[label=$(\roman*)$]
   \item\label{item:complex_1} $f(N)+2\leq f(2N)$, 
   \item\label{item:complex_2} $f(N)\leq \rho(N)+k$, 
   \item\label{item:complex_3} $f(N)\geq k$. 
\end{enumerate}    
Then, we have 
$f(N)=2\ord (N)+k$. 
\end{lemma}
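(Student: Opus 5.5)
Write $N=2^{m}M$ with $M$ odd, so that $m=\ord(N)$. The plan is to prove the two inequalities $f(N)\ge 2m+k$ and $f(N)\le 2m+k$ separately.

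\emph{Lower bound.} Iterating condition~\ref{item:complex_1} $m$ times along $M, 2M,\dots,2^mM=N$ gives
\[
f(N)\ge f(M)+2m.
\]
Condition~\ref{item:complex_3} gives $f(M)\ge k$. Together these yield $f(N)\ge 2m+k$.

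\emph{Upper bound.} The idea is to go up the chain far enough that the linear growth $2\cdot(\text{steps})$ forced by~\ref{item:complex_1} outruns the slow growth of $\rho$ allowed by~\ref{item:complex_2}. For any $s\in\N$, iterating~\ref{item:complex_1} $s$ times from $N$ and then applying~\ref{item:complex_2} at $2^sN$ gives
\[
f(N)+2s\le f(2^sN)\le \rho(2^sN)+k.
\]
Hence $f(N)\le \rho(2^{m+s}M)-2s+k$.

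Now take $s=4t$. Since $M$ is odd, $\rho(2^{m+4t}M)=\rho(2^m)+8t$. The bound becomes
\[
f(N)\le \rho(2^m)+8t-8t+k=\rho(2^m)+k.
\]
This does not improve as $t$ grows, so the choice of $s$ must be adjusted modulo $4$ to match the residue of $m$.

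Write $m+s=4a+b$ with $0\le b\le 3$, so that $\rho(2^{m+s})=8a+2^b$. The bound is then
\[
f(N)\le 8a+2^b-2s+k=8a+2^b-2(4a+b-m)+k=2m+k+(2^b-2b).
\]
The correction term $2^b-2b$ takes the values $1,0,0,2$ for $b=0,1,2,3$. Choosing $s\ge 0$ so that $b\in\{1,2\}$, which is always possible, gives $f(N)\le 2m+k$. Combined with the lower bound, $f(N)=2\ord(N)+k$.

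The main obstacle is exactly this residue bookkeeping. A naive choice such as $s=0$ or $s\equiv 0\pmod 4$ only gives $f(N)\le\rho(N)+k$, which is too weak. The factor-of-$2$ gain per doubling in~\ref{item:complex_1} beats $\rho$ only over the steps $b=0\to1\to2$ inside each period, so the argument needs $s$ chosen with $m+s\equiv 1$ or $2\pmod 4$. Everything else is routine iteration.
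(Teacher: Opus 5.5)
Your proof is correct and essentially matches the paper's argument. The lower bound comes from iterating (i) upward from the odd part of $N$ and then applying (iii). The upper bound comes from climbing to $2^{s}N$ with $\ord(2^{s}N)$ in a residue class where $\rho=2\ord$ (the paper uses only $\ord\equiv 1 \pmod 4$) and then applying (ii); the paper simply packages the two bounds as one chain of inequalities that is forced to consist of equalities.
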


\begin{proof}
We note that $\rho(N)=2\ord (N)$ if $\ord(N)\equiv 1 \mod 4$.
For a fixed $N\in \N_{+}$,
take $e\in \N$ satisfying $\ord(2^e N)\equiv 1\mod 4$. 
Then, by the condition \ref{item:complex_2}, we have 
\begin{equation}
   f(2^e N)\leq  \rho(2^e N)+k=2\ord(2^e N)+k=2e+2\ord(N)+k. \label{eq:complex_upper_bound}
\end{equation}

Put $N':=N/2^{\ord(N)}\in \N_{+}$.
By the conditions~\ref{item:complex_1} and~\ref{item:complex_3}, 
we have 
\[
f(N)\geq 2\ord(N)+f(N')\geq 2\ord(N)+k.
\]
Hence, again by the condition~\ref{item:complex_1}, we have 
\begin{equation}
f(2^eN)\geq 2e + f(N)
\geq 2e+2\ord(N)+k.
\label{eq:complex_lower_bound}
\end{equation}
From the inequalities (\ref{eq:complex_upper_bound}) and (\ref{eq:complex_lower_bound}), 
we see that all the inequalities in (\ref{eq:complex_lower_bound}) must be the equalities. 
Thus we obtain $f(N)=2\ord(N)+k$. 
\end{proof}

In Case~\ref{item:complex2}, Theorem~\ref{theorem:Hurwitz-radon}
is nothing but the conclusion obtained by applying Lemma~\ref{lem:f(N)} to the following two choices of $f$ and $k$:
\begin{itemize}
\item $f(N)=\rho^{(i)}(\mathfrak{gl}(N,\C))$ and $k=1$; 
\item $f(N)=\rho^{(i)}(\mathfrak{su}(N,N))(=\rho^{(i)}(\mathfrak{u}(N,N)))$ and $k=2$. 
\end{itemize}

Hence, it suffices to check the conditions \ref{item:complex_1}--\ref{item:complex_3}. 
By Corollary~\ref{cor:paraherm} (see the last two entries in Table~\ref{table:parahermitian}) with Corollary~\ref{corollary:auxiliary-inequality}~\ref{item:gl-sl}, we have 
\begin{align*}
\rho^{(i)}(\mathfrak{gl}(N,\C)) + 2
&\leq \rho^{(i)}(\mathfrak{su}(N,N)) + 1
\leq \rho^{(i)}(\mathfrak{gl}(2N,\C)), \\
\rho^{(i)}(\mathfrak{su}(N,N)) + 2
&\leq \rho^{(i)}(\mathfrak{gl}(2N,\C)) + 1
\leq \rho^{(i)}(\mathfrak{su}(2N,2N)).
\end{align*}
Hence, the condition~\ref{item:complex_1} is verified.

Next, we show the condition~\ref{item:complex_2}. 
By Corollary~\ref{corollary:auxiliary-inequality}~\ref{item:glC-glR} and~\ref{item:su-sp}, we obtain 
\begin{align*}
\rho^{(i)}(\mathfrak{gl}(N,\C))&\leq \rho^{(i)}(\mathfrak{gl}(2N,\R))=\rho(N)+1,\\
\rho^{(i)}(\mathfrak{su}(N,N))&\leq \rho^{(i)}(\mathfrak{sp}(2N,\R))=\rho(N)+2. 
\end{align*}
Here, we used the result in Case~\ref{item:real8}. 
Thus the condition~\ref{item:complex_2} is verified.

Finally, we show the condition~\ref{item:complex_3}.
We have $\rho^{(i)}(\mathfrak{gl}(N,\C)) \geq 1$ by Lemma~\ref{lemma:gl>0}.
Further, again by Corollary~\ref{cor:paraherm}, we obtain 
\[
\rho^{(i)}(\mathfrak{su}(N,N)) \geq \rho^{(i)}(\mathfrak{gl}(N,\C)) + 1 \geq 2.
\]
Hence the condition~\ref{item:complex_3} is proved. Thus
the proof is completed in Case~\ref{item:complex2} by Lemma~\ref{lem:f(N)}. 

\vspace{\baselineskip}

\noindent \textbf{Proof of Case~\ref{item:sl(n,F)}.}
    Note that $\mathfrak{gl}(2N,\D)$ and $\mathfrak{su}(N,N;\D)$ are adjacent terms in the periodic chains~\eqref{sequence:8-period} and~\eqref{sequence:2-period}.
Hence we have
\[
\rho^{(i)}(\mathfrak{gl}(2N,\D))
=
\rho^{(i)}(\mathfrak{su}(N,N;\D))+1.
\]
By combining this equality with Corollary~\ref{cor:paraherm}
(the ninth entry in Table~\ref{table:parahermitian}) 
and Corollary~\ref{corollary:auxiliary-inequality}~\ref{item:gl-sl}, we obtain
\[
\rho^{(i)}(\mathfrak{gl}(2N,\D))
\leq
\rho^{(i)}(\mathfrak{sl}(2N,\D))
\leq
\rho^{(i)}(\mathfrak{gl}(2N,\D)),
\]
and hence
\[
\rho^{(i)}(\mathfrak{sl}(2N,\D))=\rho^{(i)}(\mathfrak{gl}(2N,\D)).
\]
This completes the proof of Case~\ref{item:sl(n,F)}.

\vspace{\baselineskip}

\noindent \textbf{Proof of Case~\hyperref[item:sl(odd)]{$(c')$}.}
We note that the (standard) $\mathfrak{p}$-part of $\mathfrak{sl}(k,\mathbb{D})$
consists of all hermitian matrices over $\mathbb{D}$ whose trace is zero.

For $\rho^{(1)}$, Lemma~\ref{lemma:sl-square-odd} below implies that
\[
\rho^{(1)}(\mathfrak{sl}(2N+1,\D)) = 0 \quad (\D=\R,\C,\HA).
\]

For $\rho^{(2)}$, 
the diagonal matrix
$\diag(2N,-1,\ldots,-1)$
is invertible and belongs to the $\mathfrak{p}$-part of $\mathfrak{sl}(2N+1,\D)$. 
Hence, 
\[
\rho^{(2)}(\mathfrak{sl}(2N+1,\D)) \geq 1. 
\]
Furthermore, we have
\[
\rho^{(2)}(\mathfrak{sl}(2N+1,\D))
\le \rho^{(2)}(\mathfrak{gl}(2N+1,\D))
= 1,
\]
by Corollary~\ref{corollary:auxiliary-inequality}~\ref{item:gl-sl} and the results of Cases~\ref{item:real8},\ref{item:complex2}. 
Therefore,
\[
\rho^{(2)}(\mathfrak{sl}(2N+1,\D)) = 1 \quad (\D=\R,\C,\HA).
\]
This completes the proof of Case~\hyperref[item:sl(odd)]{$(c')$}.

    In the above proof, we have used the following elementary lemma (we omit its proof):
    \begin{lemma}
        \label{lemma:sl-square-odd}
        There is no odd-size square matrix over $\D=\R,\C,\HA$
        whose trace is zero and whose square is the identity matrix.
    \end{lemma}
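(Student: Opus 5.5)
The plan is a trace argument. Suppose $X\in M(2N+1,\D)$ satisfies $X^2=I_{2N+1}$ and $\operatorname{tr}X=0$. For $\D=\HA$, the trace means the real part of the sum of the diagonal entries, or equivalently the trace in a suitable complex realization. I would first reduce every case to complex matrices.

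For $\D=\R$ there is nothing to do, since we may regard $X$ as a complex matrix. For $\D=\HA$, I would use the standard embedding $M(k,\HA)\hookrightarrow M(2k,\C)$. Under this embedding $X$ goes to a complex $2k\times 2k$ matrix $\tilde X$, and the identity goes to the identity. Its complex trace equals $2\operatorname{Re}\operatorname{tr}X$, so it vanishes whenever $\operatorname{tr}X=0$. The reduced trace is the natural meaning of ``trace zero'' for the $\mathfrak{p}$-part of $\mathfrak{sl}(k,\HA)$.

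The key step is this. A complex matrix $Y$ with $Y^2=I$ is diagonalizable, because it is annihilated by $t^2-1$, which has distinct roots. Its eigenvalues are therefore $\pm1$, so $\operatorname{tr}Y=m_+-m_-$, where $m_\pm$ are the multiplicities and $m_++m_-$ is the size of $Y$. The trace then has the same parity as the size, and so it cannot vanish for odd size.

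For $\D=\R,\C$ the size is $2N+1$, which is odd, and this gives a contradiction immediately. For $\D=\HA$ the parity argument on $\tilde X$ fails, because $\tilde X$ has even size $2(2N+1)$. Here I would exploit the quaternionic structure instead. The eigenvalues of $\tilde X$ come in conjugate pairs with equal multiplicity, since $\tilde X$ commutes with an antilinear $J$ satisfying $J^2=-1$, and $J$ maps each real-eigenvalue eigenspace to itself. So the $\pm1$-eigenspaces are $J$-stable and hence have even complex dimension, say $m_\pm=2d_\pm$ with $d_++d_-=2N+1$. Then $\operatorname{tr}\tilde X=2(d_+-d_-)$, and $d_+-d_-$ is odd, so the trace is nonzero.

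The only delicate point is this quaternionic case, namely showing that the eigenspaces are $J$-invariant and therefore quaternionic subspaces of $\HA^{2N+1}$ of dimensions $d_\pm$. That follows directly, because $\tilde X$ is $\HA$-linear. The rest is routine.
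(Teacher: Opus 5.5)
Your proof is correct. The paper itself omits the proof of this lemma as elementary, so there is no written argument to compare against. Your trace-parity argument is sound in all three cases, and in the quaternionic case it correctly observes that the $\pm1$-eigenspaces of $\tilde X$ are stable under the antilinear $J$ and hence even-dimensional. You in fact prove the stronger statement $\operatorname{Re}\operatorname{tr}X\neq 0$. So it does not matter whether ``trace zero'' over $\HA$ is read as the full quaternionic trace or its real part, and the vanishing of the complex trace of $\iota(X)$, which is what the application to $\mathfrak{sl}(2N+1,\HA)$ actually uses, is also covered.

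If you want a uniform argument that avoids the complex realization, work directly in $\D^{2N+1}$ viewed as a right $\D$-module. The condition $X^2=I$ gives the splitting $\D^{2N+1}=\ker(X-I)\oplus\ker(X+I)$ via $v=\tfrac12(v+Xv)+\tfrac12(v-Xv)$, so $X$ is conjugate in $GL(2N+1,\D)$ to $\operatorname{diag}(I_{d_+},-I_{d_-})$ with $d_++d_-=2N+1$. Since $\operatorname{Re}(pq)=\operatorname{Re}(qp)$ for quaternions, the trace (its real part when $\D=\HA$) is conjugation invariant. It therefore equals $d_+-d_-$, which is odd and in particular nonzero.
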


    This completes the proof of Theorem~\ref{theorem:Hurwitz-radon}.

\section{\texorpdfstring{$\VE$}{Property (VE)} and the \texorpdfstring{Hurwitz--Radon}{Hurwitz-Radon} number}
\label{section:proof_proper-HR}
This section is devoted to the proof of Theorem~\ref{thm:proper-hurwitz-radon}. 
That is, we show that, if $G/H$ has $\VE$ for $\iota$, then
the maximal integer $n$ for which $Spin(n,1)$ acts properly on $G/H$
is given by the Hurwitz–Radon number $\rho^{(1)}(\mathfrak{g}, \iota)$,
introduced in Section~\ref{section:intro-HR} and computed in Section~\ref{section:proof-hurwitz-radon}. 

\subsection{Reduction of 
\texorpdfstring{Theorem~\ref{thm:proper-hurwitz-radon}}{Theorem D}
to a certain proposition
}

In this subsection, we reduce Theorem~\ref{thm:proper-hurwitz-radon}
to the following setting, and then prove Theorem~\ref{thm:proper-hurwitz-radon}
by combining two results,
namely, Lemma~\ref{lem:2_equiv_n} and
Proposition~\ref{proposition:characterization:HR-spin} below. The proof of Proposition~\ref{proposition:characterization:HR-spin}
will be given in the subsequent subsections.
\begin{setting}
    \label{setting:classical-semisimple}
    Let $G$ be a classical semisimple Lie subgroup of $GL(N,\C)$, 
    $\tilde{\iota}\colon G\rightarrow GL(N,\C)$ the inclusion map, 
    and $\iota:=d\tilde{\iota}$ the differential map.
    For the sake of clarity, $G$ is one of the following subgroups:
    \begin{align*}
SL(N,\mathbb{R}),\ SL(N,\mathbb{C}),\ SL(N/2,\mathbb{H}),\ SU(p,q),\
SO(p,q),\\ SO(N,\mathbb{C}),\ SO^{*}(2(N/2)),\ Sp(N/2,\mathbb{R}),\ Sp(N/2,\mathbb{C}),\ Sp(r,s),
\end{align*}
where $p+q = N$ and $r + s = N/2$, and 
we assume that $N/2$ is an integer whenever it appears.
\end{setting}

\begin{lemma}\label{lem:2_equiv_n}
In Setting~\ref{setting:classical-semisimple}, 
we assume that a homogeneous space $G/H$ of reductive type 
has $\VE$ for $\iota$. Then, for any non-trivial homomorphism $\varphi\colon Spin(n,1)\to G$ $(n\geq 2)$, the $Spin(n,1)$-action on $G/H$ via $\varphi$ is proper if and only if 
$\tilde{\iota}(\varphi(-1))=-I_{N}$. 
\end{lemma}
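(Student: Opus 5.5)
Throughout, $SL(2,\R)$ is viewed inside $Spin(n,1)$ via $Spin(2,1)\hookrightarrow Spin(n,1)$, and $\varphi_0$ denotes the restriction of $\varphi$ to this copy of $SL(2,\R)$. The plan is to reduce properness of $Spin(n,1)$ to properness of this $SL(2,\R)$, and then use $\VE$ to turn that into a condition on $\tilde\iota\circ\varphi$.

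\textbf{Step 1: reduce to $SL(2,\R)$.} Since $\mathfrak{so}(n,1)$ has real rank one, $\mathfrak{a}_{Spin(n,1)}$ is one-dimensional. We may take it to be $\R A_0\subset\mathfrak{sl}(2,\R)=\mathfrak{so}(2,1)$, because a maximal split abelian subalgebra of $\mathfrak{so}(2,1)$ is also one for $\mathfrak{so}(n,1)$. As $\varphi$ is non-trivial and $\mathfrak{so}(n,1)$ is simple for $n\ge 2$ (for $n=3$ it is $\mathfrak{sl}(2,\C)$, still simple), $d\varphi$ is injective, so $\mathfrak{a}_L=\R\, d\varphi(A_0)$ is the same line for $Spin(n,1)$ and for $SL(2,\R)$. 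By Fact~\ref{fact:properness_criterion}, the $Spin(n,1)$-action is proper if and only if the $SL(2,\R)$-action via $\varphi_0$ is proper. Note that $\varphi_0$ is a lift of $d\varphi|_{\mathfrak{sl}(2,\R)}$ in the sense of Definition~\ref{def:VE_for_G/H}: $G$ is linear and $SL(2,\R)$ has the simply connected complexification $SL(2,\C)$, so by Lemma~\ref{lemma:lift} the lift exists, and it is unique because $SL(2,\R)$ is connected. By $\VE$, properness is therefore equivalent to $d\varphi|_{\mathfrak{sl}(2,\R)}$ being very even for $\iota$, i.e.\ $\iota\circ d\varphi|_{\mathfrak{sl}(2,\R)}$ has only even-dimensional irreducible constituents.

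\textbf{Step 2: translate very-evenness into the value at $-1$.} The finite-dimensional representation $\tilde\iota\circ\varphi_0$ of $SL(2,\R)$ is completely reducible, and its constituents are the $\pi_k$, lifted to $SL(2,\R)$. On $\pi_k$ the central element $-1=\exp(\pi\sqrt{-1}\,\cdot)$ acts by $(-1)^{k-1}$: weights of $\pi_k$ have parity $k-1$, and $-1=\exp(\pi\sqrt{-1}A_0)$ computed in the complexification. Hence $\tilde\iota(\varphi_0(-1))=-I_N$ if and only if every $k$ is even, i.e.\ very evenness. Finally, $-1\in SL(2,\R)$ maps to $-1\in Spin(n,1)$ under $Spin(2,1)\hookrightarrow Spin(n,1)$, since both are the element $-1$ of the Clifford algebra. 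Therefore $\tilde\iota(\varphi_0(-1))=\tilde\iota(\varphi(-1))$, which combined with Step~1 gives the lemma.

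\textbf{Main obstacle.} The main care point is Step~1: justifying that the $\mathfrak{a}$-part of the embedded $\mathfrak{sl}(2,\R)$ spans that of $\mathfrak{so}(n,1)$, and that this persists after conjugating into $\mathfrak{a}$ of $\mathfrak{g}$ with a $\theta$-stable choice. I would handle this by choosing a Cartan involution of $\mathfrak{so}(n,1)$ that restricts to one on $\mathfrak{so}(2,1)$. Then I would extend it to a Cartan involution of $\mathfrak{g}$ compatible with $d\varphi$, which is possible for a reductive subalgebra by the standard Mostow/Karpelevich argument. The identification of $-1$ under the inclusion of spin groups is a routine Clifford-algebra check.
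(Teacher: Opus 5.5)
Your proposal is correct and follows essentially the same route as the paper. The paper chains (1)$\Leftrightarrow$(2)$\Leftrightarrow$(3)$\Leftrightarrow$(4): properness for $Spin(n,1)$ is equivalent to properness for the embedded $SL(2,\R)\simeq Spin(2,1)$ by Kobayashi's criterion and equality of real ranks; this is equivalent to very-evenness by Property~(VE); and very-evenness is equivalent to $\tilde\iota(\varphi(-1))=-I_N$ because $-1$ acts on $\pi_k$ by $(-1)^{k-1}$. The compatible Cartan involutions and the identification of $-1\in SL(2,\R)$ with $-1\in Spin(n,1)$ that you spell out are details the paper states in a single line.
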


\begin{proof}
    In the following, we think of $SL(2,\R)$ as a
    closed Lie subgroup of $Spin(n,1)$
    by $SL(2,\R)\simeq Spin(2,1)\subset Spin(n,1)$.
    Under this embedding, the element $-I_{2}\in SL(2,\R)$ corresponds to $-1\in Spin(n,1)$, and hence we identify these two elements.
    
    For a Lie group homomorphism $\varphi\colon Spin(n,1)\rightarrow G$,
    we consider the following four claims:
    \begin{enumerate}[label=$(\arabic*)$]
        \item
        \label{lem:2_equiv_n:item:spin(n,1)-proper}
        The $Spin(n,1)$-action on $G/H$ via $\varphi$ is proper; 
        \item 
        \label{lem:2_equiv_n:item:spin(2,1)-proper}
        The $SL(2,\R)$-action on $G/H$ via 
        the restriction $\varphi|_{SL(2,\R)}$ is proper;
        \item
        \label{lem:2_equiv_n:item:very-even}
        The restriction $d\varphi|_{\mathfrak{sl}(2,\R)}$ is very even for $\iota$;
        \item 
        \label{lem:2_equiv_n:item:half-int}
        $\tilde{\iota}(\varphi(-1))=-I_{N}$. 
    \end{enumerate}

    To prove our Lemma (\ref{lem:2_equiv_n:item:spin(n,1)-proper}
    $\Leftrightarrow$ \ref{lem:2_equiv_n:item:half-int}), 
    let us show \ref{lem:2_equiv_n:item:spin(n,1)-proper}
    $\Leftrightarrow$ \ref{lem:2_equiv_n:item:spin(2,1)-proper} $\Leftrightarrow$ \ref{lem:2_equiv_n:item:very-even} $\Leftrightarrow$ \ref{lem:2_equiv_n:item:half-int}.
    Since $Spin(n,1)$ has the same real rank as $SL(2,\R)$,
    \ref{lem:2_equiv_n:item:spin(n,1)-proper}
    $\Leftrightarrow$ \ref{lem:2_equiv_n:item:spin(2,1)-proper}
    follows from Kobayashi's properness criterion (Fact~\ref{fact:properness_criterion}).
    Since $G/H$ has $\VE$ for $\iota$, we have \ref{lem:2_equiv_n:item:spin(2,1)-proper} $\Leftrightarrow$ \ref{lem:2_equiv_n:item:very-even}.
    Further, \ref{lem:2_equiv_n:item:very-even} $\Leftrightarrow$  \ref{lem:2_equiv_n:item:half-int} follows from the representation theory of $SL(2,\R)$. Hence, 
    the assertion is proved.
\end{proof}

The equivalent conditions in Lemma~\ref{lem:2_equiv_n}
are related to the Hurwitz--Radon number $\rho^{(1)}(\mathfrak{g},\iota)$ as follows:
\begin{proposition}
\label{proposition:characterization:HR-spin}
    Let $n\geq 2$.
    In Setting~\ref{setting:classical-semisimple},
    $n\leq \rho^{(1)}(\mathfrak{g},\iota)$ holds if and only if 
    there exists a homomorphism $\varphi\colon Spin(n,1)\rightarrow G$
    with $\tilde{\iota}(\varphi(-1))=-I_{N}$.
\end{proposition}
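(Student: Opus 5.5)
The plan is to prove the two implications separately, working throughout with the Clifford algebra $C(n,1)$, its even part $C^{+}(n,1)\simeq C(n,0)$ or $C(0,n)$ (depending on sign conventions), and the realization $Spin(n,1)\subset C^{+}(n,1)^{\times}$. We write $e_1,\dots,e_n,e_{n+1}$ for the standard generators with $e_j^2=1$ for $j\le n$ and $e_{n+1}^2=-1$. Then the elements $u_j:=e_je_{n+1}$ ($1\le j\le n$) generate $C^{+}(n,1)$, pairwise anticommute, and satisfy $u_j^2=1$. Moreover, the span of the $u_j$ is exactly the $\mathfrak{p}$-part of $\mathfrak{spin}(n,1)\simeq\mathfrak{so}(n,1)$, up to the factor $1/2$.

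\textbf{($\Rightarrow$)} Suppose $f\colon\R^n\to\mathfrak{p}$ satisfies $\iota(f(v))^2=\|v\|^2I_N$. Then $U_j:=\iota(f(e_j))$ satisfy $U_j^2=I_N$ and, by polarization, $U_jU_k+U_kU_j=0$ for $j\neq k$. Hence $u_j\mapsto U_j$ defines an algebra homomorphism $\Phi\colon C^{+}(n,1)\to M(N,\C)$ with $\Phi(-1)=-I_N$. Restricting $\Phi$ to $Spin(n,1)$ gives a group homomorphism into $GL(N,\C)$.

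The key point is that this homomorphism lands in $G$. Its differential sends the $\mathfrak{k}$-part $\mathrm{span}\{u_ju_k\}$ to $\frac14[U_j,U_k]$, which lies in $[\iota(\mathfrak p),\iota(\mathfrak p)]\subset\iota(\mathfrak k)$, and it sends the $\mathfrak p$-part into $\iota(\mathfrak p)$. So the differential takes values in $\iota(\mathfrak g)$, and since $Spin(n,1)$ is connected, the image lies in the identity component of $G$. This gives the required $\varphi$ with $\tilde\iota(\varphi(-1))=-I_N$.

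\textbf{($\Leftarrow$)} This direction is the main obstacle. Let $\varphi$ be given with $\tilde\iota\varphi(-1)=-I_N$, and decompose $\tilde\iota\circ\varphi$ into irreducible complex representations of $Spin(n,1)$. Each constituent is genuine, meaning it does not factor through $SO(n,1)$. A single representation of this kind need not extend to $C^{+}(n,1)$, so we cannot simply reverse the argument above. The plan is therefore to replace $\varphi$ by another homomorphism $\varphi'$ whose composite $\tilde\iota\circ\varphi'$ is a direct sum of (semi)spin representations, since those do extend to $C^{+}(n,1)$.

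To do this, use Appendix~\ref{appendix:spin-representation}: every genuine irreducible representation $\pi$ has dimension divisible by the (semi)spin dimension, with the same Cartan index (real, complex or quaternionic type), and the same orthogonal or symplectic nature as the corresponding (semi)spin representation $S$. Accordingly, replace each $\pi$ by a direct sum of $\dim\pi/\dim S$ copies of $S$, choosing semispin factors so that invariant bilinear or hermitian forms of the right signature and type are preserved.

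The embedding criterion of Appendix~\ref{appendix:embedding} (Proposition~\ref{prop:embedding-criterion}) then shows that the new representation $\tau$ of the same dimension $N$ again factors through $G$, up to conjugation in $GL(N,\C)$. Here we use that the data the criterion depends on (dimensions, Cartan indices, form types) agree for $\tau$ and for $\tilde\iota\circ\varphi$. The delicate bookkeeping is for $G=SO(p,q)$, $Sp(r,s)$ and $SU(p,q)$, where the signature must also be matched; for this we would use the signature information on spin representations from Appendix~\ref{appendix:spin-representation}.

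Once $\tau$ factors through $G$ and extends to $\Phi\colon C^{+}(n,1)\to M(N,\C)$, set $f(v):=d\varphi'$-preimage of $\sum v_j\Phi(u_j)$. After conjugating so that $d\varphi'$ respects Cartan decompositions (Cartan involutions are conjugate, as in the proof of Proposition~\ref{prop:HR-pair-equiv}), $f(v)$ lies in $\mathfrak p$ and satisfies $\iota(f(v))^2=\|v\|^2I_N$. Hence $n\le\rho^{(1)}(\mathfrak g,\iota)$.
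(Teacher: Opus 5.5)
This is essentially the paper's argument. In the notation of Proposition~\ref{prop:key_for_HR}, you prove $(1)\Rightarrow(3)\Rightarrow(2)$ by extending through $\eta\colon C(n)\simeq C^{+}(n,1)$, and $(2)\Rightarrow(3)\Rightarrow(1)$ by replacing the genuine constituents of $\tilde{\iota}\circ\varphi$ with (semi)spin representations, using Facts~\ref{fact:half-int-rep-dim} and~\ref{fact:index_for_spin1} and the embedding criteria of Section~\ref{section:embedding}.

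Two steps you leave as bookkeeping are settled in the paper as follows. The replacement is the explicit $\tau_{\dagger}$, checked against the multiplicity-parity condition of Lemma~\ref{lemma:criterion_for_embedding}; a constituentwise replacement that preserves the sign of the last coordinate $\lambda_{l}$ of the highest weight would also work. The signature issue is handled by Lemma~\ref{lemma:signature_of_half_integral_rep}, not by anything in Appendix~\ref{appendix:spin-representation}: restricting to $Spin(2,1)\simeq SL(2,\R)$ shows that every invariant hermitian form is split, so no signature matching is needed.
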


We postpone the proof of Proposition~\ref{proposition:characterization:HR-spin} to the subsequent subsections and prove Theorem~\ref{thm:proper-hurwitz-radon}.

\begin{proof}[Proof of Theorem~\ref{thm:proper-hurwitz-radon} assuming   Proposition~\ref{proposition:characterization:HR-spin}]
As a corollary of the properness criterion (Fact~\ref{fact:properness_criterion}), 
recall Corollary~\ref{cor:properness-locally-isom} and Lemma~\ref{lem:VE-locally-isomorphic}. Namely, 
the property that $G/H$ admits a proper $Spin(n,1)$-action,
as well as $\VE$,
depends only on the local isomorphism class of $G/H$.
Hence, for the proof, we may  replace $G/H$ by a locally isomorphic homogeneous space. 
Since $(\mathfrak{g},\iota)$ is a classical pair, we may assume Setting~\ref{setting:classical-semisimple}.
Under this setting, the assertion follows immediately from
Lemma~\ref{lem:2_equiv_n}
and Proposition~\ref{proposition:characterization:HR-spin}.
\end{proof}

\subsection{Strategy of the proof of 
\texorpdfstring{Proposition~\ref{proposition:characterization:HR-spin}}{Proposition 5.3}}
\label{section:proof- proposition:characterization:HR-spin}
In this subsection, we explain the strategy of the proof of
Proposition~\ref{proposition:characterization:HR-spin} and outline the structure of the subsequent subsections.

Proposition~\ref{proposition:characterization:HR-spin} is exactly the equivalence 
\ref{item:HR-nleqrho(G)} $\Leftrightarrow$ \ref{item:Spin(n,1)toG_half-int} in the following: 
\begin{proposition}\label{prop:key_for_HR}
    In Setting~\ref{setting:classical-semisimple},
    the following claims are equivalent for any integer $n\geq 2$:
    \begin{enumerate}[label=$(\arabic*)$]
        \item \label{item:HR-nleqrho(G)} $n\leq \rho^{(1)}(\mathfrak{g},\iota)$;
        \item \label{item:Spin(n,1)toG_half-int} There exists a Lie group 
        homomorphism $\varphi\colon Spin(n,1)\rightarrow G$
        such that $\tilde{\iota}(\varphi(-1))=-I_{N}$;
        \item \label{item:Spin(n,1)toG_spinor}
        There exists a Lie group 
        homomorphism $\varphi\colon Spin(n,1)\rightarrow G$
        such that the representation $\tilde{\iota}\circ\varphi$
        is equivalent to a direct sum of several copies of the spin representation $S$ when $n$ is even, and to a direct sum of several copies of the semispin representations $S_{1}$
        and $S_{2}$ when $n$ is odd.
    \end{enumerate}
\end{proposition}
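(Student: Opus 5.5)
I would prove the cycle of implications $(1)\Rightarrow(3)\Rightarrow(2)\Rightarrow(1)$.

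\textbf{Step $(1)\Rightarrow(3)$.} Suppose $n\le\rho^{(1)}(\mathfrak{g},\iota)$. Take an $\R$-linear $f\colon\R^n\to\mathfrak{p}$ with $\iota(f(v))^2=\|v\|^2I_N$. Put $e_k:=\iota(f(e_k))$ for $k=1,\dots,n$, so the $e_k$ are hermitian and satisfy $e_ke_l+e_le_k=2\delta_{kl}I_N$. Then $e_k\mapsto e_k$ defines a representation of the Clifford algebra $C(n,0)$ on $\C^N$ (with the positive sign convention). The even part generated by $e_ke_l$ contains the compact spin group, and the products $e_k$ with the extra ``time'' generator realize $C^{+}(n,1)\simeq C(n,0)$. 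Concretely, the Lie algebra $\mathfrak{so}(n,1)=\mathfrak{so}(n)\oplus\R^n$ is sent by
\[
e_k\wedge e_l\mapsto \tfrac12 e_ke_l,\qquad e_k\wedge e_0\mapsto \tfrac12 e_k .
\]

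The key check is that this lands in $\iota(\mathfrak{g})$. The noncompact part does by construction. For the compact part, $[\iota(f(u)),\iota(f(v))]\in\iota([\mathfrak{p},\mathfrak{p}])\subset\iota(\mathfrak{k})$. So we obtain a Lie algebra map $\mathfrak{so}(n,1)\to\mathfrak{g}$. It lifts by Lemma~\ref{lemma:lift} to $\varphi\colon Spin(n,1)\to G$, and $\tilde\iota\circ\varphi$ is the restriction of a $C^{+}(n,1)$-module. Every $C^{+}(n,1)$-module is a sum of (semi)spin modules: $S$ for $n$ even, and $S_1,S_2$ for $n$ odd, using the Appendix. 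This gives (3).

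\textbf{Step $(3)\Rightarrow(2)$.} This is immediate, since $-1$ acts by $-1$ on the (semi)spin modules.

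\textbf{Step $(2)\Rightarrow(1)$.} This is the main obstacle. Given $\varphi$ with $\tilde\iota\varphi(-1)=-I_N$, every irreducible constituent of $\tilde\iota\circ\varphi$ is a genuine spin representation, i.e.\ one not factoring through $SO(n,1)$. I would not extend $\varphi$ itself to $C^{+}(n,1)$. Instead I would construct a new homomorphism, replacing each genuine irreducible constituent $V_\lambda$ by a (semi)spin module of the same ``type'' (real, complex or quaternionic structure, and orthogonal or symplectic invariant form, as recorded by the Cartan index). The replacement must still factor through $G$, which is decided by the embedding criterion Proposition~\ref{prop:embedding-criterion}. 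The needed input is the Appendix fact that genuine irreducibles of $Spin(n,1)$ have dimension divisible by $\dim S$ (or by $\dim S_i$, resp.\ a suitable multiple), with matching Cartan index and matching signature data. Once the replacement homomorphism $\varphi'$ is obtained, we are in situation (3). The Clifford generators are then the images of $e_k\wedge e_0$ scaled by $2$, which lie in $\iota(\mathfrak{p})$ and square to $I_N$, and this yields $f$ and hence $n\le\rho^{(1)}$.

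I expect the hard part to be the case-by-case bookkeeping over the groups in Setting~\ref{setting:classical-semisimple}. For indefinite $G$ such as $SO(p,q)$, $SU(p,q)$ and $Sp(r,s)$ one must match signatures. I would handle this by noting that a genuine module of $Spin(n,1)$ with an invariant indefinite form has split signature, exactly like $S$.
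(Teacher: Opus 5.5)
Your proposal follows the paper's proof: (1)$\Leftrightarrow$(3) comes from the Clifford algebra isomorphism $C(n)\simeq C^{+}(n,1)$ carrying $\R^{n}$ onto $\mathfrak{p}_{\mathfrak{spin}(n,1)}$, and (3)$\Rightarrow$(2) is trivial. For (2)$\Rightarrow$(3) you replace $\tilde\iota\circ\varphi$ by a sum of (semi)spin modules of the same dimension, using the same four inputs as the paper: divisibility of $\dim\pi$ for $\pi\in\Lambda_n$, equality of Cartan indices with those of $S,S_1,S_2$, the criteria of Lemma~\ref{lemma:criterion_for_embedding} and Proposition~\ref{prop:embedding-criterion}, and the split-signature Lemma~\ref{lemma:signature_of_half_integral_rep}. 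The one piece of bookkeeping you leave implicit is how to choose between $S_1$ and $S_2$ for odd $n$: constituents interchanged by some $\dagger$ must be sent to $\dagger$-interchanged semispin modules, which the paper achieves with balanced copies of $S_1\oplus S_2$ whenever $n\notin\N(\dagger)$.
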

Here, we refer to Appendix~\ref{section:clifford-spin} for 
the (semi)spin representations of the spin groups. 
 
The remainder of Section~\ref{section:proof_proper-HR} is devoted to the proof of Proposition~\ref{prop:key_for_HR}.  
We divide the proof into the two equivalences in Proposition~\ref{prop:key_for_HR}:  
\ref{item:HR-nleqrho(G)} $\Leftrightarrow$ \ref{item:Spin(n,1)toG_spinor} and  
\ref{item:Spin(n,1)toG_half-int} $\Leftrightarrow$ \ref{item:Spin(n,1)toG_spinor}.
The first is proved in Section~\ref{section:clifford-hurwitz-radon} using Clifford algebras.
The second is proved
in Sections~\ref{section:half-int-rep-spin-rep-SL(N)} to \ref{section:half-int-rep-spin-rep}.
In the second equivalence, 
the implication \ref{item:Spin(n,1)toG_spinor}
$\Rightarrow$ \ref{item:Spin(n,1)toG_half-int} is trivial by the construction of the (semi)spin representation. 

The proof of the implication
\ref{item:Spin(n,1)toG_half-int}
$\Rightarrow$
\ref{item:Spin(n,1)toG_spinor}
is carried out by dividing the classical groups $G$
into four cases.
We first treat the simplest case $G=SL(N,\C)$
in Section~\ref{section:half-int-rep-spin-rep-SL(N)}.
The proof for the remaining cases is completed in
Section~\ref{section:half-int-rep-spin-rep}.

To construct homomorphisms from $Spin(n,1)$ to a classical group $G$,
we adopt a standard approach:
we analyze which (not necessarily irreducible) representations of $Spin(n,1)$ factor through $G$.
Section~\ref{section:embedding}
presents a general method for this purpose,
based on real, quaternionic, orthogonal, and symplectic representation theory.
For $G=SO(p,q),\,SU(p,q),\,Sp(p,q)$,
this reduces the question to determining the signatures of the representations.
In Section~\ref{section:determine-signature},
we compute the signatures 
in a special setting needed for
Section~\ref{section:half-int-rep-spin-rep}.

\subsection{Proof of  \texorpdfstring{\ref{item:HR-nleqrho(G)} $\Leftrightarrow$ \ref{item:Spin(n,1)toG_spinor} in Proposition~\ref{prop:key_for_HR}}{the equivalence of (1) and (3) in Proposition 5.4}}
\label{section:clifford-hurwitz-radon}
Let $G$ and $\iota$ be as in Setting~\ref{setting:classical-semisimple}, 
and $\mathfrak{g}=\mathfrak{k}+\mathfrak{p}$ a Cartan decomposition. 
    
    \ref{item:HR-nleqrho(G)} $\Rightarrow$ \ref{item:Spin(n,1)toG_spinor}.
    Assume $n\leq \rho^{(1)}(\mathfrak{g},\iota)$, and  take an $\R$-linear map
    $f\colon \R^{n}\rightarrow \mathfrak{p}$ such that
    $\iota(f(v))^2 = \|v\|^2 I_{N}$ holds for any $v\in \R^{n}$.
    By the universality of the Clifford algebra $C(n)$, 
    the map $\iota\circ f$ is extended
    to an $\R$-algebra homomorphism 
    $F\colon C(n) \rightarrow M(N,\C)$.
    Using the $\R$-algebra isomorphism $\eta\colon C(n)\rightarrow C^{+}(n,1)$ in Lemma~\ref{lemma:c(n)-c(n,1)+}, 
    we define a representation $\tau\colon Spin(n,1)\rightarrow GL(N,\C)$ by 
    the restriction of $F\circ\eta^{-1}$
    to $Spin(n,1)$. Thus we obtain the following commutative diagram:
    \[
   \xymatrix{
    \mathfrak{p}_{\mathfrak{spin}(n,1)} \ar[r]^-{\eta^{-1}}_-{\simeq} \ar@{}[d]|{\bigcap} 
    & \R^{n} \ar@{}[d]|{\bigcap} \ar[r]^-{f} & \mathfrak{p} \ar@{}[d]|{\bigcap}^{\ \iota} \\ 
    C^{+}(n,1) \ar[r]^-{\eta^{-1}}_-{\simeq} \ar@{}[d]|{\bigcup} & C(n) \ar[r]^-{F} & M(N,\C) \ar@{}[d]|{\bigcup} \\ 
    Spin(n,1) \ar[rr]^-{\tau} & & GL(N,\C). \\ 
   }
    \]
    Here we note from \eqref{eq:clifford-algebra} that 
    the $C^{+}(n,1)$-module $\C^{N}$ via $F\circ\eta^{-1}$ is 
    decomposed into a direct sum of several copies of the unique simple 
    $C^{+}(n,1)$-module (resp.\ the two simple 
    $C^{+}(n,1)$-modules) if $n$ is even (resp.\ odd).
    Hence, by the construction of the spin representation $S$ (resp.\ the semispin representations $S_{1},S_{2}$), we see that the representation $\tau$ 
    is equivalent to a direct sum of several copies of $S$ (resp.\ $S_{1},S_{2}$) if $n$ is even (resp.\ odd).
    To prove \ref{item:HR-nleqrho(G)} $\Rightarrow$ \ref{item:Spin(n,1)toG_spinor}, 
    it suffices to show 
    that $\tau(Spin(n,1))\subset G$ holds.
    
    Now we consider the differential $d\tau\colon \mathfrak{spin}(n,1)\rightarrow M(N,\C)$ of $\tau$.
    The restriction of $d\tau$ to $\mathfrak{p}_{\mathfrak{spin}(n,1)}$ coincides with 
    $f\circ\eta^{-1}\colon \mathfrak{p}_{\mathfrak{spin}(n,1)} \rightarrow \mathfrak{p}$. Hence  
    we have $d\tau(\mathfrak{p}_{\mathfrak{spin}(n,1)})\subset \mathfrak{p}$. 
    Since the simple Lie algebra
    $\mathfrak{spin}(n,1)$ is generated by $\mathfrak{p}_{\mathfrak{spin}(n,1)}$, we obtain
    $d\tau(\mathfrak{spin}(n,1))\subset \mathfrak{g}$. 
    Hence $\tau(Spin(n,1))$ is contained in $G$ since $Spin(n,1)$  is connected. Thus \ref{item:HR-nleqrho(G)} $\Rightarrow$ \ref{item:Spin(n,1)toG_spinor} is proved.

    \ref{item:Spin(n,1)toG_spinor} $\Rightarrow$ \ref{item:HR-nleqrho(G)}.
    Let $\varphi\colon Spin(n,1)\rightarrow G$ be 
    a Lie group homomorphism
    satisfying the condition in \ref{item:Spin(n,1)toG_spinor}.
    By the definitions of $S$, $S_{1}$ and $S_{2}$,
    we can and do take an $\R$-algebra homomorphism 
    $\Phi\colon C^{+}(n,1)\rightarrow M(N,\C)$ such that 
    the restriction of $\Phi$ to $Spin(n,1)$
    coincides with $\varphi$.
    The composition of $\eta|_{\R^{n}}$ and 
    the differential $d\varphi$
    gives an $\R$-linear map $f \colon \R^{n}\rightarrow \mathfrak{p}$.
    Thus we obtain the following commutative diagram:
    \[
    \xymatrix{
    & Spin(n,1) \ar[r]^-{\varphi} \ar@{}[d]|{\bigcap}  & G \ar@{}[d]|{\bigcap}  \\ 
    C(n) \ar[r]^-{\eta}_-{\simeq} \ar@{}[d]|{\bigcup}  & C^{+}(n,1) \ar[r]^-{\Phi} \ar@{}[d]|{\bigcup} & M(N,\C) \ar@{}[d]|{\bigcup}^{\ \iota} \\
    \R^{n} \ar[r]^-{\eta}_-{\simeq}
    \ar@/_20pt/[rr]_{f} & \mathfrak{p}_{\mathfrak{spin}(n,1)} \ar[r]^-{d\varphi} & \mathfrak{p}
    }
    \]
    Hence, for any $v\in \R^n$, we have 
    \[
    \iota(f(v))^2=\Phi(\eta(v))^2
    =\Phi(\eta(v^2))
    =\Phi(\eta(\|v\|^2))
    =\|v\|^2 I_{N}.
    \]
    Thus \ref{item:Spin(n,1)toG_spinor} $\Rightarrow$ \ref{item:HR-nleqrho(G)} is proved.

\subsection{Proof of 
\texorpdfstring{Proposition~\ref{prop:key_for_HR} \ref{item:Spin(n,1)toG_half-int} $\Rightarrow$ 
    \ref{item:Spin(n,1)toG_spinor}}{the equivalence of (2) and (3) in Proposition 5.4} for \texorpdfstring{$G=SL(N,\C)$}{G=SL(N,C)}} 
    \label{section:half-int-rep-spin-rep-SL(N)}

In this subsection, we prove the implication
\ref{item:Spin(n,1)toG_half-int} $\Rightarrow$
\ref{item:Spin(n,1)toG_spinor} in
Proposition~\ref{prop:key_for_HR}
in the simplest case where $G=SL(N,\C)$.

Let $\varphi\colon Spin(n,1)\to SL(N,\C)$ be a Lie group homomorphism satisfying
the condition in Proposition~\ref{prop:key_for_HR}~\ref{item:Spin(n,1)toG_half-int}. 
Namely, the representation of $Spin(n,1)$ defined by 
\[
\tau:=\tilde{\iota}\circ \varphi,
\]
satisfies 
\begin{equation}
\label{eq:tau-1=-1}
    \tau(-1)=-I_{N},
\end{equation}
where $\tilde{\iota}\colon SL(N,\C)\hookrightarrow GL(N,\C)$ is the natural inclusion.
Then we define a representation $\tau'$ of $Spin(n,1)$ to be a direct sum of copies of the (semi)spin representation as follows: 
\begin{equation}
\label{eq:replace-tau}
\tau' := 
    \begin{cases}
        S^{\oplus\frac{\dim \tau}{\dim S}}
        & (n\in 2\N), \\
        S_{1}^{ \oplus\frac{\dim\tau}{\dim S_1} } 
        & (n\in 2\N+1). 
    \end{cases}
\end{equation}

First, let us verify that, in the definition of $\tau'$,
the multiplicities
$\frac{\dim \tau}{\dim S}$ and
$\frac{\dim \tau}{\dim S_{1}}$
are integers, i.e., that $\tau'$ is well-defined. 

We introduce some notation.
For a Lie group $L$,
let $\irr(L)$ denote the set of equivalence classes of
irreducible finite-dimensional continuous complex representations of $L$.
We set
\begin{equation}
    \label{def:half-int}
    \Lambda_{n}
    :=
    \{
    [\pi]\in \irr(Spin(n,1))
    \mid \pi(-1)=-\id
    \}.
\end{equation}
Since the center of $Spin(n,1)$ is $\{\pm 1\}$
(Lemma~\ref{lemma:spin-center}), $[\pi]\in \Lambda_{n}$ if and only if
$\pi$ does not factor through the double covering
$Spin(n,1)\to SO_{0}(n,1)$.

In the remainder of Section~\ref{section:proof_proper-HR}, we use the following abuse of notation:
for an irreducible representation $\pi$,
we write $\pi\in \Lambda_{n}$,
or say that $\pi$ belongs to $\Lambda_{n}$ if the equivalence class $[\pi]$ belongs to $\Lambda_{n}$.

By \eqref{eq:tau-1=-1}, every irreducible component of $\tau$
belongs to $\Lambda_{n}$.
Therefore, the following fact guarantees that $\tau'$ is well-defined: 
\begin{fact}
\label{fact:half-int-rep-dim}
For any $\pi \in \Lambda_{n}$,  
its dimension $\dim \pi$ is divisible by  
$\dim S=2^{n/2}$ if $n$ is even,  
and by $\dim S_{1}=2^{(n-1)/2}$ if $n$ is odd.
\end{fact}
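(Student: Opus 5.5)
We prove Fact~\ref{fact:half-int-rep-dim} by restricting $\pi$ to a compact subgroup, where the analogous statement follows from the Clifford-module structure of spinors. Since $Spin(n,1)$ is connected with complexification $Spin(n+1,\C)$, every finite-dimensional irreducible representation $\pi$ extends holomorphically to $Spin(n+1,\C)$. By Weyl's unitary trick it then restricts to an irreducible representation of the compact real form $Spin(n+1)$. The central element $-1$ is the same in all three groups, so the condition $\pi(-1)=-\id$ is preserved. It therefore suffices to prove the following: every irreducible representation of $Spin(m)$, $m=n+1$, on which $-1$ acts by $-\id$ has dimension divisible by $2^{\lfloor (m-1)/2\rfloor}$. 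For $n$ even this is $2^{n/2}$, and for $n$ odd it is $2^{(n-1)/2}$, as required.

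For the compact statement, we restrict further to $Spin(m-1)\subset Spin(m)$. The element $-1$ lies in $Spin(m-1)$, so every irreducible constituent of the restriction again has $-1$ acting by $-\id$. We then argue by induction on $m$ using the classical branching rule. The key lemma is that every irreducible representation of $Spin(k)$ with $-1\mapsto -\id$ is a constituent of $S\otimes V$ for some representation $V$ of $SO(k)$, where $S$ is the (semi)spin module. Concretely, the highest weight has all coordinates in $\tfrac12+\Z$. Subtracting $(\tfrac12,\dots,\tfrac12)$, or $(\tfrac12,\dots,\tfrac12,-\tfrac12)$ in the even case, gives an integral dominant weight $\mu$, and $V_\lambda$ occurs in $S\otimes V_\mu$.

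This alone does not yield divisibility, so instead we use the Clifford action directly. Take $\lambda=\mu+\omega_{\mathrm{spin}}$ and apply the Weyl dimension formula. For $B_r$ ($m=2r+1$) and $D_r$ ($m=2r$), the product over positive roots $\prod\langle\lambda+\delta,\alpha\rangle/\langle\delta,\alpha\rangle$ can be split into short-root factors and the rest. The parity of the half-integral coordinates then gives a $2$-adic valuation of at least $r$, respectively $r-1$. This $2$-adic computation of Weyl's formula is the main obstacle. I would first try the cleaner route instead: show that the maximal torus $T\subset Spin(m)$ contains an elementary abelian $2$-group image that acts on $\pi$ through an extraspecial group of order $2^{1+2s}$ with $s=\lfloor (m-1)/2\rfloor$.

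To build this group, let $e_1,\dots,e_m$ be an orthonormal basis and take the elements $e_ie_j\in Spin(m)$. In an orthogonal framing, products of these generate a finite subgroup $E$ whose commutator subgroup is $\{\pm1\}$, and which is extraspecial of order $2^{1+2s}$ modulo its center. Any representation of $E$ in which the center $-1$ acts by $-1$ is a multiple of the unique faithful irreducible representation, which has dimension $2^{s}$. Hence $2^{s}\mid\dim\pi$.

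It remains to verify that $E$ is extraspecial with the stated order, and in particular that the center of $E$ is exactly $\{\pm1\}$. For odd $m$ this is a direct computation with Clifford relations. For even $m$, one uses the subgroup generated by $e_1e_2,\dots,e_1e_{m-1}$ and discards the central element $e_1\cdots e_m$. This computation, together with the count $s=\lfloor (m-1)/2\rfloor$, is where I expect the care to be needed.
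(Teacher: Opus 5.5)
Your final argument, restricting to the finite Clifford group, is correct. It is a genuinely different route from the paper's.

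\textbf{The paper's route.} The paper proves this as Proposition~\ref{proposition-dim-(p.q)} by weight combinatorics. The highest weight of $\pi\in\Lambda_{p,q}$ lies in $(\tfrac12+\Z)^l$ by Lemma~\ref{lemma:half-int-rep-highest-weight}. Every other weight differs from it by an integral combination of roots, so all weights lie in $(\tfrac12+\Z)^l$. Hence the sign-change subgroup $W_0\subset W$ (all sign changes in type $B_l$, even ones in type $D_l$) acts freely on the set of weights. $W$-invariance of multiplicities then gives $\#W_0=2^{\lceil (p+q)/2\rceil-1}\mid\dim\pi$.

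\textbf{Your route.} You replace the Weyl group by a finite subgroup of the group itself. After passing to $Spin(m)$, $m=n+1$, you restrict to $E=\{\pm e_I : |I|\ \text{even}\}$, or to the corresponding group of $Spin(m-1)$ when $m$ is even. This group is extraspecial of order $2^{1+2s}$ with $s=\lfloor (m-1)/2\rfloor$. Every representation of it with $-1\mapsto-\id$ is a multiple of its unique $2^{s}$-dimensional irreducible representation.

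The two arguments are closely related. $E$ lies in the normalizer of a maximal torus $T$; it is not contained in $T$, as your wording suggests, since $E$ is non-abelian. $E\cap T$ detects the weights modulo $2$, and $E/(E\cap T)$ acts by sign changes, forming a group of the same order as $W_0$.

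\textbf{What each buys.} Yours needs no highest-weight theory and no Lemma~\ref{lemma:half-int-rep-highest-weight}. It applies verbatim to reducible representations. It also shows that $\pi|_E$ is a multiple of the spin module of $E$, which fits the paper's theme that $\Lambda_n$ mimics $S,S_1,S_2$. The paper's argument is two lines once the half-integral weight description is available, and the paper needs that description anyway for Proposition~\ref{proposition:index_of_spin_rep}.

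\textbf{Two remarks on your write-up.} First, the passage to $Spin(n+1)$ is essential when $n$ is even. Finite subgroups of $Spin(n,1)$ are conjugate into $Spin(n)$. Restricting the $2^{n/2-1}$-dimensional semispin representations of $Spin(n)$ shows that no restriction to a subgroup of $Spin(n)$ can give more than $2^{n/2-1}$. For $n$ odd, the Clifford group of $Spin(n)\subset Spin(n,1)$ already suffices.

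Second, the verification you postpone takes one line. For $|I|,|J|$ even one has $e_Ie_J=(-1)^{|I\cap J|}e_Je_I$. For $m$ odd and $e_I\neq\pm1$, pick $i\in I$ and $j\notin I$; then $e_ie_j$ anticommutes with $e_I$, so the center is $\{\pm1\}$. For $m$ even, the group generated by $e_1e_2,\dots,e_1e_{m-1}$ is exactly the Clifford group of $Spin(m-1)$.

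You should delete the branching-rule induction, the $S\otimes V_\mu$ lemma, and the $2$-adic Weyl-formula estimate. None of them is used, and the last was never carried out.
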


Although Fact~\ref{fact:half-int-rep-dim} may be known to experts,  
we could not find a suitable reference, and thus give a proof in Appendix~\ref{appendix:spin-representation}.

We now complete the proof.
The representation $\tau'$ has the same dimension $N$ as $\tau$,
and therefore defines a Lie group homomorphism
$Spin(n,1)\to SL(N,\C)$.
By construction, this homomorphism satisfies
the condition in
Proposition~\ref{prop:key_for_HR} \ref{item:Spin(n,1)toG_spinor}.
Hence, in the case $G=SL(N,\C)$,
the implication
\ref{item:Spin(n,1)toG_half-int} $\Rightarrow$
\ref{item:Spin(n,1)toG_spinor}
in Proposition~\ref{prop:key_for_HR} is proved.

In Section~\ref{section:half-int-rep-spin-rep}, we will treat the remaining classical groups $G$.
As in the case $G=SL(N,\C)$,
we replace the representation $\tau$ arising from
$\varphi\colon Spin(n,1)\rightarrow G$
by a direct sum of copies of
$S$, $S_{1}$, and $S_{2}$
without changing the dimension.
The new issue is whether the resulting representation factors through $G$.
In the next two subsections, we give the necessary tools to address this problem.

\subsection{Homomorphisms from Lie groups to classical Lie groups}
\label{section:embedding}
In this subsection, we discuss whether $N$-dimensional complex representations of Lie groups factor through 
the classical Lie subgroups of $GL(N,\C)$.
The results in this subsection may be known to experts,
but we were unable to find an appropriate reference.
Therefore, we include the proofs for the reader's convenience.

In what follows, we use the following notation:
\begin{notation}
\label{notation:subset}
For two subsets $S,T\subset GL(N,\C)$, 
we write $S\subset_{\Int} T$ if there exists $g\in GL(N,\C)$ such that $g^{-1}Sg\subset T$.
\end{notation}

\begin{problem}
\label{prob:embed}
Let 
$\tau\colon L\rightarrow GL(N,\C)$ be a (not necessarily irreducible) continuous representation of a Lie group $L$, and let
$G$ be one of the following subgroups of $GL(N,\C)$:
    \begin{align*}
GL(N,\mathbb{R}),\ GL(N/2,\mathbb{H}),\ O(N,\mathbb{C}),\ Sp(N/2,\mathbb{C}),\\  
U(p,q),\
O(p,q),\ 
O^{*}(2(N/2)),\ Sp(N/2,\mathbb{R}),\ Sp(r,s),
\end{align*}
where $p+q = N$ and $r + s = N/2$, and 
we assume that $N/2$ is an integer whenever it appears.
Then determine whether $\im \tau \subset_{\Int} G$.
\end{problem}

In this subsection, we explain how to solve Problem~\ref{prob:embed}, except for determining the signature. We divide our discussion into the following three cases: 
\begin{enumerate}[label=$(\alph*)$]
    \item 
    \label{item:RHOSp}
    $GL(N,\R),\ GL(N/2,\HA),\ O(N,\C),\ Sp(N/2,\C)$ (Lemma~\ref{lemma:criterion_for_embedding});
    \item 
    \label{item:U(p,q)}
    $U(p,q)$  (Lemma~\ref{lem:criterion_for_Upq});
    
    \item \label{item:O(p,q)Sp(p,q)Sp(n,R)Ostar}
    $O(p,q),\ O^{*}(2(N/2)),\ Sp(N/2,\R),\ Sp(r,s)$ (Proposition~\ref{prop:embedding-criterion}).
\end{enumerate}
The criteria in Cases~\ref{item:U(p,q)} and~\ref{item:O(p,q)Sp(p,q)Sp(n,R)Ostar}
leave some ambiguity in the signature.
However, in the setting needed for our purposes,
this ambiguity will be resolved in the next subsection.

\vspace{\baselineskip}
\noindent \textbf{Case \ref{item:RHOSp}.}
We denote by $\Rep(L)$ the category whose objects are finite-dimensional
complex continuous representations of a Lie group $L$ and whose morphisms
are $L$-intertwining operators.
Let $\tau\mapsto \overline{\tau}$ (resp.\ 
$\tau\mapsto \tau^{\vee}$)
be the covariant (resp.\ contravariant) functor 
of complex conjugate representations (resp.\ dual representations)
in $\Rep(L)$. Note that the squares of these functors are naturally identified with the identity functor of $\Rep(L)$.

For $\dagger\in \{-,\vee\}$ and $\varepsilon\in \{\pm 1\}$,  
we define a subgroup $G_{N}^{\dagger,\varepsilon}$ of $GL(N,\C)$ 
as shown in Table~\ref{table:RHOSp}.
\begin{table}[H]
        \centering
        \begin{tabular}{c|c|c|c|c}
           $G^{\dagger, \varepsilon}_{N}$  & $GL(N,\R)$ & $GL(N/2,\HA)$ & $O(N,\C)$ & $Sp(N/2,\C)$ \\
           \noalign{\hrule height 1.2pt}
           \hline
            $\dagger$ & $-$ & $-$ & $\vee$ & $\vee$\\
            $\varepsilon$ & $1$ & $-1$ & $1$ & $-1$
        \end{tabular}
        \caption{Definition of $G^{\dagger, \varepsilon}_{N}$.}
        \label{table:RHOSp}
    \end{table}
Put 
\begin{align}
\label{eq:delta_dagger}
\delta\equiv\delta(\dagger)
:=\begin{cases}
    1 & (\dagger=-), \\
    -1 & (\dagger=\vee).
\end{cases}    
\end{align}
Then, for an $N$-dimensional representation $\tau$ of $L$,
\begin{align}
\label{equiv:G-dagger-epsilon}
\im\tau \subset_{\Int} G^{\dagger,\varepsilon}_{N}
\Leftrightarrow &\text{ there exists an isomorphism } f\colon \tau\rightarrow \tau^{\dagger} \\
&\text{ such that } (f^{\dagger})^{\delta(\dagger)}\circ f=\varepsilon \operatorname{id}_{\tau},
\nonumber
\end{align}
where $f^{\dagger}$ denotes the image of the morphism $f$ under the functor $\dagger$. 

For each $\dagger \in\{-,\vee\}$, we briefly recall an invariant of an \emph{irreducible}
representation $\pi$
with $\pi^{\dagger} \simeq \pi$.
By Schur's lemma, one can easily check that there exists an isomorphism $f\colon 
\pi\rightarrow \pi^{\dagger}$
such that $(f^{\dagger})^{\delta(\dagger)}\circ f = \lambda\id_{\pi}$
with $\lambda\in \{\pm 1\}$. 
Then we put 
\begin{equation}
\label{def:index}
\ind^{\dagger}\pi:=\lambda \in \{\pm 1\},
\end{equation}
which is independent of the choice of the isomorphism $f$. By \eqref{equiv:G-dagger-epsilon}, it follows that, if $\pi$ is irreducible, then
\[
\im \pi \subset_{\Int} G^{\dagger,\varepsilon}_{N}
\iff \pi^{\dagger}\simeq \pi \text{ and }
\ind^{\dagger}\pi= \varepsilon .
\]
For the computations of $\ind^{-}\pi$ and $\ind^{\vee}\pi$
when $L$ is a real semisimple Lie group,
see Theorem~3 in \S8 and Theorem~2 in \S7 of \cite{Oni04}.
In \cite{Oni04}, $\ind^{-}\pi$ is called the \emph{Cartan index} and denoted by
$\varepsilon(\mathfrak{g}_{0},\rho_{0})$.

In this paper, we need to study Problem \ref{prob:embed} even for 
not necessarily irreducible representations. 
This can be reduced to the irreducible case by using the following lemma:
\begin{lemma}[{cf.\ \cite[Prop.~2.9]{Tojo2019Classification}}]\label{lemma:criterion_for_embedding}
    Fix $\varepsilon \in \{\pm1\}$ and $\dagger\in \{-,\vee\}$.
    Suppose that a continuous representation $\tau\colon L\rightarrow GL(N,\C)$ of a Lie group $L$ is completely reducible.
    Then $\im \tau \subset_{\Int} G^{\dagger, \varepsilon}_{N}$ holds if and only if for any irreducible representation $\pi$ of $L$, we have
    \begin{align}
    \label{eq:lemma:criterion_for_embedding}
    \begin{cases}
        (\varepsilon\ind^{\dagger}\pi)^{[\tau:\pi]}=1 & \text{if }\pi^{\dagger}\simeq \pi, \\
        [\tau:\pi] = [\tau:\pi^{\dagger}] & \text{if }\pi^{\dagger}\not \simeq \pi. 
    \end{cases}
    \end{align}

\end{lemma}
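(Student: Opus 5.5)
The plan is to reduce to the isotypic decomposition of $\tau$ and then apply the irreducible criterion \eqref{equiv:G-dagger-epsilon} block by block. Since $\tau$ is completely reducible, write $\tau\simeq\bigoplus_{\pi}\pi^{\oplus m_\pi}$ with $m_\pi=[\tau:\pi]$. The functor $\dagger$ permutes isomorphism classes of irreducibles, so $\tau^\dagger\simeq\bigoplus_\pi(\pi^\dagger)^{\oplus m_\pi}$.

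For necessity, assume $\im\tau\subset_{\Int}G^{\dagger,\varepsilon}_N$ and take $f\colon\tau\to\tau^\dagger$ with $(f^\dagger)^{\delta}\circ f=\varepsilon\id$. The existence of an isomorphism $\tau\simeq\tau^\dagger$ already forces $m_\pi=m_{\pi^\dagger}$, which is the second line of \eqref{eq:lemma:criterion_for_embedding}. Now fix $\pi$ with $\pi^\dagger\simeq\pi$ and restrict to the isotypic component $V_\pi\simeq\pi\otimes\C^{m}$, where $m=m_\pi$. Intertwiners preserve isotypic components, so $f$ restricts to $V_\pi\to V_\pi^\dagger$.

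Fix $f_0\colon\pi\to\pi^\dagger$ with $(f_0^\dagger)^\delta f_0=\lambda\id$, where $\lambda=\ind^\dagger\pi$. By Schur's lemma, $f|_{V_\pi}=f_0\otimes A$ for some $A\in GL(m,\C)$, where $A$ is viewed as a map $\C^m\to(\C^m)^\dagger$ in the appropriate sense (conjugate-linear identification for $\dagger=-$, bilinear pairing for $\dagger=\vee$). The condition then becomes $\lambda\cdot(A^\dagger)^\delta A=\varepsilon I_m$. In concrete terms:
\begin{itemize}
\item for $\dagger=-$ this reads $\bar A A=\varepsilon\lambda I_m$;
\item for $\dagger=\vee$ this reads $({}^tA)^{-1}A=\varepsilon\lambda I$, i.e.\ ${}^tA=\varepsilon\lambda A$.
\end{itemize}
The main point is the linear-algebra fact that such an $A$ exists if and only if $m$ is even or $\varepsilon\lambda=1$. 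For $\dagger=\vee$ this holds because nondegenerate skew forms exist only in even dimension. For $\dagger=-$ it holds because $\bar AA=-I$ gives a quaternionic structure on $\C^m$, forcing $m$ even, via $\det(\bar AA)=|\det A|^2>0$ versus $(-1)^m$. This condition is exactly $(\varepsilon\lambda)^{m}=1$.

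For sufficiency, build $f$ block by block and take the direct sum, which yields $f$ on all of $\tau$ satisfying \eqref{equiv:G-dagger-epsilon}:
\begin{itemize}
\item On a self-dual block, use $A=I_m$ if $\varepsilon\lambda=1$, and $A=\begin{pmatrix}0&I\\-I&0\end{pmatrix}$ (i.e.\ $J$) if $m$ is even.
\item On a pair $\pi\not\simeq\pi^\dagger$ with equal multiplicities, write the block as $W\oplus W^\dagger$ with $W=\pi^{\oplus m}$. Define $f(w,w')=(\varepsilon' w',w)$, using the natural identification $(W^\dagger)^\dagger=W$, and choose the scalar so that $(f^\dagger)^\delta f=\varepsilon\id$; this is always solvable since a scalar can be placed on one of the two components.
\end{itemize}

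The main obstacle is bookkeeping the sign and the meaning of $(f^\dagger)^\delta$ in the tensor decomposition $f_0\otimes A$, particularly for $\dagger=-$, where $A$ must be treated as conjugate-linear. I would handle this by fixing matrix realizations and checking the resulting identity directly.
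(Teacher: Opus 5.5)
Your proposal is correct and follows essentially the same route as the paper: isotypic decomposition, Schur's lemma to write $f$ on each self-$\dagger$ block as $A\otimes h_\pi$ with $(A^{\dagger})^{\delta}A=(\varepsilon\ind^{\dagger}\pi)I$, and the swap construction (identity on one side, $\varepsilon$ on the other) for pairs $\pi\not\simeq\pi^{\dagger}$. The only difference is that you also justify the linear-algebra fact that such $A$ exists exactly when $\varepsilon\ind^{\dagger}\pi=1$ or the multiplicity is even, via the determinant argument and skew forms, whereas the paper states it without proof.
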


\begin{proof} 
We put 
\begin{equation*}
    A:=\{[\pi] \in \irr(L) \mid \pi^{\dagger} \simeq \pi\} 
    \text{ and }
    B:= (\irr(L)\smallsetminus A)/\langle \dagger \rangle,
\end{equation*}
where $(-)/\langle \dagger \rangle$ means the quotient space by 
the action $\pi\mapsto\pi^\dagger$.

From \eqref{equiv:G-dagger-epsilon}, 
if $\im \tau \subset_{\Int} G^{\dagger, \varepsilon}_{N}$ holds, then $\tau^{\dagger}$ must be equivalent to $\tau$. Consequently, for the purpose of proving the assertion, we may assume initially that $[\tau:\pi] = [\tau:\pi^{\dagger}]$ holds for each $[\pi] \in B$.
Then the restriction of the elements in $\Hom_L(\tau, \tau^{\dagger})$ to each isotypic component of $\tau$ provides the following isomorphism of $\C$-vector spaces:
\begin{align*}
    \Hom_{L}(\tau,\tau^{\dagger}) \simeq 
    \bigoplus_{[\pi]\in A} &M([\tau:\pi],\C) \otimes_{\C} \Hom(\pi,\pi^{\dagger}) \\
    \oplus \bigoplus_{[\pi]\in B} &(M([\tau:\pi],\C) \otimes_{\C} \Hom(\pi,\pi) \\
    & \oplus M([\tau:\pi],\C) \otimes_{\C}\Hom(\pi^{\dagger},\pi^{\dagger})).
\end{align*}

For each $[\pi] \in A$, choose an isomorphism $h_{\pi} \in \Hom(\pi, \pi^{\dagger})$ such that $(h_{\pi}^\dagger)^{\delta(\dagger)} \circ h_{\pi} = (\ind^{\dagger}\pi) \id_{\pi}$. Under the above isomorphism, the element corresponding to $f \in \Hom_{L}(\tau, \tau^{\dagger})$ is denoted by
\[
((A_{f,\pi} \otimes h_{\pi})_{[\pi] \in A}, (B_{f,\pi} \otimes \id_{\pi}, C_{f,\pi} \otimes \id_{\pi^\dagger})_{[\pi] \in B}).
\]
Then $(f^\dagger)^{\delta(\dagger)} \circ f = \varepsilon \id_{\tau}$ holds if and only if the following condition holds for each irreducible representation $\pi$:
\begin{equation*}
\begin{cases}
    (A_{f,\pi}^{\dagger})^{\delta(\dagger)}A_{f,\pi} = (\varepsilon \ind^{\dagger} \pi)I_{[\tau:\pi]} & \text{if }[\pi] \in A, \\[1ex]
    (C^{\dagger}_{f,\pi})^{\delta(\dagger)}B_{f,\pi} = \varepsilon I_{[\tau:\pi]} & \text{if }[\pi] \in B.
\end{cases}
\end{equation*}
Here, we denote by $g^{\dagger}$ 
the complex conjugate of a matrix $g$ if $\dagger = -$ and 
the transpose of $g$ if $\dagger = \vee$.

From \eqref{equiv:G-dagger-epsilon}, $\im \tau \subset_{\Int} G^{\dagger, \varepsilon}_{N}$ holds if and only if there exist matrices $A_{f,\pi}, B_{f,\pi},$ and $C_{f,\pi}$ satisfying the above condition. 
In the case $[\pi] \in A$, such a matrix $A_{f,\pi}$ always exists (put $A_{f,\pi}:=I_{[\tau:\pi]}$) if $\varepsilon \ind^{\dagger} \pi = 1$.
If $\varepsilon \ind^{\dagger} \pi = -1$, the existence of such an $A_{f,\pi}$ is equivalent to $[\tau:\pi]$ being even. These can be summarized as the condition $(\varepsilon \ind^{\dagger} \pi)^{[\tau:\pi]} = 1$. 
In the case $[\pi] \in B$, such matrices $B_{f,\pi}$ and $C_{f,\pi}$ always exist  (put $B_{f,\pi}:=I_{[\tau:\pi]}$ and $C_{f,\pi}:=\varepsilon I_{[\tau:\pi]}$). 
This completes the proof.
\end{proof}

\vspace{\baselineskip}
\noindent \textbf{Case~\ref{item:U(p,q)}.}
To determine whether or not $\im \tau\subset_{\Int} U(p,q)$ (up to signature), we use
the contravariant functor $\tau\mapsto \tau^{*}:= \overline{\tau^{\vee}} (\simeq \overline{\tau}^{\vee})$.
\begin{lemma}\label{lem:criterion_for_Upq}
Let $\tau\colon L\to GL(N,\C)$ be a (not necessarily irreducible) continuous representation of a Lie group $L$. Then there exist some $p,q\in \N$ with $p+q=N$ such that $\im \tau\subset_{\Int} U(p,q)$ if and only if $\tau^{*}$ is equivalent to $\tau$. 
\end{lemma}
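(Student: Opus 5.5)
The forward direction is immediate. If $g^{-1}\tau(l)g\in U(p,q)$ for all $l$, then $\tau$ preserves the non-degenerate hermitian form $B(v,w):=\trans\overline{v}\,({}^{t}\overline{g}^{-1}I_{p,q}g^{-1})\,w$ on $\C^{N}$. Such a form is the same thing as a conjugate-linear isomorphism $\C^{N}\to(\C^{N})^{\vee}$, namely $v\mapsto B(v,\cdot)$. Invariance of $B$ says exactly that this map is an $L$-isomorphism $\tau\to\overline{\tau}^{\vee}\simeq\tau^{*}$. So $\tau^{*}\simeq\tau$.

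For the converse I would reverse this dictionary. Fix an isomorphism $f\colon\tau\to\tau^{*}$. Unwinding the definitions, $f$ is the same as an invertible matrix $A$ with
\[
{}^{t}\overline{\tau(l)}\,A\,\tau(l)=A\quad(l\in L),
\]
so $\tau$ preserves the non-degenerate sesquilinear form $B_A(v,w):={}^{t}\overline{v}Aw$. The form $B_A$ need not be hermitian, and the task is to replace $A$ by a hermitian invariant matrix that is still invertible. The set of invariant matrices is a complex vector space $V$ stable under $A\mapsto A^{*}:={}^{t}\overline{A}$, because $A^{*}$ is again invariant. So $A_{1}:=(A+A^{*})/2$ and $A_{2}:=(A-A^{*})/(2\sqrt{-1})$ are invariant hermitian matrices with $A=A_{1}+\sqrt{-1}A_{2}$.

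\textbf{Main step: finding an invertible hermitian element.} Consider the polynomial $t\mapsto\det(A_{1}+tA_{2})$ on $\C$. It is not identically zero, since its value at $t=\sqrt{-1}$ is $\det A\neq0$. Hence it has only finitely many roots, and some real $t$ avoids them. For that $t$, $A_{t}:=A_{1}+tA_{2}$ is an invertible hermitian matrix invariant under $\tau$. This is the only genuinely non-formal point of the argument, and I expect it to be where care is needed. The argument does not require $\tau$ to be completely reducible, which fits the statement's ``not necessarily irreducible.''

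To finish, apply Sylvester's law of inertia to write $A_{t}={}^{t}\overline{g}^{-1}I_{p,q}g^{-1}$ for some $g\in GL(N,\C)$ and some $p+q=N$. Then invariance of $A_{t}$ says exactly that $g^{-1}\tau(l)g\in U(p,q)$ for all $l$, so $\im\tau\subset_{\Int}U(p,q)$. The signature $(p,q)$ depends on the choice of $t$; this is the ambiguity mentioned earlier, which is left to Section~\ref{section:determine-signature}.
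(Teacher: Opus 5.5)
Your proof is correct and is essentially the paper's argument written in matrix form. The paper takes $\tilde{f}(t)=\frac{f+f^{*}}{2}-\sqrt{-1}\,t\,\frac{f-f^{*}}{2}$, which is exactly your $A_{1}+tA_{2}$; it uses $\tilde{f}(\sqrt{-1})=f$ to see that invertibility is a non-empty Zariski-open condition and picks real $t_{0}$ to get a self-adjoint invertible intertwiner $h=h^{*}$, i.e.\ a non-degenerate invariant hermitian form, while your Sylvester step just spells out the paper's identification of such forms of signature $(p,q)$ with conjugation into $U(p,q)$.
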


\begin{proof}
    We note that $\im \tau\subset_{\Int} U(p,q)$
    if and only if there exists a non-degenerate $\tau(L)$-invariant 
    hermitian form on $\C^{N}$ of signature $(p,q)$.
    Hence the ``only if''-part of our assertion is obvious, 
    and thus we focus on the proof of the ``if''-part.

    In the following, we prove that if $\tau^{*}\simeq \tau$  in $\Rep(L)$, 
    there exists an isomorphism $h\colon \tau\rightarrow \tau^{*}$ in $\Rep(L)$ with 
    $h^{*} = h$. Here we have identified $(\tau^{*})^{*}$
    naturally with $\tau$.
    If we could prove this, 
    we obtain a non-degenerate $\tau(L)$-invariant hermitian form on $\C^{N}$ by $(u,v)\mapsto \pairing{h(u)}{v}$, where 
    $\pairing{\cdot}{\cdot}$ is the natural pairing of $(\C^{N})^{\vee}$ and $\C^{N}$.
    Hence the ``if''-part of our assertion is also proved.

    Given an isomorphism $f\colon \tau\rightarrow \tau^{*}$
    in $\Rep(L)$, we define a polynomial map $\tilde{f}\colon \C\rightarrow \Hom_{L}(\tau,\tau^{*})$ by
    \[
    \tilde{f}(t):= \left(\frac{f+f^{*}}{2}\right) - \sqrt{-1}t
    \left(\frac{f-f^{*}}{2}\right).
    \]
    We note that the condition that $\tilde{f}(t) \in \Hom_{L}(\tau,\tau^{*})$ is an isomorphism is a
    Zariski-open condition with respect to $t\in \C$,
    and that $\tilde{f}(\sqrt{-1}) = f$ is an isomorphism.
    Hence there exists $t_{0}\in \R$ such that 
    $h:=\tilde{f}(t_{0})$ is also an isomorphism.
    Further, we have $h^{*} = h$ because of $t_{0}\in \R$,
    and thus obtain the desired isomorphism $h$. 
    From the above, the proof of our assertion is complete.
\end{proof}

\vspace{\baselineskip}
\noindent \textbf{Case~\ref{item:O(p,q)Sp(p,q)Sp(n,R)Ostar}}
Using the following proposition, 
we can reduce Case~\ref{item:O(p,q)Sp(p,q)Sp(n,R)Ostar} to Case~\ref{item:RHOSp} (up to signature): 
\begin{proposition}
\label{prop:embedding-criterion}
Let $\tau\colon L\rightarrow GL(N,\mathbb{C})$
be a continuous representation of a Lie group $L$. 
Then the following claims hold: 
\begin{align*}
\E p,q\in\N,\ p+q=N\text{ and }\im\tau\subset_{\Int} 
O(p,q)
&\Leftrightarrow 
\begin{cases}
\im\tau\subset_{\Int} O(N,\C); & \\
\im\tau\subset_{\Int} GL(N,\R);
\end{cases} \\
\im\tau\subset_{\Int} Sp(N/2,\R)
&\Leftrightarrow 
\begin{cases}
\im\tau\subset_{\Int} Sp(N/2,\C); & \\
\im\tau\subset_{\Int} GL(N,\R);
\end{cases} \\
\E r,s\in\N,\ r+s=N/2\text{ and }
\im\tau\subset_{\Int} Sp(r,s)
&\Leftrightarrow
\begin{cases}
\im\tau\subset_{\Int} Sp(N/2,\C); & \\
\im\tau\subset_{\Int} GL(N/2,\HA);
\end{cases} \\
\im\tau\subset_{\Int} O^{*}(2(N/2))
&\Leftrightarrow 
\begin{cases}
\im\tau\subset_{\Int} O(N,\C); & \\
\im\tau\subset_{\Int} GL(N/2,\HA).
\end{cases}
\end{align*}
\end{proposition}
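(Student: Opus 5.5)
The forward implications are immediate: $O(p,q)\subset O(N,\C)\cap GL(N,\R)$ (after the obvious identification of the real and complex bilinear forms), $Sp(N/2,\R)=Sp(N/2,\C)\cap GL(N,\R)$, and $Sp(r,s)$, $O^{*}(2(N/2))$ are the intersections of $GL(N/2,\HA)$ with $Sp(N/2,\C)$ and $O(N,\C)$ respectively, in suitable standard realizations. So the content is the converse. I would treat all four cases uniformly. Use \eqref{equiv:G-dagger-epsilon} to translate the two hypotheses into two intertwiners: an isomorphism $J\colon\tau\to\overline{\tau}$ (a real or quaternionic structure, with $\overline{J}J=\varepsilon_1\id$, $\varepsilon_1=\pm1$) and an isomorphism $B\colon\tau\to\tau^{\vee}$ (an invariant non-degenerate bilinear form, with ${}^tB=\varepsilon_2 B$). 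The task is then to produce a pair $(J',B')$ of the same types that is \emph{compatible}. Compatibility means that $B'$ is real (or quaternionic) with respect to $J'$, i.e.\ $\overline{B'}J'$ and $B'$ are related by the appropriate symmetry. A compatible pair puts $\tau$ simultaneously into the real (resp.\ quaternionic) form and into the orthogonal or symplectic group. That group is a real form of $O(N,\C)$ or $Sp(N/2,\C)$, and it is exactly one of $O(p,q)$, $Sp(N/2,\R)$, $Sp(r,s)$, $O^*$, according to $(\varepsilon_1,\varepsilon_2)$.

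To achieve compatibility, I would average, in the spirit of the proof of Lemma~\ref{lem:criterion_for_Upq}. Fix $J$ and define the conjugate form $B^{\sharp}:=\overline{J}^{\,t}\,\overline{B}\,J$ (up to the precise transpose conventions). This is again an invariant bilinear form of the same symmetry type $\varepsilon_2$. Because $\overline{J}J=\varepsilon_1$, applying the operation twice returns $B$, so $\sharp$ is a conjugate-linear involution on the finite-dimensional complex space $\mathcal{B}_{\varepsilon_2}:=\Hom_L(\tau,\tau^{\vee})^{\varepsilon_2\text{-sym}}$. Its fixed points form a real form $\mathcal{B}^{\sharp}$, with $\mathcal{B}_{\varepsilon_2}=\mathcal{B}^{\sharp}\oplus\sqrt{-1}\,\mathcal{B}^{\sharp}$, and every fixed point is compatible with $J$ by construction. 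Non-degeneracy is a Zariski-open condition given by a determinant, and it is satisfied somewhere on $\mathcal{B}_{\varepsilon_2}$, namely at $B$. A nonzero complex polynomial cannot vanish identically on a real form, so some non-degenerate $B'\in\mathcal{B}^{\sharp}$ exists. Concretely, one can take $\frac{B+B^\sharp}{2}+t\,\frac{B-B^\sharp}{2\sqrt{-1}}$ for a suitable real $t$, exactly as in the proof of Lemma~\ref{lem:criterion_for_Upq}.

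Finally, I would read off the group. In the case $\varepsilon_1=1$, the form $B'$ restricts to a real non-degenerate symmetric or alternating form on the real subspace $V_{\R}=\{J'v=\bar v\}$. This gives $O(p,q)$ for some signature, or $Sp(N/2,\R)$, the latter being forced since real symplectic forms have no signature. In the case $\varepsilon_1=-1$, the space $\C^N$ becomes $\HA^{N/2}$, and $B'$ combined with $J$ yields a quaternionic hermitian form, hence $Sp(r,s)$ for some $(r,s)$, or a quaternionic skew-hermitian form, which is unique up to equivalence and hence gives $O^{*}(2(N/2))$.

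The main obstacle I expect is verifying that $\sharp$ really is an involution preserving the symmetry type for each sign combination. In particular, signs from $\varepsilon_1$ and $\varepsilon_2$ could a priori combine so that $\sharp^2=-1$. If that happens, I would instead rescale $\sharp$ by $\sqrt{-1}$ (which changes $\sharp^2$ by a sign only if the operation is linear, so this needs care), or replace $B$ by a compatible combination of $B$ and $B^\sharp$ chosen via a Zariski-density argument over $\R$. Checking these conventions case by case is the delicate bookkeeping step.
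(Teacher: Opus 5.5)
Your argument is correct and is essentially the paper's proof in Appendix~\ref{appendix:embedding}. There, the invariant form appears as an element $c=a+\sqrt{-1}\,b$ with $a,b$ fixed by the real (or quaternionic) structure, and one picks $r\in\R$ with $a+rb$ invertible (Lemma~\ref{lem:inv-real-complex}); this is exactly your $\frac{B+B^{\sharp}}{2}+t\,\frac{B-B^{\sharp}}{2\sqrt{-1}}$, only phrased via anti-involutions of central simple algebras and a group-level inclusion rather than intertwiners of $\tau$. The obstacle you anticipate does not arise: since $\overline{J}J=J\overline{J}=\varepsilon_1$, applying $\sharp$ twice multiplies $B$ by $\varepsilon_1^{2}=1$ in every case (and your fallback of rescaling a conjugate-linear map by $\sqrt{-1}$ would not change its square anyway).
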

We will prove this assertion in Appendix~\ref{appendix:embedding}.

\subsection{Signatures of \texorpdfstring{$Spin(k,l)$}{Spin(k,l)}-invariant hermitian forms}
\label{section:determine-signature}
In this subsection, we determine the signatures of
$Spin(k,l)$-invariant hermitian forms in a special setting
(Lemma~\ref{lemma:signature_of_half_integral_rep}),
and use this result to strengthen the criteria in Cases~\ref{item:U(p,q)} and~\ref{item:O(p,q)Sp(p,q)Sp(n,R)Ostar}
of the previous subsection
(Lemma~\ref{lemma:embeddability-spin-u(p,q)-spin}).

\begin{lemma}\label{lemma:signature_of_half_integral_rep}
    Let $\tau\colon Spin(k,l)\rightarrow GL(N,\C)$ be 
    a continuous (not necessarily irreducible) representation such that $\tau(-1)=-I_{N}$. Suppose that $k+l\geq 3$ and $\min(k,l)\geq 1$. Assume one of the following claims:
    \begin{itemize}
        \item $\im \tau \subset_{\Int} O(p,q)$ $(p+q=N)$.
        \item $\im \tau \subset_{\Int} U(p,q)$ $(p+q=N)$.
        \item $N$ is even and $\im \tau \subset_{\Int} Sp(r,s)$ $(r+s=N/2)$.
    \end{itemize}
    Then we have $p=q$ and $r=s$.
\end{lemma}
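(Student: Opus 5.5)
The plan is to restrict $\tau$ to an $SL(2,\R)$ subgroup and use the semisimple element $A_{0}$ to produce a Lagrangian (maximal isotropic) subspace for the invariant form. Signature $(N/2,N/2)$ then follows from linear algebra alone, with no irreducible decomposition.

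First, since $k+l\geq 3$ and $\min(k,l)\geq 1$, either $k\geq 2$ or $l\geq 2$. So $Spin(k,l)$ contains a copy of $Spin(2,1)\simeq SL(2,\R)$ (or $Spin(1,2)\simeq Spin(2,1)$), coming from a non-degenerate subspace of signature $(2,1)$ or $(1,2)$. Its element $-I_{2}$ maps to $-1\in Spin(k,l)$, as in the proof of Lemma~\ref{lem:2_equiv_n} and Lemma~\ref{lemma:spin-center}. Hence the restriction $\tau|_{SL(2,\R)}$ satisfies $\tau(-I_{2})=-I_{N}$. By the representation theory of $SL(2,\R)$ it is a direct sum of even-dimensional irreducibles $\pi_{2m}$. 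Consequently $T:=d\tau(A_{0})$ is diagonalizable, all its eigenvalues are odd integers (so in particular nonzero), and the eigenvalue multiplicities are symmetric under $\lambda\mapsto-\lambda$.

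Next I will reduce all three cases to a non-degenerate $\tau$-invariant hermitian form $h$ on $\C^{N}$. For $U(p,q)$ this form is given directly, with signature $(p,q)$. For $O(p,q)$, the invariant real symmetric bilinear form of signature $(p,q)$ on a $\tau$-stable real form $\R^{N}$ extends sesquilinearly to a hermitian form of the same signature. For $Sp(r,s)$, the quaternionic hermitian form on $\HA^{N/2}$ of signature $(r,s)$ has a complex part, which is a complex hermitian form on $\C^{N}$ of signature $(2r,2s)$.

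The key step uses invariance: $h(Tv,w)=-h(v,Tw)$. If $v,w$ are eigenvectors with real eigenvalues $\lambda,\mu$, this gives $(\lambda+\mu)h(v,w)=0$. Let $V_{+}$ (resp.\ $V_{-}$) be the sum of eigenspaces with positive (resp.\ negative) eigenvalues. Because no eigenvalue is zero, $\C^{N}=V_{+}\oplus V_{-}$. Because no two positive eigenvalues sum to zero, $V_{+}$ is $h$-isotropic, and likewise $V_{-}$. By the symmetry of the spectrum, $\dim V_{\pm}=N/2$. A non-degenerate hermitian form of signature $(a,b)$ has maximal isotropic subspaces of dimension $\min(a,b)$. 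Therefore $\min(a,b)\geq N/2$, which forces $a=b=N/2$. This yields $p=q$ in the first two cases and $2r=2s$ in the third.

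I expect the only delicate points to be bookkeeping. One is checking that the embedding $Spin(2,1)\hookrightarrow Spin(k,l)$ really carries $-I_{2}$ to $-1$. The other is the signature-doubling in the passage from quaternionic to complex hermitian forms. The isotropy argument itself is routine.
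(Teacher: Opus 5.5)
Your proof is correct, but the key step differs from the paper's. The paper likewise restricts to $Spin(2,1)\simeq SL(2,\R)$ and reduces to $U(p,q)$ via $O(p,q)\subset U(p,q)$ and $Sp(r,s)\subset U(2r,2s)$. It then decomposes $V\simeq\bigoplus_{m}\C^{[V:V_{2m}]}\otimes_{\C}V_{2m}$ and writes the invariant form $h$ as a sum of tensor products of non-degenerate hermitian forms on the multiplicity spaces with the invariant forms $h_{2m}$ on $V_{2m}$. It concludes by quoting that each $h_{2m}$ has signature $(m,m)$.

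The paper's route is conceptual, but it relies tacitly on Schur-type facts: distinct isotypic components are $h$-orthogonal, and $h_{2m}$ is unique up to a real scalar. It also asserts the signature of $h_{2m}$ without proof. Your weight-space argument needs none of this. It uses only that $d\tau(A_{0})$ is diagonalizable with real, nonzero eigenvalues, plus the elementary bound that an isotropic subspace has dimension at most $\min(a,b)$. In fact the special case $V=V_{2m}$ of your argument proves the signature $(m,m)$ of $h_{2m}$, which the paper takes for granted.

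You do not even need the symmetry of the spectrum. Since $V_{+}$ and $V_{-}$ are both isotropic and $\C^{N}=V_{+}\oplus V_{-}$, we get $N=\dim V_{+}+\dim V_{-}\le 2\min(a,b)\le a+b=N$, which forces $a=b$. So the only input is that $0$ is not an eigenvalue of $d\tau(A_{0})$, i.e.\ that $\tau|_{\mathfrak{sl}(2,\R)}$ is very even. This makes the link with the rest of the paper transparent: a real-diagonalizable element of $\mathfrak{u}(p,q)$ has at least $|p-q|$ zero eigenvalues in the standard representation.

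You also explicitly handle the case $k=1$ via $Spin(1,2)\simeq Spin(2,1)$, and check that $-I_{2}$ goes to $-1$. The paper passes over both points.
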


\begin{proof}

By restricting $\tau$ to $Spin(2,1)\subset Spin(k,l)$, 
it suffices to consider the case $(k,l)=(2,1)$.
Since $\tau(-1)=-I_{N}$, 
it follows that $\tau$ is a continuous representation of 
$SL(2,\mathbb{R})\simeq Spin(2,1)$ 
whose irreducible components are all even-dimensional.

Since $O(p,q)\subset U(p,q)$ and 
$Sp(r,s)\subset U(2r,2s)$, 
it suffices to treat the case $\im \tau \subset_{\Int} U(p,q)$. 
Given an arbitrary $SL(2,\R)$-invariant non-degenerate hermitian form $h$ on the representation space $V$ of $\tau$,
we prove that $h$ has the same number of positive and negative signs.
For $m\in \N$, 
we denote by $V_{m}$ the $m$-dimensional irreducible $SL(2,\R)$-module.
Via an isomorphism of $SL(2,\R)$-modules
$V\simeq \bigoplus_{m\in \N} \C^{[V:V_{2m}]}\otimes_{\C}V_{2m}$,
we identify $h$ with the sum of 
the tensor products of non-degenerate hermitian forms on $\C^{[V:V_{2m}]}$ and 
the $SL(2,\R)$-invariant non-degenerate hermitian forms $h_{2m}$ on $V_{2m}$ for all $m\in \N$. 
Since $h_{2m}$ is of  signature $(m,m)$ for each $m\in \N$, our claim is proved.
\end{proof}

We label the following four classical groups in Case~\ref{item:O(p,q)Sp(p,q)Sp(n,R)Ostar} by symbols $G^{s}_{N}$ as in Table~\ref{table:def-Gs} using a map $s \colon \{-, \vee\} \to \{\pm 1\}$.
We apply the following lemma 
to treat Cases~\ref{item:U(p,q)} and~\ref{item:O(p,q)Sp(p,q)Sp(n,R)Ostar} in the next subsection:

\renewcommand{\arraystretch}{1.5}
\begin{table} 
\begin{tabular}{c!{\vrule width 1.2pt}c|c}
    $G^{s}_{N}$ & $s(-)=1$ & $s(-)=-1$ \\
    \noalign{\hrule height 1.2pt}
    \hline
    $s(\vee)=1$ & $SO(N/2,N/2)$ & $SO^{*}(2(N/2))$ \\
    \hline 
    $s(\vee)=-1$ & $Sp(N/2,\R)$ & $Sp(N/4,N/4)$ \\
\end{tabular}
\caption{Definition of $G^{s}_{N}$.}
\label{table:def-Gs}
\end{table}
\begin{lemma}
    \label{lemma:embeddability-spin-u(p,q)-spin}
    Let $\tau\colon Spin(k,l)\rightarrow GL(N,\C)$
    be a (not necessarily irreducible) continuous representation with $\tau(-1)=-I_{N}$.
    Suppose that $k+l\geq 3$ and $\min(k,l)\geq 1$. 
    Then the following two claims hold:
    \begin{enumerate}[label=$(\roman*)$]
        \item \label{lemma:embeddability-spin-u(p,q)-spin:u(p,q)} $\im\tau \subset_{\Int} SU(N/2,N/2)$ 
        if and only if $\tau^{*}$ is equivalent to $\tau$.
        \item \label{lemma:embeddability-spin-u(p,q)-spin:Gs} 
        Let $s\colon \{-,\vee\}\rightarrow \{\pm 1\}$ be a map.
        Then 
        $\im \tau \subset_{\Int} G^{s}_{N}$  
        if and only if $\im \tau \subset_{\Int} G^{\dagger,s(\dagger)}_{N}$ for each $\dagger\in\{-,\vee\}$.
    \end{enumerate}
We recall Table~\ref{table:RHOSp} in the previous subsection for the definition of $G^{\dagger,\varepsilon}_{N}$.
\end{lemma}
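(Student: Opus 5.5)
The plan is to combine the previous subsection's criteria (Lemma~\ref{lem:criterion_for_Upq} and Proposition~\ref{prop:embedding-criterion}) with the signature computation of Lemma~\ref{lemma:signature_of_half_integral_rep}. In both items the ``only if'' directions are formal. For \ref{lemma:embeddability-spin-u(p,q)-spin:u(p,q)}: if $\im\tau\subset_{\Int} SU(N/2,N/2)\subset U(N/2,N/2)$, then $\tau$ preserves a non-degenerate hermitian form, so $\tau^{*}\simeq\tau$. For \ref{lemma:embeddability-spin-u(p,q)-spin:Gs}: each group in Table~\ref{table:def-Gs} is contained in the two groups $G^{\dagger,s(\dagger)}_{N}$. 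For instance, $SO(N/2,N/2)\subset GL(N,\R)\cap O(N,\C)$ up to conjugation, and $Sp(N/4,N/4)\subset GL(N/2,\HA)\cap Sp(N/2,\C)$; the other two cases are analogous.

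For the ``if'' direction of \ref{lemma:embeddability-spin-u(p,q)-spin:u(p,q)}, Lemma~\ref{lem:criterion_for_Upq} gives $\im\tau\subset_{\Int}U(p,q)$ for some $p+q=N$. Lemma~\ref{lemma:signature_of_half_integral_rep}, using the hypotheses $\tau(-1)=-I_N$, $k+l\ge 3$, and $\min(k,l)\ge 1$, then forces $p=q$, so $N$ is even and $\im\tau\subset_{\Int}U(N/2,N/2)$. It remains to reach $SU$. I would argue via connectedness: $Spin(k,l)$ is connected when $\min(k,l)\geq 1$ and $k+l\geq 3$ (I would use $Spin$ here as the identity-component-type group of Appendix~\ref{section:clifford-spin}). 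Its Lie algebra is semisimple, so $\det\circ\tau$ is a continuous homomorphism from a connected group with perfect Lie algebra to $\C^\times$, hence trivial. Therefore the image lies in $SU(N/2,N/2)$. If the paper's $Spin(k,l)$ were disconnected, I would instead check that $\det\tau$ is trivial on the component group; this is the point I would need to verify against the appendix conventions.

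For the ``if'' direction of \ref{lemma:embeddability-spin-u(p,q)-spin:Gs}, I would apply Proposition~\ref{prop:embedding-criterion} case by case.
\begin{itemize}
\item $s=(1,1)$: we get $\im\tau\subset_{\Int}O(p,q)$, and Lemma~\ref{lemma:signature_of_half_integral_rep} gives $p=q=N/2$.
\item $s(-)=1$, $s(\vee)=-1$: we get $Sp(N/2,\R)$ directly.
\item $s(-)=-1$, $s(\vee)=-1$: we get $Sp(r,s)$ with $r=s$ by the signature lemma, so the group is $Sp(N/4,N/4)$. This requires $4\mid N$, which follows from $r=s$.
\item $s(-)=-1$, $s(\vee)=1$: we get $O^{*}(N)$.
\end{itemize}
Then connectedness and the vanishing determinant, as in the first item, place the image in the identity component or the special version. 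Concretely, $\im\tau$ is connected, so it lies in the identity component of the group. That component is contained in $SO(N/2,N/2)$, $SO^{*}(N)$, $Sp(N/2,\R)$, or $Sp(N/4,N/4)$ respectively.

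The main obstacle is bookkeeping rather than any new idea. One must make sure that $Spin(k,l)$ is connected, so that determinants and components cause no trouble, and that the signature lemma applies in every case. The signature lemma requires restricting to $Spin(2,1)\subset Spin(k,l)$, which is available exactly under $k+l\ge3$ and $\min(k,l)\ge1$.
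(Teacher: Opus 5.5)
Your proposal is correct and is essentially the paper's proof. The paper's proof is a single line: it combines Lemma~\ref{lemma:signature_of_half_integral_rep} with Lemma~\ref{lem:criterion_for_Upq} for (i) and with Proposition~\ref{prop:embedding-criterion} for (ii), and uses that $Spin(k,l)$ is connected and semisimple to land in $SU(N/2,N/2)$ and in the identity components. Your hedge about a possibly disconnected $Spin(k,l)$ is unnecessary. With the appendix definition, $-1=\exp(\pi e_ie_j)$ for two orthonormal basis vectors $e_i,e_j$ of the same sign, which exist since $k+l\ge 3$, so the group is connected, as the paper asserts.
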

\renewcommand{\arraystretch}{1}
\begin{proof}
Since $Spin(k,l)$ is connected and semisimple, 
    \ref{lemma:embeddability-spin-u(p,q)-spin:u(p,q)} and \ref{lemma:embeddability-spin-u(p,q)-spin:Gs}
    are proved immediately by combining Lemma~\ref{lemma:signature_of_half_integral_rep}
    with Lemma~\ref{lem:criterion_for_Upq} and Proposition~\ref{prop:embedding-criterion}, respectively.
\end{proof}

\subsection{Proof of 
\texorpdfstring{Proposition~\ref{prop:key_for_HR} \ref{item:Spin(n,1)toG_half-int} $\Rightarrow$ 
    \ref{item:Spin(n,1)toG_spinor}}{the equivalence of Proposition 5.4 (2) and (3)} for the remaining classical groups}
    \label{section:half-int-rep-spin-rep}

Let $G$ be a classical semisimple subgroup of $GL(N,\C)$
other than $SL(N,\C)$,
and let $\tilde{\iota}\colon G \hookrightarrow GL(N,\C)$
be the natural inclusion.
In this section, we prove the implication
\ref{item:Spin(n,1)toG_half-int}
$\Rightarrow$
\ref{item:Spin(n,1)toG_spinor}
in Proposition~\ref{prop:key_for_HR}
for $G$.
Namely, let $\varphi\colon Spin(n,1)\rightarrow G$ be a Lie group homomorphism
such that $\tilde{\iota}(\varphi(-1))=-I_{N}$.
Then we construct a representation of $Spin(n,1)$
of the same dimension as the representation $\tilde{\iota}\circ\varphi$,
given as a direct sum of several copies of the
(semi)spin representations $S,S_{1},S_{2}$ of $Spin(n,1)$,
which factors through $G$. 

To state a lemma (Lemma~\ref{lem:construction_pi_dagger}) yielding the desired representation, we present a result concerning the invariant
$\ind^{\dagger}\pi$ given in 
\eqref{def:index}. 
The following fact shows that
irreducible representations of $Spin(n,1)$ belonging to 
$\Lambda_n$, i.e., not factoring through $SO(n,1)$, 
exhibit the same behavior as the (semi)spin representations, from the viewpoint of real, quaternionic, orthogonal, and symplectic representation theory.
\begin{fact}
\label{fact:index_for_spin1}
    Fix $n\geq 2$ and  $\dagger\in \{-,\vee,*\}$.
    \begin{itemize}
        \item Suppose $n$ is even.  
        Then $\pi^{\dagger}\simeq \pi$ for any $\pi\in \Lambda_{n}$.
        Further, if $\dagger\in \{-,\vee\}$, then 
        $\ind^{\dagger}\pi = \ind^{\dagger}S$.         
        \item Suppose $n$ is odd.
        Then exactly one of the following holds:
        \begin{enumerate}[label=$(\alph*)_{\dagger}$]
            \item 
            $\pi^{\dagger}\simeq \pi$
            for any $\pi\in \Lambda_{n}$. 
            Further, if $\dagger\in \{-,\vee\}$, then $\ind^{\dagger}\pi = \ind^{\dagger}S_{1}=\ind^{\dagger}S_{2}$ holds.  
            \item 
            $\pi^{\dagger}\not\simeq \pi$ 
            for any $\pi\in \Lambda_{n}$. 
            Further, $S_{1}^{\dagger} \simeq S_{2}$.
        \end{enumerate}
    \end{itemize}
\end{fact}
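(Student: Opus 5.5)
The plan is to translate everything into highest weight theory of the complexification, and then to show that each invariant in question depends only on the coset of the highest weight modulo the weight lattice of $SO(n+1,\C)$.

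Since $Spin(n,1)$ is connected, its finite-dimensional irreducible representations are restrictions of holomorphic irreducible representations $\pi_\lambda$ of $Spin(n+1,\C)$, labelled by dominant weights $\lambda=(\lambda_1,\dots,\lambda_m)$, where $m=\lfloor (n+1)/2\rfloor$, written in the standard $\varepsilon$-basis. By Lemma~\ref{lemma:spin-center}, $[\pi_\lambda]\in\Lambda_n$ if and only if all $\lambda_j\in\tfrac12+\Z$. Every such $\lambda$ can be written as $\lambda=\omega+\mu$, where $\omega$ is the highest weight of $S$ (if $n$ is even) or of $S_1$ or $S_2$ (if $n$ is odd), and $\mu$ is a dominant weight with integer coordinates. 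Thus $\pi_\lambda$ is the Cartan component of $\omega$-rep $\otimes\,\pi_\mu$, and $\pi_\mu$ factors through $SO_0(n,1)$.

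\emph{Step 1: the isomorphism classes.} I would use the standard formulas
\[
\pi_\lambda^\vee\simeq\pi_{-w_0\lambda},\qquad \overline{\pi_\lambda}\simeq\pi_{-w_0\beta(\lambda)},\qquad \pi_\lambda^{*}\simeq\pi_{\beta(\lambda)},
\]
where $\beta$ is the diagram involution attached to the Satake diagram of $\mathfrak{so}(n,1)$. Here $\beta$ is trivial when $n+1$ is odd, and when $n+1=2m$ it is the swap of the two spin nodes, because $\mathfrak{so}(2m-1,1)$ is the outer form.
\begin{itemize}
\item For $n$ even (type $B_m$) we have $w_0=-1$ and $\beta=\id$, so every $\pi\in\Lambda_n$ satisfies $\pi^\dagger\simeq\pi$ for each $\dagger$.
\item For $n$ odd, both $-w_0$ and $\beta$ act either trivially or by $\lambda_m\mapsto-\lambda_m$; which of the two occurs depends only on $m$ and on $\dagger$.
\end{itemize}
For $\lambda\in\Lambda_n$ the last coordinate satisfies $\lambda_m\neq0$, since it lies in $\tfrac12+\Z$. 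Hence the sign flip never fixes $\lambda$, and the dichotomy $(a)_\dagger$/$(b)_\dagger$ holds uniformly on $\Lambda_n$. In case $(b)_\dagger$ the same flip exchanges the highest weights of $S_1$ and $S_2$, which gives $S_1^\dagger\simeq S_2$.

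\emph{Step 2: the indices, for $\dagger\in\{-,\vee\}$ in the self-dual case.} I would use multiplicativity of the Cartan/Frobenius--Schur index on Cartan components. If $\pi_1,\pi_2$ are irreducible and satisfy $\pi_i^\dagger\simeq\pi_i$, then $\ind^\dagger\pi_{\lambda_1+\lambda_2}=\ind^\dagger\pi_1\cdot\ind^\dagger\pi_2$. This holds because the form or conjugation on $\pi_1\otimes\pi_2$ restricts nondegenerately to the multiplicity-one highest weight component; alternatively one can read it off from Onishchik's formulas \cite{Oni04}, which express the index as a character $\lambda\mapsto(-1)^{\langle\lambda,\cdot\rangle}$ that is linear in $\lambda$. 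Applying this with $\lambda=\omega+\mu$ gives $\ind^\dagger\pi_\lambda=\ind^\dagger S\cdot\ind^\dagger\pi_\mu$, with $S_1$ or $S_2$ in place of $S$ when $n$ is odd. When $n$ is odd, case $(a)_\dagger$ and Step~1 show that $\pi_\mu$ is itself $\dagger$-self-dual. It then remains to check that $\ind^\dagger\pi_\mu=1$ for every self-dual $\pi_\mu$ factoring through $SO_0(n,1)$. For $\dagger=\vee$ this is the classical fact that self-dual representations of $SO(n+1,\C)$ are orthogonal; equivalently, the central element $\exp(2\pi\sqrt{-1}\rho^\vee)$ is trivial in $SO(n+1,\C)$. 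For $\dagger=-$ I would compute the Onishchik invariant for the real form $\mathfrak{so}(n,1)$ and show it is trivial on the root lattice coset containing the $\mu$'s.

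\emph{Main obstacle.} I expect the real case $\dagger=-$ to be the hardest part: verifying that every self-conjugate representation of $SO_0(n,1)$ is real, and checking the linearity of the Cartan index in the non-split, outer situation $n\equiv 1 \bmod 4$. I would first try Onishchik's criterion, Theorem~3 in \S8 of \cite{Oni04}, evaluated on the fundamental weights of $B_m$ and $D_m$. As a cross-check, I would compare with the known real structures of $S,S_1,S_2$ from the Clifford algebra $C^+(n,1)\simeq C(n)$ via Lemma~\ref{lemma:c(n)-c(n,1)+}.
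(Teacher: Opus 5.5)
Your overall route is the paper's. Step~1 is exactly the argument for Proposition~\ref{prop:self-dual-conjugate}: the relevant involution of dominant weights is either trivial or $\lambda_m\mapsto-\lambda_m$, and $\lambda_m\in\frac12+\Z$ is never $0$. Step~2 rests on the same multiplicativity of $\ind^\dagger$ on Cartan components (Lemma~\ref{lemma:index_of_Cartan_product}).

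\textbf{Gap in Step~2 for $n$ odd.} As written, Step~2 does not prove the full statement. Decomposing $\lambda=\omega+\mu$ and using $\ind^\dagger\pi_\mu=1$ only gives
\begin{itemize}
\item $\ind^\dagger\pi_\lambda=\ind^\dagger S_2$ when $\lambda_m>0$, and
\item $\ind^\dagger\pi_\lambda=\ind^\dagger S_1$ when $\lambda_m<0$.
\end{itemize}
The assertion $\ind^\dagger S_1=\ind^\dagger S_2$ is never addressed. This is precisely what the later replacement of $\tau$ by copies of $S_1$ alone relies on. The missing step is short. $\pi_{\omega_{m-1}+\omega_m}\simeq\wedge^{m-1}V\otimes_{\R}\C$, with $V=\R^{n,1}$ and $\omega_{m-1},\omega_m$ the highest weights of $S_1,S_2$, has integral weights, so its index is $1$. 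Multiplicativity then gives $\ind^\dagger S_1\cdot\ind^\dagger S_2=1$.

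\textbf{The step you call the main obstacle is unnecessary.} You leave undone the proof that $\ind^-\pi_\mu=1$ for integral $\mu$ via Onishchik's criterion. The paper avoids it entirely. The non-spin fundamental representations ($i<m$ in type $B$, $i\le m-2$ in type $D$) are $\pi_{\omega_i}=\wedge^iV\otimes_\R\C$. These are complexifications of the real representations $\wedge^iV$, which carry the invariant nondegenerate symmetric form induced by $Q$. Hence $\ind^-\pi_{\omega_i}=\ind^\vee\pi_{\omega_i}=1$ immediately, for both choices of $\dagger$. The same holds for $\wedge^{m-1}V\otimes_\R\C$ in type $D$, which is exactly what yields $\ind^\dagger S_1=\ind^\dagger S_2$ above.

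Now expand $\lambda=\sum_k a_k\omega_k$. Half-integrality of $\lambda$ forces $a_m$ odd in type $B$, and $a_{m-1}+a_m$ odd in type $D$. In the self-dual case, multiplicativity gives
\begin{itemize}
\item type $B$: $\ind^\dagger\pi_\lambda=(\ind^\dagger S)^{a_m}=\ind^\dagger S$;
\item type $D$: $\ind^\dagger\pi_\lambda=(\ind^\dagger S_1)^{a_{m-1}+a_m}=\ind^\dagger S_1$.
\end{itemize}
This requires no separate treatment of the real case $\dagger=-$. Your $\rho^\vee$ argument for $\dagger=\vee$ is correct, but it is also subsumed by this observation.
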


This fact follows from the explicit value of the index for irreducible representations of $Spin(n,1)$,  
but we give a more direct proof in Appendix~\ref{appendix:spin-representation},  since it explains more transparently why Theorem~\ref{thm:proper-hurwitz-radon} holds.

For $\dagger\in \{-, \vee,*\}$, we put 
\[
\N(\dagger):=
\{n\in 2\N + 1\mid (a)_{\dagger}\text{ holds in Fact~\ref{fact:index_for_spin1}}\} \subset 2\N+1.
\] 
Although an explicit description of $\N(\dagger)$
is not required in the proof of Proposition~\ref{prop:key_for_HR},
it can be described as follows:
\[
\N(\dagger)=
\begin{cases}
\{n\in \N_{+} \mid 
\delta(\dagger)n  \equiv 1,-3 \mod 8\}  & (\dagger=-,\vee), \\
\emptyset & (\dagger=*),
\end{cases}
\]
where $\delta(\dagger)$ is defined in
\eqref{eq:delta_dagger}.

Let $\dagger \in \{-,\vee, *\}$
and suppose that a representation $\tau\colon Spin(n,1)\rightarrow GL(N,\C)$ satisfies
\begin{align}
&\tau^{\dagger} \simeq \tau, 
\label{eq:tau-self-dagger}\\
&\tau(-1)=-I_{N}.
\label{eq:tau-half-int}
\end{align}
Then we define a representation $\tau_{\dagger}$ 
as a direct sum of several copies of the (semi)spin representations,
as follows:
\begin{align}
\label{eq:tau_dagger}
\tau_{\dagger} := 
    \begin{cases}
        S^{\oplus\frac{\dim \tau}{\dim S}}
        & (n\in 2\N), \\
        S_{1}^{\oplus\frac{\dim \tau}{\dim S_{1}}} 
        & (n\in \N(\dagger)), \\
        (S_{1}\oplus S_{2})^{\oplus\frac{\dim \tau}{2\dim S_{1}}}
         & (n\in(2\N+1) \smallsetminus\N(\dagger)).
    \end{cases}
\end{align}
In the first two cases, the fact that $\tau_{\dagger}$ is well-defined
follows from the 
condition~\eqref{eq:tau-half-int} and 
Fact~\ref{fact:half-int-rep-dim},
in the same way as shown in
Section~\ref{section:half-int-rep-spin-rep-SL(N)}.

In the third case, for any irreducible representation $\pi\in \Lambda_{n}$,
the representation $\pi^{\dagger}$ is not equivalent to $\pi$.
Moreover, by \eqref{eq:tau-self-dagger},
we have $[\tau:\pi]=[\tau:\pi^{\dagger}]$.
Hence $\tau$ decomposes as a direct sum of representations of the form
$\pi\oplus \pi^{\dagger}$.
Again by Fact~\ref{fact:half-int-rep-dim},
it follows that $\dim\tau$ is divisible by $2\dim S_{1}$.
Therefore $\tau_{\dagger}$ is well-defined also in this case.

The passage from $\tau$ to $\tau_{\dagger}$ preserves
several properties of the representation as follows:
\begin{lemma}
    \label{lem:construction_pi_dagger}
    Fix $\dagger \in \{-,\vee,*\}$ and 
    let $\tau,\tau_{\dagger}$ be as above. 
    Then
    \begin{equation*}
        \dim \tau_{\dagger}=\dim \tau
        \quad \text{and}\quad
        (\tau_{\dagger})^{\dagger}\simeq \tau_{\dagger}.
    \end{equation*}
    Moreover, assume $\dagger\in\{-,\vee\}$ and $\varepsilon=\pm 1$.
    Then the following hold:
    \begin{enumerate}[label=$(\roman*)$]
        \item 
        \label{item:tau_dagger-subset-g-dagger}
        
    If $\im \tau \subset_{\Int} G^{\dagger,\varepsilon}_{N}$ holds, then $\im \tau_{\dagger} \subset_{\Int} G^{\dagger,\varepsilon}_{N}$
    also holds;
    \item 
    \label{item:tau_dagger-subset-g-dagger*}
    Assume  $n\in (2\N+1)\smallsetminus \N(\dagger)$.
    If $\im \tau \subset_{\Int} G^{\dagger*,\varepsilon}_{N}$ holds, then $\im \tau_{\dagger} \subset_{\Int} G^{\dagger*,\varepsilon}_{N}$
    also holds.
    \end{enumerate} 
\end{lemma}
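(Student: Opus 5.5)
We will check the dimension and self-$\dagger$ claims first. The equality $\dim\tau_{\dagger}=\dim\tau$ holds by the choice of multiplicities in \eqref{eq:tau_dagger}; the well-definedness discussion above already shows that these multiplicities are integers. For $(\tau_\dagger)^\dagger\simeq\tau_\dagger$ we use Fact~\ref{fact:index_for_spin1} and split into three cases. If $n$ is even, then $S\in\Lambda_n$, so $S^\dagger\simeq S$. If $n\in\N(\dagger)$, then case $(a)_\dagger$ holds, so $S_1^\dagger\simeq S_1$. If $n\in(2\N+1)\smallsetminus\N(\dagger)$, then case $(b)_\dagger$ holds, so $S_1^\dagger\simeq S_2$ and hence $S_2^\dagger\simeq S_1$, because $\dagger$ squares to the identity functor. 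In every case $\tau_\dagger$ is therefore self-$\dagger$.

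For \ref{item:tau_dagger-subset-g-dagger} we apply Lemma~\ref{lemma:criterion_for_embedding} to both $\tau$ and $\tau_\dagger$, since representations of $Spin(n,1)$ are completely reducible. By \eqref{eq:tau-half-int}, every irreducible constituent of $\tau$ lies in $\Lambda_n$.

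In the cases $n$ even or $n\in\N(\dagger)$, every $\pi\in\Lambda_n$ is self-$\dagger$ and satisfies $\ind^\dagger\pi=\ind^\dagger S$ (respectively $\ind^\dagger S_1$). Suppose $\varepsilon\ind^\dagger S=1$. Then condition \eqref{eq:lemma:criterion_for_embedding} holds automatically for $\tau_\dagger$. Suppose instead $\varepsilon\ind^\dagger S=-1$. Then the hypothesis on $\tau$ forces $[\tau:\pi]$ to be even for each $\pi\in\Lambda_n$, and we must show that the multiplicity $\dim\tau/\dim S$ is even. We write $\dim\tau=\sum_\pi[\tau:\pi]\dim\pi$. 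By Fact~\ref{fact:half-int-rep-dim}, each $\dim\pi/\dim S$ is an integer, so $\dim\tau/\dim S=\sum_\pi[\tau:\pi]\,(\dim\pi/\dim S)$ is a sum of even integers, hence even. This parity bookkeeping is the only genuinely new point, and it is exactly where we need divisibility rather than just equality of dimensions.

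In the case $n\in(2\N+1)\smallsetminus\N(\dagger)$, no $\pi\in\Lambda_n$ is self-$\dagger$, and $\tau_\dagger$ contains $S_1$ and $S_2=S_1^\dagger$ with equal multiplicities. So the second line of \eqref{eq:lemma:criterion_for_embedding} holds and no parity condition arises. Hence $\im\tau_\dagger\subset_{\Int}G^{\dagger,\varepsilon}_N$ holds without using anything about $\tau$.

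For \ref{item:tau_dagger-subset-g-dagger*} we read the superscript $\dagger*$ as the operation $\dagger'$ obtained by composing $\dagger$ with $*$; thus $\dagger'=\vee$ if $\dagger=-$, and $\dagger'=-$ if $\dagger=\vee$. Since $\N(*)=\emptyset$, case $(b)_*$ holds for odd $n$, and so $S_1^*\simeq S_2$. Now take $n\notin\N(\dagger)$, so that case $(b)_\dagger$ holds and $S_1^\dagger\simeq S_2$. Then $S_1^{\dagger'}\simeq (S_1^{\dagger})^{*}\simeq S_2^*\simeq S_1$, using that $*$ squares to the identity. Thus case $(a)_{\dagger'}$ holds, and all $\pi\in\Lambda_n$ are $\dagger'$-self-dual with the common index $\ind^{\dagger'}S_1=\ind^{\dagger'}S_2$. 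The parity argument of \ref{item:tau_dagger-subset-g-dagger} then goes through with $\tau_\dagger=(S_1\oplus S_2)^{\oplus m}$. When $\varepsilon\ind^{\dagger'}S_1=-1$, the hypothesis on $\tau$ makes each $[\tau:\pi]$ even, and we must show that $[\tau_\dagger:S_1]=m=\dim\tau/(2\dim S_1)$ is even. Since $\tau^\dagger\simeq\tau$ and no $\pi\in\Lambda_n$ is self-$\dagger$, $\tau$ splits into blocks $\pi\oplus\pi^\dagger$, so $\dim\tau=\sum 2[\tau:\pi]\dim\pi$, the sum running over representatives of the $\dagger$-orbits. Each $[\tau:\pi]$ is even and $\dim S_1$ divides $\dim\pi$, so $m$ is even. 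The main obstacle is tracking the composed functor correctly; the rest is the same bookkeeping.
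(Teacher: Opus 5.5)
Your treatment of the dimension and self-$\dagger$ claims and of part~(i) is the paper's argument. Your parity count $\dim\tau/\dim S=\sum_{\pi}[\tau:\pi]\,(\dim\pi/\dim S)$ is just an additive form of the paper's identity $(\varepsilon\ind^{\dagger}S)^{\dim\tau/\dim S}=\prod_{\pi}\bigl((\varepsilon\ind^{\dagger}\pi)^{[\tau:\pi]}\bigr)^{\dim\pi/\dim S}$.

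In part~(ii) you take a different route. The paper splits into two subcases:
\begin{itemize}
\item $n\notin\N(\dagger*)$: here $S_{1}^{\dagger*}\simeq S_{2}$, and the equal multiplicities $[\tau_{\dagger}:S_{1}]=[\tau_{\dagger}:S_{2}]$ already give the second line of \eqref{eq:lemma:criterion_for_embedding};
\item $n\in\N(\dagger*)$: this is exactly your parity argument.
\end{itemize}
You instead invoke $\N(*)=\emptyset$ to get $S_{1}^{\dagger*}\simeq(S_{1}^{\dagger})^{*}\simeq S_{2}^{*}\simeq S_{1}$, so only the second subcase can occur. This is valid, and it gives extra information: for odd $n$, exactly one of $n\in\N(-)$ and $n\in\N(\vee)$ holds, which matches the paper's mod~$8$ description.

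The cost is an input the paper's proof deliberately avoids. The paper states $\N(*)=\emptyset$ only in passing, without proof, and remarks that the explicit description of $\N(\dagger)$ is not needed; its argument rests on Fact~\ref{fact:index_for_spin1} alone. The fact is true: for odd $n$, $\mathfrak{so}(n,1)$ is not an inner form of the compact form, so $\pi\mapsto\pi^{*}$ flips the sign of the last coordinate $\lambda_{l}$ of the highest weight, and that coordinate is non-zero on $\Lambda_{n}$. Still, if you want to rely only on what is established, replace the exclusion step with the one-line treatment of the subcase $n\notin\N(\dagger*)$.
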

Here, if $\dagger=-$ (resp.\ $\vee$), then $\dagger*=\vee$ (resp.\ $-$).
\begin{proof}
    By definition, it is obvious that $\dim \tau_{\dagger}=\dim \tau$.     
    The fact that
    $(\tau_{\dagger})^{\dagger}\simeq \tau_{\dagger}$
    follows immediately from
    Fact~\ref{fact:index_for_spin1}.

    \ref{item:tau_dagger-subset-g-dagger}.
    Fix $\dagger\in\{-,\vee\}$ and $\varepsilon=\pm 1$. 
    Let us check the condition \eqref{eq:lemma:criterion_for_embedding}
    in Lemma~\ref{lemma:criterion_for_embedding} for the representation
    $\tau_{\dagger}$ by distinguishing the following three cases:
    $n \in 2\N$, 
    $n \in \N(\dagger)$, 
    and 
    $n \in (2\N+1)\smallsetminus \N(\dagger)$.
   
    In the case where $n\in 2\N$,
    we have $\pi^{\dagger}\simeq \pi$ and $\ind^{\dagger}S=\ind^{\dagger}\pi$ for all $\pi\in \Lambda_{n}$
    by Fact~\ref{fact:index_for_spin1}.
    Since $\im \tau \subset_{\Int} G^{\dagger,\varepsilon}_{N}$, it follows from  Lemma~\ref{lemma:criterion_for_embedding} that 
    for any $\pi\in \Lambda_{n}$ we have 
    $(\varepsilon\ind^{\dagger}\pi)^{[\tau:\pi]} =1$.
    Here we note that 
    \[
    \dim\tau=\sum_{\pi\in \Lambda_{n}}[\tau:\pi]\dim\pi. 
    \]
    Hence, we have 
    \[
    (\varepsilon\ind^{\dagger}S)^{[\tau_{\dagger}:S]}
    = 
    (\varepsilon\ind^{\dagger}S)^{\frac{\dim \tau}{\dim S}}
    =
    \prod_{\pi\in \Lambda_{n}} ((\varepsilon\ind^{\dagger}\pi)^{[\tau:\pi]})^{\frac{\dim \pi}{\dim S}} = 1.
    \] 
    Thus the condition \eqref{eq:lemma:criterion_for_embedding}
    holds for the representation $\tau_{\dagger}$. 
    
    In the case where $n\in \N(\dagger)$,
the condition \eqref{eq:lemma:criterion_for_embedding} is verified
by replacing $S$ with $S_{1}$ in the argument for the case $n\in 2\N$.
    
    In the case where $n\in (2\N+1)\smallsetminus\N(\dagger)$, 
    we have $S_{1}^{\dagger}\simeq S_{2}$ by Fact~\ref{fact:index_for_spin1}. By the definition of $\tau_{\dagger}$, we have  
    $[\tau_{\dagger}:S_{1}]=[\tau_{\dagger}:S_{2}]$,
    which is the condition \eqref{eq:lemma:criterion_for_embedding}.

    Thus, by Lemma~\ref{lemma:criterion_for_embedding},
    we conclude 
    $\im \tau_{\dagger} \subset_{\Int} G^{\dagger,\varepsilon}_{N}$
    in all cases.

    \ref{item:tau_dagger-subset-g-dagger*}. 
    Let $\dagger\in\{-,\vee\}$, $\varepsilon=\pm 1$, and $n\in (2\N+1)\smallsetminus \N(\dagger)$.
    We now check the condition \eqref{eq:lemma:criterion_for_embedding}
    in Lemma~\ref{lemma:criterion_for_embedding} for the representation
    $\tau_{\dagger}$ by distinguishing cases according to whether $n$ belongs to $\N(\dagger*)$ or not.

In the case where $n\not\in \N(\dagger*)$, we have $S_{1}^{\dagger*}\simeq S_{2}$.
Since $[\tau_{\dagger}:S_{1}]=[\tau_{\dagger}:S_{2}]$ by the definition of $\tau_{\dagger}$,
the condition \eqref{eq:lemma:criterion_for_embedding} is satisfied.

In the case where $n\in \N(\dagger*)$, $\pi^{\dagger*}\simeq \pi$ and
$\ind^{\dagger*}\pi=\ind^{\dagger*} S_{1}$ for any $\pi\in \Lambda_{n}$.
Since $\im \tau \subset_{\Int} G^{\dagger*,\varepsilon}_{N}$,
it follows from Lemma~\ref{lemma:criterion_for_embedding} that $(\varepsilon\,\ind^{\dagger*}\pi)^{[\tau:\pi]}=1$ for all $\pi\in \Lambda_{n}$. 
Here we note that 
\[
\dim\tau =2\sum_{\pi\in \Lambda_{n}/\langle \dagger\rangle}[\tau:\pi]\dim \pi,
\]
where $\Lambda_{n}/\langle\dagger\rangle $ is the quotient set of $\Lambda_{n}$
by the action $\pi\mapsto \pi^{\dagger}$.  
Therefore,
\begin{align*}
(\varepsilon\,\ind^{\dagger*}S_{1})^{[\tau_{\dagger}:S_{1}]}
&=
(\varepsilon\,\ind^{\dagger*}S_{1})^{\frac{\dim\tau}{2\dim S_{1}}} \\
&=
\prod_{\pi\in \Lambda_{n}/\langle \dagger\rangle}
\bigl((\varepsilon\,\ind^{\dagger*}\pi)^{[\tau:\pi]}\bigr)^{\frac{\dim \pi}{\dim S_{1}}}
=1.
\end{align*}
Similarly, $(\varepsilon\,\ind^{\dagger*}S_{2})^{[\tau_{\dagger}:S_{2}]}=1$. 
Thus the condition \eqref{eq:lemma:criterion_for_embedding} is verified. 

By Lemma~\ref{lemma:criterion_for_embedding}, we conclude 
$\im \tau_{\dagger} \subset_{\Int} G^{\dagger*,\varepsilon}_{N}$
in all cases.
\end{proof}

Returning to our main task, we prove
\ref{item:Spin(n,1)toG_half-int} $\Rightarrow$ \ref{item:Spin(n,1)toG_spinor}
in Proposition~\ref{prop:key_for_HR}
for the classical subgroups $G$ of $GL(N,\C)$ other than $SL(N,\C)$.

Let $\tilde{\iota}\colon G\rightarrow GL(N,\C)$ be the natural inclusion and 
$\varphi\colon Spin(n,1)\rightarrow G$ be a 
Lie group homomorphism satisfying the condition in \ref{item:Spin(n,1)toG_half-int}. 
Then the representation 
$\tau:=\tilde{\iota}\circ \varphi$ 
satisfies $\im \tau \subset_{\Int} G$ and $\tau(-1)=-I_{N}$.

We note that if $G$ is one of 
$SO(p,q)$, $SU(p,q)$, or $Sp(r,s)$,
then Lemma~\ref{lemma:signature_of_half_integral_rep}
implies that $p=q$ and $r=s$.
Therefore, it suffices to assume that $G$ is one of the following:
\begin{enumerate}[label=$(\alph*)$]
    \item 
    \label{item:RHOSp-spin}
    $G_{N}^{\dagger,\varepsilon}\cap SL(N,\C)$ (Table~\ref{table:RHOSp});
    \item 
    \label{item:U(p,q)-spin}
    $SU(N/2,N/2)$;
    \item 
    \label{item:O(p,q)Sp(p,q)Sp(n,R)Ostar-spin}
    $G_{N}^{s}$  (Table~\ref{table:def-Gs}).
\end{enumerate}
To prove the assertion, it suffices in each case to choose an appropriate 
$\star\in \{-,\vee,*\}$ and show that the representation $\tau_{\star}$ defined in \eqref{eq:tau_dagger}
factors through $G$. 
Indeed, by Lemma~\ref{lem:construction_pi_dagger},
$\tau_{\star}$ has the same dimension as $\tau$, 
and by construction it is given as a direct sum of several copies of the 
(semi)spin representations $S$, $S_{1}$, and $S_{2}$. 
This means that $\tau_{\star}$ induces a Lie group homomorphism from $Spin(n,1)$ to $G$ satisfying the condition in \ref{item:Spin(n,1)toG_spinor}.

\ref{item:RHOSp-spin}. 
By Lemma~\ref{lem:construction_pi_dagger}~\ref{item:tau_dagger-subset-g-dagger},
$\tau_{\dagger}$ is a desired representation.

\ref{item:U(p,q)-spin}. 
By Lemma~\ref{lem:construction_pi_dagger}, we have $(\tau_{*})^{*}\simeq \tau_{*}$.
Hence $\im \tau_{*}\subset_{\Int} SU(N/2,N/2)$ holds by Lemma~\ref{lemma:embeddability-spin-u(p,q)-spin}~\ref{lemma:embeddability-spin-u(p,q)-spin:u(p,q)}.

\ref{item:O(p,q)Sp(p,q)Sp(n,R)Ostar-spin}.
Since $\im \tau \subset_{\Int} G_{N}^{s}$ holds, we have
\begin{equation}
\label{eq:tau-assumption}
\im \tau \subset_{\Int} G_{N}^{\dagger,s(\dagger)}
\quad \text{for each } \dagger\in\{-,\vee\}.
\end{equation}

We construct a representation
$\tau_{s}\colon Spin(n,1)\to GL(N,\C)$
by distinguishing two cases according to whether
$n\in 2\N \cup (\N(-)\cap\N(\vee))$ (Case~1)
or not (Case~2). 
By Lemma~\ref{lemma:embeddability-spin-u(p,q)-spin}~\ref{lemma:embeddability-spin-u(p,q)-spin:Gs}, it suffices to construct $\tau_{s}$ so that
\begin{align}
\label{eq:want-to-show}
\im \tau_{s} \subset_{\Int} G_{N}^{\dagger,s(\dagger)}
\quad \text{for each } \dagger\in\{-,\vee\}.
\end{align}

In Case~1, we have $\tau_{-}=\tau_{\vee}$ by definition, and we set
\[
\tau_{s}:=\tau_{-}=\tau_{\vee}.
\]
Then \eqref{eq:want-to-show} follows from \eqref{eq:tau-assumption} and
Lemma~\ref{lem:construction_pi_dagger}~\ref{item:tau_dagger-subset-g-dagger}.

In Case~2, we take $\star\in\{-,\vee\}$ such that $n\notin\N(\star)$, and set
\[
\tau_{s}:=\tau_{\star}.
\]
By using \eqref{eq:tau-assumption}, it follows from
Lemma~\ref{lem:construction_pi_dagger}~\ref{item:tau_dagger-subset-g-dagger}
and \ref{item:tau_dagger-subset-g-dagger*} that
\eqref{eq:want-to-show} holds for $\dagger= \star$ and $\dagger= \star*$, respectively.

Therefore, Theorem~\ref{thm:proper-hurwitz-radon} has been 
established.

\section{Classification of symmetric spaces with \texorpdfstring{$\VE$}{Property (VE)}}
\label{section:classification_VE}
The main goal of this section is to prove 
Theorem~\ref{theorem:classification_of_symmetric_space_with_very_even_condition}, 
namely, to give an infinitesimal classification of symmetric spaces $G/H$ with $\mathfrak{g}$ simple 
that have $\VE$ and admit a proper $SL(2,\R)$-action.

In Section~\ref{section:proper-symmetric}, we summarize the results of Teduka~\cite{Teduka2008PJA} and Okuda~\cite{Oku13}
on proper $SL(2,\R)$-actions on semisimple symmetric spaces $G/H$,
where properness is determined in terms of the
weighted Dynkin diagrams
associated with Lie group homomorphisms $SL(2,\R) \rightarrow G$. 
In Section~\ref{section:characterization-very-even}, 
we also characterize  very even $\mathfrak{sl}(2,\R)$-homomorphisms 
in terms of the associated
weighted Dynkin diagrams. 
In Section~\ref{section:proof-theorem-classification}, 
we apply the results in Sections~\ref{section:proper-symmetric} and~\ref{section:characterization-very-even} to prove 
Theorem~\ref{theorem:classification_of_symmetric_space_with_very_even_condition}.  
In Section~\ref{section:non-symmetric}, we present non-symmetric examples with $\VE$
(Proposition~\ref{prop:main_prop_for_non-symmetric}), based on Kobayashi's properness criterion (Fact~\ref{fact:properness_criterion}). 
Section~\ref{section:non-symmetric} can be read independently.

\subsection{Preliminary: Proper \texorpdfstring{$SL(2,\R)$}{SL(2,R)}-actions on symmetric spaces}
\label{section:proper-symmetric}
In this subsection, we briefly review the results of Teduka~\cite{Teduka2008PJA} and Okuda~\cite{Oku13}.
These results are obtained by combining Kobayashi's properness criterion (Fact~\ref{fact:properness_criterion})
and the Dynkin--Kostant classification for $\mathfrak{sl}(2,\C)$-triples in 
complex semisimple Lie algebras.

For this purpose, 
we begin by recalling the concepts of the weighted Dynkin diagram and the Satake diagram. 
In terms of these concepts, we will discuss interpretations of Fact~\ref{fact:properness_criterion}
for the $SL(2,\R)$-actions on semisimple symmetric spaces, 
by dividing the symmetric spaces into the following three cases: 
For the corresponding symmetric pair $(\mathfrak{g},\mathfrak{h})$,   
\begin{enumerate}[label=$(\alph*)$]
    \item \label{item:cpx_pair} 
    $\mathfrak{g}$ is complex simple and $\mathfrak{h}$ is complex (Fact \ref{fact:teduka});
    \item \label{item:cpx_real}
    $\mathfrak{g}$ is complex simple and $\mathfrak{h}$ is a real form of $\mathfrak{g}$ (Fact~\ref{fact:okuda-cpx/real});
    \item \label{item:abs_simple} 
    $\mathfrak{g}$ is absolutely simple (Fact \ref{fact:okuda}).
\end{enumerate}

\vspace{\baselineskip}

\noindent \textbf{Case \ref{item:cpx_pair} and 
weighted Dynkin diagrams.}
Let us first recall the notion of weighted Dynkin diagrams for a complex semisimple Lie algebra
$\mathfrak{g}_{\C}$.
We fix a Cartan subalgebra $\mathfrak{j}_{\C}$ of $\mathfrak{g}_{\C}$, and 
denote by $\Delta(\mathfrak{g}_{\C},\mathfrak{j}_{\C})$
the root system of $(\mathfrak{g}_{\C},\mathfrak{j}_{\C})$.
We then define a real form $\mathfrak{j}$ of $\mathfrak{j}_{\C}$ by 
\[
\mathfrak{j}:= \{A \in \mathfrak{j}_{\C} \mid \alpha(A) \in \R \text{ for all } \alpha \in \Delta(\mathfrak{g}_{\C}, \mathfrak{j}_{\C})\}.
\]
We fix a fundamental system $\Pi$ of $\Delta(\mathfrak{g}_{\C}, \mathfrak{j}_{\C})$,
and denote by $\mathfrak{j}_{+}$
the closed positive Weyl chamber in $\mathfrak{j}$. 
Consider the bijective $\R$-linear map
\[
\Psi\colon \mathfrak{j} \rightarrow \mathrm{Map}(\Pi, \R),\quad A \mapsto (\Psi_A \colon \alpha \mapsto \alpha(A)).
\]
We call $\Psi_{A}$ the \emph{weighted Dynkin diagram} corresponding to $A \in \mathfrak{j}$.

Next, we recall how to classify 
homomorphisms $\mathfrak{sl}(2, \C) \to \mathfrak{g}_{\C}$ in terms of the weighted Dynkin diagrams (see, e.g.,~\cite[Chap.~3]{CoMc93} for details).
Throughout this section, we use the notation 
\[
A_{0}:= \begin{pmatrix} 1 & 0 \\ 0 & -1 \end{pmatrix}
\in \mathfrak{sl}(2, \R).
\]

Let $\varphi\colon \mathfrak{sl}(2, \C) \to \mathfrak{g}_{\C}$ be a homomorphism. 
Then there exists an 
inner automorphism
$\tau$ of $\mathfrak{g}_{\C}$ such that 
$\tau(\varphi(A_{0})) \in \mathfrak{j}_{+}$. 
The element $\tau(\varphi(A_{0}))$ in $\mathfrak{j}_{+}$ is independent of the choice of $\tau$.
Thus we obtain a weighted Dynkin diagram by 
\[
F_{\varphi} := \Psi_{\tau(\varphi(A_{0}))}
\in \mathrm{Map}(\Pi, \R).
\]
It is known that $F_{\varphi}(\alpha)\in \{0,1,2\}$ for all $\alpha\in \Pi$, 
and that if $F_{\varphi_1}=F_{\varphi_2}$, then $\varphi_1$ and $\varphi_2$ are conjugate by an inner automorphism of 
$\mathfrak{g}_\C$.

The following fact describes how to determine the properness of the
$SL(2,\R)$-action on complex symmetric spaces $G_{\C}/H_{\C}$ 
in terms of the  weighted Dynkin diagram $F_{\varphi}$.

    \begin{fact}
    [{Teduka \cite[Thm.~5.1]{Teduka2008PJA}}]
    \label{fact:teduka}
    Let $G_{\C}/H_{\C}$ be a complex semisimple symmetric space with $G_{\C}$ connected, 
    $\sigma$ the corresponding involution of $\mathfrak{g}_{\C}$, 
    and $\bar{\sigma}$ the automorphism of the Dynkin diagram of $\mathfrak{g}_{\C}$ induced by $\sigma$.
    Then the following claims are equivalent for a non-trivial complex Lie algebra homomorphism
    $\varphi\colon \mathfrak{sl}(2,\C)\rightarrow \mathfrak{g}_{\C}$: 
    \begin{enumerate}
    \item 
    The action of $SL(2,\R)$ on $G_{\C}/H_{\C}$ via the lift
    $\tilde{\varphi}\colon SL(2,\C)\rightarrow G_{\C}$ is proper.
    \item The weighted Dynkin diagram $F_{\varphi}$
    is not $\bar{\sigma}$-stable. Namely, $F_{\varphi}\circ \bar{\sigma}\neq F_{\varphi}$. 
    \end{enumerate}
    \end{fact}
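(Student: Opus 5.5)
This is a cited fact (Teduka's theorem), so I would derive it directly from Kobayashi's criterion (Fact~\ref{fact:properness_criterion}).

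\emph{Step 1: reduce to an $\mathfrak{a}$-condition.} Since $SL(2,\R)$ has real rank one and $\mathfrak{a}_{SL(2,\R)}=\R A_{0}$, the criterion says the action is proper if and only if $\varphi(A_{0})$ is not conjugate into $\mathfrak{a}_{H}$ under $W(\mathfrak{g},\mathfrak{a})$. Here $\mathfrak{g}=\mathfrak{g}_{\C}$ is viewed as a real Lie algebra. Its maximal split abelian subspace can be taken to be $\mathfrak{a}=\mathfrak{j}$, and $W(\mathfrak{g},\mathfrak{a})$ is the complex Weyl group $W$. Because $\varphi(A_{0})$ is semisimple with real eigenvalues, it is inner-conjugate to the unique element $A:=\tau(\varphi(A_{0}))\in\mathfrak{j}_{+}$. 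The action is therefore proper if and only if $W\cdot A\cap\mathfrak{a}_{H}=\{0\}$. Since $\varphi$ is non-trivial, $A\neq0$, so properness fails exactly when $W\cdot A$ meets $\mathfrak{a}_{H}$.

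\emph{Step 2: identify $\mathfrak{a}_{H}$ with the $\sigma$-fixed part.} Choose $\sigma$-stable data: a Cartan subalgebra $\mathfrak{j}_{\C}$ containing a maximal split abelian subspace of $\mathfrak{h}_{\C}$, together with a compatible positive system whose chamber is $\sigma$-stable. Then $\sigma$ acts on $\mathfrak{j}$ and preserves $\mathfrak{j}_{+}$, and its action there is the diagram automorphism $\bar\sigma$. Since $\mathfrak{h}_{\C}$ is complex reductive, $\mathfrak{a}_{H}$ is a real form of a Cartan subalgebra of $\mathfrak{h}_{\C}$. I expect $\mathfrak{a}_{H}$ to be $W$-conjugate to $\mathfrak{j}^{\sigma}$, since the Cartan of $\mathfrak{h}$ extends to a $\sigma$-stable Cartan of $\mathfrak{g}_{\C}$ on which $\sigma$ acts by a diagram-type involution composed with a Weyl element.

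This is the main obstacle. The clean statement I would aim for is
\[
W\cdot\mathfrak{a}_{H}\cap\mathfrak{j}_{+}=\{X\in\mathfrak{j}_{+}\mid \bar\sigma X=X\}.
\]
To prove it, I would use that an element $X\in\mathfrak{j}$ lies in a conjugate of $\mathfrak{h}$ exactly when $X$ is conjugate to $\sigma(X)$, i.e.\ the $W$-orbit of $X$ is $\sigma$-stable. I would first prove this equivalence via semisimple orbits: a semisimple element is $G$-conjugate into $\mathfrak{h}$ if and only if its orbit is $\sigma$-stable, which is standard for complex symmetric pairs by the Kostant--Rallis theory of $\mathfrak{q}$ and $\mathfrak{h}$. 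Then, since each orbit meets $\mathfrak{j}_{+}$ in exactly one point and $\sigma$ preserves $\mathfrak{j}_{+}$, orbit-stability reduces to $\bar\sigma(X)=X$ on the dominant representative.

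\emph{Step 3: conclude.} By Step 2, $W\cdot A\cap\mathfrak{a}_{H}\neq\{0\}$ if and only if $\bar\sigma A=A$. Since $\Psi$ is a bijection intertwining $\bar\sigma$ with precomposition by the permutation of $\Pi$, $\bar\sigma A=A$ is equivalent to $F_{\varphi}\circ\bar\sigma=F_{\varphi}$. Hence properness holds exactly when $F_{\varphi}$ is not $\bar\sigma$-stable.
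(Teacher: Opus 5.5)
Your proposal is correct in outline, but it takes a different route from the paper. The paper does not prove this fact itself. It cites Teduka and remarks that the statement is a special case of Okuda's Satake-diagram criterion (Fact~\ref{fact:okuda}). One regards $\mathfrak{g}_{\C}$ as a real Lie algebra, and then ``$F_{\varphi}$ does not match $S_{\mathfrak{g}^{c}}$'' reduces to $F_{\varphi}\circ\bar\sigma\neq F_{\varphi}$. You instead argue directly from Kobayashi's criterion (Fact~\ref{fact:properness_criterion}), on which Okuda's criterion also rests. Your route is more elementary: it needs no $c$-duals and no Satake diagrams, only the identity $W\cdot\mathfrak{a}_{H}\cap\mathfrak{j}_{+}=\mathfrak{j}_{+}^{\bar\sigma}$. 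The paper's route buys a uniform treatment of complex and non-complex symmetric spaces, which it needs in the classification section.

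On your Step 2: the step you flag as the main obstacle is easier than you think. The appeal to Kostant--Rallis is unnecessary, and not quite the right reference, since that theory concerns $\mathfrak{q}$ rather than $\mathfrak{h}$. What you do need, and have not justified, is the existence of the data you ``choose'': a Cartan subalgebra $\mathfrak{j}_{\C}\supset\mathfrak{a}_{H}$ together with a $\sigma$-stable positive chamber. This is equivalent to $\mathfrak{a}_{H}$ containing a regular element of $\mathfrak{g}_{\C}$. That is the standard fact that the centralizer in $\mathfrak{g}_{\C}$ of a Cartan subalgebra $\mathfrak{c}$ of $\mathfrak{g}_{\C}^{\sigma}$ is a Cartan subalgebra of $\mathfrak{g}_{\C}$; it holds for any finite-order automorphism.

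Granting this fact, put $\mathfrak{c}:=\mathfrak{a}_{H}\otimes\C$ and $\mathfrak{j}_{\C}:=\mathfrak{z}_{\mathfrak{g}_{\C}}(\mathfrak{c})$. Then:
\begin{itemize}
\item $\mathfrak{j}^{\sigma}=\mathfrak{a}_{H}$ exactly, with no $W$-conjugation needed;
\item the positive system defined by a regular $x_{0}\in\mathfrak{a}_{H}$ is $\sigma$-stable;
\item $\sigma|_{\mathfrak{j}}$ is $\bar\sigma$ itself, with no Weyl element.
\end{itemize}
Your hedge ``composed with a Weyl element'' is a warning sign. For $\mathfrak{sl}(2,\C)$ with $\sigma=\Ad\bigl(\begin{smallmatrix}0&1\\1&0\end{smallmatrix}\bigr)$ acting on the diagonal Cartan, one gets $\mathfrak{j}^{\sigma}=0$, and the identification breaks down.

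With this normalization, the key equality takes two lines.
\begin{itemize}
\item If $\bar\sigma X=X$, then $X\in\mathfrak{j}^{\sigma}=\mathfrak{a}_{H}$.
\item If $X=wY\in\mathfrak{j}_{+}$ with $Y\in\mathfrak{a}_{H}$, then $\sigma X=(\sigma w\sigma^{-1})Y\in W\cdot X\cap\mathfrak{j}_{+}=\{X\}$, because $\sigma$ normalizes $W$ and preserves $\mathfrak{j}_{+}$.
\end{itemize}

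A small slip in Step 1: for $A\neq0$, properness means $W\cdot A\cap\mathfrak{a}_{H}=\emptyset$, not $\{0\}$. Your next sentence in effect says this.
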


Here we recall the classification of Lie algebra homomorphisms 
from $\mathfrak{sl}(2,\C)$ into a complex classical Lie algebra 
$\mathfrak{g}_{\C}$, together with the description of the associated 
weighted Dynkin diagrams.  
We associate to a partition 
$[d_{1}^{r_{1}},\ldots,d_{k}^{r_{k}}]$ 
(with $d_{1}>\cdots> d_{k}\geq 1$ and $r_{1},\ldots,r_{k}\geq 1$) 
the direct sum of $d_i$-dimensional complex irreducible representations with multiplicities $r_i$ for $i=1,\ldots,k$.
The following fact summarizes the relevant parts of 
\cite[Lem.~3.6.4 and Chap.~5]{CoMc93} in a suitable form for our argument:
\begin{fact}
\label{fact:weighted-Dynkin-diagram-classical-type}
Let $\iota\colon \mathfrak{g}_\C\rightarrow \mathfrak{gl}(N,\C)$ 
be the standard representation of a complex classical Lie algebra $\mathfrak{g}_\C$, 
and let $\tau\colon \mathfrak{sl}(2,\C)\to \mathfrak{gl}(N,\C)$
be the representation
corresponding to the partition 
$[d_1^{r_1},\ldots,d_k^{r_k}]$.

In what follows, we characterize, in terms of the partition,
when $\tau(\mathfrak{sl}(2,\C))$ is contained in $\mathfrak{g}_\C$
up to $GL(N,\C)$-conjugacy.
Moreover, all the homomorphisms  
$\varphi\colon \mathfrak{sl}(2,\C)\to\mathfrak{g}_{\C}$ 
for which the $\mathfrak{sl}(2,\C)$-representation $\iota\circ \varphi$
is equivalent to $\tau$
are classified up to inner automorphisms by describing the weighted Dynkin diagrams $F_{\varphi}$ of such $\varphi$
in terms of the eigenvalues $h_1\ge h_2\ge\cdots\ge h_N$ of $\tau(A_{0})$.
     
\begin{itemize}
    \item 
    $\mathfrak{g}_{\C}=\mathfrak{sl}(N,\C)$:  
    Always $\tau(\mathfrak{sl}(2,\C))\subset\mathfrak{g}_{\C}$, and
    \[
    F_{\varphi}=\begin{xy}
    (0,0)="1"*{\circ}, (0,3)*{h_{1}-h_{2}},
    (20,0)="2'"*{\cdots}, 
    (40,0)="3"*{\circ}, (40,3)*{h_{N-2}-h_{N-1}},
    (65,0)="4"*{\circ}, (65,3)*{h_{N-1}-h_{N}}
    \ar@{-} "1"+(1,0);"2'"+(-3,0)
    \ar@{-} "2'"+(3,0);"3"+(-1,0)
    \ar@{-} "3"+(1,0);"4"+(-1,0)
    \end{xy}.
    \]
    \item
    $\mathfrak{g}_{\C}=\mathfrak{so}(2M+1,\C)$ ($N=2M+1$):  
    $\tau(\mathfrak{sl}(2,\C))\subset\mathfrak{g}_{\C}$ up to conjugacy
   if and only if $r_i$ is even whenever $d_i$ is even, and 
    \[
    F_{\varphi}=\begin{xy}
    (0,0)="1"*{\circ}, (0,3)*{h_{1}-h_{2}}, 
    (20,0)="2"*{\cdots},
    (40,0)="3"*{\circ}, (40,3)*{h_{M-1}-h_{M}},
    (60,0)="4"*{\circ}, (60,3)*{h_{M}},
    \ar@{-} "1"+(1,0);"2"+(-3,0)
    \ar@{-} "2"+(3,0);"3"+(-1,0)
    \ar@{=>} "3"+(1,0);"4"+(-1,0)
    \end{xy}.
    \]

    \item
    $\mathfrak{g}_{\C}=\mathfrak{sp}(M,\C)$ ($N=2M$):  
    $\tau(\mathfrak{sl}(2,\C))\subset\mathfrak{g}_{\C}$  up to conjugacy
   if and only if $r_i$ is even whenever $d_i$ is odd, and 
    \[
    F_{\varphi}=\begin{xy}
    (0,0)="1"*{\circ}, (0,3)*{h_{1}-h_{2}}, 
    (20,0)="2"*{\cdots},
    (40,0)="3"*{\circ}, (40,3)*{h_{M-1}-h_{M}},
    (60,0)="4"*{\circ}, (60,3)*{2h_{M}},
    \ar@{-} "1"+(1,0);"2"+(-3,0)
    \ar@{-} "2"+(3,0);"3"+(-1,0)
    \ar@{<=} "3"+(1,0);"4"+(-1,0)
    \end{xy}.
    \]

    \item
    $\mathfrak{g}_{\C}=\mathfrak{so}(2M,\C)$ ($N=2M$):  
    $\tau(\mathfrak{sl}(2,\C))\subset\mathfrak{g}_{\C}$ up to conjugacy
    if and only if $r_i$ is even whenever $d_i$ is even.  
    If some $d_{i}$ is odd, we have
    \[
    F_{\varphi}=\begin{xy}
    (12.5,0)="3"*{\circ}, (12.5,4)*{h_{1}-h_2}, 
    (32.5,0)="4"*{\cdots},  
    (52.5,0)="5"*{\circ},
    (50,4)*{h_{M-2}-h_{M-1}},
    (72.5,3)="6"*{\circ}, (72.5,7)*{h_{M-1}}, 
    (72.5,-3)="7"*{\circ}, (72.5,-7)*{h_{M-1}}
    \ar@{-} "3"+(0.8,0);"4"+(-3,0)
    \ar@{-} "4"+(3,0);"5"+(-0.8,0)
    \ar@{-} "5"+(0.5,0.5);"6"+(-0.5,-0.5)
    \ar@{-} "5"+(0.5,-0.5);"7"+(-0.5,0.5)
    \end{xy},
    \]
    where we note that $h_{M}=0$. 
    
    If all $d_{i}$ are even (the very even case), 
    there exist exactly two different homomorphisms 
    $\varphi\colon \mathfrak{sl}(2,\C)\to\mathfrak{g}_{\C}$, 
    up to inner automorphisms of $\mathfrak{g}_{\C}$, whose diagrams are
    \[
    F_{\varphi}=
    \begin{xy}
    (12.5,0)="3"*{\circ}, (12.5,4)*{h_{1}-h_2}, 
    (32.5,0)="4"*{\cdots},  
    (52.5,0)="5"*{\circ},
    (50,4)*{h_{M-2}-h_{M-1}},
    (72.5,3)="6"*{\circ}, (72.5,7)*{a}, 
    (72.5,-3)="7"*{\circ}, (72.5,-7)*{b}
    \ar@{-} "3"+(0.8,0);"4"+(-3,0)
    \ar@{-} "4"+(3,0);"5"+(-0.8,0)
    \ar@{-} "5"+(0.5,0.5);"6"+(-0.5,-0.5)
    \ar@{-} "5"+(0.5,-0.5);"7"+(-0.5,0.5)
    \end{xy}\ ,
    \]
    where $(a,b)=(2,0),(0,2)$. 
\end{itemize}
\end{fact}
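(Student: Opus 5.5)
This statement is a summary of standard results from Collingwood--McGovern, so the plan is to derive it from the Jacobson--Morozov/Dynkin--Kostant theory and from the classical description of invariant bilinear forms on $\mathfrak{sl}(2,\C)$-modules.

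\textbf{Embeddability criterion.} Each irreducible module $V_d$ of $\mathfrak{sl}(2,\C)$ carries a unique invariant bilinear form up to scalar. This form is symmetric if $d$ is odd and skew if $d$ is even. Using Schur's lemma and multiplicity spaces, an invariant nondegenerate symmetric (resp.\ skew) form on $\bigoplus_i \C^{r_i}\otimes V_{d_i}$ exists if and only if, for each $i$, the multiplicity space admits a nondegenerate form of the complementary parity. A skew form on $\C^{r_i}$ forces $r_i$ to be even. This yields the stated parity conditions for $\mathfrak{so}$ and $\mathfrak{sp}$. For $\mathfrak{sl}(N,\C)$ there is no constraint, since $\mathfrak{sl}(2,\C)$ is perfect and so $\tau$ has trace zero.

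\textbf{Diagram description.} The eigenvalues of $\tau(A_0)$ are the integers $d-1,d-3,\ldots,1-d$ with multiplicities. Choose the standard Cartan subalgebra of diagonal matrices in each classical algebra, with the usual simple roots:
\begin{itemize}
\item $e_i-e_{i+1}$ in all types;
\item $e_M$ for $B_M$;
\item $2e_M$ for $C_M$;
\item $e_{M-1}\pm e_M$ for $D_M$.
\end{itemize}
Conjugate $\tau(A_0)$ into the closed dominant chamber by ordering its eigenvalues decreasingly. For $\mathfrak{so}$ and $\mathfrak{sp}$, the eigenvalues come in pairs $\pm h$, so the Cartan element is $\diag(h_1,\ldots,h_M,\ (0),\ -h_M,\ldots,-h_1)$. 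Evaluating the simple roots gives the displayed labels. For $\mathfrak{so}(2M,\C)$ with some $d_i$ odd, $0$ is an eigenvalue with multiplicity at least $2$ (it is even, being the total count minus a paired count), so $h_M=0$ and both fork nodes receive the label $h_{M-1}$.

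\textbf{Classification up to inner automorphisms.} By Kostant's theorem, two homomorphisms are conjugate under inner automorphisms exactly when their images of $A_0$ are, so $F_\varphi$ determines $\varphi$. The remaining point, which is the main obstacle, is the very even case in type $D$. When all $d_i$ are even, every $h_j$ is positive. Sign changes of the last coordinate realize the outer diagram automorphism of $D_M$, and this is not in the Weyl group, which only changes an even number of signs. Hence $\diag(h_1,\ldots,h_M)$ and $\diag(h_1,\ldots,h_{M-1},-h_M)$ are two inequivalent dominant representatives. They carry labels $(h_{M-1}-h_M,\,h_{M-1}+h_M)$ on the fork nodes, possibly swapped. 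Since very even partitions give $h_{M-1}=h_M$ (both equal to the smallest positive odd weight repeated, using that multiplicities are even), these labels are $(0,2)$ or $(2,0)$. Both representatives arise from genuine homomorphisms, because the $O(2M,\C)$-conjugation by a reflection swaps them. When some $d_i$ is odd, $h_M=0$ and the reflection acts trivially on the diagram, so there is just one class.
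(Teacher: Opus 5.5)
Your proposal is correct and follows essentially the same route as the standard argument in Collingwood--McGovern (Chapter~5 together with Kostant's conjugacy theorem), which the paper cites instead of giving its own proof: the parity criterion comes from invariant forms on the isotypic pieces $\C^{r_i}\otimes V_{d_i}$, the diagrams come from moving $\varphi(A_0)$ into the dominant chamber, and the two very even classes in type $D$ arise because $O(2M,\C)$-conjugation changes the sign of the last coordinate, which $W(D_M)$ cannot do. One wording point: in the orthogonal case the form on the multiplicity space must have the \emph{same} symmetry type as the form on $V_{d_i}$, and in the symplectic case the opposite type; this is what your parity conclusions actually use, so ``complementary'' is slightly misleading.
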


\begin{remark}\label{fact:invariance_Titz_inv}
When $\mathfrak{g}_\C=\mathfrak{sl}(N,\C)$, 
the weighted Dynkin diagram $F_{\varphi}$
is invariant under the non-trivial graph automorphism. 
\end{remark}
    
\vspace{\baselineskip}
\noindent \textbf{Case \ref{item:cpx_real} and Satake diagrams.}
\label{section:satake}
Let $\mathfrak{a}$ be a maximal split abelian subalgebra of a real semisimple Lie algebra $\mathfrak{g}$. In what follows, we may and do assume (see, e.g., \cite[p.\ 23--24]{Warner_harmonic_analysis_I}) that a Cartan subalgebra $\mathfrak{j}_{\C}$ 
    of $\mathfrak{g}_{\C}:=\mathfrak{g}\otimes_{\R}\C$ contains $\mathfrak{a}$, 
    and that the restriction $\overline{\Pi}:=\Pi|_{\mathfrak{a}}\smallsetminus\{0\}$
    of a fundamental system $\Pi$ of the root system $\Delta(\mathfrak{g}_{\C},\mathfrak{j}_{\C})$
    is a fundamental system of the restricted root system of $(\mathfrak{g},\mathfrak{a})$.
We write $\mathfrak{a}_{+}$ for the closed positive Weyl chamber with respect to the fundamental system $\overline{\Pi}$.
Obviously, we have $\mathfrak{a}_{+} \subset \mathfrak{j}_{+}$.

Now we briefly recall the Satake diagram of 
the real semisimple Lie algebra $\mathfrak{g}$
(see Araki~\cite{Araki62} for details).
We denote by $\Pi_{0}$ the set of simple roots in $\Pi$
which vanish on $\mathfrak{a}$.
The \emph{Satake diagram} $S_{\mathfrak{g}}$ of $\mathfrak{g}$
consists of the following data:
the Dynkin diagram of $\mathfrak{g}_{\C}$ with nodes $\Pi$, black nodes $\Pi_{0}$, and arrows joining $\alpha, \beta \in \Pi\smallsetminus\Pi_{0}$ 
such that $\alpha|_\mathfrak{a}=\beta|_{\mathfrak{a}}$.

Okuda introduced a combinatorial condition between 
a weighted Dynkin diagram in $\mathfrak{g}_{\C}$ and the Satake diagram
of $\mathfrak{g}$:

\begin{definition}[{Okuda \cite[Def.~7.3]{Oku13}}]
    \label{definition:match}
    A weighted Dynkin diagram 
    \emph{matches} the Satake diagram $S_{\mathfrak{g}}$ of $\mathfrak{g}$ if all the weights on black nodes in $S_{\mathfrak{g}}$ are zero and 
    any pair of nodes joined by an arrow in $S_{\mathfrak{g}}$ has the same weights. 
\end{definition}

Then the following lemma holds:
\begin{lemma}[{Okuda \cite[Lem.~7.5]{Oku13}}]
    \label{lemma:okuda_match}
    For $A\in \mathfrak{j}$, 
    the weighted Dynkin diagram $\Psi_{A}$
    matches the Satake diagram of $\mathfrak{g}$
    if and only if $A\in \mathfrak{a}$.
\end{lemma}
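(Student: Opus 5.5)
The plan is to reduce both directions to statements about the simple roots in $\Pi$, using the standard compatibility between $\mathfrak{a}$, $\mathfrak{j}$ and the fundamental systems fixed above. Write $\mathfrak{j}=\mathfrak{a}\oplus\mathfrak{t}'$, where $\mathfrak{t}'$ is the intersection of $\mathfrak{j}$ with $\sqrt{-1}$ times the compact part of the Cartan subalgebra $\mathfrak{j}_\C\cap\mathfrak{g}$. Let $\sigma$ denote complex conjugation of $\mathfrak{g}_\C$ with respect to $\mathfrak{g}$ (composed with the Cartan involution if needed, so that it preserves $\mathfrak{j}$). Then $\mathfrak{a}$ is exactly the subspace of $A\in\mathfrak{j}$ fixed by the induced linear involution $\sigma^{*}$ on $\mathfrak{j}$.

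The key input is Satake's description of the action of $\sigma^{*}$ on $\Pi$, as in Araki~\cite{Araki62}. For $\alpha\in\Pi_0$, we have $\sigma^{*}\alpha=-\alpha$, since such roots vanish on $\mathfrak{a}$ and are imaginary. For $\alpha\in\Pi\smallsetminus\Pi_0$, we have
\[
\sigma^{*}\alpha=\omega(\alpha)+\sum_{\beta\in\Pi_0}c_{\alpha\beta}\beta ,
\]
where $\omega$ is the involution of $\Pi\smallsetminus\Pi_0$ encoded by the arrows of $S_{\mathfrak{g}}$. I will take these facts from Araki as standard.

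\emph{``If'' direction.} Suppose $A\in\mathfrak{a}$. Then $\alpha(A)=0$ for every $\alpha\in\Pi_0$ by the definition of $\Pi_0$. If $\alpha$ and $\beta$ are joined by an arrow, then $\alpha|_{\mathfrak{a}}=\beta|_{\mathfrak{a}}$, so $\alpha(A)=\beta(A)$. Hence $\Psi_A$ matches $S_{\mathfrak{g}}$.

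\emph{``Only if'' direction.} Suppose $\Psi_A$ matches $S_{\mathfrak{g}}$. I will show that $\sigma^{*}A=A$ by checking that every $\alpha\in\Pi$ takes the same value on $A$ and on $\sigma^{*}A$; this suffices because $\Pi$ is a basis of $\mathfrak{j}^{*}$. For $\alpha\in\Pi_0$, we get $\alpha(\sigma^{*}A)=(\sigma^{*}\alpha)(A)=-\alpha(A)=0=\alpha(A)$. For $\alpha\notin\Pi_0$, the formula above gives
\[
\alpha(\sigma^{*}A)=\omega(\alpha)(A)+\sum_{\beta\in\Pi_0}c_{\alpha\beta}\,\beta(A)=\omega(\alpha)(A)=\alpha(A),
\]
using that black weights vanish and arrow-joined weights agree. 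Hence $A$ is $\sigma^{*}$-fixed, i.e.\ $A\in\mathfrak{a}$.

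The main point needing care is the identification of the fixed space of $\sigma^{*}$ on $\mathfrak{j}$ with $\mathfrak{a}$. This requires $\mathfrak{j}_\C$ to be the complexification of a maximally split Cartan subalgebra $\mathfrak{t}\oplus\mathfrak{a}$ of $\mathfrak{g}$; that is exactly the setup fixed above via \cite{Warner_harmonic_analysis_I}. With that, $\mathfrak{j}=\mathfrak{a}\oplus\sqrt{-1}\,\mathfrak{t}$ and $\sigma^{*}$ acts by $+1$ on $\mathfrak{a}$ and by $-1$ on $\sqrt{-1}\,\mathfrak{t}$. The other ingredient, Satake's formula for $\sigma^{*}\alpha$, I will cite from Araki rather than reprove.
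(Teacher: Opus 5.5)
Your proof is correct. The paper gives no argument of its own here; it only cites Okuda~\cite[Lem.~7.5]{Oku13}.

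Your route is structural. You identify $\mathfrak{a}$ with the fixed space of the conjugation $\sigma$ on $\mathfrak{j}=\mathfrak{a}\oplus\sqrt{-1}\,\mathfrak{t}$, and then verify $\sigma A=A$ simple root by simple root using the Satake--Araki formula for $\sigma^{*}\alpha$.

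The statement also follows from a shorter dimension count that uses only the normalization fixed in the paper. Your \emph{if} direction gives $\Psi(\mathfrak{a})\subset\mathcal{M}$, where $\mathcal{M}\subset\mathrm{Map}(\Pi,\R)$ is the space of diagrams matching $S_{\mathfrak{g}}$. By the definition of arrows, $\mathcal{M}$ consists of the functions that vanish on $\Pi_0$ and are constant on the fibres of the restriction map $\Pi\smallsetminus\Pi_0\to\overline{\Pi}$. Hence $\dim\mathcal{M}=\#\overline{\Pi}=\dim\mathfrak{a}$, because $\overline{\Pi}$ is a basis of $\mathfrak{a}^{*}$. Since $\Psi$ is injective, this forces $\Psi(\mathfrak{a})=\mathcal{M}$.

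That count needs neither Araki's formula nor $\sigma$-stability of $\mathfrak{j}_\C$. It works under the paper's literal hypothesis that $\mathfrak{j}_\C$ merely contains $\mathfrak{a}$. Your version shows the matching conditions are exactly the fixed-point equations of the Satake involution on $\Pi$, which is conceptually informative. The price is importing Satake's theory and the maximally split choice of $\mathfrak{j}_\C$; that choice can always be arranged by an inner automorphism centralizing $\mathfrak{a}$.

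Two small points in your write-up:
\begin{enumerate}
\item Drop the parenthetical \emph{composed with the Cartan involution if needed}. With $\mathfrak{j}=\mathfrak{a}\oplus\sqrt{-1}\,\mathfrak{t}$, plain conjugation with respect to $\mathfrak{g}$ already preserves $\mathfrak{j}$. By contrast, $\theta\sigma$ is conjugation with respect to the compact real form $\mathfrak{k}+\sqrt{-1}\,\mathfrak{p}$ and acts by $-1$ on all of $\mathfrak{j}$, so its fixed space would be $0$ rather than $\mathfrak{a}$.
\item Araki's formula is stated for a $\sigma$-fundamental system, and you should note why that holds here. A positive root with non-zero restriction to $\mathfrak{a}$ restricts to a non-negative combination of $\overline{\Pi}$. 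Since $\sigma^{*}$ does not change restrictions to $\mathfrak{a}$, $\sigma^{*}$ of such a root is again positive, so the paper's normalization makes $\Pi$ a $\sigma$-fundamental system automatically.
\end{enumerate}
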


The following fact describes how to determine the properness of the
$SL(2,\R)$-action on 
symmetric spaces
$G_{\C}/G$ in Case~\ref{item:cpx_real}, which is an immediate corollary of Okuda's theorem (Fact~\ref{fact:okuda}):

    \begin{fact}
    \label{fact:okuda-cpx/real}
    Let $G_{\C}$ be a connected complex semisimple Lie group and $G$ its closed Lie subgroup whose 
    Lie algebra $\mathfrak{g}$ is a real form of $\mathfrak{g}_{\C}$. Then the following claims are equivalent for a non-trivial complex Lie algebra homomorphism
    $\varphi\colon \mathfrak{sl}(2,\C)\rightarrow \mathfrak{g}_{\C}$: 
    \begin{enumerate}
    \item 
    \label{fact:okuda-cpx/real_item:proper}
    The action of $SL(2,\R)$ on $G_{\C}/G$ via the lift of $\varphi$ is proper.
    \item
    \label{fact:okuda-cpx/real_item:match}
    The weighted Dynkin diagram $F_{\varphi}$
    does not match the Satake diagram of $\mathfrak{g}$.
    \end{enumerate}
    \end{fact}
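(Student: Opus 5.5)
The plan is to apply Kobayashi's properness criterion (Fact~\ref{fact:properness_criterion}) directly to $G_{\C}/G$, viewed as a homogeneous space of a real Lie group, and then to use Lemma~\ref{lemma:okuda_match} to turn the resulting condition into one about the weighted Dynkin diagram.

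\textbf{Step 1: the data in the criterion.} Regard $G_{\C}$ as a real linear semisimple Lie group. Its Lie algebra $\mathfrak{g}_{\C}$ has complexification $\mathfrak{g}_{\C}\oplus\mathfrak{g}_{\C}$, so Setting~\ref{setting:semisimple} applies, and $G$ is reductive in it. Choose a Cartan involution $\theta$ of $\mathfrak{g}$ and extend it to the Cartan involution of $\mathfrak{g}_{\C}$ whose fixed algebra is the compact real form. Then $G$ is $\theta$-stable.
\begin{itemize}
\item A maximal split abelian subspace of $\mathfrak{g}_{\C}$ (as a real Lie algebra) is $\mathfrak{j}$, the real span of coroots in a Cartan subalgebra $\mathfrak{j}_{\C}\supset\mathfrak{a}$ chosen as in the Satake-diagram setup.
\item The restricted root system of $(\mathfrak{g}_{\C},\mathfrak{j})$ is $\Delta(\mathfrak{g}_{\C},\mathfrak{j}_{\C})$ restricted to $\mathfrak{j}$. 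Hence $W(\mathfrak{g}_{\C},\mathfrak{j})$ is the complex Weyl group $W$.
\item We may take $\mathfrak{a}_{H}=\mathfrak{a}\subset\mathfrak{j}$, since $\mathfrak{g}^{-\theta}=\mathfrak{p}$ and $\mathfrak{a}$ is maximal abelian in $\mathfrak{p}$.
\item For $L$ the image of $SL(2,\R)$ under the lift of $\varphi$, $\mathfrak{a}_{L}$ is the line $\R A$, where $A:=\tau(\varphi(A_{0}))\in\mathfrak{j}_{+}$ for a suitable inner automorphism $\tau$. This uses that $\varphi(A_{0})$ is hyperbolic, hence conjugate into $\mathfrak{j}_{+}$.
\end{itemize}
Since $\varphi$ is non-trivial, $A\neq 0$. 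The criterion therefore says that the action is proper if and only if $A\notin W\cdot\mathfrak{a}$.

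\textbf{Step 2: reduce to $A\in\mathfrak{a}$.} I will show that $(W\cdot\mathfrak{a})\cap\mathfrak{j}_{+}=\mathfrak{a}_{+}$.
\begin{itemize}
\item The inclusion $\supseteq$ is immediate from $\mathfrak{a}_{+}\subset\mathfrak{j}_{+}$.
\item For $\subseteq$, let $X\in\mathfrak{a}$ with $wX\in\mathfrak{j}_{+}$. The restricted Weyl group $W(\mathfrak{g},\mathfrak{a})$ moves $X$ to some $X'\in\mathfrak{a}_{+}\subset\mathfrak{j}_{+}$. Each element of $W(\mathfrak{g},\mathfrak{a})$ is induced by an element of $N_{K}(\mathfrak{a})$, which can be adjusted by $Z_{K}(\mathfrak{a})$ to normalize $\mathfrak{j}_{\C}$, so it acts on $\mathfrak{j}$ as an element of $W$. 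Then $wX$ and $X'$ are two elements of $\mathfrak{j}_{+}$ in the same $W$-orbit. By uniqueness of the dominant representative, $wX=X'\in\mathfrak{a}_{+}$.
\end{itemize}
Hence $A\notin W\cdot\mathfrak{a}$ if and only if $A\notin\mathfrak{a}$.

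\textbf{Step 3: conclude.} By Lemma~\ref{lemma:okuda_match}, $A\in\mathfrak{a}$ if and only if $F_{\varphi}=\Psi_{A}$ matches $S_{\mathfrak{g}}$. Combined with Step 2, the action is proper exactly when $F_{\varphi}$ does not match $S_{\mathfrak{g}}$.

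\textbf{Expected main obstacle.} The delicate point is Step 1 together with the Weyl group compatibility in Step 2. One must justify that the $\mathfrak{a}_{H}$ and $\mathfrak{a}_{L}$ of Fact~\ref{fact:properness_criterion} can be placed simultaneously inside the single $\mathfrak{j}$ fixed by the Satake setup. One must also check that $W(\mathfrak{g},\mathfrak{a})$ embeds into $W$ compatibly with the chambers $\mathfrak{a}_{+}\subset\mathfrak{j}_{+}$. I would handle both by citing the standard Araki/Warner construction of $\mathfrak{j}_{\C}\supset\mathfrak{a}$ with a $\sigma$-order, which is the same construction used to define $\overline{\Pi}$. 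Alternatively, the statement follows at once by specializing Okuda's general criterion (Fact~\ref{fact:okuda}) to the $c$-dual of $(\mathfrak{g}_{\C},\mathfrak{g})$.
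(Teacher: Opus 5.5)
Your argument is correct, but it takes a different route from the paper. The paper gives no separate proof. It records the statement as an immediate corollary of Okuda's general criterion, Fact~\ref{fact:okuda}~\ref{fact:okuda:item:proper}, applied to $(\mathfrak{g}_{\C},\mathfrak{g})$ regarded as a real symmetric pair. For that pair the $c$-dual is isomorphic to $\mathfrak{g}\oplus\mathfrak{g}$ inside $\mathfrak{g}_{\C}\otimes_{\R}\C\simeq\mathfrak{g}_{\C}\oplus\mathfrak{g}_{\C}$, and the translation back to $S_{\mathfrak{g}}$ is left to the reader.

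You instead go back one level to Kobayashi's criterion (Fact~\ref{fact:properness_criterion}) for $G_{\C}$ viewed as a real group. You correctly identify:
\begin{itemize}
\item $\mathfrak{j}=\sqrt{-1}\,\mathfrak{t}_{\mathfrak{m}}+\mathfrak{a}$ as its split Cartan subspace;
\item the complex Weyl group $W$ as its restricted Weyl group;
\item $\mathfrak{a}_{H}=\mathfrak{a}$.
\end{itemize}
You then supply the one real ingredient, $(W\cdot\mathfrak{a})\cap\mathfrak{j}_{+}=\mathfrak{a}_{+}$. Your proof of it is sound: adjust elements of $N_{K}(\mathfrak{a})$ by $Z_{K}(\mathfrak{a})_{0}$ so that they normalize $\mathfrak{j}_{\C}$, then use that $\mathfrak{j}_{+}$ is a fundamental domain for $W$. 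Lemma~\ref{lemma:okuda_match} then finishes the proof.

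This is in effect the mechanism of Okuda's theorem specialized to this case. The paper's deduction is shorter but relies on the heavier theorem and on an unstated computation of the $c$-dual and the simultaneous Satake diagrams for $(\mathfrak{g}_{\C},\mathfrak{g})$. Yours is self-contained, uses only the single Satake setup already fixed in the paper, and avoids $c$-duals entirely.

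The one assertion you leave unjustified is that $G$ itself (not just $\mathfrak{g}$) is reductive in the sense of Setting~\ref{setting:semisimple}. It follows quickly. $G$ normalizes $\mathfrak{g}$, whose normalizer in $\mathfrak{g}_{\C}$ is $\mathfrak{g}$. Hence $G$ lies in the algebraic group $N_{G_{\C}}(\mathfrak{g})$, whose identity component is $G_{0}$, so $G$ has finitely many components. Properness on $G_{\C}/G$ and on $G_{\C}/G_{0}$ then agree, and $G_{0}$ is stable under the Cartan involution.
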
   

\vspace{\baselineskip}

\noindent \textbf{Case \ref{item:abs_simple} and 
a pair of Satake diagrams 
associated to a semisimple symmetric pair.} \label{subsubsec:properness_caseC}
    Let $(\mathfrak{g},\mathfrak{h})$ be a semisimple symmetric pair, and $\sigma$ be the corresponding involution of $\mathfrak{g}$. 
    Then we put $\mathfrak{q}:=\mathfrak{g}^{-\sigma}
    =\{X\in \mathfrak{g} \mid \sigma(X) = -X\}$, and 
    define the \emph{$c$-dual} 
    \begin{equation}
        \label{eq:c-dual}
        \mathfrak{g}^{c}:= \mathfrak{h}+\sqrt{-1}\mathfrak{q}\subset \mathfrak{g}_{\C}.
    \end{equation}
    The Lie algebra $\mathfrak{g}^{c}$
    is also a real form of $\mathfrak{g}_{\C}$.
    Given a symmetric pair $(\mathfrak{g},\mathfrak{h})$ with $\mathfrak{g}$ simple, 
    the $c$-dual $\mathfrak{g}^{c}$ is 
    described in \cite[Sect.~1, (1,12)--(1.16)]{OshimaSekiguchi84}.
    We note that 
    the pair $(\mathfrak{g}^{c},\mathfrak{h})$ coincides 
    with the symmetric pair $(\mathfrak{g},\mathfrak{h})^{ada}$ 
    in the notation of \cite[Sect.~1]{OshimaSekiguchi84}.
    
    One can define \emph{simultaneously} the Satake diagrams 
    of $\mathfrak{g}$ and $\mathfrak{g}^{c}$ by applying the argument in Oshima--Sekiguchi~\cite[Sect.~2 and 3]{OshimaSekiguchi84} to the associated pair $(\mathfrak{g},\mathfrak{h})^a$. Take a Cartan involution $\theta$ of $\mathfrak{g}$
    with $\theta\sigma=\sigma\theta$, and consider the anti-linear extension $\bar{\theta}$ of $\theta$ 
    to $\mathfrak{g}_{\C}$. 
    We write 
    $\mathfrak{g}=\mathfrak{k}+\mathfrak{p}$, $\mathfrak{g}^{c}=\mathfrak{k}^{c}+\mathfrak{p}^{c}$, and 
$\mathfrak{h}=\mathfrak{k}_{\mathfrak{h}}+\mathfrak{p}_{\mathfrak{h}}$
for the decompositions with respect to the Cartan involutions $\theta$, $\bar{\theta}|_{\mathfrak{g}^{c}}$, and $\theta|_{\mathfrak{h}}$, respectively. 
    Take a maximal abelian subspace $\mathfrak{a}_{\mathfrak{h}}$ of $\mathfrak{p}_{\mathfrak{h}}$, 
    and extend $\mathfrak{a}_{\mathfrak{h}}$ to maximal abelian subspaces 
    $\mathfrak{a}$ and $\mathfrak{a}^{c}$ 
    of $\mathfrak{p}$ and $\mathfrak{p}^{c}$, respectively.
    Then we have $[\mathfrak{a},\mathfrak{a}^{c}] = 0$ (\cite[Lem.~2.4~(i)]{OshimaSekiguchi84}). 
    Hence, one can take 
    a Cartan subalgebra $\mathfrak{j}_{\C}$ of $\mathfrak{g}_{\C}$ containing both $\mathfrak{a}$ and $\mathfrak{a}^{c}$.
    Further, by the argument in \cite[Sect.~3]{OshimaSekiguchi84}, we may and do take a fundamental system $\Pi$ of the root system of $(\mathfrak{g}_{\C},\mathfrak{j}_{\C})$ 
    such that the restrictions 
    $\Pi|_{\mathfrak{a}}\smallsetminus\{0\}$ and 
    $\Pi|_{\mathfrak{a}^{c}}\smallsetminus\{0\}$
    are fundamental systems of the restricted root systems of $(\mathfrak{g},\mathfrak{a})$ and $(\mathfrak{g}^c,\mathfrak{a}^c)$, respectively.
    Thus the Satake diagrams $S_{\mathfrak{g}}$ and $S_{\mathfrak{g}^{c}}$ are simultaneously defined. 

    The following fact characterizes when a complex homomorphism
    $\mathfrak{sl}(2,\C)\to\mathfrak{g}_{\C}$ descends to a real homomorphism
    $\mathfrak{sl}(2,\R)\to\mathfrak{g}$, and 
    yields a criterion for the properness of $SL(2,\R)$-actions on symmetric spaces $G/H$: 
    \begin{fact}
    \label{fact:okuda}
    Let 
    $\varphi\colon \mathfrak{sl}(2,\C)\rightarrow \mathfrak{g}_{\C}$
    be a non-trivial homomorphism. 
    \begin{enumerate}[label=$(\roman*)$]
        \item $($\cite[Thm.~7.10]{Oku13}$)$
        \label{fact:okuda:item:real}
        We have $\varphi(\mathfrak{sl}(2,\R))\subset \mathfrak{g}$ 
        up to inner automorphisms of $\mathfrak{g}_{\C}$
        if and only if 
        the weighted Dynkin diagram $F_{\varphi}$ 
        matches the Satake diagram $S_{\mathfrak{g}}$.

        \item $($\cite[Thm.~10.1]{Oku13}$)$
        \label{fact:okuda:item:proper}
        Let $G/H$ be a semisimple symmetric space with Setting~\ref{setting:semisimple}. 
        We assume $\varphi(\mathfrak{sl}(2,\R))\subset \mathfrak{g}$. 
        Then the $SL(2,\R)$-action on $G/H$ via the lift of $\varphi|_{\mathfrak{sl}(2,\R)}$
        is proper if and only if $F_{\varphi}$ 
        does not match
        the Satake diagram $S_{\mathfrak{g}^{c}}$.
    \end{enumerate}
    \end{fact}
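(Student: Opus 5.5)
We prove \ref{fact:okuda:item:real} by passing through the neutral element $\varphi(A_{0})$, and then reduce \ref{fact:okuda:item:proper} to \ref{fact:okuda:item:real} via Kobayashi's criterion (Fact~\ref{fact:properness_criterion}) and Lemma~\ref{lemma:okuda_match} applied to both real forms $\mathfrak{g}$ and $\mathfrak{g}^{c}$.

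For \ref{fact:okuda:item:real}, suppose first that $\varphi(\mathfrak{sl}(2,\R))\subset\mathfrak{g}$. Then $\varphi(A_{0})$ is a hyperbolic element of $\mathfrak{g}$. Hence it is conjugate under the adjoint group of $\mathfrak{g}$ into $\mathfrak{a}_{+}$, and $\mathfrak{a}_{+}\subset\mathfrak{j}_{+}$. So the representative of $F_{\varphi}$ in $\mathfrak{j}_{+}$ lies in $\mathfrak{a}$, and Lemma~\ref{lemma:okuda_match} shows that $F_{\varphi}$ matches $S_{\mathfrak{g}}$.

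Conversely, suppose $F_{\varphi}$ matches $S_{\mathfrak{g}}$, and let $A\in\mathfrak{j}_{+}$ be the element with $\Psi_{A}=F_{\varphi}$. By Lemma~\ref{lemma:okuda_match}, $A\in\mathfrak{a}\subset\mathfrak{g}$. Set $\mathfrak{g}_{\C}(A;2)=\{X\mid[A,X]=2X\}$. By Kostant's theory (see \cite[Chap.~3]{CoMc93}), the elements $E\in\mathfrak{g}_{\C}(A;2)$ that complete $A$ to an $\mathfrak{sl}(2)$-triple form the unique open orbit of the centralizer $G_{\C}(A)_{0}$. Since $A$ is real, $\mathfrak{g}(A;2)$ is a real form of $\mathfrak{g}_{\C}(A;2)$, so it is Zariski dense and meets this open orbit. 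We choose such a real $E$.

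We then solve for $F$ in the real equation $[E,F]=A$, $F\in\mathfrak{g}(A;-2)$. This is possible because $\operatorname{ad}E\colon\mathfrak{g}_{\C}(A;-2)\to\mathfrak{g}_{\C}(A;0)$ is defined over $\R$ and $A$ lies in its image. This yields a real triple with neutral element $A$. By Kostant's conjugacy of triples sharing a neutral element, $\varphi$ is inner-conjugate to the complexification of this real triple.

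For \ref{fact:okuda:item:proper}, we take $L$ to be the image of the lift of $\varphi|_{\mathfrak{sl}(2,\R)}$, so that $\mathfrak{a}_{L}=\R\varphi(A_{0})$. By Fact~\ref{fact:properness_criterion}, the action is non-proper iff $\varphi(A_{0})$ is $W(\mathfrak{g},\mathfrak{a})$-conjugate (after $G$-conjugation into $\mathfrak{a}$) into $\mathfrak{a}_{\mathfrak{h}}$. We next check that $\mathfrak{a}\cap\mathfrak{a}^{c}=\mathfrak{a}_{\mathfrak{h}}$. Since $\mathfrak{p}^{c}=\mathfrak{p}_{\mathfrak{h}}+\sqrt{-1}(\mathfrak{k}\cap\mathfrak{q})$, we have $\mathfrak{a}\cap\mathfrak{a}^{c}\subset\mathfrak{p}\cap\mathfrak{p}^{c}=\mathfrak{p}_{\mathfrak{h}}$. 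This intersection is abelian and contains $\mathfrak{a}_{\mathfrak{h}}$, so maximality of $\mathfrak{a}_{\mathfrak{h}}$ in $\mathfrak{p}_{\mathfrak{h}}$ gives equality.

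Now let $A\in\mathfrak{j}_{+}$ represent $F_{\varphi}$; by \ref{fact:okuda:item:real}, $A\in\mathfrak{a}_{+}$. If $F_{\varphi}$ matches $S_{\mathfrak{g}^{c}}$, Lemma~\ref{lemma:okuda_match} for $\mathfrak{g}^{c}$ gives $A\in\mathfrak{a}^{c}$, hence $A\in\mathfrak{a}_{\mathfrak{h}}$, and the action is not proper. Conversely, if $\varphi(A_{0})$ is conjugate into $\mathfrak{a}_{\mathfrak{h}}$, we want to move it into $\mathfrak{a}_{\mathfrak{h}}\cap\mathfrak{j}_{+}$. Then it equals $A$, which lies in $\mathfrak{a}^{c}$, so $F_{\varphi}$ matches $S_{\mathfrak{g}^{c}}$.

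\textbf{Main obstacle.} The hard step is the last one: a $W(\mathfrak{g},\mathfrak{a})$-translate of $\varphi(A_{0})$ lying in $\mathfrak{a}_{\mathfrak{h}}$ must be moved to the dominant chamber $\mathfrak{j}_{+}$ while staying inside $\mathfrak{a}_{\mathfrak{h}}$. For this I would use the compatible choice of $\Pi$ from \cite[Sect.~3]{OshimaSekiguchi84}. That choice ensures $\mathfrak{a}_{\mathfrak{h}}\cap\mathfrak{j}_{+}$ is a fundamental domain for the action of the little Weyl group of $(\mathfrak{h},\mathfrak{a}_{\mathfrak{h}})$, realized inside $W(\mathfrak{g},\mathfrak{a})$ with its roots extending to roots of $\mathfrak{g}_{\C}$. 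I would first verify this chamber-compatibility claim; it is the point where the argument could fail.
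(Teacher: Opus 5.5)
The paper gives no proof of this Fact; it quotes Okuda's Theorems~7.10 and~10.1. Your argument for (i) is correct. So are the computation $\mathfrak{p}\cap\mathfrak{p}^{c}=\mathfrak{p}_{\mathfrak{h}}$, giving $\mathfrak{a}\cap\mathfrak{a}^{c}=\mathfrak{a}_{\mathfrak{h}}$ (the identity behind Lemma~\ref{lemma:okuda_match_a_ac}), and the implication ``$F_{\varphi}$ matches $S_{\mathfrak{g}^{c}}$ $\Rightarrow$ not proper''.

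The converse implication in (ii) has a genuine gap exactly at the step you flag, and your proposed way of closing it fails.

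\emph{Why your route fails.} $\mathfrak{a}_{\mathfrak{h}}\cap\mathfrak{j}_{+}$ is in general not a fundamental domain for $W(\mathfrak{h},\mathfrak{a}_{\mathfrak{h}})$, because that group is too small. Suppose the center of $\mathfrak{h}$ contains some $Z\neq 0$ in $\mathfrak{p}_{\mathfrak{h}}$. Then $Z$ lies in $\mathfrak{a}_{\mathfrak{h}}$, and $Z$ and $-Z$ are each fixed by $W(\mathfrak{h},\mathfrak{a}_{\mathfrak{h}})$. But $\mathfrak{j}_{+}$ cannot contain both, since $\mathfrak{g}_{\C}$ is semisimple. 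The simplest instance is $(\mathfrak{g},\mathfrak{h})=(\mathfrak{sl}(2,\R),\R A_{0})$: there $W(\mathfrak{h},\mathfrak{a}_{\mathfrak{h}})$ is trivial while $\mathfrak{a}_{\mathfrak{h}}\cap\mathfrak{j}_{+}$ is a half-line.

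\emph{The missing idea.} You never need to stay inside $\mathfrak{a}_{\mathfrak{h}}$. By Lemma~\ref{lemma:okuda_match} applied to $\mathfrak{g}^{c}$, matching $S_{\mathfrak{g}^{c}}$ only requires $A\in\mathfrak{a}^{c}$. So proceed as follows. Take $X:=wA\in\mathfrak{a}_{\mathfrak{h}}\subset\mathfrak{a}^{c}$ and act by $W(\mathfrak{g}^{c},\mathfrak{a}^{c})$, which is realized by inner automorphisms of $\mathfrak{g}^{c}$, hence of $\mathfrak{g}_{\C}$. Since $\Pi$ was chosen so that $\Pi|_{\mathfrak{a}^{c}}\smallsetminus\{0\}$ is a fundamental system for $(\mathfrak{g}^{c},\mathfrak{a}^{c})$, the set $\mathfrak{a}^{c}\cap\mathfrak{j}_{+}$ is exactly the closed positive chamber of $(\mathfrak{g}^{c},\mathfrak{a}^{c})$. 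Hence $X$ is $\operatorname{Int}(\mathfrak{g}_{\C})$-conjugate to some $X'\in\mathfrak{a}^{c}\cap\mathfrak{j}_{+}$. Now $X'$ and $A$ are $\operatorname{Int}(\mathfrak{g}_{\C})$-conjugate elements of $\mathfrak{j}_{+}$, so $X'=A$. Therefore $A\in\mathfrak{a}^{c}$, and $F_{\varphi}$ matches $S_{\mathfrak{g}^{c}}$.

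This is just your ``$\Rightarrow$'' argument from (i), run for the real form $\mathfrak{g}^{c}$ instead of $\mathfrak{g}$. A posteriori $A\in\mathfrak{a}\cap\mathfrak{a}^{c}=\mathfrak{a}_{\mathfrak{h}}$, so the statement you wanted (that the dominant representative lies in $\mathfrak{a}_{\mathfrak{h}}\cap\mathfrak{j}_{+}$) is true. It comes from $W(\mathfrak{g}^{c},\mathfrak{a}^{c})$, not from $W(\mathfrak{h},\mathfrak{a}_{\mathfrak{h}})$. With this replacement your proof of (ii) is complete.
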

    
    \begin{remark}    
    In the above fact, it is not necessary to assume that $\mathfrak{g}$ is absolutely simple. Fact~\ref{fact:teduka} (Case~\ref{item:cpx_pair}) and
Fact~\ref{fact:okuda-cpx/real} (Case~\ref{item:cpx_real}) both follow from
Fact~\ref{fact:okuda}.
However, the forms of Facts~\ref{fact:teduka} and~\ref{fact:okuda-cpx/real}
are more convenient for our purposes, and hence we have separated the
discussion into Cases~\ref{item:cpx_pair}, \ref{item:cpx_real}, and~\ref{item:abs_simple}.
    \end{remark}

    For most symmetric pairs $(\mathfrak{g},\mathfrak{h})$ with $\mathfrak{g}$ simple,
the pair $(S_{\mathfrak{g}}, S_{\mathfrak{g}^{c}})$ is  determined
only by the list of the $c$-dual $\mathfrak{g}^{c}$
given in \cite[Sect.~1, (1.12)--(1.16)]{OshimaSekiguchi84}.
However, as illustrated by the example below, this is not always the case.
Even in such situations, one can easily compute the pair
$(S_{\mathfrak{g}}, S_{\mathfrak{g}^{c}})$ by the following lemma.
The authors are grateful to T.~Okuda for explaining this useful lemma. 

    \begin{lemma}
    \label{lemma:okuda_match_a_ac}
    The dimension of the real vector space of
    weighted Dynkin diagrams which 
    match both the Satake diagrams $S_{\mathfrak{g}}$
    and $S_{\mathfrak{g}^{c}}$ is equal to 
    the real rank of $\mathfrak{h}$.
    \end{lemma}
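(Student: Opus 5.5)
The space of weighted Dynkin diagrams $\mathrm{Map}(\Pi,\R)$ is identified with $\mathfrak{j}$ via the linear isomorphism $\Psi$. By Lemma~\ref{lemma:okuda_match}, applied once to $\mathfrak{g}$ (with $\mathfrak{a}\subset\mathfrak{j}_{\C}$) and once to $\mathfrak{g}^{c}$ (with $\mathfrak{a}^{c}\subset\mathfrak{j}_{\C}$), both using the simultaneously chosen fundamental system $\Pi$, the diagrams matching $S_{\mathfrak{g}}$ form exactly $\Psi(\mathfrak{a})$. Likewise, those matching $S_{\mathfrak{g}^{c}}$ form exactly $\Psi(\mathfrak{a}^{c})$. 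Hence the space in question is $\Psi(\mathfrak{a}\cap\mathfrak{a}^{c})$, and the statement reduces to
\[
\mathfrak{a}\cap\mathfrak{a}^{c}=\mathfrak{a}_{\mathfrak{h}},
\]
whose dimension is the real rank of $\mathfrak{h}$. Here we regard all three spaces inside $\mathfrak{j}\subset\mathfrak{j}_{\C}$. The one point to check in the reduction is that Lemma~\ref{lemma:okuda_match} applies to both real forms with the same $\mathfrak{j}_{\C}$ and $\Pi$; this is exactly what the Oshima--Sekiguchi construction recalled above guarantees.

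The inclusion $\mathfrak{a}_{\mathfrak{h}}\subset\mathfrak{a}\cap\mathfrak{a}^{c}$ holds by construction, since both $\mathfrak{a}$ and $\mathfrak{a}^{c}$ were chosen to extend $\mathfrak{a}_{\mathfrak{h}}$.

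For the reverse inclusion, I would decompose with respect to $\sigma$ (extended $\C$-linearly). Since $\mathfrak{a}_{\mathfrak{h}}$ is maximal abelian in $\mathfrak{p}_{\mathfrak{h}}=\mathfrak{p}\cap\mathfrak{h}$ and $\mathfrak{a}\supset\mathfrak{a}_{\mathfrak{h}}$ is abelian, I expect (as in Oshima--Sekiguchi) that $\mathfrak{a}$ is $\sigma$-stable. This would give $\mathfrak{a}=\mathfrak{a}_{\mathfrak{h}}\oplus(\mathfrak{a}\cap\mathfrak{q})$ with $\mathfrak{a}\cap\mathfrak{q}\subset\mathfrak{p}\cap\mathfrak{q}$. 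Similarly, $\mathfrak{p}^{c}=\mathfrak{p}_{\mathfrak{h}}+\sqrt{-1}(\mathfrak{k}\cap\mathfrak{q})$, so $\mathfrak{a}^{c}=\mathfrak{a}_{\mathfrak{h}}\oplus(\mathfrak{a}^{c}\cap\sqrt{-1}\mathfrak{q})$ with $\mathfrak{a}^{c}\cap\sqrt{-1}\mathfrak{q}\subset\sqrt{-1}(\mathfrak{k}\cap\mathfrak{q})$.

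Now take $X\in\mathfrak{a}\cap\mathfrak{a}^{c}$. Applying $\sigma$ and taking the $\mathfrak{q}$-components of both decompositions, the element $X_{\mathfrak{q}}$ lies in $\mathfrak{q}\cap\sqrt{-1}\mathfrak{q}=\{0\}$, because $\mathfrak{g}\cap\sqrt{-1}\mathfrak{g}=\{0\}$ in $\mathfrak{g}_{\C}$. Hence $X\in\mathfrak{a}_{\mathfrak{h}}$. The $\sigma$-stability of $\mathfrak{a}$ and $\mathfrak{a}^{c}$ can be avoided altogether: any $X\in\mathfrak{a}\cap\mathfrak{a}^{c}$ lies in $\mathfrak{g}\cap\mathfrak{g}^{c}=\mathfrak{h}$, and also in $\mathfrak{p}$, so $X\in\mathfrak{p}_{\mathfrak{h}}$. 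Since $X$ commutes with $\mathfrak{a}_{\mathfrak{h}}$, maximality of $\mathfrak{a}_{\mathfrak{h}}$ in $\mathfrak{p}_{\mathfrak{h}}$ gives $X\in\mathfrak{a}_{\mathfrak{h}}$.

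The main obstacle is therefore not this intersection computation but the bookkeeping in the first step. One must justify that $\Psi$ identifies matching diagrams with $\mathfrak{a}$, respectively $\mathfrak{a}^{c}$, as \emph{subspaces of the same} $\mathfrak{j}$. In particular, Lemma~\ref{lemma:okuda_match} is stated for a fundamental system adapted to one real form, and here it must be applied to the common $\Pi$, which is adapted to both.
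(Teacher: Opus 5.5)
Your proposal is correct and follows the paper's proof: Lemma~\ref{lemma:okuda_match}, applied to both real forms with the common $\mathfrak{j}_{\C}$ and $\Pi$, identifies the doubly matching diagrams with $\mathfrak{a}\cap\mathfrak{a}^{c}$, and this intersection equals $\mathfrak{a}_{\mathfrak{h}}$. The paper only asserts that last equality, and your short argument fills it in correctly: $\mathfrak{a}\cap\mathfrak{a}^{c}\subset\mathfrak{g}\cap\mathfrak{g}^{c}\cap\mathfrak{p}=\mathfrak{p}_{\mathfrak{h}}$, and maximality of $\mathfrak{a}_{\mathfrak{h}}$ in $\mathfrak{p}_{\mathfrak{h}}$ then gives the reverse inclusion.
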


    \begin{proof}
    By Lemma~\ref{lemma:okuda_match}, 
    the real vector space of such 
    weighted Dynkin diagrams is isomorphic to
    $\mathfrak{a}\cap \mathfrak{a}^{c}=\mathfrak{a}_{\mathfrak{h}}$.
    This proves the assertion.
    \end{proof}

 For example, let $(\mathfrak{g},\mathfrak{h})=(\mathfrak{so}^*(4N), \mathfrak{so}^*(4N-2p)\oplus \mathfrak{so}^*(2p))$ ($N\geq 2$, $1\leq p\leq N$). Then the $c$-dual
 $\mathfrak{g}^c$ is isomorphic to $\mathfrak{so}^*(4N)$. 
 By Lemma~\ref{lemma:okuda_match_a_ac}, 
 we have
\begin{equation}
\label{eq:pair-satake-o*}
(S_\mathfrak{g}, S_{\mathfrak{g}^c})=
\begin{cases}
\left(
\begin{xy}
(0,0)="1"*{\bullet}, 
(5,0)="2"*{\circ},
(12.5,0)="3"*{\cdots},
(20,0)="4"*{\bullet},  
(25,0)="5"*{\circ}, 
(28,3)="6"*{\circ}, 
(28,-3)="7"*{\bullet}, 
\ar@{-} "1"+(1,0);"2"+(-1,0)
\ar@{-} "2"+(1,0);"3"+(-3,0)
\ar@{-} "3"+(3,0);"4"+(-1,0)
\ar@{-} "4"+(1,0);"5"+(-1,0)
\ar@{-} "5"+(0.5,0.5);"6"+(-0.5,-0.5)
\ar@{-} "5"+(0.5,-0.5);"7"+(-0.5,0.5)
\end{xy}, 
\begin{xy}
(0,0)="1"*{\bullet}, 
(5,0)="2"*{\circ},
(12.5,0)="3"*{\cdots},
(20,0)="4"*{\bullet},  
(25,0)="5"*{\circ}, 
(28,3)="6"*{\circ}, 
(28,-3)="7"*{\bullet}, 
\ar@{-} "1"+(1,0);"2"+(-1,0)
\ar@{-} "2"+(1,0);"3"+(-3,0)
\ar@{-} "3"+(3,0);"4"+(-1,0)
\ar@{-} "4"+(1,0);"5"+(-1,0)
\ar@{-} "5"+(0.5,0.5);"6"+(-0.5,-0.5)
\ar@{-} "5"+(0.5,-0.5);"7"+(-0.5,0.5)
\end{xy}\right) & \text{($p$: even)}, \\[2ex]
\left(
\begin{xy}
(0,0)="1"*{\bullet}, 
(5,0)="2"*{\circ},
(12.5,0)="3"*{\cdots},
(20,0)="4"*{\bullet},  
(25,0)="5"*{\circ}, 
(28,3)="6"*{\circ}, 
(28,-3)="7"*{\bullet}, 
\ar@{-} "1"+(1,0);"2"+(-1,0)
\ar@{-} "2"+(1,0);"3"+(-3,0)
\ar@{-} "3"+(3,0);"4"+(-1,0)
\ar@{-} "4"+(1,0);"5"+(-1,0)
\ar@{-} "5"+(0.5,0.5);"6"+(-0.5,-0.5)
\ar@{-} "5"+(0.5,-0.5);"7"+(-0.5,0.5)
\end{xy}, 
\begin{xy}
(0,0)="1"*{\bullet}, 
(5,0)="2"*{\circ},
(12.5,0)="3"*{\cdots},
(20,0)="4"*{\bullet},  
(25,0)="5"*{\circ}, 
(28,3)="6"*{\bullet}, 
(28,-3)="7"*{\circ}, 
\ar@{-} "1"+(1,0);"2"+(-1,0)
\ar@{-} "2"+(1,0);"3"+(-3,0)
\ar@{-} "3"+(3,0);"4"+(-1,0)
\ar@{-} "4"+(1,0);"5"+(-1,0)
\ar@{-} "5"+(0.5,0.5);"6"+(-0.5,-0.5)
\ar@{-} "5"+(0.5,-0.5);"7"+(-0.5,0.5)
\end{xy}\right) & \text{($p$: odd)}.
\end{cases}
\end{equation}

\subsection{Characterization of very even homomorphisms in terms of the weighted Dynkin diagrams}
\label{section:characterization-very-even}
In this subsection, we 
characterize very even $\mathfrak{sl}(2,\R)$-homomorphisms in terms of the associated weighted Dynkin diagrams (Proposition~\ref{prop:characterization_of_no_odd_dimensional}). 
For this purpose, it is convenient to introduce, in the same manner as 
Definitions~\ref{definition:classical-pair} and~\ref{def:iota-very-even} for real homomorphisms,
the notions of a classical pair and of being very even for complex homomorphisms:
\begin{definition}
    Let $(\mathfrak{g}_{i,\C}, \iota_i)$ $(i=1,2)$ be pairs of complex Lie algebras 
    and their complex representations. 
    We say that the pairs $(\mathfrak{g}_{i,\C}, \iota_i)$ are 
    \emph{equivalent} 
    if there exists a complex Lie algebra isomorphism 
    $\varphi \colon \mathfrak{g}_{1,\C}\rightarrow \mathfrak{g}_{2,\C}$ such that 
    $\iota_1$ is equivalent to $\iota_2\circ \varphi$ 
    as a representation of $\mathfrak{g}_{1,\C}$. 

    A pair of a complex Lie algebra $\mathfrak{g}_{\C}$
    and its complex representation $\iota$
    is said to be \emph{complex classical} 
    if the pair $(\mathfrak{g}_{\C},\iota)$ is equivalent 
    to a pair of a classical  complex Lie algebra and its standard representation.

    For a complex classical pair
    $(\mathfrak{g}_{\C},\iota)$, 
    we say that a complex Lie algebra homomorphism 
    $\varphi \colon \mathfrak{sl}(2,\C) \rightarrow \mathfrak{g}_{\C}$ 
    is \emph{very even for $\iota$}
    if the $\mathfrak{sl}(2,\C)$-representation $\iota \circ \varphi$ 
    decomposes as a direct sum of irreducible representations of even dimension.
\end{definition}

The following obvious lemma ensures that the property of an $\mathfrak{sl}(2,\R)$-homomorphism being very even can be verified by its 
$\C$-linear extension.
\begin{lemma}
    \label{lemma:very-even-complex-real}
    \begin{enumerate}[label=$(\roman*)$]
        \item
        \label{lemma:very-even-complex-real:item-(b)}
        For a complex classical pair $(\mathfrak{g}_{\C},\iota)$,  
        a homomorphism
        $\varphi\colon \mathfrak{sl}(2,\R)\rightarrow \mathfrak{g}_{\C}$
        is very even for $\iota$ if and only if
        its $\C$-linear extension 
        $\varphi_{\C}\colon \mathfrak{sl}(2,\C)\rightarrow \mathfrak{g}_{\C}$
        is very even for $\iota$.

        \item
        \label{lemma:very-even-complex-real:item-(c)}
        For a classical pair $(\mathfrak{g},\iota)$ with $\mathfrak{g}$ absolutely simple,  
        a homomorphism
        $\varphi\colon \mathfrak{sl}(2,\R)\rightarrow \mathfrak{g}$
        is very even for $\iota\colon \mathfrak{g}\rightarrow \mathfrak{gl}(N,\C)$ if and only if
        the $\C$-linear extension 
        $\varphi_{\C}\colon \mathfrak{sl}(2,\C)\rightarrow \mathfrak{g}_{\C}$
        is very even for the $\C$-linear extension $\iota_{\C}\colon\mathfrak{g}_{\C}\rightarrow \mathfrak{gl}(N,\C)$.
    \end{enumerate}
\end{lemma}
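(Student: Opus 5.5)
Both parts reduce to one observation: very-evenness depends only on the multiset of dimensions of the irreducible constituents of the relevant $\mathfrak{sl}(2)$-representation. That multiset is unchanged by complexification. The plan is to compare, for each item, the real representation $\iota\circ\varphi$ with the complex representation attached to $\varphi_{\C}$.

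For \ref{lemma:very-even-complex-real:item-(b)}, the representation $\iota\circ\varphi$ of $\mathfrak{sl}(2,\R)$ on $\C^{N}$ extends $\C$-linearly to $\iota\circ\varphi_{\C}$. This works because $\iota$ is $\C$-linear on $\mathfrak{g}_{\C}$ and $\varphi_{\C}$ is the $\C$-linear extension of $\varphi$. Complex subspaces of $\C^{N}$ are $\mathfrak{sl}(2,\R)$-invariant if and only if they are $\mathfrak{sl}(2,\C)$-invariant, since $\mathfrak{sl}(2,\C)=\mathfrak{sl}(2,\R)\otimes_{\R}\C$ acts through complex combinations. Hence the two representations have the same invariant subspaces, so the same irreducible decomposition and the same dimensions of irreducible constituents. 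The equivalence follows directly.

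For \ref{lemma:very-even-complex-real:item-(c)}, I would first observe that $\iota\colon\mathfrak{g}\to\mathfrak{gl}(N,\C)$ extends $\C$-linearly to $\iota_{\C}\colon\mathfrak{g}_{\C}=\mathfrak{g}\otimes_{\R}\C\to\mathfrak{gl}(N,\C)$. Then $\iota_{\C}\circ\varphi_{\C}$ is precisely the $\C$-linear extension of $\iota\circ\varphi\colon\mathfrak{sl}(2,\R)\to\mathfrak{gl}(N,\C)$, because both are $\C$-linear and agree on $\mathfrak{sl}(2,\R)$. The argument of \ref{lemma:very-even-complex-real:item-(b)} then gives equality of the constituent dimensions, hence the claim. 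The absolute simplicity hypothesis is used only to make the statement meaningful: it guarantees that $\mathfrak{g}_{\C}$ is simple and that $(\mathfrak{g}_{\C},\iota_{\C})$ is a complex classical pair, so that ``very even for $\iota_{\C}$'' is defined. The main thing to check is this last point: $\iota_{\C}$ complexifies a standard representation of a classical real form into the standard representation of a complex classical algebra. Even without that check, the dimension comparison itself is formal.
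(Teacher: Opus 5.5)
Your argument is correct and is the routine verification the paper has in mind: the paper calls this lemma obvious and gives no proof, and the key point is, as you say, that $\iota\circ\varphi_{\C}$ (resp.\ $\iota_{\C}\circ\varphi_{\C}$) is the $\C$-linear extension of $\iota\circ\varphi$. Consequently the two have the same invariant complex subspaces and hence the same dimensions of irreducible constituents. Your remark that absolute simplicity is only needed to make $(\mathfrak{g}_{\C},\iota_{\C})$ a complex classical pair, so that the right-hand side of (ii) is defined, is also accurate.
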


For $\mathfrak{sl}(2,\C)$-homomorphisms, the property of being \emph{very even} 
can be checked in terms of weighted Dynkin diagrams as follows:

\begin{proposition}\label{prop:characterization_of_no_odd_dimensional}
Let $\mathfrak{g}_\C$ be 
either $\mathfrak{sl}(2N,\C)$, $\mathfrak{sp}(N,\C)$, or $\mathfrak{so}(2N,\C)$, 
and 
$\iota$ the standard representation of $\mathfrak{g}_\C$. 
Then  a complex Lie algebra homomorphism
$\varphi\colon \mathfrak{sl}(2,\C)\to \mathfrak{g}_{\C}$
is very even for $\iota$
if and only if the weights $a_{i}$ of 
    the weighted Dynkin diagram $F_{\varphi}$ satisfy 
    \begin{equation}
    \label{eq:very-even-weight}
    \begin{cases}
    a_N\neq 0 & \text{if } \mathfrak{g}_\C=\mathfrak{sl}(2N,\C) \text{ or } \mathfrak{sp}(N,\C), \\
    a_{N-1}\neq a_{N} & \text{if } \mathfrak{g}_\C = \mathfrak{so}(2N,\C). 
    \end{cases}
    \end{equation}
    \[
\renewcommand{\arraystretch}{1.5}
\begin{array}{ccc}
\begin{xy}
(0,0)="1"*{\circ}, (0,3)*{a_1},
(7,0)="2"*{\cdots},
(14,0)="3"*{\circ}, (14,3)*{a_N}, 
(21,0)="4"*{\cdots},
(28,0)="5"*{\circ}, (28,3)*{a_{2N-1}}
\ar@{-} "1"+(1,0);"2"+(-3,0)
\ar@{-} "2"+(3,0);"3"+(-1,0)
\ar@{-} "3"+(1,0);"4"+(-3,0)
\ar@{-} "4"+(3,0);"5"+(-1,0)
\end{xy}
     & 
\begin{xy}
(0,0)="1"*{\circ}, (0,3)*{a_1}, 
(7,0)="2"*{\cdots},
(14,0)="3"*{\circ}, 
(21,0)="4"*{\circ}, (21,3)*{a_{N}}
\ar@{-} "1"+(1,0);"2"+(-3,0)
\ar@{-} "2"+(3,0);"3"+(-1,0)
\ar@{<=} "3"+(1,0);"4"+(-1,0)
\end{xy}
     & 
\begin{xy}
(12.5,0)="3"*{\circ}, (12.5,3)*{a_{1}}, 
(19.5,0)="4"*{\cdots},  
(26.5,0)="5"*{\circ},
(32,3)="6"*{\circ}, (32,6)*{a_{N-1}}, 
(32,-3)="7"*{\circ}, (32,-6)*{a_{N}}
\ar@{-} "3"+(0.8,0);"4"+(-3,0)
\ar@{-} "4"+(3,0);"5"+(-0.8,0)
\ar@{-} "5"+(0.5,0.5);"6"+(-0.5,-0.5)
\ar@{-} "5"+(0.5,-0.5);"7"+(-0.5,0.5)
\end{xy}\\
    \mathfrak{sl}(2N,\C) & \mathfrak{sp}(N,\C) & \mathfrak{so}(2N,\C)\ \ 
\end{array}
\renewcommand{\arraystretch}{1}
\]
\end{proposition}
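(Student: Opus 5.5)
We will deduce the proposition directly from Fact~\ref{fact:weighted-Dynkin-diagram-classical-type}, which expresses $F_{\varphi}$ through the eigenvalues $h_{1}\geq\cdots\geq h_{N'}$ of $\iota\circ\varphi(A_{0})$, with $N'=2N$ in all three cases. The basic observation is the following. A $d$-dimensional irreducible $\mathfrak{sl}(2,\C)$-module contributes the eigenvalues $d-1,d-3,\ldots,-(d-1)$, so it has $0$ as an eigenvalue if and only if $d$ is odd. Hence $\varphi$ is very even if and only if $0$ is not an eigenvalue of $\iota\circ\varphi(A_{0})$. The spectrum is symmetric under $h\mapsto -h$, so we have $h_{i}=-h_{2N+1-i}$. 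In particular $h_{N}\geq 0\geq h_{N+1}=-h_{N}$, and $0$ is an eigenvalue exactly when $h_{N}=0$.

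We then treat the three algebras in turn.

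\emph{Case $\mathfrak{sl}(2N,\C)$.} The middle weight is $a_{N}=h_{N}-h_{N+1}=2h_{N}$. Thus $a_{N}\neq0$ if and only if $h_{N}\neq0$, which is equivalent to very evenness.

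\emph{Case $\mathfrak{sp}(N,\C)$.} The last weight is $a_{N}=2h_{N}$, and the same conclusion follows.

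\emph{Case $\mathfrak{so}(2N,\C)$.} Here we use the dichotomy in Fact~\ref{fact:weighted-Dynkin-diagram-classical-type}.
\begin{itemize}
\item Suppose some $d_{i}$ is odd, i.e.\ $\varphi$ is not very even. Then both fork weights equal $h_{N-1}$, so $a_{N-1}=a_{N}$.
\item Suppose all $d_{i}$ are even, i.e.\ $\varphi$ is very even. Then $(a_{N-1},a_{N})\in\{(2,0),(0,2)\}$, so $a_{N-1}\neq a_{N}$.
\end{itemize}
This gives the equivalence. One should check that the two alternatives in the Fact exhaust all cases. Since each $d_{i}$ is either even or odd, they do.

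The only point requiring care is the indexing convention in the Fact. In the $\mathfrak{so}(2M,\C)$ case with an odd part, the Fact uses $h_{M}=0$. We must confirm this is consistent with the symmetric ordering of eigenvalues. For type $C$, we must confirm that the last node of the diagram carries the weight $2h_{M}$, with $h_{M}$ the smallest nonnegative eigenvalue. Both checks are routine. We expect no genuine obstacle here; the argument is essentially bookkeeping on top of the Collingwood--McGovern description.
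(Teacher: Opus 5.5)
Your proposal is correct and follows the paper's proof: both read off the relevant weight from Fact~\ref{fact:weighted-Dynkin-diagram-classical-type}. For type $A$ you use $a_N=h_N-h_{N+1}=2h_N$, for type $C$ you use $a_N=2h_N$, and for type $D$ you use the even/odd dichotomy of that Fact directly. The only cosmetic difference is in type $A$: the paper gets $(h_N,h_{N+1})$ equal to $(1,-1)$ or $(0,0)$ by counting odd parts, whereas you get it from the symmetry $h_{N+1}=-h_N$ of the spectrum.
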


    In Proposition~\ref{prop:characterization_of_no_odd_dimensional},
we excluded the cases $\mathfrak{g}_{\C}=\mathfrak{sl}(2N+1,\C)$ and
$\mathfrak{so}(2N+1,\C)$, because these classical Lie algebras obviously admit no 
very even homomorphisms for their standard representations.

\begin{proof}
    We apply Fact~\ref{fact:weighted-Dynkin-diagram-classical-type} to prove the assertion.
    Assume that the representation $\iota\circ\varphi$
    corresponds to the partition$[d_{1}^{r_{1}},\ldots,d_{k}^{r_{k}}]$, and 
let 
$h_{1}\geq h_{2}\geq \cdots$
denote the eigenvalues of $\iota(\varphi(A_{0}))$.
By definition, we note that $\varphi$ is very even for $\iota$ if and only if
all of $d_{1},\ldots,d_{k}$ are even.

Let $\mathfrak{g}_{\C}=\mathfrak{sl}(2N,\C)$.
If $\varphi$ is very even for $\iota$, then we have $(h_{N},h_{N+1})=(1,-1)$, and therefore the middle weight
of the weighted Dynkin diagram $F_{\varphi}$ is equal to $2$. In particular,
it is non-zero.
Conversely, if $\varphi$ is not very even for $\iota$, then among
$d_{1},\ldots,d_{k}$ there are at least two odd integers counted with
multiplicity.
In this case, we have $(h_{N},h_{N+1})=(0,0)$, and hence the middle weight of
$F_{\varphi}$ is equal to $0$.
Hence the assertion is  proved.

Let $\mathfrak{g}_{\C}=\mathfrak{sp}(N,\C)$. 
By definition, $\varphi$ is very even for $\iota$ if and only if
$h_{N}=1$.
Moreover,
this is equivalent to the condition that the rightmost weight of the
weighted Dynkin diagram $F_{\varphi}$ is non-zero.

Let $\mathfrak{g}_{\C}=\mathfrak{so}(2N,\C)$. 
The assertion
follows immediately from Fact~\ref{fact:weighted-Dynkin-diagram-classical-type}.
\end{proof}

In the proof of 
Theorem~\ref{theorem:classification_of_symmetric_space_with_very_even_condition}, 
we also make use of the following lemma to show that several symmetric spaces 
$G/H$ admitting proper $SL(2,\R)$-actions in Cases~\ref{item:classification_caseB} and~\ref{item:classification_caseC}
do not have $\VE$.
\begin{lemma}
    \label{lemma:non-very-even}
    Let $(\mathfrak{g}_{\C},\iota)$ be 
    as in Proposition~\ref{prop:characterization_of_no_odd_dimensional}, 
    $\varphi\colon \mathfrak{sl}(2,\C)\rightarrow \mathfrak{g}_{\C}$ a complex homomorphism, 
    and $\tau$ any representation of $\mathfrak{g}_{\C}$
    such that $(\mathfrak{g}_{\C},\tau)$ is equivalent to $(\mathfrak{g}_{\C},\iota)$.
    Then the following claims hold: 
    \begin{enumerate}
        \item \label{lemma:non-very-even_item:non_D_4}
        In the case $\mathfrak{g}_{\C}\neq \mathfrak{so}(8,\C)$,
        $\varphi$ is very even for 
        $\tau$ if and only if 
        it is very even for 
        $\iota$.

        \item \label{lemma:non-very-even_item:D_4}
        In the case 
        $\mathfrak{g}_{\C}=\mathfrak{so}(8,\C)$,
        $\varphi$ is not very even for $\tau$
        if $a_{1}=a_{3}=a_{4}$, where 
        $a_{1},a_{2},a_{3},a_{4}$ are the weights of $F_{\varphi}$.
        \[
        \begin{xy}
        (30,0)="3"*{\circ},(30,3)*{a_1},
        (45,0)="4"*{\circ},"4"+(0,3)*{a_{2}},
        (58,5)="5"*{\circ},"5"+(0,3)*{a_{3}},
        (58,-5)="5'"*{\circ},"5'"+(0,3)*{a_{4}},
        \ar@{-} "3"+(1,0);"4"+(-1,0)
        \ar@{-} "4"+(1,0);"5"+(-1,-0.3)
        \ar@{-} "4"+(1,0);"5'"+(-1,0.3)
        \end{xy}
        \]
    \end{enumerate}
\end{lemma}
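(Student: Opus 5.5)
The plan is to reduce everything to the standard representation by unwinding the equivalence $(\mathfrak{g}_{\C},\tau)\simeq(\mathfrak{g}_{\C},\iota)$. By definition there is a complex automorphism $\alpha$ of $\mathfrak{g}_{\C}$ with $\tau\simeq\iota\circ\alpha$. Then $\tau\circ\varphi\simeq\iota\circ(\alpha\circ\varphi)$ as $\mathfrak{sl}(2,\C)$-representations. Hence $\varphi$ is very even for $\tau$ if and only if $\alpha\circ\varphi$ is very even for $\iota$. So the task is to compare $\alpha\circ\varphi$ with $\varphi$.

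First I write $\alpha=\operatorname{Ad}(g)\circ\gamma$, where $\operatorname{Ad}(g)$ is inner and $\gamma$ preserves $\mathfrak{j}_{\C}$ and $\Pi$, so that $\gamma$ induces a Dynkin diagram automorphism $\bar\gamma$. Inner automorphisms do not change the isomorphism class of $\iota\circ\varphi$, since they are realized by conjugation in $GL(N,\C)$, and they do not change $F_\varphi$. Thus we may assume $\alpha=\gamma$. Choosing $\varphi$ with $\varphi(A_0)\in\mathfrak{j}_+$, we get $\gamma(\varphi(A_0))\in\mathfrak{j}_+$ as well, and therefore
\[
F_{\gamma\circ\varphi}=F_\varphi\circ\bar\gamma^{-1}.
\]

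Part~(1) is handled case by case, using that $\mathfrak{g}_{\C}\neq\mathfrak{so}(8,\C)$.
\begin{itemize}
\item For $\mathfrak{sp}(N,\C)$ there are no outer automorphisms, so $\gamma$ may be taken trivial and there is nothing to prove.
\item For $\mathfrak{so}(2N,\C)$ with $N\neq 4$, the only nontrivial diagram automorphism is realized by conjugation by an element of $O(2N,\C)\smallsetminus SO(2N,\C)$. Hence $\iota\circ\gamma\simeq\iota$, and the claim follows.
\item For $\mathfrak{sl}(2N,\C)$ the outer automorphism is $X\mapsto -{}^tX$, so $\iota\circ\gamma\simeq\iota^{\vee}$. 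Consequently $\iota\circ\gamma\circ\varphi$ is the dual of $\iota\circ\varphi$. Duals of irreducible $\mathfrak{sl}(2,\C)$-modules have the same dimensions, so the dimensions of the irreducible constituents are unchanged.
\end{itemize}
Alternatively, one can use Proposition~\ref{prop:characterization_of_no_odd_dimensional} together with Remark~\ref{fact:invariance_Titz_inv}.

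For part~(2), $\bar\gamma$ is a permutation of the three outer nodes $\{1,3,4\}$ (triality) fixing node $2$. If $a_1=a_3=a_4$, then $F_\varphi\circ\bar\gamma^{-1}=F_\varphi$. In particular, $\gamma\circ\varphi$ has equal weights on the two short-branch nodes $3$ and $4$. By Proposition~\ref{prop:characterization_of_no_odd_dimensional} (the condition $a_{N-1}\neq a_N$ for $\mathfrak{so}(2N,\C)$ with $N=4$), $\gamma\circ\varphi$ is therefore not very even for $\iota$. Hence $\varphi$ is not very even for $\tau$.

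The only step requiring care is the formula for $F_{\alpha\circ\varphi}$, namely justifying that the automorphism may be normalized to preserve $(\mathfrak{j}_{\C},\Pi)$. This is standard: $\operatorname{Aut}(\mathfrak{g}_{\C})=\operatorname{Int}(\mathfrak{g}_{\C})\rtimes\operatorname{Aut}(\text{diagram})$.
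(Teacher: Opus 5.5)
Your proof is correct and follows the paper's argument: you reduce to whether $\alpha\circ\varphi$ is very even for $\iota$, relate $F_{\alpha\circ\varphi}$ to $F_\varphi$ by a Dynkin diagram automorphism, and apply Proposition~\ref{prop:characterization_of_no_odd_dimensional}, and your part (2) is essentially the paper's triality argument. The differences are in part (1) and in the level of detail. There the paper only observes that condition \eqref{eq:very-even-weight} is invariant under diagram automorphisms, whereas your main argument shows directly that $\iota\circ\gamma\simeq\iota$ or $\iota^{\vee}$, which even gives $\tau\circ\varphi\simeq\iota\circ\varphi$; you also justify the normalization $F_{\gamma\circ\varphi}=F_\varphi\circ\bar\gamma^{-1}$, which the paper leaves implicit.
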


\begin{proof}
By our assumption, there exists a complex Lie algebra automorphism $\alpha\colon \mathfrak{g}_{\C}\rightarrow \mathfrak{g}_{\C}$
such that the representation $\iota\circ \alpha$ of $\mathfrak{g}_{\C}$ is 
equivalent to $\tau$.
Hence $\varphi$ is very even for $\tau$ if and only if
$\alpha\circ\varphi$ is very even for $\iota$.

Here we note that the weighted Dynkin diagrams
$F_{\alpha \circ \varphi}$ and $F_{\varphi}$ are mapped to each other
by an automorphism of the Dynkin diagram of
$\mathfrak{g}_{\C}$.
If $\mathfrak{g}_{\C} \neq \mathfrak{so}(8,\C)$, then the weight condition
\eqref{eq:very-even-weight} in
Proposition~\ref{prop:characterization_of_no_odd_dimensional}
is preserved under any automorphism of the Dynkin diagram.
Therefore,
$\alpha \circ \varphi$ is very even for $\iota$
if and only if $\varphi$ is very even for $\iota$ by Proposition~\ref{prop:characterization_of_no_odd_dimensional}.
This proves \eqref{lemma:non-very-even_item:non_D_4}.

On the other hand, for \eqref{lemma:non-very-even_item:D_4},
under the assumption $a_{1}=a_{3}=a_{4}$,
the two rightmost weights of $F_{\alpha \circ \varphi}$ coincide.
Hence, by Proposition~\ref{prop:characterization_of_no_odd_dimensional},
$\alpha \circ \varphi$ is not very even for $\iota$,
which completes the proof. 
\end{proof}

\subsection{Proof of 
\texorpdfstring{Theorem~\ref{theorem:classification_of_symmetric_space_with_very_even_condition}}{Theorem A}}
\label{section:proof-theorem-classification}
We prove Theorem~\ref{theorem:classification_of_symmetric_space_with_very_even_condition} by dividing symmetric spaces $G/H$ with $\mathfrak{g}$ simple into the three cases as in Section~\ref{section:proper-symmetric}:
\begin{enumerate}[label=$(\alph*)$]
    \item \label{item:classification_caseA}(Proposition~\ref{prop:classification_case_A})
    Both $\mathfrak{g}$ and $\mathfrak{h}$ are complex,
    \item \label{item:classification_caseB} (Proposition~\ref{prop:classification_case_B})
    $\mathfrak{g}$ is complex and $\mathfrak{h}$ is a real form of $\mathfrak{g}$, 
    \item \label{item:classification_caseC}(Proposition~\ref{prop:classification_case_C})
    $\mathfrak{g}$ is absolutely simple.
\end{enumerate} 
We shall prove these propositions by using the combinatorial 
criteria for properness of $SL(2,\R)$-actions in Section~\ref{section:proper-symmetric}.  

In the following, we utilize the classification result of irreducible semisimple symmetric spaces that admit a proper $SL(2,\R)$-action.
The complex case was first classified by Teduka~\cite{Teduka2008PJA}, and the general case was subsequently established by Okuda~\cite{Oku13}.

Here we note that there are some minor misprints in Okuda's classification list (\cite[Table~6]{Oku13}). While four of these misprints have been updated in Okuda~\cite[Table~A]{Okuda17}, it should be pointed out that the symmetric pair $(\mathfrak{so}(k,k), \mathfrak{so}(k,\mathbb{C}) \oplus \mathfrak{so}(2))$ (listed as the 14th from the top in \cite[Table~A]{Okuda17}) 
should be $(\mathfrak{so}(k,k), \mathfrak{so}(k,\mathbb{C}))$.

\vspace{\baselineskip}
\noindent \textbf{Proof of Theorem~\ref{theorem:classification_of_symmetric_space_with_very_even_condition} for Case \ref{item:classification_caseA}.}\label{subsubsec:classification_case_A}
We prove the following: 
\begin{proposition}
\label{prop:classification_case_A}
\begin{enumerate}
\item 
\label{item:case(a)-VET}
For $N\geq 3$ and $0\leq p<N/2$, \[
X_{N,p}:=SO(2N,\C)/(SO(2p+1,\C)\times SO(2N-2p-1,\C)) 
\]
has $\VET$ for the standard representation of $\mathfrak{so}(2N,\C)$. 
\item 
\label{item:case(a)-VE-locally-isom}
Let $G_\C/H_\C$ be a complex symmetric space with $G_\C$ connected and simple. 
If $G_\C/H_\C$ has $\VE$ and admits a proper $SL(2,\R)$-action, then it is locally isomorphic to $X_{N,p}$. 
\end{enumerate}
\end{proposition}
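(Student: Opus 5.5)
The plan for Proposition~\ref{prop:classification_case_A} is to reduce everything to Teduka's criterion (Fact~\ref{fact:teduka}) and compare it with the weight characterization of very even homomorphisms (Proposition~\ref{prop:characterization_of_no_odd_dimensional}).

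For \eqref{item:case(a)-VET}, let $\sigma$ be the involution of $\mathfrak{g}_{\C}=\mathfrak{so}(2N,\C)$ defining $\mathfrak{h}_{\C}=\mathfrak{so}(2p+1,\C)\oplus\mathfrak{so}(2N-2p-1,\C)$. It is conjugation by $I_{2p+1,2N-2p-1}$, an element of $O(2N,\C)$ of determinant $-1$, so $\sigma$ is an outer automorphism. Hence $\bar{\sigma}$ is the diagram automorphism swapping the two end nodes $\alpha_{N-1},\alpha_N$ of the fork; for $N=4$ I use the standard labelling in which the outer class of $\sigma$ is exactly this swap.

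Now take a real homomorphism $\varphi\colon\mathfrak{sl}(2,\R)\to\mathfrak{g}_{\C}$ and let $\varphi_{\C}$ be its $\C$-linear extension. The $SL(2,\R)$-action is proper exactly when $F_{\varphi_{\C}}\circ\bar\sigma\neq F_{\varphi_{\C}}$ (Fact~\ref{fact:teduka}; the trivial $\varphi$ is neither proper nor very even, so it can be set aside). That condition reads $a_{N-1}\neq a_N$. By Proposition~\ref{prop:characterization_of_no_odd_dimensional}, $a_{N-1}\neq a_N$ is equivalent to $\varphi_{\C}$ being very even for the standard representation. By Lemma~\ref{lemma:very-even-complex-real}~\ref{lemma:very-even-complex-real:item-(b)}, this is equivalent to $\varphi$ being very even.

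The pair $(\mathfrak{so}(2N,\C),\iota)$ is classical, and $\iota$ lifts to the inclusion $SO(2N,\C)\hookrightarrow GL(2N,\C)$. So $X_{N,p}$ has Property~$(\widetilde{\text{VE}})$. This holds even if, as for $N=3$, no very even homomorphism exists: then both sides of the equivalence are simply empty.

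For \eqref{item:case(a)-VE-locally-isom}, let $G_\C/H_\C$ have $\VE$ and admit a proper $SL(2,\R)$-action. By Fact~\ref{fact:teduka}, $\bar\sigma$ must move some weighted Dynkin diagram, so $\bar\sigma$ is nontrivial and $\sigma$ is outer. This leaves $\mathfrak{g}_\C$ of type $A_n$, $D_n$, or $E_6$.

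In type $A$, Remark~\ref{fact:invariance_Titz_inv} says every $F_{\varphi}$ is $\bar\sigma$-invariant, so there is no proper action. In type $E_6$ (with $\mathfrak{h}_\C=\mathfrak{f}_4$ or $\mathfrak{sp}(4,\C)$), condition~\ref{def:VE_for_G/H-item:classical} of Definition~\ref{def:VE_for_G/H} fails. A complex classical pair must have $\mathfrak{g}_\C$ isomorphic to a classical complex Lie algebra, and $\mathfrak{e}_6$ is not; I would spell this out via the complexification, using that a classical pair with $\mathfrak{g}$ complex simple is equivalent to a complex classical one.

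In type $D_N$, the outer complex symmetric pairs are exactly $(\mathfrak{so}(2N,\C),\mathfrak{so}(2p+1,\C)\oplus\mathfrak{so}(2N-2p-1,\C))$, by Berger's list, which is the same list used by Teduka and Okuda. Since $G_\C$ is connected, $G_\C/H_\C$ is then locally isomorphic to $X_{N,p}$. For $N=4$, triality may present the same space with a different looking involution, but any outer involution of $\mathfrak{so}(8,\C)$ is still of this form up to automorphism.

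The main obstacle I expect is making the case $\mathfrak{g}_\C\simeq\mathfrak{so}(8,\C)$, and low-rank coincidences such as $\mathfrak{so}(6,\C)\simeq\mathfrak{sl}(4,\C)$, airtight. There the inner/outer dichotomy and the identification of $\bar\sigma$ depend on a choice of diagram labelling. I would handle this by invoking Lemma~\ref{lemma:non-very-even} together with Fact~\ref{fact:teduka} to check that no triality-twisted involution produces additional spaces. For the non-existence assertions I would simply rely on the published classification of Teduka and Okuda, rather than recomputing the weighted Dynkin diagrams.
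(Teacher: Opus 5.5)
Your proof of (1) is the paper's argument. Teduka's criterion, with $\bar\sigma$ the swap of the two fork nodes, gives the condition $a_{N-1}\neq a_N$, and Proposition~\ref{prop:characterization_of_no_odd_dimensional} identifies this with being very even. Routing through real homomorphisms and Lemma~\ref{lemma:very-even-complex-real} is slightly more careful bookkeeping of the same step, since the paper works with complex homomorphisms directly.

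For (2) your route differs. The paper does not use Property~(VE) there at all: it quotes Teduka's (or Okuda's) classification, by which a complex simple symmetric space admits a proper $SL(2,\mathbb{R})$-action only if it is locally isomorphic to some $X_{N,p}$ with $N$ even, and (2) follows at once. You instead re-derive just what is needed:
\begin{itemize}
\item Fact~\ref{fact:teduka} forces $\sigma$ to be outer.
\item Remark~\ref{fact:invariance_Titz_inv} rules out type $A$, including $D_3=A_3$.
\item Type $E_6$ is eliminated not by non-properness but by condition (i) of Definition~\ref{def:VE_for_G/H}, since the real Lie algebra underlying $\mathfrak{e}_{6,\C}$ is not isomorphic to any classical real Lie algebra.
\item In type $D$, Berger's list of outer involutions gives exactly the $X_{N,p}$.
\end{itemize}
This is valid and more self-contained: you never need the fact that every weighted Dynkin diagram of $E_6$ or $D_{2k+1}$ is invariant under the diagram automorphism. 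The paper's citation, in exchange, yields the sharper statement that properness alone already forces $\mathfrak{g}_{\C}\simeq\mathfrak{so}(4k,\C)$.

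Your triality worry is unnecessary. Berger's classification is up to isomorphism of pairs, so for $D_4$ it already absorbs triality. In (1), $\sigma$ is given concretely, so the standard labelling is the one compatible with Proposition~\ref{prop:characterization_of_no_odd_dimensional}. Neither Lemma~\ref{lemma:non-very-even} nor a fallback on the published classification is needed.
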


\begin{proof}
\eqref{item:case(a)-VET}.
We apply Fact~\ref{fact:teduka} to $X_{N,p}$, noting 
that the involution of $\mathfrak{so}(2N,\C)$ corresponding to the symmetric space $X_{N,p}$
induces the graph automorphism of the Dynkin diagram given by
\[
\begin{xy}
(15,0)="1"*{\circ}, "1"+(0,3)*{a_1},
(30,0)="2"*{\cdots},
(45,0)="3"*{\circ}, "3"+(-1.5,3)*{a_{N-2}},
(58,5)="4"*{\circ},"4"+(0,3)*{a_{N-1}},
(58,-5)="5"*{\circ},"5"+(0,3)*{a_{N}},
\ar@{-} "1"+(1,0);"2"+(-3,0)
\ar@{-} "2"+(3,0);"3"+(-1,0)
\ar@{-} "3"+(1,0);"4"+(-1,-0.3)
\ar@{-} "3"+(1,0);"5"+(-1,0.3)
\end{xy}
\mapsto
\begin{xy}
(15,0)="1"*{\circ}, "1"+(0,3)*{a_1},
(30,0)="2"*{\cdots},
(45,0)="3"*{\circ}, "3"+(-1.5,3)*{a_{N-2}},
(58,5)="4"*{\circ},"4"+(0,3)*{a_{N}},
(58,-5)="5"*{\circ},"5"+(0,3)*{a_{N-1}},
\ar@{-} "1"+(1,0);"2"+(-3,0)
\ar@{-} "2"+(3,0);"3"+(-1,0)
\ar@{-} "3"+(1,0);"4"+(-1,-0.3)
\ar@{-} "3"+(1,0);"5"+(-1,0.3)
\end{xy}.
\]
Let $\varphi\colon \mathfrak{sl}(2,\C)\to\mathfrak{so}(2N,\C)$ be a complex Lie algebra homomorphism,
and denote by $a_{1},\ldots,a_{N}$ the weights of its weighted
Dynkin diagram $F_{\varphi}$.
By Fact~\ref{fact:teduka}, the $SL(2,\R)$-action on $X_{N,p}$ via the lift
$\tilde{\varphi}$ is proper if and only if $a_{N-1}\neq a_{N}$.
By Proposition~\ref{prop:characterization_of_no_odd_dimensional},
this is equivalent to $\varphi$ being very even for the standard representation $\iota$ of $\mathfrak{so}(2N,\C)$.
Hence $X_{N,p}$ has $\VET$ for $\iota$ since $\iota$ lifts to a Lie group representation of $SO(2N,\C)$.
This proves 
\eqref{item:case(a)-VET}.

\eqref{item:case(a)-VE-locally-isom}. 
By a result of Teduka~\cite[Thm.~1.4]{Teduka2008PJA} (or Okuda~\cite[Thm.~1.4]{Oku13}),
$G_{\C}/H_{\C}$ admits a proper $SL(2,\R)$-action if and only if it is locally isomorphic to
$X_{2N,p}$ ($N\geq 3$).
Hence \eqref{item:case(a)-VE-locally-isom} follows from this result. 
\end{proof}

\vspace{\baselineskip}
\noindent \textbf{Proof of Theorem~\ref{theorem:classification_of_symmetric_space_with_very_even_condition} for Case \ref{item:classification_caseB}.}\label{subsubsec:classification_case_B}
We prove the following: 
\begin{proposition}
\label{prop:classification_case_B}
\begin{enumerate}
\item 
\label{item:case(b)-VET}
The symmetric spaces 
\begin{align*}
&SL(2N,\C)/SU(N+1,N-1)\quad (N\geq 1), \\
&SO(2N,\C)/SO(N+1,N-1)
\quad (N\geq 3)
\end{align*}
 have $\VET$ for the standard representation of $\mathfrak{sl}(2N,\C)$ and $\mathfrak{so}(2N,\C)$, respectively. 
\item 
\label{item:case(b)-VE-locally-isom}
Let $G_\C$ be a connected complex simple Lie group and $G$ its closed subgroup whose Lie algebra is a real form of $\mathfrak{g}_\C$. 
If $G_\C/G$ has $\VE$ and admits a proper $SL(2,\R)$-action, then it is locally isomorphic to one of the symmetric spaces in \eqref{item:case(b)-VET}. 
\end{enumerate}
\end{proposition}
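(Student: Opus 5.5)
Part (1) will follow the template of Proposition~\ref{prop:classification_case_A}, with Fact~\ref{fact:okuda-cpx/real} in place of Fact~\ref{fact:teduka}. First compute the Satake diagrams of the two real forms.

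For $\mathfrak{su}(N+1,N-1)\subset\mathfrak{sl}(2N,\C)$, the Satake diagram of type $A_{2N-1}$ has arrows joining $\alpha_i$ and $\alpha_{2N-i}$ for $i\le N-1$. Its nodes $\alpha_{N-1}$ and $\alpha_{N+1}$ are white, while the middle node $\alpha_N$ and the nodes strictly between $\alpha_{N-1}$ and $\alpha_{N+1}$ are black. Here this is just the middle node $\alpha_N$, because the real rank is $N-1$ and there are $2N-1$ nodes. Consequently a weighted Dynkin diagram $F_\varphi$ matches this Satake diagram if and only if it is symmetric and $a_N=0$. By Remark~\ref{fact:invariance_Titz_inv}, $F_\varphi$ is always symmetric, so matching reduces to $a_N=0$. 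Fact~\ref{fact:okuda-cpx/real} then says the action is proper if and only if $a_N\neq0$, and by Proposition~\ref{prop:characterization_of_no_odd_dimensional} this is exactly very evenness for $\iota$.

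For $\mathfrak{so}(N+1,N-1)\subset\mathfrak{so}(2N,\C)$, the form is quasi-split of real rank $N-1$. The Satake diagram has no black nodes and one arrow joining the two end nodes $\alpha_{N-1}$ and $\alpha_N$. Matching therefore means $a_{N-1}=a_N$, and properness means $a_{N-1}\ne a_N$, which again is very evenness by Proposition~\ref{prop:characterization_of_no_odd_dimensional}.

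In both cases $\iota$ lifts to $G_\C$, so Property~$(\widetilde{\text{VE}})$ holds. The small case $N=1$ gives $SL(2,\C)/SU(2)$. There every nontrivial $\varphi$ is very even and every such action is proper, so it is consistent.

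For (2), I will start from Okuda's list of those $G_\C/G$ with $\mathfrak{g}_\C$ simple that admit a proper $SL(2,\R)$-action, i.e.\ the real forms $\mathfrak{g}$ such that some weighted Dynkin diagram does not match $S_{\mathfrak{g}}$. For each entry on that list I will test (VE). Take a $\varphi$ whose diagram fails to match, hence gives a proper action, and check whether it is very even.

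For $\mathfrak{g}_\C$ not of type $A_{2N-1}$, $C_N$ or $D_N$ with standard representation, and not isomorphic to such via an equivalent classical pair, no very even homomorphism exists. For exceptional types (VE) fails because $(\mathfrak{g}_\C,\iota)$ must be classical. For types $A_{2N}$ and $B_N$ there are no very even $\varphi$, so properness would force a contradiction. For $\mathfrak{g}_\C$ of type $A_{2N-1}$, $C_N$ or $D_N$, I will use Lemma~\ref{lemma:non-very-even}. Its part~\eqref{lemma:non-very-even_item:non_D_4} shows that (VE) may be tested with the standard $\iota$, and its part~\eqref{lemma:non-very-even_item:D_4} handles triality in type $D_4$.

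The test is then combinatorial. (VE) requires that the diagrams not matching $S_{\mathfrak{g}}$ be exactly those satisfying \eqref{eq:very-even-weight}.
\begin{itemize}
\item In type $A_{2N-1}$, this forces matching to be equivalent to $a_N=0$. The candidates are $\mathfrak{su}(p,q)$, $\mathfrak{sl}(2N,\R)$ and $\mathfrak{sl}(N,\HA)$. Only $\mathfrak{su}(N+1,N-1)$ has black nodes consisting of exactly the middle node. For the others, I will exhibit a non-very-even diagram, such as the principal one with partition $[2N-1,1]$, that fails to match, or a very even one that matches.
\item In type $C_N$, every real form's Satake diagram forces either a zero at some node other than $a_N$ or no constraint at all. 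So a witness exists; for example $[2N-1]$-type diagrams, or showing that the very even principal diagram matches $\mathfrak{sp}(N,\R)$.
\item In type $D_N$, matching must be equivalent to $a_{N-1}=a_N$. This singles out $\mathfrak{so}(N+1,N-1)$; the other forms give witnesses.
\end{itemize}

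The main obstacle is this exhaustive exclusion step. It requires going through the real forms of $A$, $C$ and $D$ type, especially $\mathfrak{so}^*(2N)$, the forms $\mathfrak{so}(p,q)$ with $|p-q|\neq2$, and the triality cases in type $D_4$, producing an explicit partition witness in each case.
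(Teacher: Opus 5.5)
Your proposal follows the paper's proof essentially step for step. Part (1) uses Fact~\ref{fact:okuda-cpx/real}, the Satake diagrams of $\mathfrak{su}(N+1,N-1)$ and $\mathfrak{so}(N+1,N-1)$, and Proposition~\ref{prop:characterization_of_no_odd_dimensional}, with Remark~\ref{fact:invariance_Titz_inv} supplying the symmetry in type $A$. Part (2) uses Okuda's classification together with a non-matching, non-very-even witness for each remaining real form, checked via Lemma~\ref{lemma:non-very-even}. Your type-by-type bookkeeping is, if anything, more explicit than the paper's table about the exceptional, $A_{2N}$ and $B_N$ cases, where Property~(VE) fails trivially.

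Two slips in your sketch need correcting. First, in type $C_N$ the ``$[2N-1]$-type'' diagram is not an admissible partition for $\mathfrak{sp}(N,\C)$. The witness to use is $[2N-2,1^2]$. It has $a_1\neq 0$ on the black node $\alpha_1$ of every $\mathfrak{sp}(p,q)$, so the action is proper, and $a_N=0$, so it is not very even. Second, in type $A$ the partition $[2N-1,1]$ is the subregular orbit, not the principal one.
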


To apply Fact~\ref{fact:okuda-cpx/real} in our proof,
we use the following lemma:
\begin{lemma}\label{lemma:very_even_not_match_caseB}
 Let $\mathfrak{g}$ be either $\mathfrak{su}(N+1,N-1)$ or $\mathfrak{so}(N+1,N-1)$ and $\varphi\colon \mathfrak{sl}(2,\C)\to \mathfrak{g}_\C$ be a Lie algebra homomorphism. 
 The following conditions are equivalent: 
 \begin{enumerate}[label=(\arabic*)]
     \item $\varphi|_{\mathfrak{sl}(2,\R)}$ is very even for the standard representation of $\mathfrak{g}_\C$.
     \item The weighted Dynkin diagram $F_{\varphi}$
     does not match the Satake diagram of $\mathfrak{g}$
     in the sense of Definition~\ref{definition:match}. 
 \end{enumerate}
\end{lemma}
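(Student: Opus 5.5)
By Proposition~\ref{prop:characterization_of_no_odd_dimensional} together with Lemma~\ref{lemma:very-even-complex-real}, condition (1) is equivalent to a weight condition on $F_{\varphi}$. For $\mathfrak{g}_\C=\mathfrak{sl}(2N,\C)$ the condition is $a_N\neq 0$, where $a_N$ is the middle node. For $\mathfrak{g}_\C=\mathfrak{so}(2N,\C)$ it is $a_{N-1}\neq a_N$, where these are the two end nodes of the fork. The plan is therefore to compute the Satake diagrams of $\mathfrak{su}(N+1,N-1)$ and $\mathfrak{so}(N+1,N-1)$ explicitly, and to check that "$F_\varphi$ matches $S_{\mathfrak{g}}$" is exactly the negation of this weight condition.

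\textbf{Case $\mathfrak{su}(N+1,N-1)$.} The real rank is $N-1$, and the Satake diagram of $A_{2N-1}$ is as follows. The first $N-1$ nodes $\alpha_1,\dots,\alpha_{N-1}$ are joined by arrows to $\alpha_{2N-1},\dots,\alpha_{N+1}$ respectively. The two remaining middle nodes $\alpha_N$ (and the central one) are black. More precisely, $2N-1-2(N-1)=1$, so the single middle node $\alpha_N$ is black.
\begin{itemize}
\item Suppose $F_\varphi$ matches. Then $a_N=0$, so $\varphi$ is not very even.
\item Conversely, suppose $a_N=0$, so $\varphi$ is not very even. By Remark~\ref{fact:invariance_Titz_inv}, $F_\varphi$ is invariant under the graph automorphism, so $a_i=a_{2N-i}$. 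These are exactly the arrow constraints, and the black node carries weight $a_N=0$. Hence $F_\varphi$ matches.
\end{itemize}
So (1) $\Leftrightarrow$ (2).

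\textbf{Case $\mathfrak{so}(N+1,N-1)$.} The real rank is $N-1$ in type $D_N$, so all nodes are white. Since $(N+1)-(N-1)=2$, this is the quasi-split non-split form, whose Satake diagram has the arrow joining the two fork nodes $\alpha_{N-1},\alpha_N$. Matching then means exactly $a_{N-1}=a_N$, which is the negation of the very even condition of Proposition~\ref{prop:characterization_of_no_odd_dimensional}.

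\textbf{Main obstacle.} The step I expect to need care is fixing the Satake diagrams correctly, in particular the count of black nodes for $\mathfrak{su}(N+1,N-1)$ and the arrow for $\mathfrak{so}(N+1,N-1)$. I would take these from Araki's tables and cross-check them by real rank. The small cases also need checking: $N=1$, where $\mathfrak{su}(2,0)$ has the single node black, and $\mathfrak{so}(N+1,N-1)$ for $N\le 3$. Finally, I must make sure that Fact~\ref{fact:weighted-Dynkin-diagram-classical-type} applies with the standard labelling; this is the convention implicit in Proposition~\ref{prop:characterization_of_no_odd_dimensional}, and the labelling ambiguity on the fork is harmless because both conditions are symmetric in $a_{N-1},a_N$.
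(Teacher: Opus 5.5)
Your proposal is correct and follows the paper's proof essentially verbatim. The paper likewise reduces (1) to the weight conditions of Proposition~\ref{prop:characterization_of_no_odd_dimensional} via Lemma~\ref{lemma:very-even-complex-real}~\ref{lemma:very-even-complex-real:item-(b)}, uses Remark~\ref{fact:invariance_Titz_inv} to make the arrow constraints automatic for $\mathfrak{su}(N+1,N-1)$, and reads off the same two Satake diagrams: a single black middle node $\alpha_N$ with folding arrows, and all white nodes with an arrow on the fork. Your momentary ``two remaining middle nodes'' slip is harmless, since you immediately correct it.
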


\begin{proof}
Our assertion follows from 
Proposition~\ref{prop:characterization_of_no_odd_dimensional}, together with Remark~\ref{fact:invariance_Titz_inv} and 
Lemma~\ref{lemma:very-even-complex-real}~\ref{lemma:very-even-complex-real:item-(b)}. Recall that 
the Satake diagrams of $\mathfrak{su}(N+1,N-1)$ and $\mathfrak{so}(N+1,N-1)$ are respectively 
 \[
 \begin{xy}
(0,5)="1"*{\circ}, 
(5,5)="2"*{\circ}, 
(12.5,5)="3"*{\cdots}, 
(20,5)="4"*{\circ}, 
(23,0)="5"*{\bullet},
(20,-5)="10"*{\circ}, 
(12.5,-5)="11"*{\cdots},
(5,-5)="12"*{\circ},
(0,-5)="13"*{\circ}, 
\ar@{-} "1"+(1,0);"2"+(-1,0)
\ar@{-} "2"+(1,0);"3"+(-2.5,0)
\ar@{-} "3"+(2.5,0);"4"+(-1,0)
\ar@{-} "4"+(0.8,-0.3);"5"+(-0.2,0.4)
\ar@{-} "10"+(0.8,0.3);"5"+(-0.2,0.4)
\ar@{-} "11"+(2.5,0);"10"+(-1,0)
\ar@{-} "12"+(1,0);"11"+(-2.5,0)
\ar@{-} "13"+(1,0);"12"+(-1,0)
\ar@{<->} "1"+(0,-2);"13"+(0,2)
\ar@{<->} "2"+(0,-2);"12"+(0,2)
\ar@{<->} "4"+(0,-2);"10"+(0,2)
\end{xy},\quad 
\begin{xy}
(5,0)="2"*{\circ},
(12.5,0)="3"*{\cdots},
(20,0)="4"*{\circ},  
(25,0)="5"*{\circ}, 
(28,4.5)="6"*{\circ}, 
(28,-4.5)="7"*{\circ},
\ar@{-} "2"+(1,0);"3"+(-2.5,0)
\ar@{-} "3"+(2.5,0);"4"+(-1,0)
\ar@{-} "4"+(1,0);"5"+(-1,0)
\ar@{-} "5"+(0.5,0.5);"6"+(-0.5,-0.5)
\ar@{-} "5"+(0.5,-0.5);"7"+(-0.5,0.5)
\ar@/^1mm/@{<->} "6"+(1.2,-0.4);"7"+(1.2,0.4)
\end{xy}\ .
\]
\end{proof}

We are ready to prove Proposition~\ref{prop:classification_case_B}. 
\begin{proof}[Proof of Proposition~\ref{prop:classification_case_B}]
\eqref{item:case(b)-VET}. 
By Fact~\ref{fact:okuda-cpx/real} 
and Lemma~\ref{lemma:very_even_not_match_caseB}, $G_\C/G$ has $\VE$ for $\iota$. 
Since $\iota$ lifts to a representation of $G_{\C}$,
the assertion is proved.

\eqref{item:case(b)-VE-locally-isom}. 
By Okuda's classification \cite[Thm.~1.4]{Oku13}, 
$G_{\C}/G$ with $\mathfrak{g}_{\C}$ simple admits a proper $SL(2,\R)$-action if and only if 
$\mathfrak{g}$ is isomorphic to 
one of the real Lie algebras 
in Table~\ref{tab:classification-caseB}.
Hence,
it suffices to show that $G_{\C}/G$ does not have $\VE$ in the case where $\mathfrak{g}$ is one of the bottom six real Lie algebras in Table~\ref{tab:classification-caseB}. 

\begin{table}[htbp]
\begin{center}
\begin{tabular}{cc}
real Lie algebra $\mathfrak{g}$ &  partition \\
\noalign{\hrule height 1.2pt}
$\mathfrak{su}(N+1,N-1)$ & $\VE$ \\
\hline
$\mathfrak{so}(N+1, N-1)$ & $\VE$ \\
\hline
$\mathfrak{sl}(N,\HA)$ & $[2N-1,1]$\\
\hline
\begin{tabular}{c}$\mathfrak{su}(p,q)$\\ ($p-q\geq 4$, $p+q=2N$) \end{tabular} & $[2N-1,1]$ \\
\hline
\begin{tabular}{c}$\mathfrak{so}(p,q)$\\ ($p-q\geq 3$, $p+q=2N+1$) \end{tabular} &$[2N+1]$ \\
\hline
\begin{tabular}{c}$\mathfrak{sp}(p,q)$ \\ ($p-q\geq0$, $p+q=N$)\end{tabular} & $[2N-2,1^2]$ \\
\hline
\begin{tabular}{c}$\mathfrak{so}(p,q)$\\ ($p-q\geq 4$, $p+q=2N$) \end{tabular} & $[2N-1,1]$\\
\hline
$\mathfrak{so}^*(2N)$ ($N\geq 4)$ & $[2N-1,1]$\\
\end{tabular}
\caption{Symmetric spaces $G_\C/G$ which admit a proper $SL(2,\R)$-action.}
\label{tab:classification-caseB}
\end{center}
\end{table}

By Fact~\ref{fact:weighted-Dynkin-diagram-classical-type}, 
for each partition in Table~\ref{tab:classification-caseB}, 
one can 
take a corresponding Lie algebra homomorphism
$\varphi\colon \mathfrak{sl}(2,\C)\rightarrow \mathfrak{g}_{\C}$.
Then we see that the weighted Dynkin diagram
$F_{\varphi}$ does not match the Satake diagram of $\mathfrak{g}$.
Hence, by Fact~\ref{fact:okuda-cpx/real},
the $SL(2,\R)$-action on $G_{\C}/G$ via the lift of  $\varphi|_{\mathfrak{sl}(2,\R)}$
is proper. 
On the other hand, one can check from Lemma~\ref{lemma:non-very-even} with 
Lemma~\ref{lemma:very-even-complex-real}~\ref{lemma:very-even-complex-real:item-(b)} that 
$\varphi|_{\mathfrak{sl}(2,\R)}$ is not very even for any representation $\iota$ 
of $\mathfrak{g}_{\C}$ such that $(\mathfrak{g}_{\C},\iota)$ is a complex classical pair. 
This proves the assertion.

For example, let us consider the case where  
$\mathfrak{g}=\mathfrak{so}^{*}(8)$, which corresponds to the last entry of Table~\ref{tab:classification-caseB} with $N=4$.
The weighted Dynkin diagram of the homomorphism
$\varphi\colon \mathfrak{sl}(2,\C)\to \mathfrak{g}_{\C}\simeq \mathfrak{so}(8,\C)$
corresponding to the partition $[7,1]$ is given by
\[
\begin{xy}
(30,0)="3"*{\circ},(30,3)*{2},
(45,0)="4"*{\circ},"4"+(0,3)*{2},
(58,5)="5"*{\circ},"5"+(0,3)*{2},
(58,-5)="5'"*{\circ},"5'"+(0,3)*{2},
\ar@{-} "3"+(1,0);"4"+(-1,0)
\ar@{-} "4"+(1,0);"5"+(-1,-0.3)
\ar@{-} "4"+(1,0);"5'"+(-1,0.3)
\end{xy}\quad.
\]
This weighted Dynkin diagram does not match the Satake diagram of
$\mathfrak{so}^{*}(8)$.
Moreover, let 
$\tau\colon \mathfrak{so}(8,\C)\to \mathfrak{gl}(M,\C)$
be any representation such that $(\mathfrak{so}(8,\C),\tau)$
is a complex classical pair. 
Then we have $M=8$, and $\tau$ is either the standard representation
or the semispin representations.
By Lemma~\ref{lemma:non-very-even}~\eqref{lemma:non-very-even_item:D_4},
the homomorphism $\varphi$ is not very even for $\tau$ in either case.
\end{proof}

\vspace{\baselineskip}
\noindent \textbf{Proof of Theorem~\ref{theorem:classification_of_symmetric_space_with_very_even_condition} for Case \ref{item:classification_caseC}.}\label{subsubsec:classification_case_C}
We prove the following: 
\begin{proposition}\label{prop:classification_case_C}
\begin{enumerate}
\item 
\label{item:case(c)-VET}
Each symmetric space $G/H$ in Table~\ref{tab:sym_pair_case_c_group} has $\VET$ for the standard representation of $\mathfrak{g}$. 
\item
\label{item:case(c)-VE-locally-isom}
Let $G/H$ be a semisimple symmetric space with $\mathfrak{g}$ absolutely simple and assume Setting~\ref{setting:semisimple}. 
If $G/H$ has $\VE$ and admits a proper $SL(2,\R)$-action, then it is locally isomorphic to one of the symmetric spaces in Table~\ref{tab:sym_pair_case_c_group}. 
\end{enumerate}
\begin{table}[htbp]
\begin{scalebox}{0.85}{
\renewcommand{\arraystretch}{1.5}
\begin{tabular}{cc}
$G/H$  & $\mathfrak{g}^c$ \\
\noalign{\hrule height 1.2pt}
$SL(2N,\R)/SO(N+1,N-1)$  & $\mathfrak{su}(N+1,N-1)$ \\
\hline
$SL(2N,\HA)/Sp(N+1,N-1)$ & $\mathfrak{su}(2N+2,2N-2)$  \\
\hline
$SO^*(4N)/U(N+1,N-1)$ & 
$\mathfrak{so}(2N+2,2N-2)$
\\
\hline
\begin{tabular}{c}
$SO(N,N)/(SO(N-p, N-p-1)
\times SO(p,p+1))$\\ $(0\leq p< N/2)$ 
\end{tabular}
& $\mathfrak{so}(N+1,N-1)$ \\
\hline 
\begin{tabular}{c}
$SU(N,N)/
(S(U(N-p,N-p-1)\times U(p,p+1)))$\\ $(0\leq p<N/2)$
\end{tabular}& $\mathfrak{su}(N+1,N-1)$ \\
\hline
 \begin{tabular}{c}$Sp(N,N)/(Sp(N-p,N-p-1)\times 
Sp(p,p+1)) $ \\$(0\leq p<N/2)$
\end{tabular}
& $\mathfrak{sp}(N+1,N-1)$ \\
\hline
\begin{tabular}{c}
$SO^*(4N)/(SO^*(4N-4p-2)
\times SO^*(4p+2))$ \\$(0\leq p< N/2)$
\end{tabular} 
& $\mathfrak{so}^*(4N)$ \\
\end{tabular}
\renewcommand{\arraystretch}{1.5}}
\end{scalebox}
\caption{Symmetric spaces $G/H$ with $\mathfrak{g}$ absolutely simple which have $\VET$.}
\label{tab:sym_pair_case_c_group}
\end{table}
\end{proposition}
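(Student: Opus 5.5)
For part (1), I will work entirely with Fact~\ref{fact:okuda}. Let $\varphi\colon\mathfrak{sl}(2,\R)\to\mathfrak{g}$ be a homomorphism with complexification $\varphi_\C$. By Fact~\ref{fact:okuda}~\ref{fact:okuda:item:real}, $F_{\varphi_\C}$ matches $S_{\mathfrak g}$, and by Fact~\ref{fact:okuda}~\ref{fact:okuda:item:proper} the action is proper if and only if $F_{\varphi_\C}$ does not match $S_{\mathfrak g^c}$. By Lemma~\ref{lemma:very-even-complex-real}~\ref{lemma:very-even-complex-real:item-(c)} and Proposition~\ref{prop:characterization_of_no_odd_dimensional}, very evenness is the weight condition \eqref{eq:very-even-weight}: the middle weight is nonzero in type $A_{2N-1}$ or $C_N$, and $a_{N-1}\neq a_N$ in type $D$. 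So for each row of Table~\ref{tab:sym_pair_case_c_group} I must show the following. Among weighted Dynkin diagrams matching $S_{\mathfrak g}$, failure to match $S_{\mathfrak g^c}$ is equivalent to \eqref{eq:very-even-weight}.

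For most rows the $c$-dual $\mathfrak g^c$ is one of $\mathfrak{su}(N+1,N-1)$, $\mathfrak{so}(N+1,N-1)$, $\mathfrak{sp}(N+1,N-1)$, or $\mathfrak{su}(2N+2,2N-2)$, $\mathfrak{so}(2N+2,2N-2)$. In these cases $S_{\mathfrak g^c}$ is obtained from a split or quasi-split pattern in which the only constraint beyond trivial ones sits at the middle node(s). For $\mathfrak{su}(N\pm1)$ the middle node is black and the other nodes are paired by arrows. For $\mathfrak{so}(N+1,N-1)$ the two fork nodes are joined by an arrow. For $\mathfrak{sp}(N+1,N-1)$ and the doubled cases, the black nodes alternate and the middle node is black. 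I will then argue that every diagram matching $S_{\mathfrak g}$ already satisfies the symmetric or black-node constraints of $S_{\mathfrak g^c}$ away from the middle. Here I use Remark~\ref{fact:invariance_Titz_inv} for the $A$-type symmetry, and for the nodes black in both diagrams the matching with $S_{\mathfrak g}$ forces those weights to vanish. Matching $S_{\mathfrak g^c}$ therefore reduces exactly to ``middle weight $=0$'' (resp.\ $a_{N-1}=a_N$). This is essentially the argument of Lemma~\ref{lemma:very_even_not_match_caseB}, now run relative to $S_{\mathfrak g}$. The $\mathfrak{so}^*(4N)$ row with $c$-dual $\mathfrak{so}^*(4N)$ uses the simultaneous Satake diagrams \eqref{eq:pair-satake-o*} with $p$ replaced by $2p+1$, which is odd. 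Both diagrams have alternating black nodes and differ only in which fork node is black. A diagram matching $S_{\mathfrak g}$ has $a_N=0$, and it matches $S_{\mathfrak g^c}$ if and only if $a_{N-1}=0$ as well, that is, $a_{N-1}=a_N$. I will fix the relative position of $S_{\mathfrak g}$ and $S_{\mathfrak g^c}$ in every row using Lemma~\ref{lemma:okuda_match_a_ac}, comparing the dimension of the common matching space with $\operatorname{rank}_\R\mathfrak h$. Finally, the standard representation lifts to $G$, which gives $\VET$.

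For part (2) I will go through Okuda's list of irreducible symmetric pairs with $\mathfrak g$ absolutely simple that admit proper $SL(2,\R)$-actions (\cite[Table~6]{Oku13} with the corrections of \cite{Okuda17}). For each pair not in Table~\ref{tab:sym_pair_case_c_group}, I will exhibit, as in the proof of Proposition~\ref{prop:classification_case_B}, a partition whose weighted Dynkin diagram matches $S_{\mathfrak g}$ but not $S_{\mathfrak g^c}$ and is not very even. Typical choices are $[2N-1,1]$, $[2N+1]$, or $[2N-2,1^2]$, taken to have equal fork weights or zero middle weight. The corresponding action is then proper but not very even. Lemma~\ref{lemma:non-very-even} makes this independent of the choice of classical $\iota$, including the triality cases in $D_4$, where I will choose a diagram with $a_1=a_3=a_4$. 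Exceptional $\mathfrak g$ admit no classical pair and are excluded automatically.

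I expect the main obstacle to be the bookkeeping in part (2): checking the whole of Okuda's table, and in particular locating for each family with small parameters (for example $\mathfrak{so}(p,q)$ with $p-q$ small, or the pairs in $\mathfrak{so}(N,N)$ whose block signatures differ from those listed) a non-very-even diagram that is still compatible with $S_{\mathfrak g}$. I would handle this by using principal-like partitions in the largest even-dimensional split subalgebra.
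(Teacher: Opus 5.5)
Your proposal is correct and follows essentially the paper's own proof. Part (1) combines Fact~\ref{fact:okuda}, Lemma~\ref{lemma:very-even-complex-real}~\ref{lemma:very-even-complex-real:item-(c)} and Proposition~\ref{prop:characterization_of_no_odd_dimensional} with a row-by-row comparison of $(S_{\mathfrak{g}},S_{\mathfrak{g}^c})$, which is exactly Lemma~\ref{lemma:very_even_and_Pair_of_Satake}. Part (2) runs through Okuda's list, exhibiting a proper but non-very-even partition for each remaining pair and making this independent of $\iota$ via Lemma~\ref{lemma:non-very-even}.

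There are three small corrections.

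First, for the $\mathfrak{sp}(N,N)$ row the decisive node in \eqref{eq:very-even-weight} is the long end node $\alpha_{2N}$ of $C_{2N}$, not a ``middle'' node. It is the only node that is black in $S_{\mathfrak{sp}(N+1,N-1)}$ but not in $S_{\mathfrak{sp}(N,N)}$.

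Second, your description of the $c$-duals is slightly off. The alternating black nodes belong to $S_{\mathfrak{g}}$, not to $S_{\mathfrak{g}^c}$. For instance, $\mathfrak{su}(2N+2,2N-2)$ has three consecutive black middle nodes, and $\mathfrak{so}(2N+2,2N-2)$ has both fork nodes black.

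Third, in part (2) you must identify two pairs in Okuda's list with entries of the table rather than try to disqualify them. These are $(\mathfrak{so}(6,2),\mathfrak{u}(3,1))\simeq(\mathfrak{so}^*(8),\mathfrak{u}(3,1))$ and $(\mathfrak{so}(3,3),\mathfrak{so}(3,\C))\simeq(\mathfrak{sl}(4,\R),\mathfrak{so}(3,1))$. Both spaces do have $\VE$ for a non-standard classical representation, so the search for a proper non-very-even homomorphism necessarily fails there.
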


To apply Fact~\ref{fact:okuda} in our proof, 
we use the following: 

\begin{lemma}\label{lemma:very_even_and_Pair_of_Satake}
Let $(\mathfrak{g},\mathfrak{h})$ be one of the pairs corresponding to the symmetric spaces in 
Table~\ref{tab:sym_pair_case_c_group}, and 
let $(S_{\mathfrak{g}},S_{\mathfrak{g}^c})$ denote the pair 
of Satake diagrams. 
For a complex Lie algebra homomorphism $\varphi\colon \mathfrak{sl}(2,\C)\to \mathfrak{g}_{\C}$ 
whose weighted Dynkin diagram $F_{\varphi}$ matches $S_{\mathfrak{g}}$,   the following conditions are equivalent:
\begin{enumerate}[label=(\arabic*)]
    \item $\varphi$ is very even for the standard representation of $\mathfrak{g}_{\C}$.
    \item $F_{\varphi}$ does not match $S_{\mathfrak{g}^c}$.
\end{enumerate}

\end{lemma}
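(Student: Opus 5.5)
The plan is a case-by-case verification over the seven rows of Table~\ref{tab:sym_pair_case_c_group}. In each row, $\mathfrak{g}_\C$ is $\mathfrak{sl}(2N,\C)$, $\mathfrak{sl}(4N,\C)$, $\mathfrak{so}(4N,\C)$, $\mathfrak{so}(2N,\C)$ or $\mathfrak{sp}(N,\C)$, and the criterion of Proposition~\ref{prop:characterization_of_no_odd_dimensional} applies. By that proposition, condition (1) is a condition on the weights of $F_\varphi$ at the ``middle'' node or the two fork nodes: $a_{\mathrm{mid}}\neq 0$ in types $A_{2M-1}$ and $C_N$, and $a_{N-1}\neq a_N$ in type $D$. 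So (1)$\Leftrightarrow$(2) reduces to a purely combinatorial comparison. Assuming $F_\varphi$ matches $S_{\mathfrak{g}}$, we must show that it matches $S_{\mathfrak{g}^c}$ exactly when that middle condition fails.

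The first step is to write down the pair $(S_{\mathfrak{g}},S_{\mathfrak{g}^c})$ simultaneously for each row. The $c$-duals are listed in the second column of the table, and the Satake diagrams of $\mathfrak{sl}(2N,\R)$, $\mathfrak{sl}(2N,\HA)$, $\mathfrak{so}^*(4N)$, $\mathfrak{so}(N,N)$, $\mathfrak{su}(N,N)$, $\mathfrak{sp}(N,N)$ are standard. The $c$-duals $\mathfrak{su}(N+1,N-1)$, $\mathfrak{so}(N+1,N-1)$, $\mathfrak{sp}(N+1,N-1)$ have diagrams that are all white or alternately black except near the middle: for $\mathfrak{su}(N+1,N-1)$ there is a single black middle node with all other nodes folded by arrows, as displayed in the proof of Lemma~\ref{lemma:very_even_not_match_caseB}. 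Where there is ambiguity in the relative placement of the two diagrams, I will use Lemma~\ref{lemma:okuda_match_a_ac}. The space of weighted diagrams matching both must have dimension $\operatorname{rank}_\R\mathfrak{h}$, and this pins down the placement, as in the $\mathfrak{so}^*(4N)$ example \eqref{eq:pair-satake-o*}. That example already covers the last row: for $p$ even and odd the black fork node flips, and $4p+2$ corresponds to the ``odd'' situation after the relevant reindexing.

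The second step is the comparison itself. Matching $S_{\mathfrak{g}}$ already imposes the conditions of $S_{\mathfrak{g}}$: black nodes have weight zero, and for $\mathfrak{su}(N,N)$ arrows identify mirrored weights. Given this, I expect that the additional conditions imposed by $S_{\mathfrak{g}^c}$ reduce to exactly one extra linear condition on the middle weight(s). For instance, in the $SL(2N,\R)$ row, $S_{\mathfrak{g}}$ is all white and $S_{\mathfrak{g}^c}=S_{\mathfrak{su}(N+1,N-1)}$. But $F_\varphi$ is automatically symmetric by Remark~\ref{fact:invariance_Titz_inv}, so matching $S_{\mathfrak{g}^c}$ amounts to $a_N=0$, which is the failure of very evenness. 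In the type $D$ rows, $S_{\mathfrak{so}(N+1,N-1)}$ carries an arrow between the fork nodes, so matching it means $a_{N-1}=a_N$, again the negation of very evenness. For $\mathfrak{sp}(N,N)$ with $c$-dual $\mathfrak{sp}(N+1,N-1)$, the extra black node relative to $S_{\mathfrak{g}}$ should be the last node, giving $a_N=0$.

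The main obstacle will be the rows where $S_{\mathfrak{g}}$ and $S_{\mathfrak{g}^c}$ both have many black nodes: $\mathfrak{sl}(2N,\HA)$ versus $\mathfrak{su}(2N+2,2N-2)$, $\mathfrak{so}^*(4N)$ versus $\mathfrak{so}(2N+2,2N-2)$, and $\mathfrak{su}(N,N)$, $\mathfrak{sp}(N,N)$ with general $p$. For these I would carefully check, via the rank count of Lemma~\ref{lemma:okuda_match_a_ac}, that the two diagrams are aligned so that their common matching space has codimension exactly one inside the $S_{\mathfrak{g}}$-matching space. I would then check that the extra vanishing condition lands precisely on the middle node, or on the fork difference. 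Once this alignment is established, the equivalence follows immediately from Proposition~\ref{prop:characterization_of_no_odd_dimensional}.
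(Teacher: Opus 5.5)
Your plan is essentially the paper's proof: write down the simultaneous pair $(S_{\mathfrak{g}},S_{\mathfrak{g}^c})$ for each row (using Lemma~\ref{lemma:okuda_match_a_ac} to fix the fork placement for $\mathfrak{so}^*(4N)$, as in \eqref{eq:pair-satake-o*}), then read off the equivalence from Proposition~\ref{prop:characterization_of_no_odd_dimensional} together with Remark~\ref{fact:invariance_Titz_inv}. Two small corrections are needed: for $\mathfrak{sp}(N,N)$ one has $\mathfrak{g}_\C=\mathfrak{sp}(2N,\C)$, so the extra black node of $S_{\mathfrak{sp}(N+1,N-1)}$ is $\alpha_{2N}$ and the condition is $a_{2N}=0$; and in the $\mathfrak{sl}(2N,\R)$ and $\mathfrak{sl}(2N,\HA)$ rows the real rank drops by $N$, not $1$, so your ``codimension one'' check holds only inside the flip-invariant diagrams, which is exactly where Remark~\ref{fact:invariance_Titz_inv} is needed.
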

\begin{proof}
Draw explicitly the pair $(S_{\mathfrak{g}}, S_{\mathfrak{g}^{c}})$ of Satake diagrams for each symmetric pair $(\mathfrak{g},\mathfrak{h})$, as described in Section~\ref{subsubsec:properness_caseC}. 
For instance, 
in the case $(\mathfrak{g},\mathfrak{h})=(\mathfrak{sl}(2N,\HA), \mathfrak{sp}(N-1,N+1))$, we have 
\begin{equation*}
(S_{\mathfrak{g}},S_{\mathfrak{g}^{c}})
= \left(
\begin{xy}
(0,3.5)="1"*{\bullet}, 
(5,3.5)="2"*{\circ},
(12.5,3.5)="3"*{\cdots}, 
(20,3.5)="4"*{\circ}, 
(25,3.5)="4.5"*{\bullet},  
(28, 0)="5"*{\circ}, 
(25,-3.5)="5.5"*{\bullet},
(20,-3.5)="6"*{\circ},
(12.5,-3.5)="7"*{\cdots}, 
(5,-3.5)="8"*{\circ},
(0,-3.5)="9"*{\bullet},
\ar@{-} "1"+(0.8,0);"2"+(-0.8,0)
\ar@{-} "2"+(0.8,0);"3"+(-2.7,0)
\ar@{-} "3"+(2.7,0);"4"+(-0.8,0)
\ar@{-} "4"+(0.8,0);"4.5"+(-0.8,0)
\ar@{-} "4.5"+(0.7,-0.2);"5"+(-0.2,0.9)
\ar@{-} "5.5"+(0.8,0.2);"5"+(-0.2,-0.7)
\ar@{-} "6"+(0.8,0);"5.5"+(-0.8,0)
\ar@{-} "7"+(2.7,0);"6"+(-0.8,0)
\ar@{-} "8"+(0.8,0);"7"+(-2.7,0)
\ar@{-} "9"+(0.8,0);"8"+(-0.8,0)
\end{xy},
\begin{xy}
(0,3.5)="1"*{\circ}, 
(5,3.5)="2"*{\circ},
(12.5,3.5)="3"*{\cdots}, 
(20,3.5)="4"*{\circ}, 
(25,3.5)="4.5"*{\bullet},  
(28, 0)="5"*{\bullet}, 
(25,-3.5)="5.5"*{\bullet},
(20,-3.5)="6"*{\circ},
(12.5,-3.5)="7"*{\cdots}, 
(5,-3.5)="8"*{\circ},
(0,-3.5)="9"*{\circ},
\ar@{-} "1"+(0.8,0);"2"+(-0.8,0)
\ar@{-} "2"+(0.8,0);"3"+(-2.7,0)
\ar@{-} "3"+(2.7,0);"4"+(-0.8,0)
\ar@{-} "4"+(0.8,0);"4.5"+(-0.8,0)
\ar@{-} "4.5"+(0.7,-0.2);"5"+(-0.2,0.9)
\ar@{-} "5.5"+(0.8,0.2);"5"+(-0.2,-0.7)
\ar@{-} "6"+(0.8,0);"5.5"+(-0.8,0)
\ar@{-} "7"+(2.7,0);"6"+(-0.8,0)
\ar@{-} "8"+(0.8,0);"7"+(-2.7,0)
\ar@{-} "9"+(0.8,0);"8"+(-0.8,0)
\ar@{<->} "1"+(0,-1);"9"+(0,1)
\ar@{<->} "2"+(0,-1);"8"+(0,1)
\ar@{<->} "4"+(0,-1);"6"+(0,1)
\end{xy} 
\right).
\end{equation*}
In the case $(\mathfrak{g},\mathfrak{h})=(\mathfrak{so}^*(4N), \mathfrak{so}^*(4p+2)\oplus \mathfrak{so}^*(4N-4p-2))$ $(0\leq p< N/2)$, as in \eqref{eq:pair-satake-o*}, we have 
\begin{equation*}
(S_{\mathfrak{g}},S_{\mathfrak{g}^{c}})=
\left(
\begin{xy}
(0,0)="1"*{\bullet}, 
(5,0)="2"*{\circ},
(12.5,0)="3"*{\cdots},
(20,0)="4"*{\bullet},  
(25,0)="5"*{\circ}, 
(28,3)="6"*{\circ}, 
(28,-3)="7"*{\bullet}, 
\ar@{-} "1"+(1,0);"2"+(-1,0)
\ar@{-} "2"+(1,0);"3"+(-3,0)
\ar@{-} "3"+(3,0);"4"+(-1,0)
\ar@{-} "4"+(1,0);"5"+(-1,0)
\ar@{-} "5"+(0.5,0.5);"6"+(-0.5,-0.5)
\ar@{-} "5"+(0.5,-0.5);"7"+(-0.5,0.5)
\end{xy}, 
\begin{xy}
(0,0)="1"*{\bullet}, 
(5,0)="2"*{\circ},
(12.5,0)="3"*{\cdots},
(20,0)="4"*{\bullet},  
(25,0)="5"*{\circ}, 
(28,3)="6"*{\bullet}, 
(28,-3)="7"*{\circ}, 
\ar@{-} "1"+(1,0);"2"+(-1,0)
\ar@{-} "2"+(1,0);"3"+(-3,0)
\ar@{-} "3"+(3,0);"4"+(-1,0)
\ar@{-} "4"+(1,0);"5"+(-1,0)
\ar@{-} "5"+(0.5,0.5);"6"+(-0.5,-0.5)
\ar@{-} "5"+(0.5,-0.5);"7"+(-0.5,0.5)
\end{xy}\right).
\end{equation*}
Then our assertion is easily checked from Proposition~\ref{prop:characterization_of_no_odd_dimensional}, together with Remark~\ref{fact:invariance_Titz_inv}. 
\end{proof}

\begin{proof}[Proof of Proposition~\ref{prop:classification_case_C}]
\eqref{item:case(c)-VET}.
Let $G/H$ be one of the symmetric spaces in Table~\ref{tab:sym_pair_case_c_group}, 
$\iota$ the standard representation of $\mathfrak{g}$, 
and $\varphi\colon \mathfrak{sl}(2,\R)\to \mathfrak{g}$ a Lie algebra homomorphism. 
By Fact~\ref{fact:okuda}~\ref{fact:okuda:item:real}, 
the weighted Dynkin diagram $F_{\varphi_{\C}}$ of the $\C$-linear extension 
$\varphi_{\C}\colon \mathfrak{sl}(2,\C)\to \mathfrak{g}_{\C}$ 
matches $S_{\mathfrak{g}}$. 
Fact~\ref{fact:okuda}~\ref{fact:okuda:item:proper} then implies that the  
$SL(2,\R)$-action on $G/H$ via the lift of $\varphi$ is proper if and only if 
$F_{\varphi_{\C}}$ does not match $S_{\mathfrak{g}^{c}}$. 
By Lemma~\ref{lemma:very_even_and_Pair_of_Satake}, 
this is equivalent to $\varphi_{\C}$ being very even for the $\C$-linear extension $\iota_{\C}$,
and, by Lemma~\ref{lemma:very-even-complex-real}~\ref{lemma:very-even-complex-real:item-(c)}, this is equivalent to $\varphi$ being very even for $\iota$.
Thus $G/H$ has $\VE$ for $\iota$.
Since $\iota$ lifts to a Lie group representation of $G$ in each case, $G/H$ has $\VET$ for $\iota$.

\eqref{item:case(c)-VE-locally-isom}. 
It follows from Proposition~\ref{prop:characterization_of_no_odd_dimensional} that
if $G/H$ has $\VE$ and admits a proper $SL(2,\R)$-action, then 
$\mathfrak{g}_{\C}$ is of types $A$, $C$ or $D$.
By Okuda's classification \cite[Thm.~1.4]{Oku13}, 
a symmetric space $G/H$ with $\mathfrak{g}_{\C}$ of types $A$, $C$ or $D$ 
admits a proper $SL(2,\R)$-action if and only if $(\mathfrak{g},\mathfrak{h})$ is isomorphic to 
one of the symmetric pairs in 
Tables~\ref{tab:sym_pair_case_c_group} or \ref{longtab:non_very_even_symbol}.
Note that the pairs $(\mathfrak{so}(6,2), \mathfrak{u}(3,1))$ and 
$(\mathfrak{so}(3,3), \mathfrak{so}(3,\C))$, 
which are excluded in Table~\ref{longtab:non_very_even_symbol}, 
are isomorphic to 
$(\mathfrak{so}^*(8), \mathfrak{u}(3,1))$ and 
$(\mathfrak{sl}(4,\R), \mathfrak{so}(3,1))$ in Table~\ref{tab:sym_pair_case_c_group}, respectively.

Hence it suffices to show that 
$G/H$ does not have $\VE$ if $(\mathfrak{g},\mathfrak{h})$
    is isomorphic to 
    one of the symmetric pairs in Table~\ref{longtab:non_very_even_symbol}.
By Fact~\ref{fact:weighted-Dynkin-diagram-classical-type}, 
for each partition in Table~\ref{longtab:non_very_even_symbol}, 
take a corresponding Lie algebra homomorphism
$\varphi\colon \mathfrak{sl}(2,\C)\rightarrow \mathfrak{g}_{\C}$ and compute 
the weighted Dynkin diagram $F_{\varphi}$.
Then, one can check that the weighted Dynkin diagram $F_\varphi$ matches $S_\mathfrak{g}$, but does not match $S_{\mathfrak{g}^c}$. 
Hence, by Fact~\ref{fact:okuda}, 
we may assume that $\varphi(\mathfrak{sl}(2,\R))\subset \mathfrak{g}$ by replacing $\varphi$ with some $G_\C$-conjugate, 
and then the $SL(2,\R)$-action on $G/H$ via the lift of $\varphi$ is proper. 
On the other hand, by using Lemma~\ref{lemma:non-very-even} with Lemma~\ref{lemma:very-even-complex-real}~\ref{lemma:very-even-complex-real:item-(c)}, one can show that the Lie algebra homomorphism 
$\varphi|_{\mathfrak{sl}(2,\R)}\colon \mathfrak{sl}(2,\R)\rightarrow \mathfrak{g}$
is not very even for any  representation $\tau$ of $\mathfrak{g}$ such that $(\mathfrak{g},\tau)$ is a classical pair. Hence, $G/H$ does not have $\VE$. 
\end{proof}

This completes the proof of Theorem~\ref{theorem:classification_of_symmetric_space_with_very_even_condition}.

\subsection{Non-symmetric examples with 
\texorpdfstring{$\VE$}{Property (VE)}}
\label{section:non-symmetric}

In this subsection, we give homogeneous spaces that have Property~(VE) but are not locally isomorphic to any symmetric space:
\begin{proposition}\label{prop:main_prop_for_non-symmetric}
Let $N\geq 2$, $\D=\R,\C,\HA$, and  
$\F=\R,\C$. Suppose $G/H$ is one of the following homogeneous spaces:
\begin{enumerate}[label=(\roman*)]
\item\label{item:SL/SL} $SL(2N,\D)/\left(SL(p,\D)\times SL(q,\D)\right)$ $(p,q\text{ are odd, }p+q=2N)$;
\item\label{item:Sp/Sp}$Sp(N,\F)/
\prod_{k=1}^{l} Sp(p_{k},\F)$ 
$(\sum_{k=1}^{l} p_{k}=N-1)$.
\end{enumerate} 
Then, $G/H$ has $\VET$ for the standard representation of $\mathfrak{g}$. 
\end{proposition}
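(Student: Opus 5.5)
We verify Property~(VE) directly from Kobayashi's properness criterion (Fact~\ref{fact:properness_criterion}), since $G/H$ is not symmetric and Okuda's Satake-diagram criterion does not apply. The representation $\iota$ is standard and lifts to $G$, so the tilde version is automatic once (VE) holds. For a homomorphism $\varphi\colon\mathfrak{sl}(2,\R)\to\mathfrak{g}$ we have $\mathfrak{a}_{SL(2,\R)}=\R\,\varphi(A_0)$ after conjugation. Properness therefore amounts to one condition: $\varphi(A_0)$ is not $W(\mathfrak{g},\mathfrak{a})$-conjugate into $\mathfrak{a}_H$.

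We realize $\mathfrak{a}$ as real diagonal matrices, with the Weyl group acting by permutations of coordinates in case~\ref{item:SL/SL} and by signed permutations in case~\ref{item:Sp/Sp}. The element $\iota(\varphi(A_0))$ is diagonalizable with real eigenvalues, namely the weights $d-1,d-3,\dots,1-d$ of each irreducible summand of dimension $d$ in $\iota\circ\varphi$. Up to $W$ it is the diagonal matrix listing these eigenvalues (for $Sp(N,\F)$, the $N$ values $h_1\ge\dots\ge h_N\ge 0$ taken from the $\pm$ pairs).

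Next we describe $W\cdot\mathfrak{a}_H$.

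In case~\ref{item:SL/SL}, $\mathfrak{a}_H$ consists of block-diagonal trace-zero-per-block diagonal matrices, i.e. $\diag(x,y)$ with $x\in\R^p$, $y\in\R^q$ and $\sum x=\sum y=0$. The $W$-orbit is the set of diagonal elements whose entries can be split into a $p$-subset and a $q$-subset, each with zero sum.

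In case~\ref{item:Sp/Sp}, $\mathfrak{a}_H=\bigoplus_k\mathfrak{a}_{Sp(p_k,\F)}$ sits inside the first $N-1$ coordinates, so $W\cdot\mathfrak{a}_H$ is exactly the set of elements with at least one coordinate equal to $0$.

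We now compare this with very evenness.

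In case~\ref{item:Sp/Sp}, $\varphi$ is very even if and only if no summand has odd dimension. This holds if and only if $0$ is not an eigenvalue of $\iota\varphi(A_0)$, equivalently $h_N\neq0$, which is exactly the condition that $\varphi(A_0)\notin W\cdot\mathfrak{a}_H$. So properness is equivalent to very evenness immediately. When $\F=\C$ the same argument works, because the real rank structure is identical.

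In case~\ref{item:SL/SL} the comparison takes two steps.

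Suppose $\varphi$ is not very even. The number of odd-dimensional summands is even, since the total dimension $2N$ is even. We need a zero-sum $p$-subset of eigenvalues whose complement also sums to zero. The complement condition is automatic, because the trace is zero. Each even-dimensional summand contributes $\pm$ pairs, and each odd one contributes $\pm$ pairs plus a single $0$. We build a $p$-subset by taking whole summands, or pairs $\{a,-a\}$ drawn from them, together with zeros. This needs a parity count: $p$ is odd, and there are at least two zeros available. So we take one zero plus $(p-1)/2$ symmetric pairs. There are $N-1$ pairs available once a single zero is set aside, counting the second zero-pair as a pair $\{0,0\}$, so $(p-1)/2\le N-1$ pairs suffice. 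Hence the action is non-proper.

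Suppose $\varphi$ is very even. All eigenvalues are odd integers. A set of $p$ odd integers has odd sum, since $p$ is odd, so no zero-sum $p$-subset exists and $\varphi(A_0)\notin W\cdot\mathfrak{a}_H$. Hence the action is proper.

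For $\D=\C,\HA$ we use the real diagonal $\mathfrak{a}$ of size $2N$, and for $\HA$ each eigenvalue is counted once. The same parity argument applies once we note that $\mathfrak{a}_H$ is again the block trace-zero diagonal subspace.

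The main obstacle is a correct bookkeeping of $\mathfrak{a}$, $\mathfrak{a}_H$ and the Weyl group for $\D=\HA$ and $\F=\C$. In particular, we must check that the eigenvalue list of $\varphi(A_0)$ in $\mathfrak{a}$ is the standard-representation list, halved for quaternionic matrices. This halving matters in the odd-dimensional-summand step, where a quaternionic representation forces even multiplicities. We would handle it by computing everything through the complexification and Lemma~\ref{lemma:very-even-complex-real}.
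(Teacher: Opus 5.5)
Your proposal is correct and follows essentially the same route as the paper. Kobayashi's criterion reduces properness to $\varphi(A_0)\notin W(\mathfrak{g},\mathfrak{a})\cdot\mathfrak{a}_H$, and the paper then compares this with very evenness just as you do: in case (i) by parity (a sum of $p$ odd eigenvalues is odd, while otherwise $c_N=0$ gives the zero-sum subset $\pm c_1,\dots,\pm c_{(p-1)/2},c_N$), and in case (ii) by the presence of a zero eigenvalue, with your bookkeeping for $\mathbb{D}=\mathbb{H}$ being somewhat more explicit than the paper's.
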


To prove the assertion, we apply Kobayashi's properness criterion (Fact~\ref{fact:properness_criterion}). 
In what follows, we use the notation in Section~\ref{subsec:properness_criterion}. Note that 
\[
A_0:=\begin{pmatrix}1 &0\\ 0 &-1\end{pmatrix}
\]
spans a maximal split abelian subalgebra of $\mathfrak{sl}(2,\R)$. 
Further, we can take a maximal split abelian subalgebra $\mathfrak{a}$ of $\mathfrak{g}$ as follows:  
\begin{align*}
\ref{item:SL/SL}\ 
\mathfrak{a}&:=\left\{\operatorname{diag}(a_1,\cdots, a_{2N})\ \middle|\ a_i\in \R,\ \sum_{i=1}^{2N}{a_i}=0 \right\}, \\ 
\ref{item:Sp/Sp}\ 
    \mathfrak{a}&:=\{\operatorname{diag}(a_1,\cdots, a_N, -a_N,\cdots, -a_1)\mid a_i\in \R \}. 
\end{align*}

\begin{lemma}\label{lem:key_for_non-symmetric}
In each case of \ref{item:SL/SL} and \ref{item:Sp/Sp} in Proposition~\ref{prop:main_prop_for_non-symmetric},
let $\varphi\colon \mathfrak{sl}(2,\R)\to \mathfrak{g}$ be a homomorphism with $\varphi(A_0)\in \mathfrak{a}$. 
Then $\varphi(A_0)\not \in W(\mathfrak{g},\mathfrak{a})\cdot\mathfrak{a}_{H}$
if and only if $\varphi$ is very even for the standard representation.
\end{lemma}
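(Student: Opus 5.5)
We will compute both sides explicitly in terms of the eigenvalues of $\iota(\varphi(A_0))$. Since $\varphi(A_0)\in\mathfrak{a}$ is diagonal, its diagonal entries are the eigenvalues of $\iota\circ\varphi(A_0)$. These are the weights $d-1,d-3,\ldots,1-d$ of the irreducible constituents $\pi_d$ of $\iota\circ\varphi$ (complexified). Hence $\varphi$ is very even exactly when every diagonal entry of $\varphi(A_0)$ is non-zero. Indeed, an odd-dimensional constituent contributes the weight $0$, and an even-dimensional one contributes only odd weights.

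Next we describe $W(\mathfrak{g},\mathfrak{a})\cdot\mathfrak{a}_H$ in both cases.

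In case \ref{item:SL/SL}, $H=SL(p,\D)\times SL(q,\D)$ is embedded block-diagonally, so we may take $\mathfrak{a}_H$ to be the set of diagonal matrices $\operatorname{diag}(x_1,\ldots,x_p,y_1,\ldots,y_q)$ with $\sum x_i=0=\sum y_j$. The Weyl group is $S_{2N}$, acting by permuting the entries. Thus an element $\operatorname{diag}(a_1,\ldots,a_{2N})$ of $\mathfrak{a}$ lies in $W\cdot\mathfrak{a}_H$ iff the multiset $\{a_i\}$ splits into a sub-multiset of size $p$ with sum $0$ and a complementary one of size $q$ (whose sum is then automatically $0$).

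We apply this to $a=\varphi(A_0)$, whose multiset of entries is a union of the symmetric strings $\{d-1,\ldots,1-d\}$.

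If $\varphi$ is not very even, some entry is $0$. The non-zero entries come in pairs $\pm c$. Since $p$ is odd, we can take a size-$p$ sum-zero subset consisting of one zero together with $(p-1)/2$ pairs $\pm c$. This is possible provided there are enough such pairs, or enough zeros. If there are not enough pairs, we substitute further zeros, using that the number of zeros has the same parity as $2N$ minus the number of non-zero entries, which is even.

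Conversely, if $\varphi$ is very even, all entries are odd integers. A subset of odd size $p$ then has a sum that is odd, hence non-zero, so $a\notin W\cdot\mathfrak{a}_H$.

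In case \ref{item:Sp/Sp}, $H=\prod_k Sp(p_k,\F)$ sits in $Sp(N-1,\F)\subset Sp(N,\F)$. Hence $\mathfrak{a}_H$ consists of the elements $\operatorname{diag}(a_1,\ldots,a_N,-a_N,\ldots,-a_1)$ having at least one $a_i=0$; up to $W$ we may take it to be $a_N=0$, and every other value of $(a_1,\ldots,a_{N-1})$ occurs. The Weyl group of type $C_N$ acts by signed permutations of $(a_1,\ldots,a_N)$. Therefore $W\cdot\mathfrak{a}_H=\{a\mid \text{some } a_i=0\}$. Since the diagonal entries of $\varphi(A_0)$ are $\pm a_i$, this set is reached exactly when $\iota(\varphi(A_0))$ has eigenvalue $0$, i.e.\ exactly when $\varphi$ is not very even. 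This case is immediate.

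The main obstacle is the combinatorial step in case \ref{item:SL/SL}: producing a size-$p$ zero-sum subset whenever $0$ occurs. The cleanest route is a parity count. Let $z\ge1$ be the number of zero entries; then $z\equiv 2N\equiv 0 \pmod 2$ fails or holds according to the odd strings, and in fact $z$ equals the number of odd-dimensional constituents, which is even. Take $m=\min\bigl((p-1)/2,\ (2N-z)/2\bigr)$ opposite pairs $\pm c$ and fill the remaining $p-2m$ slots with zeros. We need $p-2m\le z$. If $m=(2N-z)/2$, then $p-2m\le 2N-(2N-z)=z$ holds since $p\le 2N$; if $m=(p-1)/2$, the subset uses exactly one zero, and $z\ge1$.
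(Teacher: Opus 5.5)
Your proof is correct and follows the paper's argument. You describe $W(\mathfrak{g},\mathfrak{a})\cdot\mathfrak{a}_H$ explicitly: in case (i) the entries must admit a size-$p$ zero-sum sub-multiset, and in case (ii) some coordinate must vanish. You then use that very even means all entries of $\varphi(A_0)$ are odd, hence non-zero.

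The only difference is bookkeeping in case (i). The paper writes the entries as $c_1,-c_1,\ldots,c_N,-c_N$ with $c_1\ge\cdots\ge c_N\ge 0$, counting zeros among the pairs, and simply takes $\pm c_1,\ldots,\pm c_{(p-1)/2},c_N$. That makes your min-count case analysis unnecessary, along with the somewhat garbled parity remark about $z$ (which, in the quaternionic case, is half the number of odd-dimensional constituents rather than equal to it).
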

\begin{proof}
Let $\iota$ be the standard representation of $\mathfrak{g}$.

In Case~\ref{item:SL/SL},  
the set $W(\mathfrak{g},\mathfrak{a}) \cdot \mathfrak{a}_H$ consists of all elements $X \in \mathfrak{a}$ such that  
the sum of certain $p$ eigenvalues of $X$ equals zero.  
We note that  
all eigenvalues of $\varphi(A_0) \in \mathfrak{a}$ can be expressed as $c_1, -c_1, \dots, c_N, -c_N$ with $c_1 \geq c_2 \geq \cdots \geq c_N \geq 0$.

If $\varphi$ is very even for $\iota$,  
then all $c_i$ are odd integers.  
Consequently, the sum of any $p$ eigenvalues of $\varphi(A_0)$ is odd since $p$ is odd.  
This implies $\varphi(A_0) \not\in W(\mathfrak{g},\mathfrak{a}) \cdot \mathfrak{a}_H$. 

On the other hand, if $\varphi$ is 
not very even for $\iota$, then $c_N = 0$.  
In this case, 
the sum of the $p$ elements $\pm c_{1},\ldots,\pm c_{(p-1)/2},c_{N}$
is zero, which shows that $\varphi(A_0) \in W(\mathfrak{g},\mathfrak{a}) \cdot \mathfrak{a}_H$.  
This completes the proof for Case~\ref{item:SL/SL}.

In Case~\ref{item:Sp/Sp}, 
$W(\mathfrak{g},\mathfrak{a})\cdot\mathfrak{a}_{H}$
consists of all elements of $\mathfrak{a}$ whose eigenvalues are at least one zero. 
It follows from representation theory of $\mathfrak{sl}(2,\R)$ that
$\varphi$ is very even for $\iota$
if and only if no eigenvalues of $\varphi(A_{0})\in\mathfrak{a}$ are zero.
Hence, our assertion for Case~\ref{item:Sp/Sp} is also proved.
\end{proof}

We are ready to prove Proposition~\ref{prop:main_prop_for_non-symmetric}.
\begin{proof}[Proof of Proposition~\ref{prop:main_prop_for_non-symmetric}]
Let $G/H$ be a homogeneous space 
in Proposition~\ref{prop:main_prop_for_non-symmetric}
and $\iota$ the standard representation of $\mathfrak{g}$.
Since $\iota$ lifts to $G$, 
it suffices to show that $G/H$ has $\VE$ for $\iota$. 

Let $\varphi\colon \mathfrak{sl}(2,\R)\to \mathfrak{g}$ be a Lie algebra homomorphism. 
To prove the assertion, we may assume that
$\varphi(A_0)\in \mathfrak{a}$ after replacing $\varphi$ by a suitable
$G$-conjugate if necessary.
From Lemma~\ref{lem:key_for_non-symmetric} above, 
$\varphi$ is very even for $\iota$ if and only if $\varphi(A_0)\not \in W(\mathfrak{g},\mathfrak{a})\mathfrak{a}_{H}$. 
From Kobayashi's properness criterion (Fact~\ref{fact:properness_criterion}), this is equivalent to the condition that the $SL(2,\R)$-action on $G/H$ via the lift of $\varphi$ is proper. 
Thus, $G/H$ has $\VE$ for $\iota$. This proves the assertion.
\end{proof}

\begin{landscape}
\renewcommand{\arraystretch}{1.2}
\begin{longtable}{>{\centering}p{8.5em}ccc}
\caption{Symmetric spaces admitting a proper $SL(2,\R)$-action via a non-very even homomorphism.}
\label{longtab:non_very_even_symbol}
\endfirsthead
     \caption{(continued)}
    \endhead
$\mathfrak{g}$ & $\mathfrak{h}$ & $\mathfrak{g}^c$ & partition \\
\noalign{\hrule height 1.2pt}
  $\mathfrak{sl}(2N,\R)$ & $\mathfrak{sl}(N,\C)\oplus \mathfrak{so}(2)$ & $\mathfrak{sl}(N,\HA)$ &$[2N-1, 1 ]$\\
  \hline
  $\mathfrak{sl}(N,\R)$ & \begin{tabular}{c} $\mathfrak{so}(p,q)$ ($p+q=N$, $p>q+2$)\end{tabular} & $\mathfrak{su}(p,q)$  & $[N-1,1]$\\ 
  \hline
  $\mathfrak{sl}(N,\HA)$ & \begin{tabular}{c} $\mathfrak{sp}(p,q)$ ($p+q=N$, $p> q+2$)\end{tabular} & $\mathfrak{su}(2p,2q)$ &$[(N-1)^2,1^2]$\\
  \hline
  $\mathfrak{su}(2p,2q)$ & 
  $\mathfrak{sp}(p,q)$ 
  ($p>q$) 
  & $\mathfrak{sl}(p+q,\HA)$ &$[4q+1, 1^{2(p-q)-1}]$\\
  \hline
  $\mathfrak{su}(2N,2N)$ & $\mathfrak{sp}(N,N)$ & $\mathfrak{sl}(2N,\HA)$ &$[4N-1,1]$\\
  \hline
  $\mathfrak{su}(N,N)$ & $\mathfrak{so}^*(2N)$ & $\mathfrak{sl}(N,\HA)$  &$[2N-1,1]$\\ 
  \hline
  $\mathfrak{su}(p,q)$ ($p>q$) & \begin{tabular}{c}$\mathfrak{su}(p_1, q_1)\oplus \mathfrak{su}(p_2,q_2)\oplus \mathfrak{so}(2)$\\ ($p_1>q_1, q_2>p_2$, 
  $p_{1}+p_{2}=p$, $q_{1}+q_{2}=q$)
  \end{tabular}& $\mathfrak{su}(p_{1}+q_{2},p_{2}+q_{1})$ &$[2q+1, 1^{p-q-1}]$\\
  \hline
  $\mathfrak{su}(N,N)$  & \begin{tabular}{c}$\mathfrak{su}(p_1, q_1)\oplus \mathfrak{su}(p_2,q_2)\oplus \mathfrak{so}(2)$\\ ($p_1>q_1+1, q_2>p_2$, 
  $p_{1}+p_{2}=q_{1}+q_{2}=N$)
  \end{tabular} & $\mathfrak{su}(p_{1}+q_{2},p_{2}+q_{1})$ &$[2N-1, 1]$\\
    \hline
  $\mathfrak{sp}(N,\R)$ & \begin{tabular}{c}$\mathfrak{su}(p,q)\oplus \mathfrak{so}(2)$ ($p\geq q$, $p+q=N$)\end{tabular} & $\mathfrak{sp}(p,q)$ & $[2(N-1),1^2]$\\
    \hline
    $\mathfrak{sp}(2N,\R)$ & $\mathfrak{sp}(N,\C)$ & $\mathfrak{sp}(N,N)$  &$[4N-2, 1^2]$\\
    \hline
    \begin{tabular}{c}
    $\mathfrak{sp}(p,q)$ 
    $(p>q)$ \end{tabular}
    & \begin{tabular}{c}$\mathfrak{sp}(p_1,q_1)\oplus \mathfrak{sp}(p_2,q_2)$ \\ 
    ($p_1>q_1, q_2>p_2$, 
    $p_{1}+p_{2}=p$, $q_{1}+q_{2}=q$) \end{tabular} & $\mathfrak{sp}(p_{1}+q_{2},p_{2}+q_{1})$  & $[(2q+1)^2, 1^{2p-2q-2}]$\\
    \hline
    $\mathfrak{sp}(N,N)$ & \begin{tabular}{c}$\mathfrak{sp}(p_1,q_1)\oplus \mathfrak{sp}(p_2,q_2)$\\ ($p_1>q_1+1, q_2>p_2$, 
    $p_{1}+p_{2}=q_{1}+q_{2}=N$)
    \end{tabular} & $\mathfrak{sp}(p_{1}+q_{2},p_{2}+q_{1})$ &$[(2N-1)^2, 1^2]$\\
    \hline
    \begin{tabular}{c}
    $\mathfrak{so}(p,q)$ \\
    ($p+q$ is even, \\
    $p\geq q+2$, \\$(p,q)\neq (6,2)$)
    \end{tabular}& \begin{tabular}{c}$\mathfrak{so}(p_1,q_1)\oplus \mathfrak{so}(p_2,q_2)$\\ ($p_1>q_1$, $q_2>p_2$, 
    $p_{1}+p_{2}=p$, $q_{1}+q_{2}=q$) \\
    \end{tabular} & $\mathfrak{so}(p_{1}+q_{2},p_{2}+q_{1})$ &$[2q+1,1^{p-q-1}]$\\
    \hline
    $\mathfrak{so}(N,N)$ & \begin{tabular}{c}$\mathfrak{so}(p_1,q_1)\oplus \mathfrak{so}(p_2,q_2)$ \\ ($p_1>q_1+1$, $q_2>p_2$, 
    $p_{1}+p_{2}=q_{1}+q_{2}=N$)
    \end{tabular} & $\mathfrak{so}(p_{1}+q_{2},p_{2}+q_{1})$ &$[2N-1,1]$\\
    \hline
    $\mathfrak{so}(6,2)$ & $\mathfrak{so}(6,1)$ & $\mathfrak{so}(7,1)$  & $[3^2,1^2]$
    \\
    \hline
    \begin{tabular}{c}
    $\mathfrak{so}(2p,2q)$ \\
    ($p\geq q+1$, \\
    $(p,q)\neq (3,1)$) 
    \end{tabular} & $\mathfrak{su}(p,q)\oplus \mathfrak{so}(2)$ & $\mathfrak{so}^{*}(2(p+q))$ &$[4q+1, 1^{2p-2q-1}]$\\
    \hline
    $\mathfrak{so}(2N,2N)$ & $\mathfrak{su}(N,N)\oplus \mathfrak{so}(2)$ & $\mathfrak{so}^{*}(4N)$ & $[4N-1,1]$\\
    \hline
    $\mathfrak{so}^*(4N)$ & \begin{tabular}{c}$\mathfrak{su}(p,q)\oplus \mathfrak{so}(2)$\\ ($p+q=2N$, $p>q+2$) \end{tabular} 
    & $\mathfrak{so}(2p,2q)$  & $[2^{2N-2}, 1^4]$\\
    \hline
    $\mathfrak{so}^*(4N+2)$ & \begin{tabular}{c}
    $\mathfrak{su}(p,q)\oplus \mathfrak{so}(2)$ \\
    ($p+q=2N+1, p>q+1)$
    \end{tabular} 
    & $\mathfrak{so}(2p,2q)$ &$[(2N+1)^2]$\\
    \hline
    $\mathfrak{so}(N,N)$ ($N\geq 4$)& $\mathfrak{so}(N,\C)$ & $\mathfrak{so}^{*}(2N)$ &$[2N-1,1]$\\
    \hline
\end{longtable}
\renewcommand{\arraystretch}{1.5}
\end{landscape}

\appendix

\section{Proof of 
\texorpdfstring{Proposition \ref{prop:embedding-criterion}}{Proposition 5.9}}
\label{appendix:embedding}
The goal of this appendix is to prove the following:
\begin{proposition}
\label{prop:emb-criterion-variant}
    For any $g\in GL(N,\C)$, we have the following four inclusion relations
    up to $GL(N,\C)$-conjugacy:
    \begin{align*}
        (g^{-1} O(N,\C)g)\cap GL(N,\R) &\subset O(p,q)\ (\exists p,q\in\N \text{ s.t. }p+q=N), \\
        (g^{-1} Sp(N/2,\C)g)\cap GL(N,\R) &\subset Sp(N/2,\R), \\
        (g^{-1} O(N,\C)g)\cap GL(N/2,\HA) &\subset O^{*}(2(N/2)), \\
        (g^{-1} Sp(N/2,\C)g)\cap GL(N/2,\HA) &\subset Sp(p,q)\ (\exists p,q\in\N \text{ s.t. }p+q=N/2).
    \end{align*}
\end{proposition}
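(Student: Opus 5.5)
We interpret $g^{-1}O(N,\C)g$ as the isometry group of the non-degenerate symmetric bilinear form $B(u,v)={}^t(gu)(gv)$ on $\C^N$. Similarly, $g^{-1}Sp(N/2,\C)g$ is the isometry group of a non-degenerate skew form $B$. Both $GL(N,\R)$ and $GL(N/2,\HA)$ are centralizers in $GL(N,\C)$ of an antilinear operator $J$ with $J^2=\varepsilon\,\id$, where $\varepsilon=1$ for $GL(N,\R)$ (complex conjugation) and $\varepsilon=-1$ for $GL(N/2,\HA)$ (the quaternionic structure). The plan is to produce, from $B$ and $J$, a new form $B'$ of the same symmetry type that is \emph{compatible} with $J$. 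By compatible we mean that the hermitian form $h'(u,v):=B'(Ju,v)$ (or its conjugate, whichever is sesquilinear in the right slots) is non-degenerate and hermitian, and that every element preserving both $B$ and $J$ also preserves $B'$. Then the intersection $G_B\cap GL_J$ lies in $G_{B'}\cap GL_J$. This last group is, up to conjugacy, exactly $O(p,q)$, $Sp(N/2,\R)$, $O^*(2(N/2))$, or $Sp(p,q)$ by the standard normal forms for a pair (compatible bilinear form, real/quaternionic structure).

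Concretely, I will define the antilinear-twisted form $B^J(u,v):=\overline{B(Ju,Jv)}$. This is again complex bilinear, with the same symmetry as $B$, and it is preserved by every $x\in G_B\cap GL_J$. Since $B$ and $B^J$ are both non-degenerate, we can write $B^J(u,v)=B(Au,v)$ with $A\in GL(N,\C)$. The operator $A$ commutes with $G_B\cap GL_J$, and it satisfies a relation $JAJ^{-1}=A^{-1}$ up to the sign conventions. The idea is then the analogue of the polar decomposition, as in the proof of Lemma~\ref{lem:criterion_for_Upq}. I will look for a $B$-self-adjoint square root $C$ of $A$ that is a polynomial in $A$, chosen so that the new form $B'(u,v):=B(Cu,v)$ satisfies $\overline{B'(Ju,Jv)}=B'(u,v)$. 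Being a polynomial in $A$, $C$ still commutes with the intersection, so $B'$ is preserved by it. It remains symmetric or skew because $C$ is $B$-self-adjoint. A polynomial square root exists because $A$ is invertible.

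The main obstacle is arranging that $C$ also intertwines correctly with $J$, i.e.\ $JCJ^{-1}=C^{-1}$. The worry is that the square root branch, conjugated by the antilinear $J$, need not be compatible. I will first try an alternative that avoids this. Conjugating by $g$, we may assume the group $K=G_B\cap GL_J$ is a real algebraic group stable under a Cartan involution, namely conjugate transpose with respect to a $J$-invariant positive hermitian form. Since we only need an inclusion up to conjugacy, it suffices to treat a maximal compact $J$- and $B$-compatible model. The alternative route is to use the Zariski-open/real-parameter trick from Lemma~\ref{lem:criterion_for_Upq}. Consider the real pencil $B_t:=B+tB^J$ for $t\in\R$ (or $B+\sqrt{-1}\,t\,B^J$, depending on $\varepsilon$). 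Each $B_t$ is $K$-invariant. The pencil is fixed by the twist $\overline{(\cdot)(J\cdot,J\cdot)}$ up to swapping the two terms, so we choose the coefficient so that $B_t$ is twist-invariant. Non-degeneracy is a Zariski-open condition in $t$ that holds at some complex point, hence at some real $t_0$. Then $B':=B_{t_0}$ is a $K$-invariant non-degenerate form of the right symmetry with $B'^J=B'$.

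Finally, $\overline{B'(Ju,v)}$ (suitably arranged) is a non-degenerate hermitian or skew-hermitian form commuting with the structure. The classical classification of real and quaternionic forms of orthogonal and symplectic groups then identifies $\{x\in GL_J: x \text{ preserves } B'\}$ up to conjugacy. In the real case with $B'$ symmetric we get $O(p,q)$, and with $B'$ skew we get $Sp(N/2,\R)$. In the quaternionic case with $B'$ symmetric we get $O^*$, and with $B'$ skew we get $Sp(p,q)$. This gives the four inclusions, with the signatures only determined as some $p,q$.
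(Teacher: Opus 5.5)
Your final route (the pencil argument) is the paper's proof in different language. The paper works with anti-involutions on the central simple algebra $A=M(N,\R)$ or $M(N/2,\HA)$ and writes the element defining $\sigma$ as $c=a+\sqrt{-1}\,b$, with $\tau(a)=a$ and $\tau(b)=b$. It observes that every $x\in G_{A_\C}(\sigma)\cap A^{\times}$ satisfies $\tau(x)ax=a$ and $\tau(x)bx=b$. It then replaces $c$ by an invertible real combination $c'=a+rb$ (Lemma~\ref{lem:inv-real-complex}) and finishes with the normal forms of Lemma~\ref{lem:classification-of-inv}. Your $B$, $B^J$ and the final identification of $G_{B'}\cap GL_J$ play exactly these roles. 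In your formulation the symmetry type of $B'$ is visibly that of $B$, so you do not need the paper's Lemma~\ref{lem:cohomology-vanish}.

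There is, however, an error in the one step that matters: the pencil you wrote is not twist-invariant. We have $(B+tB^J)^J=B^J+tB$ and $(B+\sqrt{-1}\,tB^J)^J=B^J-\sqrt{-1}\,tB$. So for real $t$ neither is in general fixed by the twist. The first is fixed only at $t=1$, where $B+B^J$ may be degenerate and no free parameter remains. Nor does the choice depend on $\varepsilon$: $B\mapsto B^J$ is an antilinear involution on bilinear forms for both signs, since $(B^J)^J=B$ because $\varepsilon^2=1$.

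What you need is the decomposition of $B$ into real and imaginary parts for this real structure. Put $B_1:=\tfrac12(B+B^J)$ and $B_2:=\tfrac{1}{2\sqrt{-1}}(B-B^J)$. Both are twist-fixed, $K$-invariant, and of the same symmetry as $B$, and $B=B_1+\sqrt{-1}\,B_2$. Hence the Gram determinant of $B_1+tB_2$ is a non-zero polynomial in $t$, so some real $t_0$ gives the required non-degenerate $B'=B_1+t_0B_2$. This is literally $c'=a+rb$.

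With this correction the proof is complete. The square-root detour and the reduction to a ``maximal compact model'' should be dropped; the latter has no justification, since $K$ is an arbitrary subgroup.
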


Proposition \ref{prop:embedding-criterion} is 
an immediate consequence of this proposition.

We prove simultaneously each of the above inclusion relations between classical Lie groups in terms of central simple algebras.
For this purpose we introduce a few notations.
Let $A$ be a finite-dimensional algebra over a commutative field $\F$.
Recall that an $\F$-linear map $\sigma\colon A\rightarrow A$ is called 
an \emph{anti-involution} on $A$ if $\sigma^{2}=\id$ and  $\sigma(xy)=\sigma(y)\sigma(x)$ for all $x,y\in A$.
Then we define an $\F$-algebraic group $G_{A}(\sigma)$ as 
    \[
    G_{A}(\sigma):=\{x\in A^{\times}\mid \sigma(x)x=1\}, 
    \]
where $A^{\times}$ denotes
the group of invertible elements in the $\F$-algebra $A$. 
We say that two $\F$-linear anti-involutions $\sigma_{1},\sigma_{2}\colon A\rightarrow A$ are \emph{equivalent} if there exists $a\in A^\times$ satisfying 
\[
\sigma_{2}(x)=a^{-1}\sigma_{1}(axa^{-1})a
\]
for all $x\in A$. Then we have $G_{A}(\sigma_{1})=aG_{A}(\sigma_{2})a^{-1}$. 
In particular, the two $\F$-algebraic groups $G_{A}(\sigma_{1}),G_{A}(\sigma_{2})$ are $\F$-isomorphic.

\begin{example}
\label{ex:classical}
Let $I_{N}$ be the identity matrix of size $N\in \N$.
We consider the $N\times N$ matrices 
\[
I_{p,q}:=\begin{pmatrix}
I_{p} & 0 \\
0 & -I_{q}
\end{pmatrix}
\text{ $(p+q=N)$ and }
J_{N}:=
\begin{pmatrix}
0 & -I_{N/2} \\
I_{N/2} & 0 
\end{pmatrix}\  \ (\text{when }N\in 2\N).
\]
Let $\D$ be one of $\R$, $\C$ or $\HA$, with $\F$ the center of $\D$.
The following $\F$-linear anti-involutions $\sigma$ on $A=M(N,\D)$
define classical Lie groups as below:
\begin{itemize}
    \item ($\D=\R$) 
    \begin{itemize}
        \item 
        For $\sigma(x) = I_{p,q}\!^{t}xI_{p,q}^{-1}$
        we have $G_{A}(\sigma) = O(p,q)$.
        \item
        For $\sigma(x) = J_{N}\!^{t}xJ_{N}^{-1}$
        we have $G_{A}(\sigma) = Sp(N/2,\R)$.
    \end{itemize}
    \item ($\D=\C$) 
    \begin{itemize}
        \item 
        For $\sigma(x) = \!^{t}x$ 
        we have $G_{A}(\sigma) = O(N,\C)$.
        \item
        For $\sigma(x) = J_{N}\!^{t}xJ_{N}^{-1}$
        we have $G_{A}(\sigma) = Sp(N/2,\C)$.
    \end{itemize}      
    \item ($\D=\HA:=\R+\R\Hai+\R\Haj+\R\Hak$) 
    \begin{itemize}
        \item
        For $\sigma(x) = \Haj x^{*}\Haj^{-1}$
        we have $G_{A}(\sigma) = O^{*}(2N)$.
        \item 
        For $\sigma(x) = I_{p,q}x^{*} I_{p,q}^{-1}$ 
        we have $G_{A}(\sigma) = Sp(p,q)$.
    \end{itemize}
\end{itemize}
\end{example}

The following lemma is elementary: 
\begin{lemma}
    \label{lem:classification-of-inv}
    Let $\D$ be one of $\R$, $\C$ or $\HA$, with $\F$ the center of $\D$.
    Any $\F$-linear anti-involution on $M(N,\D)$ is equivalent 
    to one of those in Example~\ref{ex:classical}.
\end{lemma}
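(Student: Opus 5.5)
The plan is to reduce everything to the structure of the anti-involution relative to a fixed positive definite form. I treat the cases $\D=\R,\C,\HA$ separately, and within each I compare $\sigma$ with the \emph{standard} anti-involution $\sigma_0(x)=x^{*}$, the conjugate transpose over $\D$, which is $\F$-linear when $\F=\R$. For $\D=\C$ I use the $\C$-linear $\sigma_0(x)={}^{t}x$ instead. The steps are as follows.

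\begin{enumerate}
\item Let $\sigma$ be any $\F$-linear anti-involution. Then $\sigma\circ\sigma_0$ is an $\F$-algebra automorphism of $M(N,\D)$.
\item For $\D=\R,\HA$, $M(N,\D)$ is central simple over $\R$, so Skolem--Noether gives $\sigma\circ\sigma_0=\Ad(s)$ for some $s\in GL(N,\D)$, hence $\sigma(x)=s\,x^{*}s^{-1}$. For $\D=\C$ the same argument with $\sigma_0={}^{t}$ gives $\sigma(x)=s\,{}^{t}x\,s^{-1}$ with $s\in GL(N,\C)$.
\item Applying $\sigma^{2}=\id$ gives $s(s^{*})^{-1}x\,s^{*}s^{-1}=x$ for all $x$. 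So $s^{*}=\lambda s$ with $\lambda$ central, i.e.\ $\lambda\in\F$. Taking $*$ again forces $\lambda\bar\lambda=1$ in general, and $\lambda=\pm1$ in the $\R$-linear cases, since $\lambda$ is a real scalar. For $\D=\C$ we get ${}^{t}s=\pm s$ directly.
\item It remains to classify $s$ up to the equivalence in the statement. Replacing $\sigma$ by the equivalent $x\mapsto a^{-1}\sigma(axa^{-1})a$ changes $s$ into $a^{-1}s(a^{*})^{-1}$, that is, a congruence of $s$ (up to inverting $a$).
\end{enumerate}

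The classification in step~4 then uses classical normal forms for (skew-)hermitian or (skew-)symmetric matrices over $\D$ under congruence.

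\begin{itemize}
\item $\D=\R$. If $s$ is symmetric, Sylvester's law gives $s\sim I_{p,q}$. If $s$ is skew-symmetric, $s\sim J_N$, and $N$ must be even.
\item $\D=\C$. If $s$ is symmetric, it is congruent to $I_N$ over $\C$. If $s$ is skew-symmetric, $s\sim J_N$.
\item $\D=\HA$. If $s$ is hermitian, the quaternionic Sylvester law gives $s\sim I_{p,q}$. If $s$ is skew-hermitian, it is congruent to $\Haj I_N$, so $\sigma(x)=\Haj x^{*}\Haj^{-1}$.
\end{itemize}

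In each case a scalar multiple of $s$ does not change $\sigma$, which takes care of the overall sign. This recovers exactly the list in Example~\ref{ex:classical}.

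I expect the main obstacle to be the quaternionic skew-hermitian normal form. I would prove it by diagonalization: choose $v$ with $s(v,v)\neq0$; this value is a purely imaginary quaternion, and it can be rotated to a real multiple of $\Haj$ by conjugating $v$ with a unit quaternion. The value can then be rescaled to $\Haj$ using a real factor, and I pass to the orthogonal complement and induct. The care needed is the bookkeeping of left versus right scalar multiplication, so I would fix the convention that $\HA^{N}$ is a right $\HA$-vector space from the start.
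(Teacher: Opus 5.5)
Your proposal is correct. The paper gives no proof of this lemma; it is simply declared elementary. Your argument is the standard one the authors have in mind:
\begin{itemize}
\item Skolem--Noether gives $\sigma(x)=s\,\sigma_0(x)\,s^{-1}$.
\item Involutivity forces $\sigma_0(s)=\pm s$, since the center is $\F$ and $\sigma_0$ fixes $\F$ pointwise, so the detour through $\lambda\bar\lambda=1$ is unnecessary.
\item Equivalence acts on $s$ by congruence $s\mapsto b\,s\,\sigma_0(b)$.
\item The normal forms of nondegenerate symmetric, skew-symmetric, hermitian and skew-hermitian matrices over $\D$ finish the classification.
\end{itemize}
These are the same congruence normal forms the paper itself invokes over $\C$ in the proof of Lemma~\ref{lem:cohomology-vanish}.

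One step you should make explicit is the existence of $v$ with $h(v,v)\neq 0$ for a nondegenerate quaternionic skew-hermitian form $h(v,w)=v^{*}sw$. The analogous claim is false for real or complex skew-symmetric forms, so it genuinely needs the quaternionic structure. Suppose $h(v,v)=0$ for all $v$. Polarization together with $h(w,v)=-\overline{h(v,w)}$ gives $h(v,w)\in\R$ for all $v,w$. Replacing $w$ by $w\Hai$ shows that $h(v,w)\Hai$ is also real, hence $h\equiv 0$, a contradiction. With that added, your rotate-and-rescale induction on $v^{\perp}$ goes through as written.
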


By Lemma \ref{lem:classification-of-inv}, Proposition \ref{prop:emb-criterion-variant} immediately reduces to the following:
\begin{proposition}
    \label{prop:emb-critetion-algebra}
    Let $A$ be a central simple $\R$-algebra,
    and $\sigma$ a $\C$-linear anti-involution on the complexification
    $A_{\C}:=A\otimes_{\R}\C$. 
    Then there exists an $\R$-linear anti-involution 
    $\sigma'\colon A\rightarrow A$ satisfying the following two conditions:
    \begin{enumerate}
        \item 
        $\sigma$ is equivalent to  
        the anti-involution $\sigma'_{\C}=\sigma'\otimes \id$ on $A_{\C}=A\otimes_{\R}\C$;
        \item
        $G_{A_{\C}}(\sigma)\cap A^{\times}\subset G_{A}(\sigma')$.
    \end{enumerate}
\end{proposition}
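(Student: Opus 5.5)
We make a Cartan-involution-type choice for $\sigma'$, reducing everything to the comparison of two anti-involutions on $A_{\C}$ that differ by a single invertible element. Let $c$ denote the complex conjugation $x\otimes z\mapsto x\otimes\bar z$ on $A_{\C}$; it is an anti-linear algebra automorphism fixing $A$. Then $\bar\sigma:=c\circ\sigma\circ c$ is again a $\C$-linear anti-involution on $A_{\C}$. By Skolem--Noether applied to the automorphism $\sigma\circ\bar\sigma$ of the central simple $\C$-algebra $A_{\C}$, there is $u\in A_{\C}^{\times}$, unique up to scalar, with $\bar\sigma(x)=u^{-1}\sigma(x)u$.

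The key observation is that the group $\Gamma:=G_{A_{\C}}(\sigma)\cap A^{\times}$ is stable under both $\sigma$ and $\bar\sigma$. Indeed, for $x\in\Gamma$ we have $\sigma(x)=x^{-1}$, and applying $c$ gives $\bar\sigma(x)=c(\sigma(x))=x^{-1}$. Hence $u$ commutes with every element of $\Gamma$, and moreover $\sigma(x)=\bar\sigma(x)$ on the real span $B$ of $\Gamma$.

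The plan is to build $\sigma'$ on $A$ as follows. First take any $\R$-linear anti-involution $\sigma_0$ on $A$ of the same type as $\sigma$, i.e.\ orthogonal or symplectic after complexification; such $\sigma_0$ exists by Lemma~\ref{lem:classification-of-inv} and Example~\ref{ex:classical}. Since $\sigma$ and $\sigma_{0,\C}$ have the same type, we can write $\sigma(x)=a^{-1}\sigma_{0,\C}(x)a$ with $a\in A_{\C}^{\times}$ satisfying $\sigma_{0,\C}(a)=a$. The condition $\sigma(x)x=1$ then becomes $\sigma_{0,\C}(x)\,a\,x=a$.

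For $x\in A^{\times}$ we have $\sigma_{0,\C}(x)=\sigma_0(x)\in A$, so splitting the relation into real and imaginary parts gives
\[
\sigma_0(x)\,(\operatorname{Re}a)\,x=\operatorname{Re}a,
\qquad
\sigma_0(x)\,(\operatorname{Im}a)\,x=\operatorname{Im}a,
\]
where we write $a=\operatorname{Re}a+\sqrt{-1}\,\operatorname{Im}a$ with $\operatorname{Re}a,\operatorname{Im}a\in A$, both $\sigma_0$-fixed. The pencil $s\mapsto \operatorname{Re}a+s\operatorname{Im}a$ for $s\in\R$ consists of $\sigma_0$-symmetric elements, each preserved by $\Gamma$ in the above sense.

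We then pick a real $s_0$ at which $b:=\operatorname{Re}a+s_0\operatorname{Im}a$ is invertible. This is possible because the reduced norm of $\operatorname{Re}a+s\operatorname{Im}a$ is a polynomial in $s$ which does not vanish identically: its value at $s=\sqrt{-1}$ is the reduced norm of $a$, which is non-zero. This is the same Zariski-open trick as in the proof of Lemma~\ref{lem:criterion_for_Upq}. Set
\[
\sigma'(x):=b^{-1}\sigma_0(x)\,b .
\]
Since $\sigma_0(b)=b$, $\sigma'$ is an $\R$-linear anti-involution on $A$. Every $x\in\Gamma$ satisfies $\sigma_0(x)bx=b$, that is $\sigma'(x)x=1$, which gives condition~(2).

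The remaining issue, and the expected main obstacle, is condition~(1): $\sigma$ must be equivalent to $\sigma'_{\C}$. Over $\C$ the equivalence class of an anti-involution on $M(N,\C)$ is determined by its type, since $O(N,\C)$ and $Sp(N/2,\C)$ are the only options. Here $\sigma'_{\C}$ and $\sigma$ are both obtained from $\sigma_{0,\C}$ by conjugating with a $\sigma_{0,\C}$-symmetric invertible element ($b$ and $a$ respectively). Such a conjugation preserves the type, because the symmetric versus skew character of the associated bilinear form is unchanged. Hence both have the type of $\sigma_{0,\C}$, and they are equivalent, which gives condition~(1).

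The delicate point is ensuring that a $\sigma_0$ of the correct type exists on $A$ itself. For $A=M(N,\R)$ both types occur, with symplectic requiring $N$ even, which is forced when $\sigma$ is symplectic. For $A=M(N/2,\HA)$ both types occur via $O^*$ and $Sp(p,q)$. This is exactly what Lemma~\ref{lem:classification-of-inv} and Example~\ref{ex:classical} supply.
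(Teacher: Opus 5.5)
Your proposal is correct and is essentially the paper's proof: pick $\sigma_0$ on $A$ with $\sigma\simeq\sigma_{0,\C}$, write $\sigma(x)=a^{-1}\sigma_{0,\C}(x)a$ with $\sigma_{0,\C}(a)=a$, specialize $\operatorname{Re}a+s\operatorname{Im}a$ to an invertible real element $b$, and set $\sigma'(x)=b^{-1}\sigma_0(x)b$. Your ``type is preserved, hence equivalent over $\C$'' argument for condition (1) is just a rephrasing of Lemma~\ref{lem:cohomology-vanish}, and the opening paragraph (with $\bar\sigma$, Skolem--Noether and $u$) is never used and can be deleted.
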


The rest of this section is devoted to showing this proposition.
Let us give some lemmas for the proof.

\begin{lemma}
    \label{lem:cohomology-vanish}
    Let $\sigma\colon A_{\C}\rightarrow A_{\C}$ be a $\C$-linear anti-involution
    on a central simple $\C$-algebra $A_{\C}$. 
    If an invertible element $a$ in $A_{\C}$ satisfies $\sigma(a)=a$, 
    then there exists $b\in A_{\C}$ such that $a=\sigma(b)b$.
\end{lemma}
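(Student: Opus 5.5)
Since $A_{\C}$ is central simple over $\C$, we identify $A_{\C}\simeq M(m,\C)$. By Lemma~\ref{lem:classification-of-inv} (with $\D=\C$), $\sigma$ is equivalent to $x\mapsto {}^{t}x$ or $x\mapsto J\,{}^{t}x J^{-1}$. Either way, $\sigma(x)=s\,{}^{t}x\,s^{-1}$ for some invertible $s$ with ${}^{t}s=\epsilon s$, $\epsilon=\pm1$. Symmetrically, $\sigma$ is $x\mapsto \sigma_{0}(a x a^{-1})$-type conjugate of any fixed anti-involution $\sigma_{0}$ of the same type, and two anti-involutions $\sigma_{1},\sigma_{2}$ differ by $\sigma_{2}(x)=c\,\sigma_{1}(x)c^{-1}$ with $c$ invertible and $\sigma_{1}(c)=\epsilon' c$.

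I will reduce the lemma to the statement that every invertible $a$ with $\sigma(a)=a$ is congruent to $1$. This is the same content as the classical fact that a nondegenerate symmetric (resp.\ skew) bilinear form over $\C$ is unique up to change of basis.

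Concretely, in the model $\sigma(x)=s\,{}^{t}x\,s^{-1}$, the condition $\sigma(a)=a$ becomes $s\,{}^{t}a\,s^{-1}=a$. Setting $B:=as$, we get ${}^{t}B={}^{t}s\,{}^{t}a=\epsilon s\,{}^{t}a=\epsilon a s=\epsilon B$. So $B$ is invertible with the same symmetry type as $s$.

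Next, we want $b$ with $\sigma(b)b=a$, i.e.\ $s\,{}^{t}b\,s^{-1}b=a$, i.e.\ ${}^{t}b\,(s^{-1}bs)\cdot$... It is cleaner to set $b=s c\, s^{-1}$ or to work directly. The requirement $\sigma(b)b=a$ is equivalent to
\[
{}^{t}b\,s^{-1}\,b\,s={}^{t}\!\!\left(s^{-1}\right)\cdots,
\]
which I will rearrange to the form ${}^{t}b'\,B'\,b''=B$ for appropriate substitutions. The key classical input is that any two invertible matrices over $\C$ that are both symmetric, or both skew-symmetric, are congruent: $B={}^{t}P\,s\,P$. This holds by Gram--Schmidt for symmetric forms (using square roots in $\C$) and by symplectic bases for alternating forms.

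With $B=as={}^{t}P\,s\,P$, take $b:=P$. Then
\[
\sigma(b)b=s\,{}^{t}P\,s^{-1}P.
\]
This is not obviously $a$, so I will instead choose $b:=s^{-1}\,{}^{t}\!\cdots$ and adjust.

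The correct bookkeeping is as follows. The goal $\sigma(b)b=a$ means $s\,{}^{t}b\,s^{-1}b=a$, i.e.\ ${}^{t}b\,(s^{-1}b\,s)=s^{-1}as$. Put $b=s\,c\,s^{-1}$. Then $s^{-1}bs=c$ and ${}^{t}b={}^{t}s^{-1}\,{}^{t}c\,{}^{t}s={}^{t}s^{-1}\,{}^{t}c\,s\,\epsilon\cdot\epsilon$, since ${}^{t}s=\epsilon s$ and ${}^{t}s^{-1}=\epsilon s^{-1}$. This yields the equation $s^{-1}\,{}^{t}c\,s\,c=s^{-1}as$, i.e.\ ${}^{t}c\,s\,c=as\cdot$, up to the correct placement. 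So the equation reduces to ${}^{t}c\,s\,c=B$ with $B$ of the same parity as $s$, which is solvable by the congruence theorem.

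The main obstacle is only this index bookkeeping, i.e.\ getting the placement of $s$ versus $s^{-1}$ right. I will verify it by directly expanding $\sigma(b)b$ once $c$ has been chosen with ${}^{t}c\,s\,c=as$. Finally, the result is invariant under replacing $\sigma$ by an equivalent anti-involution: if $\sigma_{2}(x)=u^{-1}\sigma_{1}(uxu^{-1})u$, transport $a\mapsto uau^{-1}$ and $b\mapsto ubu^{-1}$. This justifies the reduction to the model $\sigma(x)=s\,{}^{t}x\,s^{-1}$.
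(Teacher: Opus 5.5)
Your argument is correct and is essentially the paper's. Both reduce via Wedderburn and Lemma~\ref{lem:classification-of-inv} to $\sigma(x)=s\,{}^{t}x\,s^{-1}$ with ${}^{t}s=\pm s$, then invoke congruence of invertible symmetric (resp.\ skew-symmetric) complex matrices; the paper states this as ``${}^{t}bb$ resp.\ ${}^{t}bJ_{N}b$''. Your final bookkeeping checks out: with ${}^{t}c\,s\,c=as$ and $b=scs^{-1}$ one gets $\sigma(b)={}^{t}c$, hence $\sigma(b)b={}^{t}c\,s\,c\,s^{-1}=a$, so in a write-up you should state this computation directly and drop the abandoned attempts (the trial $b=P$ and the display containing $\cdots$).
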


\begin{proof}
    By Wedderburn's theorem, it suffices to consider the case $A_{\C}=M(N,\C)$. 
    Furthermore, by Lemma \ref{lem:classification-of-inv},
    we may assume that $\sigma$ is
    defined by $\sigma(a)=\!^{t}a$ or $\sigma(a)=J_{N}\!^{t}aJ_{N}^{-1}$ ($N$ is even). 
    Then our claim follows immediately from the elementary fact that
    any invertible complex symmetric (resp.\ skew-symmetric) matrices can be written as $\!^{t}bb$
    (resp.\ $\!^{t}bJ_{N}b$) for some $b\in GL(N,\C)$.
\end{proof}

\begin{lemma}
    \label{lem:inv-real-complex}
    Let $A$ be a finite-dimensional $\R$-algebra.
    If an element $a+\sqrt{-1}b$ ($a,b\in A$) in $A_{\C}=A\otimes_{\R}\C$ is invertible, then there exists a real number $r$ such that $a+rb\in A$ is also invertible.
\end{lemma}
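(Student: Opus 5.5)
The statement to prove is Lemma~\ref{lem:inv-real-complex}: if $a+\sqrt{-1}b$ is invertible in $A_{\C}$, then some real combination $a+rb$ is invertible in $A$. The plan is the same Zariski-density argument used in the proof of Lemma~\ref{lem:criterion_for_Upq}, applied to a one-variable polynomial.

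First, reduce invertibility to the non-vanishing of a determinant. Let $d=\dim_{\R}A$, and consider the left regular representation $L\colon A\to \End_{\R}(A)\simeq M(d,\R)$, given by $L(x)y=xy$. Because $A$ is finite-dimensional and unital, an element $x\in A$ is invertible exactly when $L(x)$ is bijective. Indeed, if $L(x)$ is bijective, there is $y$ with $xy=1$. Then $L(x)L(y)=\id$ in the finite-dimensional algebra $\End_{\R}(A)$, so $L(y)L(x)=\id$ as well, and applying this to $1$ gives $yx=1$. The same argument works over $\C$ for $A_{\C}$, with the complexified map $L_{\C}\colon A_{\C}\to M(d,\C)$. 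So $x\in A_{\C}$ is invertible if and only if $\det L_{\C}(x)\neq 0$.

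Next, define the polynomial
\[
p(t):=\det L_{\C}(a+tb)=\det\bigl(L(a)+tL(b)\bigr)\in \R[t], \qquad t\in\C.
\]
Its coefficients are real because $L(a)$ and $L(b)$ are real matrices. By hypothesis $p(\sqrt{-1})\neq 0$, so $p$ is not the zero polynomial. A nonzero polynomial has only finitely many roots, so we can choose $r\in\R$ with $p(r)\neq 0$. Then $L(a+rb)$ is invertible, and by the first step $a+rb\in A$ is invertible.

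The only step that needs care is the first one: the equivalence between invertibility in $A$ and bijectivity of left multiplication. This uses finite-dimensionality to pass from a one-sided inverse to a two-sided inverse. Everything after that is routine.
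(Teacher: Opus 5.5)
Your proposal is correct and follows essentially the same route as the paper: both take the determinant of the left regular representation of $a+tb$, note that it is nonzero at $t=\sqrt{-1}$, and pick a real $r$ that is not a root. Your version is slightly more careful. You work with the real left regular representation $L$ on $A$ itself and justify that one-sided inverses become two-sided, so invertibility of $a+rb$ in $A$, and not just in $A_{\C}$, follows directly; the paper leaves that step implicit.
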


\begin{proof}
    We consider the $\C$-algebra homomorphism $\phi\colon A_{\C}\rightarrow \End_{\C}(A_{\C})$ defined by $\phi(x):=$(the left multiplication map by $x$ on $A_{\C}$).
    Notice that $x$ is invertible in $A_{\C}$  if and only if $\phi(x)$ is invertible in $\End_{\C}(A_{\C})$.
    
    We consider the polynomial function $f(t):=\det(\phi(a+tb))$ for $t\in \C$.
    Since $a+\sqrt{-1}b$ is invertible, we have $f(\sqrt{-1})\neq 0$.
    Hence $f(t)$ is a non-zero polynomial, and thus there exists
    a real number $r$ such that $f(r)\neq 0$. Then $a+rb\in A$ is also invertible, which proves our claim.
\end{proof}

We are ready to prove Proposition \ref{prop:emb-critetion-algebra}. 
\begin{proof}[Proof of Proposition \ref{prop:emb-critetion-algebra}]
    By Wedderburn's theorem, $A$ is isomorphic to $M(N,\R)$ or $M(N,\HA)$ as an $\R$-algebra. 
    Hence by the classification 
    of anti-involutions (Lemma~\ref{lem:classification-of-inv}), 
    we may and do take an $\R$-linear anti-involution $\tau$ on $A$ 
    such that the $\C$-linear anti-involution 
    $\tau_{\C}:=\tau\otimes \id$ on $A_{\C}$ is equivalent to $\sigma$.
    Namely, there exists an invertible element $g$ in $A_{\C}$ such that for all $x\in A_{\C}$ we have $\sigma(x)=g^{-1}\tau_{\C} (gxg^{-1})g$.
    
    Put $c:=\tau_{\C}(g)g\in A_{\C}$.  
    Then for all $x\in A_{\C}$ we have
    \[
    \sigma(x) = c^{-1}\tau_{\C}(x)c.
    \]
    Write $c=a+\sqrt{-1}b$ using $a,b\in A$. 
    By Lemma \ref{lem:inv-real-complex}, 
    there exists a real number $r$ such that $c':=a+rb\in A$ is invertible.
    Then we consider an $\R$-linear anti-automorphism $\sigma'$ on $A$
    defined by $\sigma'(x) := c'^{-1}\tau(x)c'$.
    
    Let us see that $\sigma'$ is an anti-involution
    and that its $\C$-linear extension $\sigma'_{\C}$ on $A_{\C}$
    is equivalent to $\sigma$.
    Notice $\tau_{\C}(c)=c$ since $c=\tau_{\C}(g)g$. 
    In particular, we have $\tau(a)=a$ and $\tau(b)=b$, 
    and thus $\tau(c')=c'$. Hence $\sigma'$ is an anti-involution. 
    Further, we see from Lemma \ref{lem:cohomology-vanish} that 
    there exists $g'\in A_{\C}$ such that $c'= \tau_{\C}(g')g'$.
    Hence $\sigma'_{\C}$ is equivalent to $\tau_{\C}$, 
    and also equivalent to $\sigma$.

    Now we prove $G_{A_{\C}}(\sigma)\cap A^{\times}\subset G_{A}(\sigma')$.
    Let $x\in G_{A_{\C}}(\sigma)\cap A^{\times}$.
    Then we have $\tau(x)cx=c$. In particular, 
    $\tau(x)ax=a$ and $\tau(x)bx=b$ since $x\in A$. 
    Hence we obtain $\tau(x)c'x=c'$, which means $x\in G_{A}(\sigma')$.
    Thus the $\R$-linear anti-involution $\sigma'$ satisfies the two desired conditions.
\end{proof}

\section{Topics related to spin groups associated with indefinite quadratic forms}
\label{appendix:topocs-clifford}

In this appendix, we briefly review some basic facts, including some less well-known ones, about Clifford algebras and spin groups over $\R$ associated with indefinite quadratic forms, which were used in Sections~\ref{section:proper->spin} and~\ref{section:proof_proper-HR}.
Although in this paper we only need the case of quadratic forms of signature $(n,1)$, we include the general signature for completeness.

\subsection{Clifford algebras, spin groups, and spin representations}
\label{section:clifford-spin}
In this subsection, we introduce some notation related to Clifford algebras
and spin groups. 
As a reference, see for example \cite[Chap.~5]{Varadarajan_supersymmetry}.

Let $V$ be an $n$-dimensional vector space 
over $\F=\R$ or $\C$ equipped with a non-degenerate symmetric bilinear form $Q\colon V\times V \rightarrow \F$. 
As usual, we put $Q(v):=Q(v,v)$, which defines a quadratic form on the $\F$-vector space $V$. 
We consider the tensor algebra $TV:=\bigoplus_{i\in\N}V^{\otimes i}$ 
and its two-sided ideal $I(Q)$ generated by 
$v\otimes v - Q(v)$ for all $v\in V$.
Then the quotient $\F$-algebra 
\[
C(V)\equiv C(V,Q):= TV/I(Q)
\]
is called the \emph{Clifford algebra} of the quadratic form $Q$.

Let $q\colon TV\rightarrow C(V)$ be the quotient homomorphism. 
If $\{e_{1},\ldots,e_{n}\}$ is an $\F$-basis of $V$,
then the elements 
$q(e_{i_{1}})\cdots q(e_{i_{k}})$
$(1\leq i_{1}<\ldots< i_{k}\leq n,\ 0\leq k\leq n)$
form an $\F$-basis of $C(V)$. 
In particular, $\dim_{\F}C(V)=2^{n}$ and 
the restriction of $q$ to the subspace $V$ of $TV$
is injective. Throughout this paper, 
we think of $V$ as a subspace of $C(V)$
via $q$ and the image $q(v)$ of $v\in V$ is also denoted by the same symbol $v$.

The $\F$-linear map
$V\rightarrow C(V)$ defined by $v\mapsto -v$
is uniquely extended to an involutive $\F$-algebra automorphism $(-)'$
of $C(V)$. 
We define an $\F$-subalgebra $C^{+}(V)$ of $C(V)$ by 
\[
C^{+}(V)\equiv C^{+}(V,Q):=\{x\in C(V,Q)\mid x'=x\},
\]
which is called the \emph{even Clifford algebra} of $Q$.
If $\{e_{1},\ldots,e_{n}\}$ is an $\F$-basis of $V$, then 
we have 
$(e_{i_{1}}\cdots e_{i_{k}})'=(-1)^{k}e_{i_{1}}\cdots e_{i_{k}}$
for $1\leq i_{1}<\ldots< i_{k}\leq n$
and $0\leq k\leq n$.
Hence $e_{i_{1}}\cdots e_{i_{k}}\in C^{+}(V)$
if and only if $k$ is even.
In particular, $\dim_{\F}C^{+}(V) = 2^{n-1}$.

The $\F$-linear map $V\rightarrow C(V)$ defined by 
$v\mapsto v$
is uniquely extended to an $\F$-linear \emph{anti}-involution $(-)^{*}$
of $C(V)$. 
Now we define the \emph{spin group} of $Q$ by 
\[
Spin(V)\equiv Spin(V,Q) :=\{x\in C^{+}(V)^{\times} \mid 
xx^{*} = 1 \text{ and } xVx^{-1}\subset V
\}.
\]
Note that the elements $\pm 1 \in C^{+}(V)$ 
belong to $Spin(V)$.

For $x\in Spin(V)$ and $v\in V$, 
we have $xvx^{-1}\in V$ by definition.
One can check that the $\F$-linear map $v\mapsto xvx^{-1}$ 
defines an element of the special orthogonal group $SO(V)$
with respect to $Q$. 
Thus we obtain a Lie group homomorphism
\[
\Ad\colon Spin(V)\rightarrow SO(V),\ \ x\mapsto (v\mapsto xvx^{-1}).
\]
As is well-known, the kernel of this homomorphism is $\{\pm 1\}$
and the image coincides with the identity component of 
the Lie group $SO(V)$.

Now we recall the spin representation
and semispin representations. 
It is known that there exists an isomorphism
\begin{equation}
    \label{eq:clifford-algebra}
    C^{+}(V)\otimes_{\F}\C
    \simeq 
    \begin{cases}
        M(2^{(n-1)/2},\C) & \text{($n$ is odd)}, \\
        M(2^{(n-2)/2},\C)^{\oplus 2}
        & \text{($n$ is even)},
    \end{cases}
\end{equation} 
of $\C$-algebras. Thus the inclusion 
$Spin(V)\hookrightarrow (C^{+}(V)\otimes_{\F}\C)^{\times}$
defines a homomorphism 
\begin{equation}
\label{def:spin}
S\colon Spin(V)\rightarrow
\begin{cases}
    GL(2^{(n-1)/2},\C) & \text{($n$ is odd)}, \\
    GL(2^{(n-2)/2},\C)^{\times 2}
    & \text{($n$ is even)},
\end{cases}
\end{equation}
which is called the \emph{spin representation} of $Spin(V)$.
Further, we put 
\begin{equation}
\label{def:semispin}
S_{i}:=\operatorname{pr}_{i}\circ S
\colon Spin(V)\rightarrow
    GL(2^{(n-2)/2},\C)
\ \ (i=1,2)
\end{equation}
if $n$ is even, where $\operatorname{pr}_{i}$ is the $i$th projection.
The homomorphisms $S_{1}$ and $S_{2}$ are called the \emph{semispin representations}
of $Spin(V)$.

Let us recall that the Lie algebra $\mathfrak{spin}(V)$ of $Spin(V)$ is realized as a Lie subalgebra of $C^{+}(V)$. Since $C^{+}(V)$ is an $\F$-algebra, we note that $C^{+}(V)$ is a Lie algebra over $\F$ with the bracket $[x,y] = xy - yx$. Further, via the exponential map $\exp\colon C^{+}(V) \rightarrow C^{+}(V)^\times$ ($x \mapsto \sum_{n=0}^{\infty} x^n/n!$), we think of $C^{+}(V)$ as the Lie algebra of the Lie group $C^{+}(V)^\times$. In particular, the Lie algebra $\mathfrak{spin}(V)$ of $Spin(V)$
has the following realization as a Lie subalgebra of $C^{+}(V)$:
\begin{equation}
\label{def:spin-lie}
\mathfrak{spin}(V)= 
\{x\in C^{+}(V,Q)\mid 
x+ x^{*} = 0 \text{ and } \ad(x)(V) \subset V
\},
\end{equation}
where $\ad(x)(v) = [x,v]$. If ${e_{1},\ldots,e_{n}}$ form an orthogonal basis of the $\F$-vector space $V$ with respect to $Q$, it can be easily verified that the $\F$-vector space 
$\mathfrak{spin}(V)$ is spanned by the elements $e_{i}e_{j}$ $(1\leq i<j\leq n)$.

Let $\R^{p,q}$ be the real vector space $\R^{p+q}$ equipped with the quadratic form $x_{1}^{2}+\ldots+ x_{p}^2-x_{p+1}^2-\ldots -x_{p+q}^2$. In this case, $C(\R^{p,q})$, $C^{+}(\R^{p,q})$, $Spin(\R^{p,q})$, and $\mathfrak{spin}(\R^{p,q})$ are denoted by $C(p,q)$, $C^{+}(p,q)$, $Spin(p,q)$, and $\mathfrak{spin}(p,q)$, respectively. When $q=0$, we write these as $C(p)$, $C^{+}(p)$, $Spin(p)$, and $\mathfrak{spin}(p)$, respectively.

It is known that the center of $Spin(p,q)$ with $p+q\geq 3$ is isomorphic to $\mathbb{Z}/2\mathbb{Z}$ 
when $p+q$ is odd or when both $p$ and $q$ are odd, 
and isomorphic to $\mathbb{Z}/4\mathbb{Z}$ when both $p$ and $q$ are even.  
In particular, the structure of the center of $Spin(n,1)$ was used in 
Section~\ref{section:proper->spin}:
\begin{lemma}
\label{lemma:spin-center}
For $n\geq 2$,
the center of $Spin(n,1)$ coincides with $\{\pm 1\}$.
\end{lemma}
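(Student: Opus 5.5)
The center contains $\{\pm 1\}$, since $\pm 1\in Spin(n,1)$ are scalars in $C^{+}(n,1)$ and hence central. The plan is to prove the reverse inclusion using the covering $\Ad\colon Spin(n,1)\to SO_{0}(n,1)$, whose kernel is $\{\pm1\}$. If $z$ is central in $Spin(n,1)$, then $\Ad(z)$ is central in $SO_{0}(n,1)$, because $\Ad$ is surjective onto the identity component. So the first step is to show that the center of $SO_{0}(n,1)$ is trivial for $n\geq 2$.

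For this, note that an element of $SO_{0}(n,1)$ commuting with all of $SO_{0}(n,1)$ is a linear map on $\R^{n+1}$ that commutes with an irreducible group action. The standard representation of $SO_{0}(n,1)$ is absolutely irreducible for $n\geq 2$: its complexification is the standard representation of $\mathfrak{so}(n+1,\C)$, which is irreducible for $n+1\geq 3$. By Schur's lemma such an element is therefore a real scalar $c\,I_{n+1}$. Preserving the quadratic form forces $c=\pm1$, and $\det=1$ with $n+1$ odd gives $c=1$. When $n+1$ is even, $-I$ lies in $SO(n,1)$ but not in the identity component, because it reverses time orientation (it swaps the two sheets of the hyperboloid). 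Hence $c=1$ in all cases.

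It follows that $\Ad(z)=1$, so $z\in\ker\Ad=\{\pm1\}$. The only delicate point is this identity-component argument for $-I$ when $n$ is odd. I would settle it explicitly by observing that $-I$ maps the future timelike cone $\{x_{n+1}>0\}$ to the past cone, whereas every element of $SO_{0}(n,1)$, being connected to the identity, preserves the future cone.
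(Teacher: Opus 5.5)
Your proof is correct, but it is not what the paper does. The paper gives no argument for this lemma; it records it as the case $(p,q)=(n,1)$ of the ``known'' description of the center of $Spin(p,q)$, which has order $2$ when $p+q$ is odd or $p,q$ are both odd.

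You instead derive it directly from two ingredients:
the double cover $\operatorname{Ad}\colon Spin(n,1)\to SO_{0}(n,1)$ with kernel $\{\pm1\}$, which the appendix states;
and the triviality of the center of $SO_{0}(n,1)$.
You get the latter from Schur's lemma, using absolute irreducibility of $\mathbb{R}^{n+1}$ for $n\ge 2$, and from the time-orientation argument that excludes $-I$ when $n$ is odd. The citation covers all signatures at once, while your route is self-contained and uses only facts the paper already records.

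Your method also extends to general $(p,q)$ with $p+q\ge 3$. Since $-I\in SO_{0}(p,q)$ exactly when $p$ and $q$ are both even, $Z(Spin(p,q))$ has order $2$ or $4$ accordingly. In the order-$4$ case the central element $\omega=e_{1}\cdots e_{p+q}$ satisfies $\omega^{2}=(-1)^{(p+q)/2}$. So the center is $\mathbb{Z}/4\mathbb{Z}$ only when $p+q\equiv 2\pmod 4$; for example, $Spin(2,2)\simeq SL(2,\mathbb{R})\times SL(2,\mathbb{R})$ has center $(\mathbb{Z}/2\mathbb{Z})^{2}$. This corrects the paper's blanket statement ``$\mathbb{Z}/4\mathbb{Z}$ when both $p$ and $q$ are even'', though that imprecision does not affect signature $(n,1)$.
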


Let us fix a Cartan involution of $\mathfrak{spin}(p,q)$. 
Take the standard basis $\{e_{1}^{+},\ldots,e_{p}^{+},e_{1}^{-},\ldots,e_{q}^{-}\}$ of $\mathbb{R}^{p+q}$. 
Then we define an involutive $\mathbb{R}$-algebra automorphism $\theta_{p,q}\colon C(p,q)\rightarrow C(p,q)$  by
$\theta_{p,q}(e^{+}_{i})=e^{+}_{i}$ and $\theta_{p,q}(e^{-}_{i})=-e^{-}_{i}$.
The $\mathbb{R}$-algebra automorphism $\theta_{p,q}$ preserves $\mathfrak{spin}(p,q)$,
and thus defines a Lie algebra automorphism of $\mathfrak{spin}(p,q)$. Further, it is easy to verify that the following diagram commutes:
\[
\xymatrix{
\mathfrak{spin}(p,q) \ar[r]^-{\ad}_{\simeq} \ar[d]_{\theta_{p,q}}  & 
\mathfrak{so}(p,q) \ar[d]^{-\trans(\cdot)}  \\ 
\mathfrak{spin}(p,q) \ar[r]^-{\ad}_{\simeq}   & 
\mathfrak{so}(p,q)
},
\]
where 
$\mathfrak{so}(p,q)=\{X\in M(p+q,\R)\mid 
\!^{t}XI_{p,q}+I_{p,q}X=0
\}$.
Thus $\theta_{p,q}$ defines a Cartan involution of $\mathfrak{spin}(p,q)$. Let
\begin{equation*}
\mathfrak{spin}(p,q)=\mathfrak{k}_{\mathfrak{spin}(p,q)}+\mathfrak{p}_{\mathfrak{spin}(p,q)}    
\end{equation*}
be the Cartan decomposition with respect to $\theta_{p,q}$. In the realization \eqref{def:spin-lie} of $\mathfrak{spin}(p,q)$, the $\R$-vector space $\mathfrak{p}_{\mathfrak{spin}(p,q)}$ is spanned by the elements $e_{i}^{+}e_{j}^{-}$. In the case $(p,q)=(n,1)$, 
the following holds. This lemma was effectively used in Section~\ref{section:clifford-hurwitz-radon}:

\begin{lemma}
    \label{lemma:c(n)-c(n,1)+}
    There exists an $\R$-algebra isomorphism 
    $\eta\colon C(n)\rightarrow C^{+}(n,1)$ such that $\eta(\R^{n})= \mathfrak{p}_{\mathfrak{spin}(n,1)}$. 
\end{lemma}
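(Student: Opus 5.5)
The plan is to write the isomorphism down explicitly on generators and then check it with the universal property of the Clifford algebra. Let $e_1,\dots,e_n$ be the standard orthonormal basis of $\R^n$ in $C(n)$, so that $e_i^2=1$ and $e_ie_j=-e_je_i$ for $i\neq j$. In $C(n,1)$, write $e_1^+,\dots,e_n^+$ and $e_1^-$, with $(e_i^+)^2=1$ and $(e_1^-)^2=-1$. We define
\[
\eta(e_i):=e_i^{+}e_1^{-}\in C^{+}(n,1)\quad (1\le i\le n).
\]

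The first step is to verify the Clifford relations. On one hand,
\[
(e_i^+e_1^-)^2=-e_i^+e_i^+e_1^-e_1^-=-(1)(-1)=1.
\]
On the other hand, for $i\neq j$ we get $e_i^+e_1^-e_j^+e_1^-=-e_i^+e_j^+(e_1^-)^2=e_i^+e_j^+$. This is antisymmetric in $i,j$, so the images anticommute. More generally, for $v\in\R^n$ the element $\eta(v)=\sum v_ie_i^+e_1^-$ satisfies $\eta(v)^2=\|v\|^2$. By the universal property of $C(n)=T\R^n/I(Q)$, the linear map $\eta$ on $\R^n$ therefore extends uniquely to an $\R$-algebra homomorphism $\eta\colon C(n)\to C^{+}(n,1)$.

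Next we show that $\eta$ is bijective. Both algebras have dimension $2^n$: $\dim C(n)=2^n$, and $\dim C^{+}(n,1)=2^{(n+1)-1}=2^n$ by the basis description in the appendix. So it suffices to prove surjectivity. The even subalgebra is spanned by products of an even number of distinct basis vectors. Those not involving $e_1^-$ are products of pairs $e_i^+e_j^+=\eta(e_i)\eta(e_j)$. Those involving $e_1^-$ are products $e_{i_1}^+\cdots e_{i_{2k-1}}^+e_1^-$, which we obtain as $e_{i_1}^+\cdots e_{i_{2k-2}}^+\cdot(e_{i_{2k-1}}^+e_1^-)$, again in the image. (Alternatively, injectivity follows from the fact that $\eta$ maps the standard monomial basis to $\pm$ distinct standard monomials, since $\eta(e_{i_1}\cdots e_{i_k})=\pm e_{i_1}^+\cdots e_{i_k}^+(e_1^-)^{k\bmod 2}$.)

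Finally, the paper states that $\mathfrak{p}_{\mathfrak{spin}(n,1)}$ is spanned by the elements $e_i^+e_1^-$, $1\le i\le n$, so $\eta(\R^n)=\mathfrak{p}_{\mathfrak{spin}(n,1)}$. For completeness one checks that these elements lie in $\mathfrak{spin}(n,1)$. First, $(e_i^+e_1^-)^*=e_1^-e_i^+=-e_i^+e_1^-$. Second, $\ad(e_i^+e_1^-)$ preserves $\R^{n,1}$, as a short computation shows. Third, $\theta_{n,1}(e_i^+e_1^-)=-e_i^+e_1^-$. None of these steps is a real obstacle; the only point needing care is the sign bookkeeping in $(e_i^+e_1^-)^2=1$, which is exactly where the signature $(n,1)$, rather than $(n+1,0)$, is used.
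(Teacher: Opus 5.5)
Your proof is correct and follows the paper's argument exactly: the paper also defines $\eta(e_i^{+}):=e_i^{+}e_1^{-}$, notes that $\eta$ is surjective, concludes bijectivity from $\dim_{\R}C(n)=\dim_{\R}C^{+}(n,1)$, and reads off $\eta(\R^n)=\mathfrak{p}_{\mathfrak{spin}(n,1)}$ from the spanning set $\{e_i^{+}e_1^{-}\}$. You simply write out the Clifford-relation check and the surjectivity computation that the paper calls immediate.
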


\begin{proof}
    We define an $\R$-algebra homomorphism
    $\eta\colon C(n)\rightarrow C^{+}(n,1)$ by $\eta(e_{i}^{+}):=e_{i}^{+}e_{1}^{-}$.
    It is immediate that $\eta$ is surjective,
    and thus $\eta$ is an isomorphism since $\dim_{\R}C(n) = \dim_{\R}C^{+}(n,1)$ holds. It is obvious from the construction
    of $\eta$
    that $\eta(\R^{n}) = \mathfrak{p}_{\mathfrak{spin}(n,1)}$.
\end{proof}

\subsection{Remarks on finite-dimensional representations of indefinite spin groups}
\label{appendix:spin-representation}
In this subsection, we prove, for general indefinite spin groups $Spin(p,q)$,
the facts (Facts~\ref{fact:index_for_spin1} and~\ref{fact:half-int-rep-dim})
used in Section~\ref{section:half-int-rep-spin-rep}
concerning finite-dimensional representations of $Spin(n,1)$.
The corresponding statements are given by
Propositions~\ref{proposition-dim-(p.q)} and~\ref{prop:index_of_spin_rep},
which correspond to Facts~\ref{fact:half-int-rep-dim}
and~\ref{fact:index_for_spin1}, respectively.

We begin by reviewing the necessary preliminaries.
For $p+q \geq 3$, the finite-dimensional complex irreducible representations
of $Spin(p,q)$ are in one-to-one correspondence, via the unitary trick,
with the finite-dimensional irreducible representations of $\mathfrak{so}(p+q,\C)$.
Moreover, there is a one-to-one correspondence between such representations and the
dominant integral weights of the root system of $\mathfrak{so}(p+q,\C)$, obtained by
taking the highest weight of each representation.

In what follows, we recall the root system $\Delta$ of $\mathfrak{so}(p+q,\C)$,
the choice of a fundamental system $\Pi$, and the action of the Weyl group $W$.
The notation follows Bourbaki~\cite[PLATES~II and~IV]{Bourbaki_Lie_4_to_6}.
We also review the description of the dominant integral weights corresponding to
irreducible representations $\pi\colon Spin(p,q)\to GL(N,\C)$ satisfying $\pi(-1)=-I_{N}$.
We put 
\begin{equation*}
\Lambda_{p,q}:=\{[\pi]\in \irr(Spin(p,q))\mid \pi(-1)=-\id\},
\end{equation*}
where $\irr(Spin(p,q))$ denotes 
the set of 
equivalence classes of irreducible finite-dimensional representations of $Spin(p,q)$. 
Here, $\Lambda_{n,1}$ is referred to as $\Lambda_n$ in
Section~\ref{section:half-int-rep-spin-rep}.

\underline{The case $p+q=2l+1$ $(l\in\N)$.}
Let $\varepsilon_{1},\ldots,\varepsilon_{l}$ be the standard basis of $\R^{l}$.
The root system $\Delta$ of $\mathfrak{so}(p+q,\C)$ and a fundamental system
$\Pi$ can be described as follows:
\begin{align*}
\Delta &= \{\pm \varepsilon_{i}\pm \varepsilon_{j}\mid 1\leq i<j\leq l\} \cup 
\{\pm \varepsilon_{i}\mid 1\leq i\leq l\}, \\
\Pi&= \{\varepsilon_{1}-\varepsilon_{2},\ \ldots,\ 
 \varepsilon_{l-1}-\varepsilon_{l},\ 
\varepsilon_{l}\}.
\end{align*}
The Weyl group $W = S_{l}\ltimes(\Z/2\Z)^{l}$ acts on $\R^{l}$ by all permutations of the components
and all sign changes.
Moreover, a vector $(\lambda_{1},\ldots,\lambda_{l})\in\R^{l}$ is dominant integral if and only if
\begin{equation*}
\lambda_1\geq \lambda_2\geq \cdots \geq \lambda_l \geq 0 
\quad \text{and} \quad 
\lambda_1,\ldots,\lambda_l \in \Z,
\end{equation*}
or
\begin{equation}
\label{eq:type-B-halfint}
\lambda_1\geq \lambda_2\geq \cdots \geq \lambda_l \geq 0
\quad \text{and} \quad 
\lambda_1,\ldots,\lambda_l \in \tfrac{1}{2}+\Z.
\end{equation}

\underline{The case $p+q=2l$ $(l\in\N)$.}
Let $\varepsilon_{1},\ldots,\varepsilon_{l}$ be the standard basis of $\R^{l}$.
Then the root system $\Delta$ of $\mathfrak{so}(p+q,\C)$ and a fundamental system
$\Pi$ can be described as follows:
\begin{align*}
\Delta &= \{\pm \varepsilon_i \pm \varepsilon_j \mid 1\leq i<j\leq l\}, \\
\Pi&=\{\varepsilon_1-\varepsilon_2,\ \ldots,\ 
\varepsilon_{l-1}-\varepsilon_l,\ 
\varepsilon_{l-1}+\varepsilon_l\}.
\end{align*}
The Weyl group $W=S_{l}\ltimes(\Z/2\Z)^{l-1}$ acts on $\R^{l}$ by all permutations of the components
and all even sign changes.
Moreover, a vector $(\lambda_{1},\ldots,\lambda_{l})\in\R^{l}$ is dominant integral if and only if
\[
\lambda_{1}\geq \cdots\geq \lambda_{l-1}\geq |\lambda_{l}|\geq 0
\quad \text{and} \quad 
\lambda_1,\ldots,\lambda_{l}\in \Z,
\]
or
\begin{equation}
\label{eq:type-D-halfint}
\lambda_{1}\geq \cdots\geq \lambda_{l-1}\geq |\lambda_{l}|\geq 0
\quad \text{and} \quad 
\lambda_1,\ldots,\lambda_{l}\in \tfrac{1}{2}+\Z.
\end{equation}

The dominant integral weights associated with the representations belonging to $\Lambda_{p,q}$
can be described as follows (see, e.g., \cite[Prop.~23.13~(iii)]{Fulton-Harris}):

\begin{lemma} 
    \label{lemma:half-int-rep-highest-weight}
The representations belonging to $\Lambda_{p,q}$ are in one-to-one correspondence
(via the highest weight theory and the unitary trick)
with the dominant integral weights $(\lambda_{1},\ldots,\lambda_{l})$
satisfying \eqref{eq:type-B-halfint} when $p+q=2l+1$,
and \eqref{eq:type-D-halfint} when $p+q=2l$.
\end{lemma}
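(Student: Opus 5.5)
The plan is to identify irreducible representations of $Spin(p,q)$ with finite-dimensional irreducible representations of $\mathfrak{so}(p+q,\C)$ via the unitary trick, and then to decide, using highest weight theory, exactly which ones satisfy $\pi(-1)=-\id$. Because the unitary trick identifies the representation categories of $Spin(p,q)$ and of the compact simply connected group $Spin(p+q)$, the question reduces to how the central element $-1$ acts.

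First, in $Spin(p,q)$ the element $-1$ can be written as $\exp(\pi e_ie_j)$ for a pair of orthonormal vectors $e_i,e_j$ of the same sign, or with $\theta$-compact elements. Concretely, $(e_1^+e_2^+)^2=-1$, so $\exp(t e_1^+e_2^+)=\cos t+\sin t\, e_1^+e_2^+$, and at $t=\pi$ this gives $-1$. Such an element exists when $p\ge2$. In the case $p=1$ I would use $q\ge2$ instead: then $(e_1^-e_2^-)^2=-1$ as well. Since $p+q\ge3$, at least one of these is available.

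Next, under $\mathfrak{spin}\simeq\mathfrak{so}$, the element $e_1e_2$ corresponds to $2$ times the standard rotation generator, up to sign and convention. After complexification it lies in a Cartan subalgebra and equals $2\sqrt{-1}H_{\varepsilon_1}$, where $\varepsilon_1(H_{\varepsilon_1})=1$. Hence, on a weight vector of weight $\mu$, the element $\exp(\pi e_1e_2)$ acts by $e^{\pm 2\pi\sqrt{-1}\,\mu_1/2}$, where $\mu_1$ is the $\varepsilon_1$-coordinate of $\mu$. This scalar is $-1$ exactly when $\mu_1\in\tfrac12+\Z$.

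Then I would note that, for an irreducible $\pi$ with highest weight $\lambda$, every weight differs from $\lambda$ by an element of the root lattice. In both types $B$ and $D$, the root lattice consists of integer vectors, so all weights share the coordinates' integrality class. As a result, $-1$, being central, acts by the scalar $-1$ if and only if the $\lambda_i$ lie in $\tfrac12+\Z$, and by $+1$ if they are integers. Combining this with the list of dominant integral weights, which are either all-integer or all-half-integer, gives the bijection between $\Lambda_{p,q}$ and the weights satisfying \eqref{eq:type-B-halfint} or \eqref{eq:type-D-halfint}.

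The main obstacle is the normalization: checking that $\exp(\pi e_1e_2)=-1$ in $C^{+}(p,q)$ and that $\ad(e_1e_2)$ is \emph{twice} the rotation generator, so that the factor of $1/2$ in the exponent is correct. I would verify this directly from $[e_1e_2,e_1]=-2e_2$. As an independent check, the spin representation has highest weight $(\tfrac12,\dots,\tfrac12)$ and visibly sends $-1$ to $-\id$.
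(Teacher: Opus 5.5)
Your route is correct and genuinely different from the paper's. The paper does not prove this lemma; it only cites Fulton--Harris.

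The usual argument behind such citations is lattice-theoretic. One has $\ker(Spin(m,\C)\to SO(m,\C))=\{\pm1\}$, and the character lattice of a maximal torus of $SO(m,\C)$ is $\Z^{l}$. So an irreducible representation kills $-1$ exactly when its weights lie in $\Z^{l}$. Those weights are all congruent to $\lambda$ modulo the integral root lattice.

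You instead compute the scalar by which the central element $-1=\exp(\pi e_1e_2)\in C^{+}(p,q)$ acts. This is self-contained and matches the paper's Clifford-algebra model of $Spin(p,q)$, but everything rests on one normalization. The lattice argument needs no exponential computation, but it presupposes the torus of $SO(m,\C)$ and the kernel of the covering.

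That normalization is stated wrongly in your third paragraph, off by a factor of two. Your own identity $e_1e_2=2\sqrt{-1}H_{\varepsilon_1}$ is correct: it follows from $[e_1e_2,e_1]=-2e_2$ and $[e_1e_2,e_2]=2e_1$. With it, $e_1e_2$ acts on a weight vector of weight $\mu$ by $2\sqrt{-1}\mu_1$. Hence $\exp(\pi e_1e_2)$ acts by $e^{2\pi\sqrt{-1}\mu_1}$, not by $e^{\pm 2\pi\sqrt{-1}\mu_1/2}=e^{\pm\pi\sqrt{-1}\mu_1}$.

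With your formula, a half-integral $\mu_1$ would give $\pm\sqrt{-1}$, which is impossible since $(-1)^2=1$. Also $-1$ would then occur exactly for odd integral $\mu_1$, contradicting your next sentence. The factor $2$ in $\ad(e_1e_2)=2R$ is exactly what turns $t=\pi$ into a full $2\pi$-rotation; there is no extra $\tfrac12$ in the exponent. Your closing remark about ``the factor of $1/2$'' should be corrected accordingly.

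You should also add one sentence linking the Cartan subalgebra containing $e_1e_2$ to the fixed one used for highest weights. Conjugate by an inner automorphism, which is harmless because $-1$ is central. Afterwards $e_1e_2=2\sqrt{-1}H$ with $\varepsilon_k(H)=\pm\delta_{jk}$ for some $j$. All coordinates of all weights lie in the same class modulo $\Z$, so the conclusion does not depend on $j$. With these two corrections the argument is complete.
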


The following fact is known 
about the dimensions of finite-dimensional representations
belonging to $\Lambda_{p,q}$;
Fact~\ref{fact:half-int-rep-dim} is a special case with $q=1$.
\begin{proposition}
    \label{proposition-dim-(p.q)}
   The dimension of any representation belonging to $\Lambda_{p,q}$
is divisible by $2^{\lceil\frac{p+q}{2}\rceil-1}$.
\end{proposition}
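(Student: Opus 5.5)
The natural route is to pass from $Spin(p,q)$ to the compact group $Spin(p+q)$, or equivalently to $\mathfrak{so}(p+q,\C)$. By the unitary trick and Lemma~\ref{lemma:half-int-rep-highest-weight}, the dimension of a representation in $\Lambda_{p,q}$ depends only on its highest weight $\lambda=(\lambda_1,\ldots,\lambda_l)$, whose entries all lie in $\tfrac12+\Z$. So it suffices to show that every irreducible $\mathfrak{so}(p+q,\C)$-module with such a weight has dimension divisible by $2^{\lceil (p+q)/2\rceil-1}$, which equals $2^{l}$ if $p+q=2l+1$ and $2^{l-1}$ if $p+q=2l$.

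I would use the standard tensor-product trick with the (semi)spin representation rather than the Weyl dimension formula. Set $\mu:=\lambda-(\tfrac12,\ldots,\tfrac12)$. This is a dominant integral weight with integer entries; in type $D$ with $\lambda_l<0$ I would instead subtract $(\tfrac12,\ldots,\tfrac12,-\tfrac12)$. Write $V_\mu$ for the irreducible module of highest weight $\mu$, which factors through $SO$.

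\textbf{Type $B$.} Here $\lambda=\mu+\omega_l$, where $\omega_l$ is the highest weight of the spin representation $S$, of dimension $2^l$. By PRV/Kostant-type results, $V_\mu\otimes S$ decomposes as a sum of $V_{\mu+\nu}$, with $\nu$ running over the weights $(\pm\tfrac12,\ldots,\pm\tfrac12)$ of $S$ for which $\mu+\nu$ is dominant. To make this usable, I would instead exploit a structural fact: every module with half-integral weights is a direct summand of $V_\mu\otimes S$. The cleaner way to get divisibility is the following.

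\textbf{Key step: the Clifford module structure.} Every $\pi\in\Lambda_{p,q}$ satisfies $\pi\simeq \operatorname{Hom}_{C}(S,\pi\otimes S^{\vee}\otimes S)$-type factorisations. A simpler approach is to show that $\dim\pi$, viewed as the value at $0$ of the character, is divisible via the Weyl character formula. Concretely, the Weyl numerator $\sum_w \varepsilon(w)e^{w(\lambda+\rho)}$ restricted to the one-parameter line through $\rho$ gives
\[
\dim\pi=\prod_{\alpha>0}\frac{\langle\lambda+\rho,\alpha\rangle}{\langle\rho,\alpha\rangle}.
\]
For half-integral $\lambda$ I would instead use the factorisation of the character: $\chi_\lambda=\chi_{\omega_l}\cdot \psi$, where $\psi$ is a virtual character with integer coefficients. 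The factorisation I would establish is $\chi_\lambda=\chi_S\cdot \chi'$, with $\chi'$ the character of a virtual $SO$-module. This follows because the Weyl denominator of type $B_l$ equals the type $D_l$ denominator times $\prod_i(e^{\varepsilon_i/2}-e^{-\varepsilon_i/2})$, and $\chi_S=\prod_i(e^{\varepsilon_i/2}+e^{-\varepsilon_i/2})$. Evaluating at the identity then gives $\dim\pi=2^l\cdot\dim\chi'$ with $\dim\chi'\in\Z$.

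\textbf{Making the factorisation rigorous.} The numerator $\sum_{w}\varepsilon(w)e^{w(\lambda+\rho)}$ is anti-invariant under $W$. Since $\lambda+\rho$ has integer entries in type $B$, the numerator is divisible by the type $B$ denominator in the ring $\Z[e^{\pm\varepsilon_i}]$; this is the integrality of the Weyl character. I need the extra divisibility by $\chi_S$. My approach would be to specialise each $e^{\varepsilon_i/2}\mapsto t$ with all variables equal and count the order of vanishing at $t=-1$: the half-integral shift forces each factor $e^{\varepsilon_i/2}+e^{-\varepsilon_i/2}$ to appear. In practice, the most direct route is to let $\pi$ occur in $V_\mu\otimes S$ and induct on $\mu$ using Brauer's formula $[V_\mu\otimes S]=\sum_\nu [V_{\mu+\nu}]$ (with reflections applied where needed). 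Induction on the dominance order then shows that each $[V_\lambda]$ is a $\Z$-combination of classes $[V_{\mu'}\otimes S]$, all of which have dimension divisible by $2^l$.

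\textbf{Type $D$.} The same induction works with $S_1,S_2$ of dimension $2^{l-1}$: each half-integral $[V_\lambda]$ is a $\Z$-combination of $[V_{\mu'}\otimes S_i]$.

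I expect the main obstacle to be controlling Brauer's formula at the walls, where $\mu+\nu$ fails to be dominant, so that the triangular induction closes with integer coefficients. I would handle it by ordering the weights by $\lambda_1+\cdots+\lambda_l$ and noting that the leading term of $V_\mu\otimes S$ is $V_{\mu+\omega}$ with coefficient $1$.
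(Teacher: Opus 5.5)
Your final argument is correct, but it takes a different and heavier route than the paper's. You write $\lambda=\mu+\omega$ with $\omega$ the highest weight of $S$ (resp.\ of $S_1$ or $S_2$), and observe that $[V_\mu\otimes S]=[V_\lambda]+\sum(\text{lower half-integral dominant terms})$. By triangular induction, $[V_\lambda]$ is then a $\Z$-combination of classes $[V_{\mu'}\otimes S]$ (resp.\ $[V_{\mu'}\otimes S_i]$), so $2^{\lceil (p+q)/2\rceil-1}$ divides $\dim V_\lambda$.

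The paper tensors nothing. Every weight of $\pi\in\Lambda_{p,q}$ differs from $\lambda$ by an integral combination of roots, so it lies in $(\tfrac12+\Z)^l$ and has no zero coordinate. Hence the sign-change subgroup $W_0\subset W$ (all sign changes in type $B$, even ones in type $D$) acts freely on the set of weights. Since weight multiplicities are $W$-invariant, $\dim\pi$ is a sum over free $W_0$-orbits and is divisible by $\#W_0$. That argument needs only $W$-invariance of multiplicities. Yours needs the tensor product rule and an induction, but it yields more: $[V_\lambda]$ lies in the ideal of the character ring generated by the (semi)spin character(s).

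A few details should be fixed.
\begin{itemize}
\item The ``walls'' are no obstacle. $S$ in type $B$ and $S_1,S_2$ in type $D$ are minuscule, so $V_\mu\otimes S\simeq\bigoplus V_{\mu+\nu}$, summed over the weights $\nu$ of $S$ with $\mu+\nu$ dominant, each with multiplicity one. No signed reflections occur.
\item The coordinate sum $\lambda_1+\cdots+\lambda_l$ is not a valid induction parameter in type $D$. For $l=4$, $V_{(1,0,0,0)}\otimes S_1\simeq V_{(\frac32,\frac12,\frac12,-\frac12)}\oplus V_{(\frac12,\frac12,\frac12,\frac12)}$, and both highest weights have coordinate sum $2$. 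Induct on the dominance order (or on $\langle\lambda,\rho\rangle$) instead, as you also suggest.
\item Drop the intermediate ideas: the ``Clifford module'' factorisation, and specialising all $e^{\varepsilon_i/2}$ to one variable $t$, where the Weyl denominator vanishes identically.
\end{itemize}
The type-$B$ identity $\chi_\lambda=\chi_S\cdot\chi'$ is in fact true, though you do not need it. The entries $m_i$ of $\lambda+\rho$ are positive integers, so each column of the alternant $\det(x_j^{m_i}-x_j^{-m_i})$ is divisible by $x_j-x_j^{-1}$.
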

\begin{proof}
    Depending on the parity of $p+q$, we define a normal subgroup $W_{0}$  
of the Weyl group $W$ by  
\[
W_{0} =
\begin{cases}
(\mathbb{Z}/2\mathbb{Z})^{l} & \text{if } p+q = 2l+1, \\
(\mathbb{Z}/2\mathbb{Z})^{l-1} & \text{if } p+q = 2l.
\end{cases}
\]

Let $\pi$ be the $\mathfrak{so}(p+q,\C)$-module with highest weight $\lambda$. 
Then any weight $\mu$ of $\pi$ 
can be written in the form  
$\lambda - \sum_{\alpha \in \Pi} n_{\alpha} \alpha$,  
where $n_{\alpha} \in \mathbb{N}$.  
Therefore, by Lemma~\ref{lemma:half-int-rep-highest-weight},  
if $\lambda$ corresponds to $\Lambda_{p,q}$,  
then every weight $\mu$ of $\pi$ lies in $(1/2 + \mathbb{Z})^{l}$.

This observation implies that the above subgroup $W_{0}$ acts freely  
on the set of all weights of $\pi$.  
Hence, the dimension of the representations belonging to $\Lambda_{p,q}$ is divisible by  
$\# W_{0} = 2^{\lceil (p+q)/2 \rceil - 1}$.
\end{proof}

Next, we review some real representation-theoretic properties of finite-dimensional irreducible representations $\pi$ of $Spin(p,q)$.
We denote by $\bar{\pi}$, $\pi^{\vee}$, and $\pi^{*}$ the complex conjugate representation, the dual representation, and the conjugate dual representation of $\pi$, respectively.

Fix $\dagger \in \{-, \vee, *\}$.
For a representation $\pi_{\lambda}$ with dominant integral weight
$\lambda = (\lambda_{1}, \ldots, \lambda_{l})$, $\pi_{\lambda}^{\dagger}$ is also irreducible.
The dominant integral weight corresponding to $\pi_{\lambda}^{\dagger}$ is given by
\[
\begin{cases}
\lambda, & \text{if } p+q = 2l+1, \\[4pt]
\lambda \text{ or } (\lambda_{1}, \ldots, \lambda_{l-1}, -\lambda_{l}), & \text{if } p+q = 2l.
\end{cases}
\]
Here, which of $\lambda$ or $(\lambda_{1}, \ldots, \lambda_{l-1}, -\lambda_{l})$ occurs does not depend on the dominant integral weight $\lambda$, but depends on the signature $(p,q)$ and $\dagger\in \{-, \vee, *\}$.
For details, see, for example, \cite[\S7, Prop.~3]{Oni04} when $\dagger = \vee$, and \cite[\S8, Thm.~3]{Oni04} when $\dagger = -$.
In particular, the following statement follows from Lemma~\ref{lemma:half-int-rep-highest-weight}.
\begin{proposition}
    \label{prop:self-dual-conjugate}
    Fix $\dagger\in \{-,\vee,*\}$. 
    If $p+q$ is odd, then
    every $\pi \in \Lambda_{p,q}$ is equivalent to  $\pi^\dagger$. 
    If $p+q$ is even, then either every $\pi \in \Lambda_{p,q}$ is equivalent to  $\pi^\dagger$, or every $\pi \in \Lambda_{p,q}$ is not equivalent to  $\pi^\dagger$.
\end{proposition}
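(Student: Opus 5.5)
The plan is to reduce everything to the behaviour of highest weights under the operation $\dagger$, using the description of $\pi_\lambda^\dagger$ recalled just above together with Lemma~\ref{lemma:half-int-rep-highest-weight}. Fix $\dagger\in\{-,\vee,*\}$ and the signature $(p,q)$. For an irreducible representation $\pi_\lambda$ of $Spin(p,q)$ with dominant integral weight $\lambda=(\lambda_1,\ldots,\lambda_l)$, the representation $\pi_\lambda^\dagger$ is again irreducible. Its highest weight is $\lambda$ when $p+q=2l+1$. When $p+q=2l$, it is either $\lambda$ or $\lambda^\flat:=(\lambda_1,\ldots,\lambda_{l-1},-\lambda_l)$, and the choice between the two depends only on $(p,q)$ and $\dagger$, not on $\lambda$. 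Since $\dagger$ preserves the condition $\pi(-1)=-\id$ (for $\vee$ and $-$ this is clear because $-\id$ is real and self-inverse, and $*$ is their composite), $\dagger$ maps $\Lambda_{p,q}$ to itself. So we only need to compare $\lambda$ with its image.

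\emph{Odd case.} If $p+q$ is odd, the highest weight of $\pi^\dagger$ equals that of $\pi$ for every $\pi$. By the highest weight theory underlying Lemma~\ref{lemma:half-int-rep-highest-weight}, this gives $\pi^\dagger\simeq\pi$ for all $\pi\in\Lambda_{p,q}$, as required.

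\emph{Even case.} If $p+q=2l$, first suppose the uniform rule sends $\lambda\mapsto\lambda$. Then every $\pi\in\Lambda_{p,q}$ satisfies $\pi^\dagger\simeq\pi$.

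Otherwise the rule is $\lambda\mapsto\lambda^\flat$ for every $\lambda$. For $[\pi_\lambda]\in\Lambda_{p,q}$, Lemma~\ref{lemma:half-int-rep-highest-weight} says that $\lambda$ satisfies \eqref{eq:type-D-halfint}, so $\lambda_l\in\tfrac12+\Z$. In particular $\lambda_l\neq0$, hence $\lambda^\flat\neq\lambda$, and by uniqueness of highest weights $\pi_\lambda^\dagger\not\simeq\pi_\lambda$. So in this case no $\pi\in\Lambda_{p,q}$ is equivalent to $\pi^\dagger$, giving the dichotomy.

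The main point needing care is the input that the rule is uniform in $\lambda$, which is taken from \cite{Oni04} as recalled above. For $\dagger=*$ one justifies it by composing the rules for $-$ and $\vee$: each is either the identity or $\lambda\mapsto\lambda^\flat$, both of which are uniform involutions, so their composite is one of the same two maps. The other essential point is that half-integrality forces $\lambda_l\neq0$; this is precisely why the statement is restricted to $\Lambda_{p,q}$.
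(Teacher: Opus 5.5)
Your proposal is correct and is essentially the paper's own argument. The paper combines the $\lambda$-independent rule for the highest weight of $\pi_\lambda^\dagger$ (always $\lambda$ in type $B$; either $\lambda$ or $(\lambda_1,\ldots,\lambda_{l-1},-\lambda_l)$ in type $D$) with Lemma~\ref{lemma:half-int-rep-highest-weight}, which gives $\lambda_l\in\tfrac12+\Z$ and hence $\lambda_l\neq 0$. Your two additions, that $\dagger$ preserves $\Lambda_{p,q}$ and that the rule for $\dagger=*$ is obtained by composing the rules for $-$ and $\vee$, are valid elaborations of points the paper leaves implicit.
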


Next, we consider the case where $\dagger\in\{-,\vee\}$ and $\pi^{\dagger}\simeq \pi$ for $\pi\in \Lambda_{p,q}$,
and describe some property of  $\ind^{\dagger}\pi$ given in  \eqref{def:index}.
\begin{proposition}\label{proposition:index_of_spin_rep}
    Fix $\dagger\in \{-,\vee\}$
    and assume that 
    $\pi\in \Lambda_{p,q}$ is equivalent to
    $\pi^{\dagger}$.
    \begin{itemize}
        \item Suppose $p+q$ is odd.  
        Then 
        $\ind^{\dagger}\pi = \ind^{\dagger}S$.         
        \item Suppose $p+q$ is even.
        Then  $\ind^{\dagger}\pi = \ind^{\dagger}S_{1}$.
    \end{itemize}
    Here, $S$ and $S_{1},S_{2}$ denote the spin representation
    and semispin representations
    of 
    $Spin(p,q)$, respectively. 
\end{proposition}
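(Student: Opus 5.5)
The plan is to compare the index of an arbitrary $\pi\in\Lambda_{p,q}$ with that of the (semi)spin representation through a tensor-product argument, using that indices are multiplicative. Let $\sigma$ denote $S$ if $p+q$ is odd and $S_{1}$ if $p+q$ is even, and let $\lambda$ be the highest weight of $\pi$. By Lemma~\ref{lemma:half-int-rep-highest-weight}, $\lambda$ has all entries in $\tfrac12+\Z$. Put $\mu:=\lambda-\omega$, where $\omega=(\tfrac12,\ldots,\tfrac12)$ is the highest weight of $\sigma$. In the even case we choose $S_{1}$ or $S_{2}$, i.e.\ $\omega=(\tfrac12,\ldots,\tfrac12,\pm\tfrac12)$, with the sign of $\lambda_{l}$. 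Then $\mu$ is dominant integral, so $\pi_{\mu}$ factors through $SO_{0}(p,q)$. By the Cartan/PRV theorem, $\pi$ occurs with multiplicity one in $\pi_{\mu}\otimes\sigma$ as the Cartan component.

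The key facts are as follows.

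(a) If $\rho_{1},\rho_{2}$ are irreducible, self-$\dagger$, and $\pi$ is a multiplicity-one, self-$\dagger$ constituent of $\rho_{1}\otimes\rho_{2}$, then $\ind^{\dagger}\pi=\ind^{\dagger}\rho_{1}\cdot\ind^{\dagger}\rho_{2}$. To see this, take the structure maps $f_{1},f_{2}$. Then $f_{1}\otimes f_{2}$ is a $\dagger$-structure on the tensor product with sign $\ind^{\dagger}\rho_{1}\ind^{\dagger}\rho_{2}$. It preserves the Cartan component, since that component is characterised as the isotypic component of a self-$\dagger$ type occurring once. Restricting gives a structure on $\pi$ with that sign.

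(b) For the Cartan component to be self-$\dagger$, the factors $\pi_{\mu}$ and $\sigma$ must themselves be self-$\dagger$. By Proposition~\ref{prop:self-dual-conjugate} and the remark preceding it, $\dagger$ acts on highest weights either trivially or by $\lambda_{l}\mapsto-\lambda_{l}$, uniformly in $\lambda$. Under the hypothesis $\pi^{\dagger}\simeq\pi$ (with $\lambda_{l}\neq0$ automatically), the action must be trivial. Hence $\sigma^{\dagger}\simeq\sigma$ and $\pi_{\mu}^{\dagger}\simeq\pi_{\mu}$.

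(c) We need $\ind^{\dagger}\pi_{\mu}=1$ for every representation factoring through $SO_{0}(p,q)$ that is self-$\dagger$. This is the main obstacle. For $\dagger=-$, the representation $\pi_{\mu}$ is a constituent of tensor powers of the real standard representation $\R^{p,q}\otimes\C$. I would try to show it is realised as the Cartan component of $\bigotimes_{i}\Lambda^{i}(\C^{p+q})^{\otimes m_{i}}$, where every factor is real (index $+1$), and then apply (a) iteratively. The sub-obstacle is the middle exterior powers in type $D$ when they split as $\Lambda^{l}_{\pm}$. There $\Lambda^{l}_{+}$ and $\Lambda^{l}_{-}$ may be swapped by $\dagger$, but in that case the uniform-action observation in (b) already forces trivial action. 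When the action is trivial, $\Lambda^{l}_{\pm}$ are themselves subrepresentations of the real, self-dual $\Lambda^{l}$, each occurring once, hence have index $+1$ by the same restriction argument. For $\dagger=\vee$ the argument is identical, with the invariant symmetric form on $\C^{p+q}$ in place of the real structure.

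Combining (a)–(c) gives $\ind^{\dagger}\pi=\ind^{\dagger}\pi_{\mu}\cdot\ind^{\dagger}\sigma=\ind^{\dagger}\sigma$. In the even case we also need the index of $S_{1}$ and $S_{2}$ to agree. This holds because $S_{1}$ and $S_{2}$ correspond under the outer automorphism coming from $\mathrm{Pin}(p,q)$, which commutes with the relevant conjugation or duality on $C^{+}(p,q)$. So the statement with $S_{1}$ follows.
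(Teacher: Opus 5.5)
Your proof is correct and follows essentially the paper's route: both rest on the multiplicativity of $\ind^{\dagger}$ for Cartan components (Lemma~\ref{lemma:index_of_Cartan_product}), on the exterior-power representations being real and orthogonal (index $1$), and on the uniform action of $\dagger$ on highest weights, which makes every factor self-$\dagger$. The difference is only in bookkeeping: the paper expands $\lambda=\sum_k a_k\omega_k$ in fundamental weights, so the oddness of $a_l$ (resp.\ $a_{l-1}+a_l$) replaces your step~(c) and its treatment of the middle exterior powers, and $\ind^{\dagger}S_1=\ind^{\dagger}S_2$ comes directly from $1=\ind^{\dagger}\pi_{\omega_{l-1}+\omega_l}=\ind^{\dagger}S_1\,\ind^{\dagger}S_2$ rather than from your $\mathrm{Pin}(p,q)$ twist (which is also valid, because $\ind^{\dagger}$ is unchanged by precomposing with any automorphism; the commutation with conjugation or duality that you invoke is not needed).
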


For the proof of Proposition~\ref{proposition:index_of_spin_rep}, we use the following lemma concerning $\ind^{\dagger}\pi$, which goes back to É.~Cartan.
\begin{lemma}
\label{lemma:index_of_Cartan_product}
    Fix $\dagger\in\{-,\vee\}$.
Let $\lambda,\mu$ be dominant integral weights such that
$\pi_{\lambda}^{\dagger}\simeq \pi_{\lambda}$ and $\pi_{\mu}^{\dagger}\simeq \pi_{\mu}$.
Then we have $\pi_{\lambda+\mu}^{\dagger}\simeq \pi_{\lambda+\mu}$, and
\[
\ind^{\dagger}\pi_{\lambda+\mu}
= (\ind^{\dagger}\pi_{\lambda})(\ind^{\dagger}\pi_{\mu}).
\]
\end{lemma}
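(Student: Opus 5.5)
We prove Lemma~\ref{lemma:index_of_Cartan_product} via the Cartan product: $\pi_{\lambda+\mu}$ is the $L$-submodule of $\pi_\lambda\otimes\pi_\mu$ generated by $v_\lambda\otimes v_\mu$, where $v_\lambda,v_\mu$ are highest weight vectors. It occurs there with multiplicity one, since the weight $\lambda+\mu$ has multiplicity one in the tensor product.

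Choose isomorphisms $f_\lambda\colon\pi_\lambda\to\pi_\lambda^\dagger$ and $f_\mu\colon\pi_\mu\to\pi_\mu^\dagger$ with $(f_\lambda^\dagger)^{\delta}\circ f_\lambda=(\ind^\dagger\pi_\lambda)\id$, and similarly for $\mu$. Since $\dagger$ is compatible with tensor products ($(\pi\otimes\pi')^\dagger\simeq\pi^\dagger\otimes\pi'^\dagger$ canonically, for both complex conjugation and duality), $F:=f_\lambda\otimes f_\mu$ is an isomorphism $\pi_\lambda\otimes\pi_\mu\to(\pi_\lambda\otimes\pi_\mu)^\dagger$. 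It satisfies
\[
(F^\dagger)^{\delta}\circ F=(\ind^\dagger\pi_\lambda)(\ind^\dagger\pi_\mu)\id.
\]

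Next we show that $F$ carries the isotypic component of type $\pi_{\lambda+\mu}$ into the corresponding component of the target. Because $\pi_\lambda^\dagger\simeq\pi_\lambda$ and $\pi_\mu^\dagger\simeq\pi_\mu$, applying $\dagger$ to the tensor product gives $\pi_\lambda\otimes\pi_\mu$ again. That product contains a unique constituent with the extreme weight, namely $\pi_{\lambda+\mu}$. Its image under $\dagger$ is the unique constituent of $(\pi_\lambda\otimes\pi_\mu)^\dagger$ whose highest weight is the image of $\lambda+\mu$ under the weight map induced by $\dagger$, which is additive. Hence $\pi_{\lambda+\mu}^\dagger\simeq\pi_{\lambda+\mu}$.

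The delicate point is the conjugation case $\dagger=-$ for a real group, where highest weights are transformed by a diagram-type map such as $-w_0\circ\bar\sigma$. We handle it by noting that this map is additive on weights and preserves dominance, and that the isomorphism type of the constituent is determined by the weight. Once this is settled, the multiplicity-one isotypic component $U\simeq\pi_{\lambda+\mu}$ is sent by $F$ onto the unique isotypic component $U^\dagger$ of the target. Restricting gives $f:=F|_U\colon U\to U^\dagger$ with $(f^\dagger)^\delta\circ f=(\ind^\dagger\pi_\lambda)(\ind^\dagger\pi_\mu)\id_U$. By the independence of $\ind^\dagger$ from the choice of $f$ (Schur's lemma), this equals $\ind^\dagger\pi_{\lambda+\mu}$.

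The main obstacle is the compatibility of $f^\dagger$ with restriction to a direct summand. We check that the dual, respectively conjugate, of the restriction equals the restriction of $F^\dagger$ composed with the canonical projection $(\pi_\lambda\otimes\pi_\mu)^\dagger\to U^\dagger$. The projection is $L$-equivariant and unique because $U$ is an isotypic component. This is routine but requires care with the contravariance of $\vee$.
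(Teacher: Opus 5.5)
Your proposal is correct and takes the same route the paper indicates. The paper realizes $\pi_{\lambda+\mu}$ as the multiplicity-one Cartan component of $\pi_\lambda\otimes\pi_\mu$, citing Iwahori's Lemma~10 for $\dagger=-$ and noting that $\dagger=\vee$ is analogous; your restriction of $f_\lambda\otimes f_\mu$ to that component, together with Schur's lemma, is exactly that argument spelled out.
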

This lemma can be proved by observing that $\pi_{\lambda+\mu}$ occurs with multiplicity one
in the tensor product representation $\pi_{\lambda}\otimes \pi_{\mu}$.
For the case $\dagger=-$, see Iwahori~\cite[Lem.~10]{Iwahori-real-rep}.
The case $\dagger=\vee$ can be proved in the same way.

Lemma~\ref{lemma:index_of_Cartan_product} was used to reduce the computation of $\ind^{\dagger}\pi$
for irreducible representations to that for the fundamental representations.
We adopt the same idea here to prove Proposition~\ref{proposition:index_of_spin_rep}.
For this purpose, 
let us recall the description of the fundamental representations of $Spin(p,q)$.
In what follows, we denote by $V=\R^{p+q}$ the standard real representation of $Spin(p,q)$.

\underline{The case $p+q=2l+1$:}
The fundamental weights corresponding to the fundamental system $\Pi$
and the corresponding complex representations of $Spin(p,q)$ are given by
\begin{align*}
    \omega_{i}&=\varepsilon_{1}+\cdots+\varepsilon_{i},\quad
    \pi_{\omega_{i}}=(\wedge^{i}V)\otimes_{\R}\C\ \ (i=1,\ldots,l-1),\\
    \omega_{l}&=\tfrac{1}{2}(\varepsilon_{1}+\cdots+\varepsilon_{l}),\quad
    \pi_{\omega_{l}}=S\ \ (\text{the spin representation}).
\end{align*}
In particular, for each $\dagger\in\{-,\vee\}$ we have
\begin{equation}
    \label{eq:index-fundamental-B}
    \ind^{\dagger} \pi_{\omega_{i}}=1\quad (i=1,\ldots,l-1).
\end{equation}

\underline{The case $p+q=2l$:} 
The fundamental weights corresponding to the fundamental system $\Pi$
and the corresponding complex representations of $Spin(p,q)$ are given by
\begin{align*}
    \omega_{i}&=\varepsilon_{1}+\cdots+\varepsilon_{i},\quad
    \pi_{\omega_{i}}=(\wedge^{i}V)\otimes_{\R}\C\ \ (i=1,\ldots,l-2),\\
    \omega_{l-1}&=\tfrac{1}{2}(\varepsilon_{1}+\cdots+\varepsilon_{l-1}-\varepsilon_{l}),\quad
    \pi_{\omega_{l-1}}=S_{1},\\
    \omega_{l}&=\tfrac{1}{2}(\varepsilon_{1}+\cdots+\varepsilon_{l-1}+\varepsilon_{l}),\quad
    \pi_{\omega_{l}}=S_{2}.
\end{align*}
Since the definition of the semispin representations $S_{1}$ and $S_{2}$ involves
an ambiguity in labeling, it is more precise to say that $\pi_{\omega_{l-1}}$
and $\pi_{\omega_{l}}$ are denoted by $S_{1}$ and $S_{2}$, respectively.
Although not a fundamental representation, note that the representation
$\pi_{\omega_{l-1}+\omega_{l}}$ corresponding to $\omega_{l-1}+\omega_{l}$ is
$(\wedge^{l-1}V)\otimes_{\R}\C$.
From the above, for each $\dagger\in\{-,\vee\}$, 
\begin{equation}
    \label{eq:index-fundamental-D}
    \ind^{\dagger}\pi_{\omega_{i}}=1\quad (i=1,\ldots,l-2),\quad
    \ind^{\dagger}\pi_{\omega_{l-1}+\omega_{l}}=1.
\end{equation}

We are ready to show Proposition~\ref{proposition:index_of_spin_rep}. 
\begin{proof}[Proof of Proposition~\ref{proposition:index_of_spin_rep}]
Let $\lambda$ be the dominant integral weight corresponding to $\pi\in \Lambda_{p,q}$, and write
$\lambda=\sum_{k=1}^{l}a_{k}\omega_{k}$ with $a_{1},\ldots,a_{l}\in\N$.

When $p+q=2l+1$, by Lemma~\ref{lemma:half-int-rep-highest-weight}, note that $a_{l}$ is odd.
Hence, by Lemma~\ref{lemma:index_of_Cartan_product} and \eqref{eq:index-fundamental-B}, we obtain
\begin{equation*}
\ind^{\dagger}\pi=(\ind^{\dagger}\pi_{\omega_{l}})^{a_{l}}=\ind^{\dagger}S.
\end{equation*}

When $p+q=2l$, 
by Lemma~\ref{lemma:index_of_Cartan_product} and \eqref{eq:index-fundamental-D}, we have
$\ind^{\dagger}\pi_{\omega_{l-1}}=\ind^{\dagger}\pi_{\omega_{l}}$.
Moreover, by Lemma~\ref{lemma:half-int-rep-highest-weight}, note that
$a_{l-1}+a_{l}$ is odd.
Therefore, again by Lemma~\ref{lemma:index_of_Cartan_product} and
\eqref{eq:index-fundamental-D}, we have
\begin{equation*}
\ind^{\dagger}\pi=(\ind^{\dagger}\pi_{\omega_{l-1}})^{a_{l-1}+a_{l}}
=\ind^{\dagger}S_{1}.
\end{equation*}
This completes the proof. 
\end{proof}

Combining Propositions~\ref{prop:self-dual-conjugate}
and \ref{proposition:index_of_spin_rep}, we obtain the following. Fact~\ref{fact:index_for_spin1} is a special case with $q=1$.
\begin{proposition}\label{prop:index_of_spin_rep}
    Fix $\dagger\in \{-,\vee,*\}$.
    \begin{itemize}
        \item Suppose $p+q$ is odd.  
        Then $\pi^{\dagger}\simeq \pi$ for any $\pi\in \Lambda_{p,q}$.
        Further, if $\dagger\in \{-,\vee\}$, then 
        $\ind^{\dagger}\pi = \ind^{\dagger}S$.         
        \item Suppose $p+q$ is even.
        Then one of the following claims holds:
        \begin{enumerate}[label=$(\alph*)_{\dagger}$]
            \item 
            $\pi^{\dagger}\simeq \pi$
            for any $\pi\in \Lambda_{p,q}$. 
            Further, if $\dagger\in \{-,\vee\}$, then 
            $\ind^{\dagger}\pi = \ind^{\dagger}S_{1}$.  
            \item 
            $\pi^{\dagger}\not\simeq \pi$ 
            for any $\pi\in \Lambda_{p,q}$. 
            Further, $S_{1}^{\dagger} \simeq S_{2}$.
        \end{enumerate}
    \end{itemize}
\end{proposition}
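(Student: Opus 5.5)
The plan is to obtain the statement by splitting it according to the parity of $p+q$ and reading off the two ingredients already in place. The first is Proposition~\ref{prop:self-dual-conjugate}, which controls whether $\pi^{\dagger}\simeq\pi$ for $\pi\in\Lambda_{p,q}$. The second is Proposition~\ref{proposition:index_of_spin_rep}, which computes $\ind^{\dagger}\pi$ once self-$\dagger$-ness is known.

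First suppose $p+q$ is odd. Proposition~\ref{prop:self-dual-conjugate} gives $\pi^{\dagger}\simeq\pi$ for every $\pi\in\Lambda_{p,q}$ and every $\dagger\in\{-,\vee,*\}$. For $\dagger\in\{-,\vee\}$, the index is then defined, and Proposition~\ref{proposition:index_of_spin_rep} yields $\ind^{\dagger}\pi=\ind^{\dagger}S$. This settles the odd case.

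Now suppose $p+q=2l$ is even. Proposition~\ref{prop:self-dual-conjugate} shows that the class $\Lambda_{p,q}$ falls uniformly into one of two alternatives.
\begin{itemize}
\item In the first alternative, every $\pi\in\Lambda_{p,q}$ satisfies $\pi^{\dagger}\simeq\pi$. For $\dagger\in\{-,\vee\}$, Proposition~\ref{proposition:index_of_spin_rep} gives $\ind^{\dagger}\pi=\ind^{\dagger}S_1$, which is alternative $(a)_\dagger$.
\item In the second alternative, no $\pi\in\Lambda_{p,q}$ satisfies $\pi^{\dagger}\simeq\pi$. It remains to show $S_1^{\dagger}\simeq S_2$.
\end{itemize}

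For the last claim I will use the description just before Proposition~\ref{prop:self-dual-conjugate}. The highest weight of $\pi_\lambda^\dagger$ is either $\lambda$ or $(\lambda_1,\dots,\lambda_{l-1},-\lambda_l)$, and which of the two occurs depends only on $(p,q)$ and $\dagger$, not on $\lambda$. Since $S_1=\pi_{\omega_{l-1}}\in\Lambda_{p,q}$ is not self-$\dagger$, the map must be the sign flip on the last coordinate. This flip sends $\omega_{l-1}$ to $\omega_l$, so $S_1^\dagger\simeq\pi_{\omega_l}=S_2$.

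Finally, the two alternatives are mutually exclusive as long as $\Lambda_{p,q}$ is nonempty, which holds because $S_1\in\Lambda_{p,q}$. So exactly one of $(a)_\dagger$ and $(b)_\dagger$ holds, as the statement requires.

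The main obstacle is the uniformity claim: that the action of $\dagger$ on highest weights is independent of $\lambda$. This is exactly what the cited results from Onishchik give, as recalled in the text. Beyond that, the only check is that $\omega_{l-1}$ and $\omega_l$ are exchanged by the last-coordinate sign flip, which is immediate from their explicit forms.
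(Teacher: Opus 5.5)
Your proposal is correct and follows the paper, which obtains this statement simply by combining Proposition~\ref{prop:self-dual-conjugate} with Proposition~\ref{proposition:index_of_spin_rep}. Your derivation of $S_1^{\dagger}\simeq S_2$ in case $(b)_\dagger$ spells out a step the paper leaves implicit: the highest weight of $S_1^{\dagger}$ must be either $\omega_{l-1}$ or its last-coordinate sign flip $\omega_l$, and non-self-$\dagger$-ness rules out the former.
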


\section*{Acknowledgements.}
We are grateful to Toshiyuki Kobayashi, Takeyoshi Kogiso, Taro Yoshino, Takayuki Okuda,
Masatoshi Kitagawa, and Ryosuke Nakahama  
for their helpful comments and discussion. 

The first named author is supported by the Special Postdoctoral
Researcher Program at RIKEN and JSPS under Grant-in Aid for Scientific Research 
(JP24K16929).
The second named author is supported by the Special Postdoctoral
Researcher Program at RIKEN.


\begin{thebibliography}{10}

\bibitem{Adams_sphere}
J.~F. Adams, \emph{Vector fields on spheres}, Ann. of Math. (2) \textbf{75}
  (1962), 603--632.

\bibitem{Adams_Lax_Phillips65}
J.~F. Adams, P.~D. Lax, and R.~S. Phillips, \emph{On matrices whose real linear
  combinations are non-singular}, Proc. Amer. Math. Soc. \textbf{16} (1965),
  318--322, Correction. Ibid. \textbf{17} (1966), 945--947.

\bibitem{Araki62}
S.~Araki, \emph{On root systems and an infinitesimal classification of
  irreducible symmetric spaces}, J. Math. Osaka City Univ. \textbf{13} (1962),
  1--34.

\bibitem{Yeung_71}
Y.~H. Au-yeung, \emph{On matrices whose nontrivial real linear combinations are
  nonsingular}, Proc. Amer. Math. Soc. \textbf{29} (1971), 17--22.

\bibitem{Benoist96}
Y.~Benoist, \emph{Actions propres sur les espaces homog\`enes r\'{e}ductifs},
  Ann. of Math. (2) \textbf{144} (1996), no.~2, 315--347.

\bibitem{Berger57}
M.~Berger, \emph{Les espaces sym\'{e}triques noncompacts}, Ann. Sci. \'{E}cole
  Norm. Sup. (3) \textbf{74} (1957), 85--177.

\bibitem{BochenskiOkuda16}
M.~Boche\'{n}ski, P.~Jastrz\k{e}bski, T.~Okuda, and A.~Tralle, \emph{Proper
  {$SL(2,\mathbb{R})$}-actions on homogeneous spaces}, Internat. J. Math.
  \textbf{27} (2016), no.~13, 1650106, 10.

\bibitem{BochenskiTralle24}
M.~Boche\'{n}ski and A.~Tralle, \emph{A solution of the problem of standard
  compact {C}lifford--{K}lein forms}, arXiv:2403.10539v3, 2024.

\bibitem{Bochenski-Tralle-transformation-group24}
\bysame, \emph{{S}tandard {C}ompact {C}lifford--{K}lein {F}orms and {L}ie
  {A}lgebra {D}ecompositions}, Transformation Groups (2024).

\bibitem{Bourbaki_Lie_4_to_6}
N.~Bourbaki, \emph{Lie groups and {L}ie algebras. {C}hapters 4--6}, Elements of
  Mathematics (Berlin), Springer-Verlag, Berlin, 2002, Translated from the 1968
  French original by Andrew Pressley.

\bibitem{CalabiMarkus1962}
E.~Calabi and L.~Markus, \emph{Relativistic space forms}, Ann. of Math. (2)
  \textbf{75} (1962), 63--76.

\bibitem{CoMc93}
D.~H. Collingwood and W.~M. McGovern, \emph{Nilpotent orbits in semisimple
  {L}ie algebras}, Van Nostrand Reinhold Mathematics Series, Van Nostrand
  Reinhold Co., New York, 1993.

\bibitem{Corlette_superrigidity}
K.~Corlette, \emph{Archimedean superrigidity and hyperbolic geometry}, Ann. of
  Math. (2) \textbf{135} (1992), no.~1, 165--182.

\bibitem{Culler-Shalen83}
M.~Culler and P.~B. Shalen, \emph{Varieties of group representations and
  splittings of {$3$}-manifolds}, Ann. of Math. (2) \textbf{117} (1983), no.~1,
  109--146.

\bibitem{Danciger-Gueritaud-Kassel-affine-2020}
J.~Danciger, F.~Gu\'eritaud, and F.~Kassel, \emph{Proper affine actions for
  right-angled {C}oxeter groups}, Duke Math. J. \textbf{169} (2020), no.~12,
  2231--2280.

\bibitem{Eckmann43}
B.~Eckmann, \emph{Gruppentheoretischer beweis des satzes von hurwitz-radon
  \"uber die komposition quadratischer formen}, Comment. Math. Helv.
  \textbf{15} (1943), 358--366.

\bibitem{Fulton-Harris}
W.~Fulton and J.~Harris, \emph{Representation theory}, Graduate Texts in
  Mathematics, vol. 129, Springer-Verlag, New York, 1991, A first course,
  Readings in Mathematics.

\bibitem{Ghys95}
\'{E}. Ghys, \emph{D\'{e}formations des structures complexes sur les espaces
  homog\`enes de {$\mathrm{SL}(2,\mathbf{C})$}}, J. Reine Angew. Math.
  \textbf{468} (1995), 113--138.

\bibitem{Goldmannonstandard}
W.~M. Goldman, \emph{Nonstandard {L}orentz space forms}, J. Differential Geom.
  \textbf{21} (1985), no.~2, 301--308.

\bibitem{GueritaudGuichardKasselWienhard2017anosov}
F.~Gu\'{e}ritaud, O.~Guichard, F.~Kassel, and A.~Wienhard, \emph{Anosov
  representations and proper actions}, Geom. Topol. \textbf{21} (2017), no.~1,
  485--584.

\bibitem{HelgasonDiffLieSymm2001}
S.~Helgason, \emph{Differential geometry, {L}ie groups, and symmetric spaces},
  Graduate Studies in Mathematics, vol.~34, American Mathematical Society,
  Providence, RI, 2001, Corrected reprint of the 1978 original.

\bibitem{Hurwitz22}
A.~Hurwitz, \emph{\"{U}ber die {K}omposition der quadratischen {F}ormen}, Math.
  Ann. \textbf{88} (1922), no.~1-2, 1--25.

\bibitem{Iwahori-real-rep}
N.~Iwahori, \emph{On real irreducible representations of {L}ie algebras},
  Nagoya Math. J. \textbf{14} (1959), 59--83.

\bibitem{Kaneyuki_Kozai_1985}
S.~Kaneyuki and M.~Kozai, \emph{Paracomplex structures and affine symmetric
  spaces}, Tokyo J. Math. \textbf{8} (1985), no.~1, 81--98.

\bibitem{kannaka24}
K.~Kannaka, \emph{Counting orbits of certain infinitely generated non-sharp
  discontinuous groups for the anti-de {S}itter space}, Selecta Math. (N.S.)
  \textbf{30} (2024), no.~1, 11.

\bibitem{KannakaKobayashi25}
K.~Kannaka and T.~Kobayashi, \emph{Zariski-dense deformations of standard
  discontinuous groups for pseudo-riemannian homogeneous spaces}, To appear in
  Zariski Dense Subgroups, Number Theory and Geometric Applications, (eds: G.
  Prasad, A. Rapinchuk, B. Sury, and A. Tralle), Infosys Science Foundation
  Series, Springer^^e2^^80^^93Nature, 2026.

\bibitem{KannakaOkudaTojo24}
K.~Kannaka, T.~Okuda, and K.~Tojo, \emph{Zariski-dense discontinuous surface
  groups for reductive symmetric spaces}, Symmetry in geometry and analysis.
  {V}ol. 1. {F}estschrift in honor of {T}oshiyuki {K}obayashi, Progr. Math.,
  vol. 357, Birkh\"auser/Springer, Singapore, 2025, pp.~321--354.

\bibitem{Kassel12}
F.~Kassel, \emph{Deformation of proper actions on reductive homogeneous
  spaces}, Math. Ann. \textbf{353} (2012), no.~2, 599--632.

\bibitem{KasselKobayashi16}
F.~Kassel and T.~Kobayashi, \emph{Poincar\'{e} series for non-{R}iemannian
  locally symmetric spaces}, Adv. Math. \textbf{287} (2016), 123--236.

\bibitem{KasselTholozan}
F.~Kassel and N.~Tholozan, \emph{Sharpness of proper and cocompact actions on
  reductive homogeneous spaces}, arXiv:2410.08179v1, 2024.

\bibitem{Kobayashi89}
T.~Kobayashi, \emph{Proper action on a homogeneous space of reductive type},
  Math. Ann. \textbf{285} (1989), no.~2, 249--263.

\bibitem{Kobayashi1992necessary}
\bysame, \emph{A necessary condition for the existence of compact
  {C}lifford--{K}lein forms of homogeneous spaces of reductive type}, Duke
  Math. J. \textbf{67} (1992), no.~3, 653--664.

\bibitem{Kobayashi96}
\bysame, \emph{Criterion for proper actions on homogeneous spaces of reductive
  groups}, J. Lie Theory \textbf{6} (1996), no.~2, 147--163.

\bibitem{Kobayashi98}
\bysame, \emph{Deformation of compact {C}lifford--{K}lein forms of
  indefinite-{R}iemannian homogeneous manifolds}, Math. Ann. \textbf{310}
  (1998), no.~3, 395--409.

\bibitem{Kobayashi-unlimit}
\bysame, \emph{Discontinuous groups for non-{R}iemannian homogeneous spaces},
  Mathematics unlimited---2001 and beyond, Springer, Berlin, 2001,
  pp.~723--747.

\bibitem{KobayashiYoshino05}
T.~Kobayashi and T.~Yoshino, \emph{Compact {C}lifford--{K}lein forms of
  symmetric spaces---revisited}, Pure Appl. Math. Q. \textbf{1} (2005), no.~3,
  Special Issue: In memory of Armand Borel. Part 2, 591--663.

\bibitem{Lam67}
K.~Y. Lam, \emph{Construction of nonsingular bilinear maps}, Topology
  \textbf{6} (1967), 423--426.

\bibitem{Margulis-freegroup-anounce}
G.~A. Margulis, \emph{Free completely discontinuous groups of affine
  transformations}, Dokl. Akad. Nauk SSSR \textbf{272} (1983), no.~4, 785--788.

\bibitem{Margulis-freegroup-1984}
\bysame, \emph{Complete affine locally flat manifolds with a free fundamental
  group}, vol. 134, 1984, Automorphic functions and number theory, II,
  pp.~190--205.

\bibitem{Margulis_discrete_subgroup}
\bysame, \emph{Discrete subgroups of semisimple {L}ie groups}, Ergebnisse der
  Mathematik und ihrer Grenzgebiete (3) [Results in Mathematics and Related
  Areas (3)], vol.~17, Springer-Verlag, Berlin, 1991.

\bibitem{Oku13}
T.~Okuda, \emph{Classification of semisimple symmetric spaces with proper
  {$SL(2,\mathbb{R})$}-actions}, J. Differential Geom. \textbf{94} (2013),
  no.~2, 301--342.

\bibitem{Okuda17}
\bysame, \emph{Abundance of nilpotent orbits in real semisimple {L}ie
  algebras}, J. Math. Sci. Univ. Tokyo \textbf{24} (2017), no.~3, 399--430.

\bibitem{Oni04}
A.~L. Onishchik, \emph{Lectures on real semisimple {L}ie algebras and their
  representations}, ESI Lectures in Mathematics and Physics, European
  Mathematical Society (EMS), Z\"{u}rich, 2004.

\bibitem{OshimaSekiguchi84}
T.~\={O}shima and J.~Sekiguchi, \emph{The restricted root system of a
  semisimple symmetric pair}, Group representations and systems of differential
  equations ({T}okyo, 1982), Adv. Stud. Pure Math., vol.~4, North-Holland,
  Amsterdam, 1984, pp.~433--497.

\bibitem{Radon22}
J.~Radon, \emph{Lineare {S}charen orthogonaler {M}atrizen}, Abh. Math. Sem.
  Univ. Hamburg \textbf{1} (1922), no.~1, 1--14.

\bibitem{Teduka2008}
K.~Teduka, \emph{Proper actions of {$\mathrm{SL}(2,\mathbb{R})$} on
  {$\mathrm{SL}(n,\mathbb{ R})$}---homogeneous spaces}, J. Math. Sci. Univ.
  Tokyo \textbf{15} (2008), no.~1, 1--13.

\bibitem{Teduka2008PJA}
\bysame, \emph{Proper actions of {$\mathrm{SL}(2,\mathbf{C})$} on irreducible
  complex symmetric spaces}, Proc. Japan Acad. Ser. A Math. Sci. \textbf{84}
  (2008), no.~7, 107--111.

\bibitem{Thurston-3fold}
W.~P. Thurston, \emph{Three-{D}imensional {G}eometry and {T}opology. {V}ol. 1},
  Princeton Mathematical Series, vol.~35, Princeton University Press,
  Princeton, NJ, 1997.

\bibitem{Tojo2019Classification}
K.~Tojo, \emph{Classification of irreducible symmetric spaces which admit
  standard compact {C}lifford--{K}lein forms}, Proc. Japan Acad. Ser. A Math.
  Sci. \textbf{95} (2019), no.~2, 11--15.

\bibitem{Varadarajan_supersymmetry}
V.~S. Varadarajan, \emph{Supersymmetry for mathematicians: an introduction},
  Courant Lecture Notes in Mathematics, vol.~11, New York University, Courant
  Institute of Mathematical Sciences, New York; American Mathematical Society,
  Providence, RI, 2004.

\bibitem{Warner_harmonic_analysis_I}
G.~Warner, \emph{Harmonic analysis on semi-simple {L}ie groups. {I}}, Die
  Grundlehren der mathematischen Wissenschaften, vol. Band 188,
  Springer-Verlag, New York-Heidelberg, 1972.

\bibitem{Yokota_g2}
I.~Yokota, \emph{Non-compact simple {L}ie group {$G\sb{2}'$} of type
  {$G\sb{2}$}}, J. Fac. Sci. Shinshu Univ. \textbf{12} (1977), no.~1, 45--52.

\bibitem{Zeghib98}
A.~Zeghib, \emph{On closed anti-de {S}itter spacetimes}, Math. Ann.
  \textbf{310} (1998), no.~4, 695--716.

\end{thebibliography}
\end{document}